\numberwithin{equation}{section}
\theoremstyle{plain}
 \newtheorem{lemma}[thm]{Lemma}
 \newtheorem{Prop}[thm]{Proposition}
\theoremstyle{remark}
 \newtheorem{asmp}[thm]{Assumption}
 \newtheorem{defn}[thm]{Definition}
\def \be{\begin{equation}}
\def \ee{\end{equation}}
\def \bt{\begin{theorem}} 
\def \et{\end{theorem}}
\def \bl{\begin{lemma}} 
\def \el{\end{lemma}}
\def \bea{\begin{eqnarray}}
\def \eea{\end{eqnarray}}
\def \bas{\begin{eqnarray*}}
\def \eas{\end{eqnarray*}}
\renewcommand{\eqref}[1]{(\ref{#1})} 
\newcommand{\bC}{\mathbb{C}}
\newcommand{\bD}{\mathbb{D}}
\newcommand{\bE}{\mathbb{E}}
\newcommand{\bP}{\mathbb{P}}
\newcommand{\bR}{\mathbb{R}}
\newcommand{\mt}{\mathbb{T}}
\newcommand{\bZ}{\mathbb{Z}}
\newcommand{\GG}{\mathcal{G}}
\newcommand{\TT}{\mathcal{T}}
\newcommand{\sfA}{\mathsf {A}}
\newcommand{\sfB}{\mathsf {B}}
\newcommand{\sfD}{\mathsf {D}}
\newcommand{\sfG}{\mathsf {G}}
\newcommand{\sfH}{\mathsf {H}}
\newcommand{\sfI}{\mathsf {I}}
\newcommand{\sfO}{\mathsf {O}}
\def\b1{\mathbf 1}
\newcommand{\Log}{{\rm Log}}
\begin{document}

\title{Spectral measure for uniform $d$-regular digraphs}

 \author{Arka Adhikari}
    \address{Arka Adhikari\hfill\break
    Department of Mathematics, University of Maryland-College Park, College Park, MD, USA}
    \email{arkaa@umd.edu}

\author{Amir Dembo}
\address{Amir Dembo \hfil \break Department of Mathematics and Statistics, Stanford University, Stanford, CA, USA}
\email{adembo@stanford.edu}

\begin{abstract}
Consider the matrix $\sfA_\GG$ chosen uniformly at random from the finite 
set of all $N$-dimensional matrices of zero main-diagonal and binary entries, 
having each row and column of $\sfA_\GG$ sum to $d$. 
That is, the adjacency matrix for the uniformly random 
$d$-regular simple digraph $\GG$. Fixing $d \ge 3$, it has long been conjectured 
that as $N \to \infty$ the corresponding empirical eigenvalue distributions converge
weakly, in probability, to an explicit non-random limit,
given by the Brown measure of the free sum of $d$ Haar unitary operators. 
We reduce this conjecture to bounding the decay in $N$ of the probability that
the minimal singular value of the shifted matrix $\sfA(w) = \sfA_\GG - w \sfI$
is very small. While the latter remains a challenging task, the required bound is 
comparable to the recently established control on the singularity of $\sfA_\GG$.
The reduction is achieved here by sharp estimates 
on the behavior at large $N$, near the real line, of the Green's function (aka resolvent) 
of the Hermitization of $\sfA(w)$, which is of independent interest.
\end{abstract}

 \subjclass[2010]{46L53,60B10, 60B20,05C50,05C20}
    \keywords{random graphs, random matrices, small singular values, Girko's method, limiting spectral distribution, Brown measure, free convolution, Stieltjes transform, Green function }


\maketitle

\section{Introduction}\label{sec-intro}

The method of moments and the Stieltjes transform approach 
provide rather precise information on asymptotics of the 
Empirical Spectral Distribution (in short \abbr{esd}), 
for many {\em Hermitian} random matrix models.
In contrast, both methods fail for {\em non-Hermitian} matrix models, 
and the only available general scheme for finding the limiting spectral 
distribution in such cases is the one proposed by Girko (in \cite{Girko84}). 
It is extremely challenging to rigorously justify this scheme, even 
for the matrix model consisting of i.i.d. entries (of zero mean and finite variance). 
Indeed, the {\em circular law} conjecture, for the i.i.d. case, was established
in full generality only in 2010, after rather long series of partial results 
(see \cite{TV10} and historical references in \cite{BC12}). Beyond this 
simple model, only a few results have been rigorously proved 
in the non-Hermitian regime. Our focus here is on the long standing conjecture 
about the limit, as $N \to \infty$, of the \abbr{esd} of the adjacency matrix 
$\sfA_\GG$ in case $\GG$ is a uniformly random $d$-regular {\em directed graph} (aka digraph) 
of $N$ vertices.
Specifically, in this context it is conjectured (see \cite{BC12}), that, fixing $d \ge 3$, 
such \abbr{esd} 
converge weakly, in probability, to {the oriented Kesten-McKay law. Namely, the}
measure $\mu_d$ on the complex plane, 
whose density with respect to Lebesgue measure on $\bC$ is  
\beq\label{eq:dreg}
h_d(w) := \f{1}{\pi} \f{d^2(d-1)}{(d^2 - |w|^2)^2}\bI_{\{|w| \le \sqrt{d}\}} 
\eeq
{(c.f. \cite{Cook,MNR} for more insights on this conjecture).
Related to this conjecture, \cite{Coste21} applies the high trace method to study 
the spectral gap of such matrices, proving in particular that the limiting \abbr{esd} 
support must be the disk of radius $\sqrt{d}$ with no outliers apart from the trivial eigenvalue. 
This is also one of the consequences of \cite{CLZ} work on the asymptotic of the 
characteristic polynomial of such $\sfA_\GG$ 
outside the disk of radius $\sqrt{d}$ (for more on this direction, see \cite{CLZ} and the
references therein). Note that as $d \to \infty$, the oriented Kesten-McKay law $\mu_d$, 
rescaled by $\sqrt{d}$, converges to the circular law. This suggests that the circular law should 
hold for the limiting \abbr{esd} of $\frac{1}{\sqrt{d}} \sfA_\GG$ whenever $d \wedge (N-d) \to \infty$ 
as $N \to \infty$, as indeed shown by \cite{Cook,LLTTY} (see also \cite{BCZ14} for this result in 
the related permutation model, with at least poly-log growing degree).}

From a graph-theoretic point of view, it is most natural to consider the
{\em simple digraph} (\abbr{sd}) version of this problem,  whereby we restrict $\GG$ to be a 
simple digraph, or alternatively choose $\sfA_\GG$ uniformly from the ensemble of 
all matrices of $\{0,1\}$-valued entries with zero main diagonal and having each of 
its row and column sum to $d$. {Taking advantage of the contiguity results of \cite{Janson95}, 
one} may alternatively aim at proving this conjecture for the 
{\em configuration model} (\abbr{cm}), where 
$\sfA_\GG$ is uniformly chosen from among all matrices of non-negative integer 
entries with all rows and columns sum to $d$ (that is, $\GG$ is still uniform,
directed and $d$-regular, but now allowing also for self and multiple edges).
The preceding conjecture is best appreciated in the context of yet another 
version, the so called {\em permutation model} (\abbr{pm}), where $\sfA_\GG$ is 
taken as the sum of $d$ uniformly and independently chosen 
$N$-dimensional permutation matrices (that is, with $\GG$ a $d$-regular
multi-graph, now of a slightly non-uniform law). 

Indeed, permutation matrices form a 
subgroup of the group of orthogonal matrices, and it was shown in \cite{BD13} that 
as $N \to \infty$, the \abbr{esd} for the sum of $d$ independently chosen, \emph{Haar 
distributed}
$N$-dimensional, 
orthogonal matrices, converge weakly, 
in probability, to the measure $\mu_d$
whose density is $h_d(\cdot)$ of \eqref{eq:dreg}. This result of \cite{BD13} 
brings us back to the rationale behind the conjecture
(which should hold for all three versions of uniform $d$-regular digraph 
we have mentioned). Namely, the observation that $\mu_d$ is the {\em Brown measure} of the
{\em free sum} of $d \ge 2$ {\em Haar unitary operators} (see \cite[Example 5.5]{HL00}). 
To understand the plausible route and challenges in affirming this conjecture, recall the 
definition of Brown measure for a bounded operator (see \cite[Page 333]{HL00}),
where $\langle \Log, \mu \rangle := \int_{\bR} \log |x| d\mu(x)$
for any probability measure $\mu$ on $\bR$ (for which such integral is well defined).
\begin{dfn}
Let $(\cA, \tau)$ be a non-commutative $W^*$-probability space, i.e. 
a von Neumann algebra $\cA$ with a normal faithful tracial 
state $\tau$ (see \cite[Defn. 5.2.26]{AGZ10}). 
For $h$ a positive element in $\cA$, let $\mu_h$ denote the unique 
probability measure on $\bR^+$ such that 
$\tau(h^n) = \int t^n d \mu_h(t)$ for all $n \in \bZ^+$. 
The Brown measure $\mu_a$ associated with each bounded $a \in \cA$, 
is the Riesz measure corresponding to the 
$[-\infty,\infty)$-valued sub-harmonic function 
$w \mapsto  \langle \Log, \mu_{|a-w|} \rangle$ on $\bC$.   
That is, $\mu_a$ is the unique Borel probability measure on $\bC$ such that   
\beq\label{def:bmeas}
d \mu_a (w) = \f{1}{2 \pi} \Delta_w
\langle \Log, \mu_{|a-w|} \rangle 
\, dw
, 
\eeq
where $\Delta_w$ denotes the two-dimensional Laplacian operator 
(with respect to $w \in \bC$), and the identity (\ref{def:bmeas}) 
holds in distribution sense (i.e. when integrated against 
any test function $\psi \in C^\infty_c(\bC)$).
\end{dfn}

\noindent
Recall that as $N \to \infty$, independent Haar distributed $N$-dimensional orthogonal
matrices converge in $\star$-moments (see \cite{Sniady02} for a definition), to the 
collection $\{u_i\}_{i =1}^d$ of $\star$-{\em free} Haar unitary operators 
(see \cite[Thm.~5.4.10]{AGZ10}). The challenge, even in the context of \cite{BD13},
stems from the fact that the convergence of $\star$-moments (or even the stronger
convergence in distribution of traffics, see \cite{Male11}), does not imply convergence 
of the corresponding Brown measures\footnote{The Brown measure of a matrix 
is its \abbr{ESD} (see \cite[Prop.~1]{Sniady02})} (see \cite[\textsection 2.6]{Sniady02}). 
Moreover, while \cite[Thm.~6]{Sniady02} shows that if the original matrices 
are perturbed by adding small Gaussian (of {\em unknown variance}), then the Brown measures do converge,
removing the Gaussian, or merely identifying the variance needed, are often hard tasks (e.g.
\cite[Prop.~7 \& Cor.~8]{GWZ14} provide an example of an ensemble where no
Gaussian matrix of polynomially vanishing variance can regularize the Brown measures, in this sense). 
In contrast, for Toeplitz matrices, many spectral features of regularization
by adding a matrix of polynomially small norm,  are by now fairly well understood
(c.f. \cite{SV} for Gaussian perturbations, or \cite{BPZ20} for non-Gaussian ones).

Thus, instead of relying on the preceding free probability intuition, we shall utilize Girko's approach, which
we now summarize. First, one associates to any $N$-dimensional (non-Hermitian) 
matrix $\sfA_N$ and every $w \in \bC$ the $2N$-dimensional 
{\em Hermitian} matrix 
\beq\label{eq:herm-def}
\sfH^{\sfA_N} (0,w) := 
 \begin{bmatrix}
  0 & \sfA_N -w \sfI_N \\
 \sfA_N^* - \overline{w} \sfI_N & 0   
 \end{bmatrix} \,.
\eeq  
Applying Green's formula to the characteristic polynomial 
of a matrix $\sfA_N$ (whose \abbr{esd} we 
denote hereafter by $L_{\sfA_N}$), results with 
\begin{align*}
\int_\bC \psi(w) dL_{\sfA_N}(w) =  & 
\f{1}{2 \pi N} \int_\bC \Delta \psi(w) \log | \det  (w \sfI_N-\sfA_N)| dw 
, \qquad \forall \psi \in C_c^2(\bC)\,. 
\end{align*} 
The eigenvalues of $\sfH^{\sfA_N}(0,w)$ are merely
$\pm 1$ times the singular values of $w \sfI_N- \sfA_N$. Hence, denoting the \abbr{esd} 
of $\sfH^{\sfA_N}(0,w)$ by $\nu_N^w$, we have that
\beq
\f{1}{N} \log |\det (w \sfI_N-\sfA_N)| = \f{1}{2N} \log |\det \sfH^{\sfA_N}(0,w)| 
= \langle \Log, \nu_N^w \rangle \, , \notag 
\eeq
out of which we deduce Girko's formula,
\beq
\int_\bC \psi(w)dL_{\sfA_N}(w) = \f{1}{2 \pi} 
\int_\bC \Delta \psi(w) \langle \Log, \nu_N^w \rangle dw \,, \qquad \forall \psi \in C_c^2(\bC)\,.
\label{eq:girko_key_identity} 
\eeq
The preceding identity suggests the following recipe for proving 
convergence of $L_{\sfA_N}$ per given family of non-Hermitian random matrices $\{\sfA_N\}$ 
(to which we referred already as Girko's method).
\vskip5 pt

\noindent
 {\bf Step 1}: Show that for (Lebesgue almost) 
every $w \in \bC$, as $N \ra \infty$ the measures 
$\nu_N^w$ converge weakly, in probability, to 
some measure $\nu^w$. 

\vskip5pt

\noindent
{\bf Step 2}: Justify that 
$\langle \Log, \nu_N^w \rangle \to \langle \Log, \nu^w \rangle$ 
in probability, for almost every $w \in \bC$
(which is the main technical challenge of this approach).

\vskip5pt

\noindent
{\bf Step 3}: A uniform integrability argument allows one to convert 
the $w$-a.e. convergence of $\langle \Log, \nu_N^w \rangle$ to 
the corresponding convergence for a suitable collection 
$\cS \subseteq C_c^2(\bC)$ of (smooth) test functions. 
It then follows from (\ref{eq:girko_key_identity}) that
for each fixed $\psi \in \cS$, we have the convergence in probability
\begin{align}
\int_\bC \psi(w) dL_{\sfA_N}(w) \ra & \f{1}{2 \pi} \int_\bC \Delta \psi(w) 
\langle \Log, \nu^w \rangle dw \nonumber \\ 
&= \int_\bC \psi(w) h(w) dw\,, \quad \text{for} \quad h(w):=\frac{1}{2\pi} \Delta \langle \Log, \nu^w \rangle \,,
\label{eq:step3}
\end{align}
provided $w \mapsto \langle \Log, \nu^w \rangle$ is smooth enough to justify the integration 
by parts. For $\cS$ large enough, this implies the convergence in probability of the \abbr{esd}-s 
$L_{\sfA_N}$ to a limit whose density
is $h(w)$.

For example, \cite{BD13} follows this approach, inductively over $d \ge 2$, and 
for the core induction step they adapt the arguments of \cite{GKZ11}  
(who for Haar distributed orthogonal $\sfO_N$,
independent of the uniformly bounded, non-negative definite, diagonal $\sfD_N$, show that the 
weak convergence of $L_{\sfD_N}$, in probability, implies the same for 
$L_{\sfO_N \sfD_N}$, whose limit is 
the Brown measure of a certain 
operator). Utilizing this, it suffices in our case to only consider {\bf Step 2}. Indeed, 
for any adjacency matrix $\sfA_N$ of a $d$-regular digraph we have that $d^{-2} \sfA_N^* \sfA_N$
is a (Markov) probability transition matrix, hence of spectral norm one. Any 
such $\sfA_N$ must then have singular values bounded by $d$ and a Hilbert-Schmidt norm 
$\|\sfA_N\|_2 \le d \sqrt{N}$. The same applies for any sum of 
$d$ unitary matrices, and in particular to the sum $\sfB_N$ of $d$ independent 
Haar distributed unitary matrices. Recall, from \cite[Prop.~1.4]{BD13}, 
that $L_{\sfB_N}$ converges weakly, in probability, to the measure $\mu_d$ 
(of density $h_d(w)$ as in \eqref{eq:dreg}), and from \cite[Lemma 3.2]{BD13}, 
that for almost every $w \in \bC$, as $N \to \infty$,
\[
\frac{1}{N} \log |\det(w \sfI_N-\sfB_N)| \to \langle \Log, \nu^w \rangle \,.
\]
It then follows from applying the \textit{replacement principle} of \cite[Thm. 2.1]{TV10}
for ensembles $(\sfA_N)$ and $(\sfB_N)$, that \textbf{Step 2} already yields the 
conjectured convergence for any such $L_{\sfA_N}$.  

The following theorem, which is our main result, addresses the latter convergence,  {while also providing 
the corresponding local limit law}.
\begin{thm}\label{thm:main}
Let $\bP_{N,d}$ denote the uniform distribution over all $d$-regular, simple digraphs on $N$ vertices,
with $\sfA_N=A_\GG$ the adjacency matrix of such a digraph, and $s_1(w)$ the minimal singular 
value of $\sfA_N-w \sfI_N$.
Suppose for $\mathfrak{a}=0$,  any fixed $\kappa>0$ and a.e $w \in \bC$,
\begin{equation}\label{eq:lowesteigbnd}
\lim_{N \to \infty} N^{\mathfrak{a}} \,  \bP_{N,d} (- \log s_1 (w) \ge N^{\kappa}) = 0 \,.
\end{equation}
(a).  Girko's method {\bf Step 2} holds for our $\nu_N^w$
(i.e. $\langle \Log, \nu_N^w \rangle \to \langle \Log, \nu^w \rangle$, in probability),
and the \abbr{esd} $\{L_{\sfA_N}\}$ converge weakly, in probability 
to $\mu_d$, whose density is $h_d(w)$ of \eqref{eq:dreg}.
\newline
(b).  Further,  if \eqref{eq:lowesteigbnd} holds with $\mathfrak{a}(\kappa)>0$,  then for some 
$\mathfrak{a}_o>0$ any $w_o \in \bC$ and $\psi \in C_c^2(\bC)$,  setting $\psi_r (\cdot) := r^{2} \psi(r \, \cdot)$,  
we have that
\begin{equation}\label{eq:local-lim}
\lim_{N \to \infty} \bP_{N,d} \Big(  \big| \langle \psi_{N^a} (\cdot-w_o),{L_{{\sfA}_N}} - \mu_d \rangle \big|  
> \eta \Big)  = 0 \,, \qquad \forall \eta>0,  a\in (0,\mathfrak{a}_o) \,.
\end{equation}
\end{thm}
There is a long history regarding bounds on the smallest singular value of random matrix ensembles
which are of the type of \eqref{eq:lowesteigbnd} (see, e.g. \cite{TV10,RV14,BCZ14,LLTTY18,LT22,SSS}). 
Of particular note are the paper \cite{Huang21}, which shows that 
$\bP_{N,d}(s_1(0)=0) \to 0$ as $N \to \infty$ {(c.f. \cite{NP} for better decay rate),
and \cite{SSS}, which establishes 
\eqref{eq:lowesteigbnd} at $\kappa=1$ for the adjacency matrices of Erdos-Renyi($d/N$) digraphs.}
Unfortunately, \cite{Huang21,NP} rely on considering digraphs over finite fields, which does not seem applicable for an estimate such as \eqref{eq:lowesteigbnd}, {whereas independence of the
entries of $\sfA_N$ seems to be key to the success of the approach taken in \cite{SSS}.}

As we shortly show, Theorem \ref{thm:main} is an immediate consequence of the 
following local law for the Green's function $\sfG_N(z,w):=[\sfH^{\sfA_N}(0,w)-z \sfI_{2N}]^{-1}$ 
of our Hermitized matrices. 
\begin{thm}\label{thm:trace}
Let $m_\star (z,w) :=\int_{\bR} (\lambda-z)^{-1} d\nu^w (\lambda)$ (which is well defined
for $\Im(z)>0$).
For any $0 < \mathfrak{c} < 1$ and $d \ge 3$, there exist $\varepsilon =\varepsilon(\mathfrak{c},d)>0$  and
$\delta = \delta(\mathfrak{c},d) >0$
so that 
$\bP_{N,d}(\Omega'_N) \ge 1-O(N^{-\mathfrak{c}})$ for some
sets $\Omega'_N$ of $d$-regular, simple digraphs and for any compact $\mathbb{D} \subset \mathbb{C}$, 
as $N \to \infty$, 
\begin{equation}\label{eq:trG-conv}
\sup_{\GG \in \Omega_N'} \sup_{w \in \mathbb{D}, \Im(z) \ge N^{-\varepsilon}}
\Big| \frac{1}{2N} {\rm trace}(\sfG_N(z,w)) - m_\star (z,w) \Big| \le N^{-\delta} \,.
\end{equation}
\end{thm}

\begin{rmk} 
The values of $\delta(\mathfrak{c},d)$ and $\varepsilon(\mathfrak{c},d)$ in our proof are sub-optimal. 
While plausible to expect our method to yield $\varepsilon(\mathfrak{c},d) \to 1$ 
as $d \to \infty$ and $c \to 0$, this would entail substantially more effort.  We further note that 
near the edge of the support of  $\mu_d$ (namely,  at $|w| =\sqrt{d}$),  the singularity of the 
self-consistency equations causes weaker estimates on the Green's function,  but 
having small $\varepsilon$ and $\delta$ allows us to get the uniform bound 
in \eqref{eq:trG-conv},  avoiding effects due to such singularity near the edge.
\end{rmk}

\begin{rmk} We have stated Theorems \ref{thm:main} and \ref{thm:trace} in terms of uniformly 
random, \emph{simple} $d$-regular digraphs, namely, taken from the \abbr{sd} model. However,
our work yields precisely the same results when $\GG$ is taken from the configuration model.
\end{rmk}

\begin{rmk}  
Combining Theorem \ref{thm:trace} with \cite[Theorem 2.1]{BPZ19} (at $\alpha=0$),
one has, for the standard 
complex Ginibre matrix $\sfG_N$, that under $\bP_{N,d}$ the \abbr{esd} of 
$\sfA_N+N^{-\beta-\frac{1}{2}} \sfG_N$ converges weakly, in probability, 
to the oriented Kesten-McKay law $\mu_d$. Utilizing \cite[Theorem 1.8]{BPZ20}, 
the same applies for $\sfA_N+N^{-\beta} \sfO_N$, without requiring \eqref{eq:lowesteigbnd} to hold. 
\end{rmk}

Many recent papers deal with establishing local laws for graph models.  For example,  \cite{RHY19,RKY17,HuangYau} derive local laws via switching estimates that generate useful 
self-consistent equations.  The most relevant here are 
\cite{RHY19,HuangYau}, which introduce the strategy of preserving local neighborhoods and performing switches along the boundary of the local neighborhood.  In the graphs considered in those papers the local neighborhoods of most vertices match those of the root of an infinite $d$-regular tree,  so one applies perturbation theory and averaging estimates to compare the Green's functions of the graph with the one at the root of the infinite $d$-regular tree.  
The key for doing so are estimates on the Green's function on the tree,  which thanks to its homogeneous 
structure depend only on the distance between the two vertices.  This homogeneity of the tree Green's function
helps one establish the contractive property of the linearized self-consistency equations for the perturbation,
which is essential to the success of this approach.  

While we follow the same philosophy in our proof of Theorem \ref{thm:trace},  a major new challenge arises when considering digraphs,  whereby the limit object 
with respect to which we now perturb is the infinite $d$-regular directed tree (see Definition \ref{def:digraphtree}).
Due to having separate in-edges and out-edges in the directed tree,  homogeneity is lost,  with the
Green's function estimates now depending on the precise path taken between the two vertices on the tree.
This adds complexity to the perturbation estimates we need (see Section \ref{sec:Tree-like}),  with   
much extra care now required for comparing the Green's functions of the original and the switched 
graphs (specifically,  for controlling the effect that applying Schur complement 
formula for the removal of large neighborhoods has on the Green's functions of the switched graphs).  Lastly and
most importantly,  as further detailed at the start of Section \ref{sec:sec2},  in the digraph case the limiting (non-perturbative) Green's function evolution along some edges is expansive (instead of contractive),  and a very 
precise averaging is crucial to get the stability of the self-consistency equations.

Deferring for awhile the construction and the proof of all these Green's function estimates,
we proceed with a standard,  short derivation of Theorem \ref{thm:main} out of 
Theorem \ref{thm:trace}.  The remainder of this paper is devoted to the proof 
of Theorem \ref{thm:trace},  and other results which we get along the way 
(e.g. about the asymptotic behavior of the off-diagonal entries of $\sfG_N$).

\subsection{Proof of Theorem \ref{thm:main}}
Fixing $d \ge 3$,  $0<\mathfrak{c} <1$ and $\kappa < \varepsilon(\mathfrak{c},d)$ 
we restrict ourselves hereafter \abbr{wlog} to digraphs in $\Omega_N'$ from Theorem \ref{thm:trace},  where the estimate \eqref{eq:trG-conv} holds and denote by 
$\Omega_N(w)$ those digraphs for which $s_1(w)$,  the minimal singular value of $\sfA_\GG-w \sfI_N$
exceeds $\delta_N := e^{-N^{\kappa}}$.  
\newline
(a).  Fixing $w \in \bC$ where \eqref{eq:lowesteigbnd} holds (namely,  $\bP_{N,d}(\Omega_N(w)) \to 1$), 
recall that for any $z \in \bC_+$ and $N$, 
\begin{equation}\label{dfn:Stiel}
\frac{1}{2N} {\rm trace}(\sfG_N(z,w)) = \int_{\bR} (\lambda-z)^{-1} d\nu_N^w(\lambda) \,.
\end{equation}
In particular, \eqref{eq:trG-conv} implies the convergence as $N \to \infty$ 
of the Stieltjes transform of $\nu_N^w$ to that of $\nu^w$, at any $z \in \bC_+$, uniformly over 
$\Omega'_N$.  This of course implies that $|\langle f, \nu_N^w \rangle - \langle f, \nu^w \rangle| \to 0$
for any $f \in C_b(\bC)$ (which is {\bf Step 1} of Girko's method), also uniformly over 
digraphs in $\Omega_N'$.  Further, with
$\langle \Log, \mu \rangle_a^b := \int 1_{|x| \in [a,b]} \log |x| d\mu(x)$ for $a<b$,
and recalling that the spectral norm of $\sfA_\GG(w)$ is uniformly 
bounded (by $d+|w|$), 
we have for $\eta>0$ fixed, uniformly over $\Omega_N'$,  that as $N \to \infty$,
\[
|\langle \Log, \nu_N^w \rangle_\eta^\infty - \langle \Log, \nu^w \rangle_\eta^\infty| \to 0 \,.
\]
By definition,  on $\Omega_N(w)$ we have that $\nu_N^w(-(\delta_N,\delta_N))=0$.
Further,  from \cite[(4.18)]{BD13} we have for a.e.~$w$,
\begin{equation}\label{eq:tail-nu}
\Big| \langle \Log, \nu^w \rangle_0^\eta \Big|  \le C \eta | \log \eta| \,, \qquad \forall \, 0 \le \eta \le 1/2 \,,
\end{equation} 
Thus, it remains only to show that  
$\langle \Log, \nu_N^w \rangle_{\delta_N}^\eta \to 0$ uniformly on $\Omega_N$, when $N \to \infty$
followed by $\eta \to 0$. To this end,  note that \eqref{eq:trG-conv} also implies that for some 
$C<\infty$, any $N$, all digraphs in $\Omega'_N$ and $y>0$, 
\[
\nu_N^w ((-y,y)) \le C (y \vee N^{-\varepsilon})  
\]
(see \cite[(4.15)]{BD13} or \cite[Lemma 15]{GKZ11}). Combining this bound with integration by parts we find that 
\begin{equation}\label{bd:near-zero}
\langle \Log, \nu_N^w \rangle_{\delta_N}^\eta \le C \int_{\delta_N}^\eta \frac{1}{y} (y \vee N^{-\varepsilon}) dy  \le C ( N^{-\varepsilon} |\log \delta_N| + \eta ) = C (N^{\kappa-\varepsilon} + \eta) \,,
\end{equation}
which goes to zero when $N \to \infty$ followed by $\eta \to 0$,  thereby establishing {\bf Step 2} of 
Girko's method.
\newline
(b).
Next,  fixing  $\eta>0$ and $a<\mathfrak{a}_o$,  since the Lipschitz norm of $\psi_r(\cdot)$ is 
at most $r^3  \| \nabla \psi \|_\infty$,   
we can and shall replace $\langle \psi_{N^a} (\cdot-w_o),{L_{{\sfA}_N}} \rangle$  
in \eqref{eq:local-lim},  by $\langle \widetilde{\psi}_{N^a} (\cdot-w_o),{L_{{\sfA}_N}} \rangle$,  where
$\widetilde{\psi}_r (\cdot) := \bE_{U_N} [\psi_r(\cdot - U_N)]$ 
for $U_N$ uniformly distributed on ${\mathbb B}(0,N^{-3 \mathfrak{a}_o})$ independently of $\sfA_N$.
Hence,  we let 
\[
\Gamma_N(w) := \bE_{U_N} \big[ \langle \Log, \nu_N^{w+  U_N} \rangle \big] - \langle \Log, \nu^w \rangle \,,
\]
and combine Girko's formula \eqref{eq:girko_key_identity}
for $\widetilde{\psi}_{N^a}(\cdot)$ and $\psi_{N^a}(\cdot)$,  with Fubini's theorem,  to get
\eqref{eq:local-lim} out of 
\[
\lim_{N \to \infty} \bP_{N,d} \Big( \big| \int_\bC \Delta \psi_{N^a} (w) \Gamma_N(w_o+w) d w \big| > \eta \Big) = 0 \,.
\]

We will now obtain estimates on the differences of the empirical densities of $\nu^w_N$ and $\nu^w$ via the Helffer-Sjostrand formula. First, we remark that the Green's function $m^d_{\mathcal{T}}(z,w)$ is bounded by a constant via the computations of  Lemma \ref{lem:minftybnd}. Thus, the estimate  \eqref{eq:trG-conv} would also imply that $\frac{1}{2N} \text{trace}(G_N(z,w))$ is bounded by a constant independent of $N, z$ and $w$ as long as we consider $z$ with $\Im(z)\ge N^{-\varepsilon}$. Now, since the functions 
$\Im(z) \Im(m^d_{\mathcal{T}}(z,w))$ and $\Im(z)\frac{1}{2N} \Im(\text{trace}(G_N(z,w)))$ are both monotone increasing in $\Im(z)$. Thus, we have that,
\begin{equation}
\Im(m^d_{\mathcal{T}}(z,w)) \le \frac{C N^{-\varepsilon}}{\Im(z)}, \frac{1}{2N} \Im(\text{trace}(G_N(z,w))) \le \frac{C N^{-\varepsilon}}{\Im(z)}
\end{equation}
for any $z$ with $\Im(z) \le N^{-\varepsilon}$.

In addition, we have that,
\begin{equation}
\left|m^d_{\mathcal{T}}(z,w) - \frac{1}{2N} \Im(\text{trace}(G_N(z,w)))\right| \le \frac{N^{-\delta }}{\Im(z)}, \forall z: 1 \ge  \Im(z) \ge N^{-\varepsilon}.
\end{equation}

If we define $\rho = \nu^w_N - \nu^w$ and let $f_{\hat{\eta}}$ be a positive twice differentiable function that is $1$ on the region $(-E,E)$, 0 on $(-E - \hat{\eta}, E +\hat{\eta})$ and has first derivative bounded by $\frac{1}{\hat{\eta}}$ and second derivative bounded by $\frac{1}{\hat{\eta}^2}$, we can apply the Hellfer-Sjostrand formula as in \cite[Lemma 11.2]{EY17} 
to derive that,
\begin{equation}
\begin{aligned}
&\nu^w_N(-E,E) - \nu^w(-E,E) = \int_{-E-\hat{\eta}}^{E+ \hat{\eta}} f_{\hat{\eta}}(x) \text{d}\rho(x) - \int_E^{E+\hat{\eta}} f_{\hat{\eta}}(x) \text{d}\nu^w_N(x)\\&
\hspace{1 cm}- \int_{-E-\hat{\eta}}^{-E}f_{\hat{\eta}}(x) \text{d}\nu^w_N(x) + \int_E^{E+\hat{\eta}} f_{\hat{\eta}}(x) \text{d}\nu^w(x) +\int_{E}^{E+\hat{\eta}}f_{\hat{\eta}}(x) \text{d}\nu^w(x)\\
&\le C N^{-\varepsilon} \log N + N^{-\delta } \frac{N^{-2\varepsilon}}{\hat{\eta}^2} + C \hat{\eta}
\end{aligned}
\end{equation}
We used the fact that $ - \int_E^{E+\hat{\eta}} f_{\hat{\eta}}(x)$ is negative and that the density of $\nu^w$ is bounded (again via Lemma \ref{lem:minftybnd})  to control  $\int_{E}^{E+\hat{\eta}}f_{\hat{\eta}}(x) \text{d}\nu^w(x)$ for the last inequality. We can derive a lower bound for $\nu^w_N(-E,E) - \nu^w(-E,E)$ in a similar way. 

If we set $E$ to be any value $\gg N^{-\varepsilon}$, we can choose $\hat{\eta} = N^{-\varepsilon} \log N$ to assert that

\begin{equation}
|\nu^w_N(-y,y) - \nu^w(-y,y)| \le C[ N^{-\delta} + N^{-\varepsilon} \log N], \text{ } \forall \text{ } y \gg N^{-\varepsilon}.
\end{equation}
Using this estimate, we find that for $\chi = N^{-\varepsilon +\tau}$ for some small $\tau >0$ and $\eta = d^2 + |w|$, we have that
\begin{equation}
|\langle \Log, \nu^{w}_N \rangle_{\chi}^{\eta} - \langle \Log, \nu^{w} \rangle_{\chi}^{\eta}| \le C [N^{-\delta} + N^{-\varepsilon} \log N] \log N.
\end{equation}

If we also assume that we are on the event $\Omega_N(w)$, then we can also apply the estimates \eqref{bd:near-zero} and \eqref{eq:tail-nu} to bound
\begin{equation}
|\langle \Log, \nu^{w}_N \rangle_{0}^{\chi} - \langle \Log, \nu^{w} \rangle_{0}^{\chi}| \le |\langle \Log, \nu^{w}_N \rangle_{0}^{\chi}| + |\langle \Log, \nu^{w} \rangle_{0}^{\chi}| | \le C (N^{\kappa - \varepsilon} + N^{-\varepsilon + \tau}) + C N^{-\varepsilon + \tau} \log N.
\end{equation}

Now, since both $\nu^{w}_N(-y,y) = \nu^{w}(-y,y) =1$ for $|y| \ge d^2 + |w|$, we have that
$$
|\langle \Log, \nu^{w}_N \rangle_{\eta}^{\infty} - \langle \Log, \nu^{w} \rangle_{\eta}^{\infty}| = 0
$$

Combining all these estimates, we see that on the event $\Omega_N(w)$, we have that,
\begin{equation}
|\langle \Log, \nu^{w} \rangle - \langle \Log, \nu^{w}_N \rangle| \le C [N^{\kappa - \varepsilon} \log N +  N^{-\varepsilon +\tau} \log N + N^{-\delta } \log N + N^{-\varepsilon} (\log N)^2].
\end{equation}

In what proceeds, we choose $\kappa$ very small, $\tau$ very small, and $2a < \min(\varepsilon - \max[\kappa,\tau], \delta )$. Then, we have that on the event $\Omega_N(w)$ that,
\begin{equation}
|\langle \Log, \nu^{w} \rangle - \langle \Log, \nu^{w}_N \rangle| \le C N^{-2a} (\log N)^{-2}.
\end{equation}

Recall the event  $\Omega_N(w + U_N)$, which is the event that the smallest singular value of the matrix $A_N +  U_N$ is larger than $\delta_N = \exp[-N^{\kappa}]$, we notice that the above estimates can be written as follows.
\begin{align*}
 \big| \langle \Log, \nu_N^{w +  U_N} \rangle - \langle \Log, \nu^{w + U_N} \rangle \big| 
& \le 
 C (\log N)^{-2} N^{-2a}\mathbbm{1}[\Omega_N(w+  U_N)] \\& + \mathbbm{1}[\Omega_N(w+  U_N)^c]\left[ \frac{1}{N} \sum_{i=1}^N |\log| \lambda_i -w-  U_N||  +|\langle \Log, \nu^{w +  U_N} \rangle| \right] \,. 
\end{align*}

By putting this inequality back into Girko's formula and taking expectation, we notice that

\begin{align} \label{eq:someexpan}
&\mathbb{E}\left[\int_\bC \Delta \psi_{N^a} (w) \Gamma_N(w_o+w) d w\right]  \le  N^{2a} \int_{\mathbb{C}} |\triangle \psi(\xi)|  C( \log N)^{-2} N^{-2a} \mathbb{E}[\mathbbm{1}[\Omega_N(w_0 + N^{-a} \xi+ U_N)]] \text{d} \xi\nonumber \\
&+ N^{2a} \int_{\mathbb{C}}|\triangle \psi(\xi)| \mathbb{E}\left[\mathbbm{1}[\Omega_N(w_0 + N^{-a} \xi+  U_N)^c] \left[ \frac{1}{N} \sum_{i=1}^N |\log| \lambda_i -w_0 + N^{-a} \xi-  U_N|| \right] \right] \text{d}\xi \nonumber \\
 &+ N^{2a} \int_{\mathbb{C}}|\triangle \psi(\xi)| \mathbb{E}\left[\mathbbm{1}[\Omega_N(w_0 + N^{-a} \xi+  U_N)^c] \right]|\langle \Log, \nu^{w_0 + N^{-a}\xi +  U_N} \rangle|   \text{d}\xi.
\end{align}

The first line of \eqref{eq:someexpan} is clearly $o(1)$. Furthermore, since pointwise we know that

 $ \mathbb{E}\left[\mathbbm{1}[\Omega_N(w_0 + N^{-a} \xi+ U_N)^c] \right] \le N^{-\mathfrak{c}}$, the third line is also $o(1)$ provided $2a <\mathfrak{c}$, as otherwise all other quantities in the integral in the third line are constant in $N$.

To deal with the second line, we first notice that,
\begin{equation*}
\begin{aligned}
& \mathbb{E}\left[\mathbbm{1}[\Omega_N(w_0 + N^{-a} \xi+  U_N)^c] \left[ \frac{1}{N} \sum_{i=1}^N |\log| \lambda_i -w_0 + N^{-a} \xi- U_N|| \right] \right]\\& \le  \mathbb{E}^{1/2}[\mathbbm{1}[\Omega_N(w_0 + N^{-a} \xi + U_N)^c]] \frac{1}{N} \sum_{i=1}^N \mathbb{E}^{1/2} \left[ \log^2|\lambda_i -w_0 + N^{-a} \xi -  U_N|] \right]
\end{aligned}
\end{equation*}

Notice that, regardless of the value of $\lambda_i$, if we take the expectation over the random variable $U_N$, the value of $\mathbb{E}^{1/2} \left[ \log^2|\lambda_i -w_0 + N^{-a} \xi -  U_N|] \right]$ will be bounded by $ O(\log N)$. Now, $ \mathbb{E}^{1/2}[\mathbbm{1}[\Omega_N(w_0 + N^{-a} \xi + U_N)^c]] < N^{-\mathfrak{c}/2}$. Thus, as long as we set $a < \mathfrak{c}/2$, the integral on the second line of \eqref{eq:someexpan} is also $o(1)$. We find that $\mathbb{E}\left[\int_\bC \Delta \psi_{N^a} (w) \Gamma_N(w_o+w) d w\right] $ is $o(1)$ and must converge to $0$ in probability by Markov's inequality. 

\subsection{The model and Green's function}

We start with a more formal definition of our model.
\begin{defn}[Digraph]
A digraph $\GG$ has vertices $V$ and directed edges $E$, where each oriented 
edges $e=(v,w) \in E$ has an initial vertex $v$ and a terminal vertex $w \ne v$ (in particular, 
the $(v,w)$ and $(w,v)$ are distinct edges). In the adjacency matrix $\sfA_{\GG}$ 
of a digraph $\GG$, the entry $a_{i,j}$ indicates the number of edges that 
are oriented from the vertex $i$ to the vertex $j$
(and such adjacency matrix is, in general, not Hermitian), where a simple digraph has 
only $\{0,1\}$-valued entries.
\end{defn}

\begin{defn}[Uniformly random $d$-regular digraph]
A $d$-regular digraph is a graph such that for each vertex $v \in V$, exactly $d$ edges of the graph 
have $v$ as their initial vertex and exactly $d$ edges have $v$ as their terminal vertex. We let 
$\GG_{N,d}$ denote the (finite) set of all $d$-regular, simple, digraphs on $N$ vertices with 
$\mathbb{P}_{N,d}$ the corresponding uniform distribution on $\GG_{N,d}$ (namely, 
selecting each specific instance from $\GG_{N,d}$ with probability $\frac{1}{|\GG_{N,d}|}$).
\end{defn}

Hereafter we often use lighter notations, with the dimension $N$ and the identity matrix $\sfI_N$ 
implicit, as are sometimes also the arguments $w$ and $z$. Choosing $\GG$
according to $\bP_{N,d}$, \abbr{wlog} we also replace $\sfA_{\GG}$ hereafter by 
the normalized matrix 
$A:=\frac{1}{\sqrt{d-1}} \sfA_{\GG}$ and for $w \in \bC$ and $z \in \bC_+$, let
\begin{equation}\label{def:Hzw}
G(z,w):= H(z,w)^{-1}\,, \qquad 
H(z,w):=\begin{bmatrix}
-z & A -w\\
A^*- \overline{w}  & -z 
\end{bmatrix} \,.
\end{equation}
By definition, $G(z,w)=\sqrt{d-1} \sfG_N(\sqrt{d-1} z,\sqrt{d-1}w)$ and
towards Theorem \ref{thm:trace} our goal is to
control for large $N$, the normalized trace of $G(z,w)$, where of most interest is the 
case of $\Im(z) \downarrow 0$. 

To this end, recall the following expansion of 
the Green's function, at $|z|$ large enough,
\begin{equation}
G(z,w)= -z^{-1}(I - z^{-1}H(0,w))^{-1} = -z^{-1} \sum_{k=0}^{\infty} z^{-k} H(0,w)^k.
\end{equation}
Fixing $w$ and using $H:=H(0,w)$, we further have for any $a,b \le N$ and $k \ge 0$,
$$
[H^{2k}]_{a,b} = \sum_{v_1,v_2,\ldots, v_{2k-1}} \prod_{j=0}^{k-1} H_{v_{2j},v_{2j+1}+N} H_{v_{2j+1}+N,v_{2j+2}} ,
$$
where the sum is over $v_i \le N$, with $v_0=a$ and $v_{2k}=b$. Similarly, 
$$
[H^{2k}]_{a+N,b+N} = \sum_{v_1,v_2,\ldots,v_{2k-1}} \prod_{j=0}^{k-1}H_{v_{2j}+N, v_{2j+1}} H_{v_{2j+1},v_{2j+2}+N},
$$
again summing over $v_i \le N$, with $v_0=a$ and $v_{2k}=b$. Moreover, 
$$
[H^{2k+1}]_{a,b+N} = \sum_{v_1,\ldots,v_{2k}} \Big[\prod_{j=0}^{k-1} H_{v_{2j}, v_{2j+1} +N} H_{v_{2j+1}+N, v_{2j+2}}  \Big] H_{v_{2k},b+N}
$$
summing over $v_i \le N$, with $v_0=a$, whereas $[H^{2k+1}]_{a+N,b}$ is the complex conjugate of 
$[H^{2k+1}]_{a,b+N}$.

Now, in case 
$v_{2j} \ne v_{2j+1}$,
an entry $H_{v_{2j},v_{2j+1}+N}$ represents a forward edge (agreeing with the orientation) from vertex $v_{2j}$ to vertex $v_{2j+1}$ while $H_{v_{2j+1} +N,v_{2j+2} } = H_{v_{2j+2},v_{2j+1}+N}$ represents a traversal through a backward edge (opposing the orientation) from the vertex $v_{2j+1}$ to $v_{2j+2}$. For this reason, we include in a local neighborhood of each vertex $v \in \GG$ also all
those vertices that are reachable through edges traversed in the reverse direction. 
When we have in the expansion for $[H^{2k}]_{a,b}$ some $v_{2j}= v_{2j+1}$, it follows that
$v_{2j-1}$ travels to $v_{2j}$ via a backward edge, stays at $v_{2j}$ with weight factor $-w$ 
(as there are no self-edges in $\GG$), and proceeds from $v_{2j+1}$ to $v_{2j+2}$ 
via a backward edge (once again). Similarly, in case we have in such expansion 
some $v_{2j-1}=v_{2j}$, we enter $v_{2j-1}$ via a forward edge, stay with weight factor
$-\overline{w}$ and leave $v_{2j}$ to $v_{2j+1}$ again via a forward edge. Thus, the inclusion 
of such factors $-w$ or $-\overline{w}$ breaks the otherwise 
regular pattern of alternating between traveling on forward edges and on backward edges.
More generally, we may travel a few forward edges in succession or a few backward edges 
in succession (each consecutive pair of forward edges then contributes the factor
$-\overline{w}$ while each consecutive pair of backward edges contributes $-w$).
This 
solidifies the assertion that a local neighborhood 
should consider traversal via forward and backward edges \emph{in any type of succession},
measuring  the neighborhood's size by the distance in the unoriented graph.

\subsection{The  $d$-regular directed tree and its Green's function.}\label{sec:GreenFunc}

Our proof relies on relating entries $G_{ij}$ of the Green's function for a $d$-regular digraph,  
to the Green's function of the local neighborhood of the graph. Typically, such local neighborhood 
should look like some subset of the infinite directed tree which we describe next.

\begin{defn}[Construction of the $d$-regular $K$-neighborhoods in directed tree]\label{def:K-tree}

First, fix a vertex $\hat{r}$ to be the root. From this root $\hat{r}$, create a $2d$ regular tree of $K$ levels. The root will have $2d$ children while any other vertex that is not a root or a child at the $K$ level will have $2d-1$ children. At this point, we can start orienting the edges. For convenience, assume that we have drawn this tree on the plane so that the root is on the very top and any vertex at distance $\ell$ from the root will be below its parent at the $(\ell-1)$-th level. With this planar drawing, the notion of $j$ leftmost children of a vertex would be a meaningful notion. With these notions, we can now describe the orientation of our edges.

Our process proceeds level by level; first, we orient the edges that are adjacent to the root in the unoriented graph. Afterwards, we proceed to orient the edges that are adjacent to a vertex of distance $1$ from the root in the unoriented graph and so on. At the root $\hat{r}$, we will orient the leftmost $d$ edges away from the root and the rightmost $d$ edges are oriented toward the root. Now assume that we have oriented the edges that are adjacent to a vertex of distances $1$ to $\ell-1$ from the root in the unoriented graph. We now proceed to orient the edges that are adjacent to a vertex at the $\ell$-th level in the unoriented graph that have not been assigned an orientation as of yet. Observe for every vertex $v$ at the $\ell$-th level, there are still $2d-1$ children edges that have not been assigned an orientation and these edges connect the vertex $v$ to a vertex below it.

Depending on the orientation of the parent edge (i.e., the edge already assigned an orientation), there will be two ways to assign the orientation to the remaining children edges. If the parent is oriented away from the vertex in question, then orient the $d-1$ leftmost children edges of the vertex away from the vertex and the remaining $d$ edges towards the vertex. If the parent edge is oriented towards the vertex, then we orient the $d$ leftmost children away from the vertex and the remaining $d-1$ edges towards the vertex.
  
  \end{defn}

\begin{defn}[$d$-regular infinite directed trees]\label{def:digraphtree}
We have three natural notions of $d$-regular directed trees (while only one is truly $d$-regular, 
we abuse the notation to make our descriptions more concise):
\begin{enumerate}[(a)]
 \item The $d$-regular infinite directed tree of Definition \ref{def:K-tree} is denoted 
 by $\TT$. Ignoring orientation, it is a $2d$-regular infinite tree, and every 
 vertex has $d$ outedges and $d$ inedges.

\item The tree $\TT_1$ is such that its unoriented version is a $(2d-1)$-ary tree; namely, the root has $2d-1$ edges while all other vertices have $2d$ edges in the unoriented version. When oriented, 
$\TT_1$  has the property that the root has $d$ outedges and $d-1$ inedges.

\item The tree $\TT_2$ is also such that its unoriented version is a $(2d-1)$-ary tree. The difference
from $\TT_1$ is that under the orientation, here the root vertex has $d-1$ outedges and $d$ inedges.

\end{enumerate}
\end{defn}


\subsubsection{Self-consistent equation for the $d$-regular directed tree}
Our primary quantity of interest is the Green's function $G_{\mathcal T}$.
However, in order to utilize
the Schur complement formula to analyze this function, one must also analyze the Green's
functions of $\TT_1$ and $\TT_2$. Our basic operation in the application of the Schur complement formula is to remove the root vertex and induct on the remaining graph. Indeed, 
removing the root of $\TT$ yields $d$ disjoint copies of
$\TT_1$ and $d$ disjoint copies of $\TT_2$. Similarly, removing the root of $\TT_1$ yields $d$ copies of $\TT_1$ and $d-1$ copies of $\TT_2$, with the reverse holding for 
$\TT_2$. Utilizing these relations allows us to inductively compute entries of the Green's functions 
$G_{\TT}$, $G_{\TT_1}$ and $G_{\TT_2}$, 
but there is one complication we must consider. In our convention each vertex 
$v \in \TT$ corresponds to $2$ rows in the corresponding Hermitized 
matrix $H$, which we shall call $v$ and $v + \aleph$ (just as in an $N$ vertex digraph, such vertex $v$ corresponds to both rows $v$ and $v+N$). Thus, when applying the Schur complement formula
we must remove at the same time two vertices: $v$ and $v+\aleph$.
Specifically, our starting point in deriving Green's function identities is the block decomposition
of a matrix and its inverse,
\begin{equation}
\begin{bmatrix}
 A & B\\
C & D
\end{bmatrix}
\begin{bmatrix}
X & Y \\
Z & W
\end{bmatrix}=
\begin{bmatrix}
I & 0\\
0 & I
\end{bmatrix}.
\end{equation}
resulting with by elementary algebra with the Schur complement formulas
$$
X = (A - B D^{-1} C)^{-1}, \qquad \quad Z= - D^{-1}C X, \qquad \quad
 W = D^{-1} + D^{-1} C X B  D^{-1} \,.
$$
Suppressing the common arguments $z,w$ and setting $G^{(T)}:=(H|_{T^c \times T^c})^{-1}$ for 
the Green's function 
with the rows and columns of $T$ removed, these equations amount to,
\begin{align}\label{eq:firstGreen}
 G|_{T \times T} &= (H|_{T \times T} - H|_{T \times T^c} \, G^{(T)} \, H|_{ T^c \times T})^{-1},\\
\label{eq:secondGreen}
G|_{T^c \times T} &= - G^{(T)}  \, H|_{T^c \times T} \, G|_{T \times T},\\
\label{eq:thirdGreen}
G|_{T^c \times T^c} &= G^{(T)} + G^{(T)}  \, H|_{T^c \times T} \, G|_{T \times T} \, H|_{T \times T^c} G^{(T)} \,.
\end{align}


{Hereafter, similarly to \eqref{def:Hzw}, we first scale the adjacency matrix of $\TT$ by the factor 
$\frac{1}{\sqrt{d-1}}$, so our Green's function is given by 
$G_\TT(z,w):=\sqrt{d-1} \sfG_\TT(\sqrt{d-1} z,\sqrt{d-1}w)$ in terms of the original, non-scaled 
function $\sfG_\TT$.}
Our analysis of $G_{\TT}$ relies on the removal of both indices $\hat{r}$ and $\hat{r} + \aleph$ 
associated with the root vertex $\hat{r}$, to relate 
$\mathcal T$ to $d$ copies of $\TT_1$ and $d$ copies of $\TT_2$  
(we use the same notation
 $\hat{r}, \hat{r} + \aleph$
for the root indices of $\TT_1$ and $\TT_2$,
and it be clear from the context to which tree's root we refer each time). Likewise, 
removing the root of $G_{\TT_1}$ yields $d$ copies of $\TT_1$ and $d-1$ copies of $\TT_2$, where the index $\hat{r}$ of the original $\TT_1$ was connected to the $\hat{r}+\aleph$ index of its children (namely, the $d$ copies of the subtree $\TT_1$), while the $\hat{r}+\aleph$ index of the original $\TT_1$ was connected to the $\hat{r}$ index of its children (namely, the $d-1$ copies of the subtree $\TT_2$). Moreover, having no contribution from $ H|_{T \times T^c} \, G^{(T)}\, H|_{ T^c \times T}$ to the entries indexed by $\hat{r}, \hat{r}+\aleph$ or $\hat{r}+\aleph,\hat{r}$, it thus follows from \eqref{eq:firstGreen}
that
\begin{equation}\label{eq:zeroselfconc}
\begin{bmatrix}
(G_{\TT_1})_{\hat{r},\hat{r}} & (G_{\TT_1})_{\hat{r},\hat{r} + \aleph}\\
(G_{\TT_1})_{\hat{r}+\aleph,\hat{r}}  & (G_{\TT_1})_{\hat{r}+\aleph,\hat{r}+\aleph}
\end{bmatrix}^{-1} = -
\begin{bmatrix}
z  & w \\
\overline{w} & z
\end{bmatrix}
- \begin{bmatrix}
\frac{d}{d-1} (G_{\TT_1})_{\hat{r}+\aleph, \hat{r}+\aleph} & 0\\
0 & (G_{\TT_2})_{\hat{r},\hat{r}}
\end{bmatrix}.
\end{equation}
Similarly, removing the root of $G_{\TT_2}$ results with $d$ subtrees $\TT_2$ and $d-1$ subtrees $\TT_1$. The index $\hat{r}$ of the original $\TT_2$ was connected to the root $\hat{r}+ \aleph$ of the $d-1$ subtrees $\TT_1$, while $\hat{r}+\aleph$ was connected to the root $\hat{r}$ of 
the $d$ subtrees $\TT_2$, so by Schur complement formula,
\begin{equation} \label{eq:firstselfconc}
\begin{bmatrix}
(G_{\TT_2})_{\hat{r},\hat{r}} & (G_{\TT_2})_{\hat{r},\hat{r} + \aleph}\\
(G_{\TT_2})_{\hat{r}+\aleph,\hat{r}}  & (G_{\TT_2})_{\hat{r}+\aleph,\hat{r}+\aleph}
\end{bmatrix}^{-1} =
- \begin{bmatrix}
z  & w \\
\overline{w} & z
\end{bmatrix}
- \begin{bmatrix}
 (G_{\TT_1})_{\hat{r}+\aleph, \hat{r}+\aleph} & 0\\
0 &  \frac{d}{d-1}(G_{\TT_2})_{\hat{r},\hat{r}}
\end{bmatrix}.
\end{equation}
Since, as mentioned earlier, 
\begin{equation} \label{eq:secondselfconc}
\begin{bmatrix}
(G_{\TT})_{\hat{r},\hat{r}} & (G_{\TT})_{\hat{r},\hat{r} + \aleph}\\
(G_{\TT})_{\hat{r}+\aleph,\hat{r}}  & (G_{\TT})_{\hat{r}+\aleph,\hat{r}+\aleph}
\end{bmatrix}^{-1} =
- \begin{bmatrix}
z  & w \\
\overline{w} & z
\end{bmatrix}
- \begin{bmatrix}
 \frac{d}{d-1}(G_{\TT_1})_{\hat{r}+\aleph, \hat{r}+\aleph} & 0\\
0 &  \frac{d}{d-1}(G_{\TT_2})_{\hat{r},\hat{r}}
\end{bmatrix},
\end{equation}
we see that all Green's function entries are derived from 
$(G_{\TT_1})_{\hat{r}+\aleph,\hat{r}+\aleph}$ and $(G_{\TT_2})_{\hat{r},\hat{r}}$. 
Further, the investigation of the path expansion of the Green's function at large $|z|$ suggests
that $(G_{\TT_1})_{\hat{r}+\aleph,\hat{r}+\aleph} = (G_{\TT_2})_{\hat{r},\hat{r}}$. Proceeding under this 
ansatz and denoting the latter quantity by $m_{\infty}$, we deduce from \eqref{eq:zeroselfconc} by 
 the inversion formula for $2 \times 2$ matrices, that 
\begin{equation}\label{eq:self-conc}
m_{\infty} = \frac{z + \frac{d}{d-1} m_{\infty}}{|w|^2-\big( z+ \frac{d}{d-1} m_{\infty} \big)(z+m_{\infty})}.
\end{equation}
Fixing $w \in \mathbb{C}$, by the implicit function theorem in terms of $(m_\infty,z^{-1})$ at $(0,0)$, 
there exists a unique analytic solution $m_\infty(\cdot,w):\mathbb{C}_+ \mapsto \mathbb{C}_+$ of
the cubic equation \eqref{eq:self-conc}, 
such that $z \, m_\infty (z,w) \to -1$ when $|z| \to \infty$. Using hereafter 
this solution of \eqref{eq:self-conc}, and suppressing the arguments $z,w$, we get from 
the central parameter $m_\infty$ also the quantities
%
\begin{equation}\label{eq:m-infty-rel}
\begin{aligned}
& m_{\infty}^{sd} := \frac{z + m_{\infty}}{z + \frac{d}{d-1}m_{\infty}}m_{\infty},
& \qquad m_{\TT}^{d}:= \frac{z + \frac{d}{d-1}m_{\infty}}{|w|^2-\big(z + \frac{d}{d-1}m_{\infty} \big)^2}, \\
& m_{\infty}^{uod}:= \frac{-w}{z+ \frac{d}{d-1} m_{\infty}}m_{\infty},
& m_{\TT}^{uod}:= \frac{-w}{z+ \frac{d}{d-1} m_{\infty}}m_{\TT}^d,
\\
& m_{\infty}^{lod} := \frac{-\overline{w}}{z+ \frac{d}{d-1} m_{\infty}} m_{\infty},
& m_{\TT}^{lod}:= \frac{-\overline{w}}{z+ \frac{d}{d-1} m_{\infty}}m_{\TT}^d,
\end{aligned}
\end{equation}
where $m_\infty^{sd} := (G_{\TT_1})_{\hat{r},\hat{r}}$, 
$m_{\infty}^{uod} :=(G_{\TT_1})_{\hat{r},\hat{r}+\aleph}$ and 
$m_{\infty}^{lod} :=(G_{\TT_1})_{\hat{r}+\aleph,\hat{r}}$ are the remaining entries of $G_{{\mathcal T}_1}$
(see the \abbr{lhs} of \eqref{eq:zeroselfconc}). Upon exchanging $\hat{r}$ with $\hat{r}+\aleph$, these are also 
the entries of $G_{{\mathcal T}_2}$ (see \eqref{eq:firstselfconc}), whereas
$m_{\TT}^d := (G_{\TT})_{\hat{r},\hat{r}} =(G_{\TT})_{\hat{r}+ \aleph,\hat{r}+\aleph}$,
$m_{\TT}^{uod}:= (G_{\TT})_{\hat{r},\hat{r}+\aleph}$ and
$m_{\TT}^{lod}:=(G_{\TT})_{\hat{r}+\aleph,\hat{r}}$ (see \eqref{eq:secondselfconc}).

{
In particular, it is not hard to verify that 
\begin{equation}\label{eq:scaling}
m_\TT^d(z,w)=\sqrt{d-1} m_\star(\sqrt{d-1}z,\sqrt{d-1}w) \,.
\end{equation}} For details, one can refer to lemma \ref{lem:compareStieltjes}.

\subsection{Extension of graphs and main result}

A key ingredient in the asymptotic evaluation of the Green's function for our digraphs, is the
following concept of graph extension (or more precisely, the extension of the
associated weighted adjacency matrix).
\begin{defn}[Extensions of graphs] \label{def:GraphExt}
Suppose $\GG$ is a digraph of vertex set $V$, where each $v \in V$ has 
in-degree $d_{in} (v) \le d$ and out-degree $d_o(v) \le d$. Any given 
deficit functions $0 \le def_{in}(v) \le d- d_{in}(v)$, $0 \le def_{o}(v) \le d - d_o(v)$,
and two functions $\Delta_1$ and $\Delta_2$ of $z,w$, induce the following 
extension of the matrix $H$ which corresponds to $\GG$ via \eqref{def:Hzw} :
\begin{equation} \label{eq:ExtensionGraph}
\begin{aligned}
 \text{Ext}_{\text{def}} & (\GG,\Delta_1,\Delta_2) = \\
&  H 
- \sum_{v \in V} \Big[ \frac{d- d_{o}(v) -  \text{def}_{o}(v)}{d-1} \Delta_1 e_{v,v} + \frac{d-d_{in}(v) - def_{in}(v)}{d-1} \Delta_2 e_{v+\aleph,v+\aleph} \Big]  \,,
\end{aligned}
\end{equation}
where $e_{v,v}$ and $e_{v+\aleph,v+\aleph}$ denote, respectively, the $(v,v)$ and  
$(v+\aleph,v+\aleph)$ coordinates of the matrix
{(with isolated vertices in $V$ also considered in the sum).} The Green's function 
$G(\text{Ext}_{\text{def}}  (\GG,\Delta_1,\Delta_2))$ associated with such 
extension is merely the inverse of the \abbr{rhs} of \eqref{eq:ExtensionGraph}.
In particular, we use the notation $Ext$ for a graph extension where both 
$def_{in}$ and $def_{o}$ are identically zero. Similarly,  
$\text{Ext}_i$ denotes the case where a specific vertex $\hat{r}$ (the root) has $def_{in}(\hat{r}) =1$, while
all other values of $def_{o}$ and $def_{in}$ are zero, 
whereas in $\text{Ext}_o$ one has that $def_{o}(\hat{r}) =1$, while all other values 
of the deficit functions are zero.  That is,
\begin{equation} \label{eq:exti}
\begin{aligned}
\text{Ext}_i(\GG,\Delta_1,\Delta_2)= 
& H 
- \sum_{v \in V} \left[ \frac{d- d_{o}(v)}{d-1} \Delta_1 e_{v,v} + \frac{d-d_{in}(v) - \delta_{v,\hat{r}}}{d-1} \Delta_2 e_{v+\aleph,v+\aleph} \right] .
\end{aligned}
\end{equation}
\begin{equation} \label{eq:exto}
\begin{aligned}
&\text{Ext}_o(\GG,\Delta_1,\Delta_2) = 
H 
- \sum_{v \in V} \left[ \frac{d- d_{o}(v) -\delta_{v,\hat{r}}}{d-1} \Delta_1 e_{v,v} + \frac{d-d_{in}(v)}{d-1} \Delta_2 e_{v+\aleph,v+\aleph} \right]  .
\end{aligned}
\end{equation}
\end{defn}

We denote by $B_\ell(v)$ the unoriented ball in digraph $\GG$ of radius $\ell$ centered at vertex $v$,
with $\partial B_\ell(v)$ denoting the set of vertices of $\GG$ having an undirected 
distance $\ell$ from $v$. More generally, for a subset $U$ of vertices of $\GG$, 
we denote by $B_\ell(U)$ the union of such balls $B_\ell(v)$ for $v \in U$.
With these quantities at hand, our main result, stated next, is a refinement of Theorem 
\ref{thm:trace}, providing also a uniform over $(i,j)$, control on the entries $G_{i,j}$ of $G(z,w)$.
\begin{thm} \label{thm:mainthm}

Fix a constant  $0 < \mathfrak{c} < 1$ and $d \ge 3$. Given these terms, we can find parameters 
$\delta=\delta(\mathfrak{c},d)>0$,  
$\varepsilon =\varepsilon(\mathfrak{c},d)>0$ 
and  $\mathfrak{r}= \mathfrak{r}(\mathfrak{c},d)$  such that for 
$r = r_N=\mathfrak{r} \log_{d-1} N$,
some set of $d$-regular random digraphs $\GG$ 
occurring with probability $\mathbb{P}_{N,d}$ at least $1- O(N^{-\mathfrak{c}})$ 
and any compact $\mathbb{D} \subset \mathbb{C}$, 
\begin{align}
\sup_{w \in \mathbb{D}, \Im(z) \ge N^{-\varepsilon}}
\max_{ij} |G_{i,j} - G_{i,j}(Ext(B_{r}(\{i,j\}),m_{\infty}, m_{\infty}))| & \le N^{-\delta}
,\\
\sup_{w \in \mathbb{D}, \Im(z) \ge N^{-\varepsilon}}
\Big| \frac{1}{N} \sum_{i=1}^N G_{i,i} - m_{\TT}^d \Big|  & \le N^{-\delta}.
\label{eq:N-delta}
\end{align}
\end{thm}


Theorem \ref{thm:mainthm} will be shortly deduced from our next two results, the first of which 
is a perturbation theory on infinite $d$-regular digraphs.
\begin{thm} \label{thm:pert}
 Let $\GG$ be a graph with excess of at most $1$. Assume further that $\text{diam}(\GG)^3[|Q_I - m_{\infty}| + |Q_O - m_{\infty}|]\ll 1$. Then, we have,
\begin{equation} \label{eq:carefulpert}
\begin{aligned}
&|G_{ij}(Ext(\GG,Q_I,Q_O)) - G_{ij}(Ext(\GG,m_{\infty},m_{\infty}))| \\ &\lesssim \left(\frac{1}{d-1}\right)^{\text{dist}(i,j)}(1+ \text{dist}(i,j)) [|Q_I - m_{\infty}| + |Q_O - m_{\infty}|]
\end{aligned}
\end{equation}
\end{thm}

We will elaborate on the meanings of $Q_I$ and $Q_O$ in the next section. For now, just understand these terms as a tool that help mediate the derivation of an appropriate self-consistent equation. Our next result 
states the relationship between $G_{ij}$ and the $d$-regular infinite tree extension, except with parameters $Q_I$ and $Q_O$. The specific definitions of the error terms ($\epsilon(z,w)$ and $S_g^1$), can also be found in the next section.
\begin{thm} \label{lem:auxmain}
Fix $d \ge 3$, $0 < \mathfrak{c} < 1$ and a compact domain $\bD \subset \bC$. There exist
$\varepsilon=\varepsilon(\mathfrak{c},d) >0$ and $r=r_N=\mathfrak{r} {\log_{d-1} N}$,
such that on a set of $d$-regular, random digraphs 
occurring with probability $1- O(N^{-\mathfrak{c}})$, all $w \in \bD$ and $\Im(z) \ge N^{-\varepsilon}$ 
we have
\begin{equation} \label{eq:auxcollect}
\begin{aligned}
&|Q_{I/O} - m_{\infty}(z,w)| \le \frac{\epsilon(z,w)}{S_g^1},\\
&\max_{i,j} |G_{ij} - G_{ij}(Ext(B_{\ell}(\{i,j\},\GG),Q_I, Q_O))| \le \epsilon(z,w) .
\end{aligned}
\end{equation}
(See Part 2 of Section \ref{sec:sec2} for a formal definition of $\epsilon(z,w)$, which are independent of 
$\GG$. We remark also that $S^1_g$ of \eqref{eq:singularities}
is a deterministic constant depending only on $z$ and $w$.) 
\end{thm}

\begin{proof}[Proof of Theorem \ref{thm:mainthm} (from Theorems \ref{thm:pert} and \ref{lem:auxmain})]
With $|Q_{I/O} - m_{\infty}(z,w)| \le \frac{\epsilon(z,w)}{S_g^1}$, we can apply the perturbation estimate from Lemma \ref{lem:pert} to ensure that for $r=r_N=\mathfrak{r} \log N$,
$$
|G_{ij}(Ext(B_{r}(\{i,j\},\GG),m_{\infty}(z,w), m_{\infty}(z,w)))- G_{ij}(Ext(B_{r}(\{i,j\},\GG),Q_I, Q_O))| \lesssim (\log N)  \frac{\epsilon}{S_g^1}.
$$
This combined with the second estimate in \eqref{eq:auxcollect}, ensures that,
\begin{equation}
|G_{ij} - G_{ij}(Ext(B_{r}(\{i,j\},\GG),m_{\infty}(z,w), m_{\infty}(z,w)))| \lesssim \epsilon + (\log N) \frac{\epsilon}{S_g^1}.
\end{equation}

Now, if $i$ has a radius $\mathfrak{R}$ tree-like neighborhood, we would know that
$$G_{ii}(Ext(B_{r}(\{i,j\},\GG),m_{\infty}(z,w), m_{\infty}(z,w)))  = m^{d}_{\TT}.$$
There are also at most $O(N^{\mathfrak{c}})$ vertices without a radius $\mathfrak{R}$ tree-like neighborhood. For these vertices, we can instead use the bound that $|G_{ii}| = O(1)$. 

This again comes from Lemma \ref{lem:pert} to bound $G_{ij}(Ext(B_{r}(\{i,j\},\GG),Q_I, Q_O))$ by $O(1)$ and the triangle inequality.  This ensures that,
\begin{equation}
\Big| \frac{1}{N} \sum_{i=1}^N G_{ii} - m^d_{\TT}\Big| \lesssim N^{\mathfrak{c}-1} + \epsilon + (\log N) 
\frac{\epsilon}{S_g^1}.
\end{equation}
Finally,  recall \eqref{eq:Sg-ident} that here $S_g^1 \ge \frac{1}{2} N^{-\varepsilon}$ and 
\eqref{def:eps} that $\epsilon(z,w) \le N^{-\delta'}$ for any $\delta'< \min(\mathfrak{r},\frac{1-\varepsilon}{2})$.
We thus deduce the stated uniform bound of order $N^{-\delta}$,  upon choosing 
$\varepsilon(\mathfrak{c},\mathfrak{r})>0$ small enough.
\end{proof}

\subsection*{Acknowledgment}
This project was supported in part by NSF grants DMS-2102842 (A.A.) and DMS-2348142 (A.D).
We thank Anirban Basak, Charles Bordenave,  Nicholas Cook,  Konstantin Tikhomirov and 
Ofer Zeitouni for their most helpful comments on an earlier version of the paper.

\section{Challenges and outline for proving
Theorem \ref{lem:auxmain}} \label{sec:sec2}

{
Sharp estimates on the Green's function are key to many works in random matrix theory.  A standard technique to establish those is by first deriving a self-consistent equation for this Green's function,  which under suitable stability properties,  
allows for a multi-scale argument transferring the estimates on the Green's function 
from one value of $\Im[z]$ to a lower value of $\Im[z]$ (see \cite{EY17}). 

This strategy is used in \cite{HuangYau},  where 
the control parameter $Q$ of \cite[Eqn. (1.6)]{HuangYau},  which represents the 
averaged Green's function of the graph with one edge removed,  satisfies 
the self-consistent equation of  \cite[Eqn.  (1.7)]{HuangYau}.  It roughly says that one can 
determine the Green's function by treating a local neighborhood of large size $K$ around each vertex 
as part of an infinite tree.   We proceed to relate our work with \cite{HuangYau} describing also 
the new challenges that appear in the context of our analysis.  First,  note that to deal with digraphs,  we must 
take into account whether an in-edge or an out-edge has been removed.  To this end,  we associate two vertices 
$v$ and $v+N$ to every vertex $v$ in the original graph,  resulting with a pair of control parameters
 \begin{equation}\label{eq:QIO}
\begin{aligned}
&Q_I := \frac{1}{Nd} \sum_{(x\to y) \in E } G_{y+N,y+N}^{(x,x+N)}\,, \\
& Q_O := \frac{1}{Nd} \sum_{(y \to x) \in E} G_{y,y}^{(x,x+N)}\,.
\end{aligned}
\end{equation}
Further,  our basic removal operation via the Schur-complement formula,  is now the removal of a two-dimensional 
submatrix rather than a single entry,  resulting with a pair of self-consistent equations
\begin{equation}\label{eq:fixed-pt}
\begin{aligned}
& Q_I = Y_{i,K}(Q_I,Q_O),\\
& Q_O = Y_{o,K}(Q_I,Q_O),
\end{aligned}
\end{equation}
representing the treatment of a neighborhood of size $K$ of our digraph as part of the $d$-regular directed tree.

The general outline for deriving \eqref{eq:fixed-pt} is as in \cite{HuangYau}.  Namely,  apply the Schur complement formula to describe the effects of removing a large neighborhood of a vertex,  then perform a switching transformation and finally add back the removed neighborhood.   However,  the intermediate estimates required for implementing this strategy differ drastically between our digraph case and the non-oriented case.  In particular,  the task of proving many estimates in \cite{HuangYau} is much simplified by having the Green's function estimates depend only on the graph distance 
between vertices.  For example,  utilizing the identity \eqref{eq:SchurcompOutsideTree},  namely 
\begin{equation} 
\begin{aligned}
G^{(\mt)} - P^{(\mt)}& = G-P - (G-P) (G|_\mt)^{-1}G
- P[(G|_\mt)^{-1} - (P|_\mt)^{-1}] G - P (P|_\mt)^{-1} (G-P),
\end{aligned}
\end{equation}
one aims to show that the \abbr{lhs} is small by relying on a-priori control on the size of the terms $P_{i,j}$ on its \abbr{rhs}.
Here $P_{i,j}$ roughly corresponds to the Green's function of vertices $i$ and $j$ in the infinite tree,  which are
of distance $O(\log \log N)$.  Having removed a large neighborhood,  one is to sum over $O((\log N)^p)$ such terms
and it is crucial to have a contractive effect of the total contribution,  so the total error on the \abbr{rhs} will
not get exponentially worse as we remove larger and larger pieces (when $N$ grows).  

In the digraph case,  when bounding $P_{i,j}$ the contribution of each path between the two vertices
depends on the orientations of the various edges along that path.  Applying the worst-case value over
such orientations yield bounds which grow exponentially in the size of the neighborhood removed
and thereby to a complete failure of the general strategy.  To address this challenge and ensure that the 
perturbation estimates on the Green's function are contractive,  we leverage on having certain Green's function 
terms which are better than the worst case bound and carefully enumerate over all types of factors that may appear.  
Specifically,  see Lemma \ref{lem:minftybnd},  where the contractive factor is $X+Y<1$ but a less careful 
analysis of the perturbation estimates would lead to an expansive factor of the form $2X>1$.  
For this reason,  we also can not apply the intermediate lemmas of \cite{HuangYau} which rely on the 
simplification offered by the Green's function structure of the infinite unoriented regular tree (that we lack here).

Note also another major difficulty in our self-consistent equations preventing us from 
obtaining estimates on the Green's function reaching the optimal $\Im[z] \asymp N^{-1+\epsilon }$.
Indeed,  the linearization \eqref{eq:linearizedselfconsist} of our self-consistent equations is 
very singular as $\Im[z]$ gets close to this critical value.  This problem is circumvented in 
\cite{HuangYau} by a second order expansion of their equation for $Q$ and explicitly solving for the 
solutions of the resulting quadratic equation.  Applying such approach for our \eqref{eq:fixed-pt}
yields a pair of quadratic equations,  reduced to a single quartic equation,  
whose solutions are not analytically tractable.  As already shown,  our primary goal of 
estimating the Green's function well enough for deriving the global limit law of $d$-regular digraphs
does not require descending to the critical threshold of $\Im[z]$,  and we thus settle for the 
linearized self-consistent equations. 
}

\subsection{Part 1: Heuristics of the switching argument}
We proceed to provide the heuristics behind our derivation of the self-consistent equations 
for the Green's function of the random $d$-regular digraph.  To this end,  recall 
the important control quantities { $Q_I$ and $Q_O$
of \eqref{eq:QIO}}
(and as before, we suppress the arguments $z,w$ whenever their specific value is irrelevant).
Such quantities  
deal with the effect at of removing an edge on the Green's function of the digraph. Specifically, 
$G_{y+N,y+N}^{(x,x+N)}$ 
measures the effect at $y$ of
removing the incoming edge $x \to y$,  
and $G_{y,y}^{(x,x+N)}$ likewise measures the effect at $y$ of removing the outgoing edge
$y \to x$. Thus, our control quantities $Q_I$ and $Q_O$ measure the average effect on 
the Green's function due to the removal of a uniformly chosen incoming edge, 
or outgoing edge, respectively.
As outlined next, our proof strategy consists of the following two steps:
\begin{enumerate}[(a)]
\item Our first step relates the Green's function of $\GG$ around a vertex $v$ to the 
Green's function of the  $\ell$(= $\ell_N$) neighborhood $\mathcal{N}_\ell(v)$ of $v$ for $\ell= O(\log \log N)$.
\item Performing a switching of the edges connecting $\mathcal{N}_\ell(v)$ to $\mathcal{N}_\ell^c(v)$
averages out the effects of specific edges, leading to $Q_I$ and $Q_O$. We then show that
the latter functions concentrate for large $N$ around the non-random $m_\infty$.
\end{enumerate}

\subsubsection{Reduction to the local $K$ neighborhood}

Ignoring for the time being the specific value of $\ell$, consider the relation
via the Schur complement formula between entries of the Green's function of 
$\GG$ within $\mathcal{N}_K(v)$ and the matrix $H$ restricted to this local neighborhood.
Specifically, denote by $\tilde{\mathcal{N}}_K(v)$ the collection of indices in $\mathcal{N}_K(v)$ 
and $\mathcal{N}_K(v) +N$, with $O_{\mathcal{N}_K(v)}$ denoting the collection 
of directed edges that start in $\mathcal{N}_K(v)$ and terminate in $\mathcal{N}^c_K(v)$, while
$I_{\mathcal{N}_K(v)}$ denotes the directed edges that start in $\mathcal{N}^c_K(v)$ and 
terminate in $\mathcal{N}_K(v)$. Considering 
\eqref{eq:firstGreen} 
when $T=\tilde{\mathcal{N}}_K(v)$, yields that for any pair of indices $c,d \in \tilde{\mathcal{N}}_K(v)$
\begin{equation} \label{eq:Knbd}
G_{c,d}= \Bigg[\Big( H|_{\tilde{\mathcal{N}}_K(v)} - \frac{1}{d-1} \bigg[\sum_{(b \to a) \in I_{\mathcal{N}_K(v)}}  
\!\!\!\!\! G^{(\tilde{\mathcal{N}}_K(v))}_{b,b} e_{a+N,a+N} + \!\!\!\!\! \sum_{(a \to b) \in O_{\mathcal{N}_K(v)} }  
\!\!\!\!\!\! G^{(\tilde{\mathcal{N}}_K(v))}_{b+N,b+N} \, e_{aa} \bigg] \Big)^{-1}\Bigg]_{cd} \,.
\end{equation}
We now proceed with the following approximations of the two sums on the \abbr{rhs}. First one would 
expect that since our $d$-regular digraph is locally tree-like around $v$ (when $K$ does not 
grow fast with $N$), the value of $G^{(\tilde{\mathcal{N}}_K(v))}_{b,b}$ should only depend on 
the local behavior around $b$ of the restricted 
graph $\GG|{\mathcal{N}^c_K(v)}$. Moreover, by the same reasoning, 
locally around $b$, the digraph $\GG|{\mathcal{N}^c_K(v)}$ should 
be similar to the original graph $\GG$ with only the vertex $a$ removed. 
We thus expect that, at least for most edges involved in the first sum of \eqref{eq:Knbd},
$$
G^{(\tilde{\mathcal{N}}_K(v))}_{b,b} \approx G^{(a,a+N)}_{b,b} \,.
$$
After such replacement, assuming $K$ is large enough,
we may expect to be able to do even further averaging, thereby 
replacing each $G^{(a,a+N)}_{bb}$ by the total average of terms of this form along all $N d$ 
directed edges of our $d$-regular digraph. Namely, fixing $a \in \mathcal{N}_K(v)$,
$$
\sum_{\{b : (b \to a) \in I_{\mathcal{N}_K(v)}\}} G^{(a,a+N)}_{b,b} \to (d-d_{i}(a)) \frac{1}{Nd} 
\sum_{(y \to x) \in E} G^{(x,x+N)}_{y,y} = (d-d_i(a)) Q_O,
$$
where $d_{i}(a)$ denotes the number of in-edges of $a$ within ${\GG}|\mathcal{N}_K(v)$.
By the same reasoning, one can also expect to have per fixed $a \in \mathcal{N}_K(v)$,
$$
\sum_{\{b:(a \to b) \in O_{\mathcal{N}_K(v)}\}} G^{(a,a+N)}_{b+N,b+N} \to (d-d_o(a)) 
\frac{1}{Nd} \sum_{(x \to y) \in E} G^{(x,x+N)}_{y+N,y+N} = (d-d_o(a)) Q_I\,.
$$
Taken together, this would posit an almost equality 
$$
G_{c,d} \approx G(Ext(\GG|\mathcal{N}_K(v), Q_I,Q_O))_{c,d}
$$ 
for any pair of indices $c,d \in \tilde{\mathcal{N}}_K(v)$, thereby 
`reducing' the evaluation of the Green's function to the study of the two  
order parameters $Q_I$ and $Q_O$ of \eqref{eq:QIO}.

Turning to the latter task, we proceed to derive an equation for the terms $G^{(x,x+N)}_{y,y}$ by
applying a similar reduction to the local $K$ neighborhood of $y$, now for the graph $\GG$ 
after the removal of
$(y  \to x)$. Specifically, let $\mathcal{N}^{(y \to x)}_K(y)$ denote the $K$ neighborhood of the digraph, 
rooted at $y$, after removing the edge $(y \to x)$. Applying the same heuristics as done earlier, we arrive at
\begin{equation}\label{eq:G-Exto}
 G^{(x,x+N)}_{y,y} \approx G(Ext_{o}(\GG|\mathcal{N}^{(y \to x)}_K(y), Q_I,Q_O))_{y,y}
\end{equation}
(where the removal of $(y \to x)$ yields the deficiency $def_o(y)=1$ at the root). Similarly, one would 
propose that
\begin{equation}\label{eq:G-Exti}
G^{(x,x+N)}_{y+N,y+N} \approx G(Ext_{i}(\GG|\mathcal{N}^{(x \to y)}_K(y),Q_I,Q_O))_{y+N,y+N} \,.
\end{equation}

\subsubsection{Tree-like neighborhoods and self-consistency}

For any  $K \ge 1$, let $\TT^K_1$ denote the $(2d-1)$-ary directed tree of $K$ levels, 
whose root vertex has $d$ out-edges and $d-1$ in-edges. Similarly, let $\TT^K_2$ denote
the $(2d-1)$-ary directed tree of $K$ levels, whose root vertex has $d-1$ out-edges and $d$ in-edges.
Define for functions $\Delta_1(z,w)$, $\Delta_2(z,w)$ on $\mathbb{C}_+ \times \mathbb{C}$ the functions 
\begin{equation}\label{dfn:YioK}
\begin{aligned}
Y_{i,K}(\Delta_1,\Delta_2) &:= G(Ext_i(\TT^K_1),\Delta_1,\Delta_2)_{r+\aleph,r+\aleph},\\
Y_{o,K}(\Delta_1,\Delta_2)&:= G(Ext_o(\TT^K_2),\Delta_1,\Delta_2)_{r,r},
\end{aligned}
\end{equation}
where $r$ and $r+\aleph$ are the two indices corresponding to the root of the relevant tree.

To determine the asymptotic of $Q_I$ and $Q_O$ we average the approximations \eqref{eq:G-Exto} 
and \eqref{eq:G-Exti} over all directed edges of our digraph. While doing so, we note that 
most neighborhoods $\mathcal{N}^{(x \to y)}_K(y)$ should look like the $(2d-1)$-ary directed 
tree $\TT^K_1$ of $K$ levels, whose root vertex has $d$ out-edges and $d-1$ in-edges.
Similarly, most neighborhoods $\mathcal{N}^{(y \to x)}_K(y)$ should look like the $(2d-1)$-ary directed
tree $\TT^K_2$ of $K$ levels, whose root vertex has $d-1$ out-edges and $d$ in-edges. Thus, 
after averaging, we expect $Q_I$ and $Q_O$ to approximately satisfy the following self-consistency 
equations 
\begin{equation}
\begin{aligned}
& Q_I = Y_{i,K}(Q_I,Q_O),\\
& Q_O = Y_{o,K}(Q_I,Q_O).
\end{aligned}
\end{equation}
Further, by definition the pair $(Q_I,Q_O)$ should not depend on $K \ge 1$, and for any $w \in \mathbb{C}$,
either function should be analytic from $\mathbb{C}_+$ to $\mathbb{C}_+$ and such that 
$z Q_I(z,w) \to -1$ and $z Q_O(z,w) \to -1$ when $|z| \to \infty$. In view of our 
expectation that $Q_I \approx Q_O$, we verify that the unique solution $Q_I=Q_O$
of \eqref{eq:fixed-pt} with such analytic properties is $m_{\infty}(z,w)$ of \eqref{eq:self-conc}.

\subsection{Part 2: 
The structure of the proof of Theorem \ref{lem:auxmain}}

The proof of Theorem \ref{lem:auxmain} is based on careful and technical estimates of Green's functions after performing a switching. In order to even discuss our proof, it is first necessary to devote some time to defining appropriate notation and the correct order parameters. The first two sub-subsections here are devoted to introducing the appropriate notation. After defining all necessary notions, we then state our important results on Green's function estimates for graphs after switching. These are Theorems  \ref{thm:firstswitch} and \ref{thm:secondswitching}. After some computations, we will show that Theorem \ref{lem:auxmain} follows from these substeps. 


\subsubsection{Notation related to Graph Structure}

The goal of this section is to introduce the notation that we will use later as well as give a broad overview of the strategy of the proof.  The sections that will appear later will go over many of the detailed estimates.

We will start with definitions related to the graph.
\begin{defn}[Ball of Radius $R$]
Let $\GG=(V,E)$ be a digraph and let $S$ be some set of vertices in $V$; here, we will let $\GG^u$ denote the unoriented version of $\GG$.We let $B_R(S, \GG)$ denote the set of vertices that are of distance $R$ from the vertices $V$ in the unoriented graph $\GG^u$.  If $S$ consists of a single element $v$, then we will use $B_R(v, \GG)$ as a shorthand for $B_R(\{v\}, \GG)$; $B_{R}(S,\GG)$ will also be used to denote the subgraph induced by the vertices in the set; which version is used will be clear in context.  In what follows later in the paper, we will always use distance between vertices to refer to the distance between vertices in the unoriented graph $\GG^u$. As this is the only reason to refer to the unoriented version $\GG^u$; we will not use this notation later and let the reader understand that all distances refer to the unoriented distance.

We will say that the vertex $v$ has a radius $R$ tree-like neighborhood in $\GG$ is the unoriented structure of the subgraph $B_R(v,\GG)$ is a tree. If this neighborhood is not tree-like, then we can define the excess of the neighborhood as the number of edges that need to be removed from the subgraph $B_R(v,\GG)$ that need to be removed in order to make the neighborhood tree-like.


\end{defn}

Our method also requires the introduction of many scales of order parameters.

\begin{defn}[Order Parameters]\label{def:ordparam}
\begin{enumerate}
\item We define the value $\mathfrak{R}:= \frac{\mathfrak{c}}{4} \log_{d-1} N$. This value is chosen so that nearly all vertices will have a radius $\mathfrak{R}$ tree-like neighborhood. We will show that with high probability, almost all vertices have a radius $\mathfrak{R}$ tree-like neighborhood. 
\item We introduce the value $r$ which satisfies the inequalities $\frac{\mathfrak{R}}{16} \le r \le \frac{\mathfrak{R}}{8}$. Alternatively, $r:= \mathfrak{r} \log_{d-1} N$ with $\frac{\mathfrak{c}}{64} \le \mathfrak{r} \le \frac{\mathfrak{c}}{32}$. The purpose of this parameter $r$ is to ensure that the Green's functions of two graphs that agree on a neighborhood of size $r$ will be very close to one another. This is the statement of Lemma \ref{lem:nbdpert}. 

\item Finally, we have the value $\ell\in[ \mathfrak{a} \log_{d-1} \log N,2 \mathfrak{a} \log_{d-1} \log N]$, with $\mathfrak{a}\ge 12$.  The specific value of $\ell$ is chosen so that the stability estimates in the self-consistent equation from Lemma  \ref{thm:Self-conc} hold.  $\ell$ represents the size of the neighborhood on which we perform a switching argument. 

\end{enumerate}

We have the following order of scales on our constants:  $\mathfrak{R}> r \gg l$. 

\end{defn}

Indeed with our order parameters, we can make the following definition of good $d$-regular digraphs.
\begin{defn}[Good $d$-regular digraphs] \label{def:Goodregular}

We define the set $\Omega$ (the set of good $d$-regular digraphs) to be the set of $d$-regular digraphs on $N$ vertices that satisfy the following two properties.
\begin{enumerate}
\item All radius $\mathfrak{R}$ neighborhoods have excess at most $1$.
\item All except $N^{\mathfrak{c}}$ vertices have neighborhoods of size $\mathfrak{R}$ that are tree-like.
\end{enumerate}

Fix some parameter $\mathfrak{q}>0$. This constant $\mathfrak{q}$ dictates the probability that a `good' switching event occurs with probability $1- O(N^{-\mathfrak{q}})$. Depending on this parameter $\mathfrak{q}$, we can define the set $\Omega_{S,\mathfrak{q}}$ of the graphs that have sufficiently nice structural properties after switching. A graph belongs in $\Omega_{S,\mathfrak{q}}$ if it satisfies the following two properties,
\begin{enumerate}
\item All except $2 N^{\mathfrak{c}}$ vertices have neighborhoods of size $\mathfrak{R}$ that are tree-like.
\item The $\mathfrak{R}/8$ neighborhood of any vertex has excess at most 1 and the radius $\mathfrak{R}/2$ neighborhood of any vertex has excess at most $C_{\mathfrak{q}}$. $C_{\mathfrak{q}}$ is a constant depending on $\mathfrak{q}$ but not on $N$.
\end{enumerate}

\end{defn}

\subsubsection{Notation related to the Switching Procedure}

With our order parameters defined, we can now define our set of switching data.
In order to avoid an overly long definition, we split the discussion of the switching procedure into two parts. The first part will introduce useful notation, while the second part will actually describe the procedure used in the switching.

\begin{defn} [Switching Data for a Radius $\ell$ Neighborhood] \label{defn:Switchingpart1}
Consider a $d$-regular digraph $\GG$ with $N$ vertices.
Consider a vertex $o$ and its radius $\mathcal{\ell}$ neighborhood $\mt:=B_{\ell}(o,\GG)$.

We define the boundary of $\mt$ to be the vertices of $\mt$ that are connected to a vertex in $\GG$. The boundary edges of $\mt$ are the set of edges that connect a vertex in the boundary of $\mt$ to a vertex in $\mt^c$. We can index these boundary edges as follows:
$\{(l_{\alpha},a_{\alpha}
)\}$; $l_{\alpha}$ is chosen to be the vertex that lies in $\mt$, while $a_{\alpha}$ is chosen to be the vertex that is in $\mt^c$.  We will use 
$\mu$ to denote the number of such edges connecting $\mt$ to $\mt^c$. We remark that if $\mt$ were a purely treelike neighborhood, then $\mu = 2d(2d-1)^{\ell-1}$. Furthermore, in a treelike neighborhood then $\mu/2$ of these edges are oriented in the direction $l_{\alpha} \to a_{\alpha}$ and $\mu/2$ of these edges are oriented in the direction $a_{\alpha} \to l_{\alpha}$.


Since the edges can have different orientations,  we have to be careful regarding the choice of the switching when constructing the switching data.
Our switching data $S=(S_1,\ldots,S_{\mu})$ will consist of a choice of $|\mu|$ random edges that lie strictly inside $\mt^c$ in $\GG$; these will be indexed as $(b_{\alpha},c_{\alpha})$, but the specific value of $b_{\alpha}$ and $c_{\alpha}$ will depend on the orientation. If the edge were oriented in the direction $l_{\alpha}$ to $a_{\alpha}$, then the indices are chosen so that $(b_{\alpha},c_{\alpha})$ were oriented in the direction $b_{\alpha} $ to $c_{\alpha}$. If instead the vertices we had the orientation $a_{\alpha}$ to $l_{\alpha}$, then the edge $(b_{\alpha},c_{\alpha})$ will be oriented $c_{\alpha}$ to $b_{\alpha}$. The purpose of this is to ensure that $a_{\alpha}$ will naturally be switched with $c_{\alpha}$.

The collection $S$ is chosen so that for each $\alpha \le |\mu|$, each edge $b_{\alpha},c_{\alpha}$ is chosen independently and uniformly over all edges that lie in $\mt^c$. Note that this will allow for repetition, but such collisions are highly unlikely and will not make any difference in the ultimate analysis.  The set of all possible switching data corresponding to a graph $\GG$ around the vertex $o$ will be denoted as $\mathbb{S}_{\GG,o}$. The vertices $o$ are naturally identified in $\GG$ and $\tilde{\GG}$.


\end{defn}

We continue our discussion of the switching procedure by actually describing the graphs that are constructed by switching.

\begin{defn}[The Switching Construction]\label{defn:switchingtwo}
Recall the digraph $\GG$, the neighborhood $\mt$ around $o$, as well as the switching data $S$ from Definition \ref{defn:Switchingpart1}.

For each $\alpha$, we define the indicator function variable $\chi_{\alpha}$ which will take values $1$ or $0$ based on the following conditions. $\chi_{\alpha}=1$ if and only if the subgraph $B_{\mathfrak{R}/4}(\{a_{\alpha},b_{\alpha},c_{\alpha}\}, \GG \setminus \mt)$ is a tree even after adding the edge connecting $a_{\alpha}$ to $b_{\alpha}$ and , furthermore, the vertices $\{a_{\alpha},b_{\alpha},c_{\alpha}\}$ is of distance at least $\mathfrak{R}/4 $ from all other switching vertices $\{a_{\beta},b_{\beta},c_{\beta}\}$ for $\beta \ne \alpha$.

The switching map $T$ can be thought of as a bijective map from pairs of $d$-regular digraphs and switching data  $T: (\GG, S) \to (\tilde{\GG},T(S))$ with $S \in \mathbb{S}_{\GG,o}$ and $T(S)  \in \mathbb{S}_{\tilde{\GG},o}$. There is a natural way to associate the vertices $o$ in both $\GG$ and $\tilde{\GG}$.

To construct $\tilde{\GG}$, we perform the following procedure. If $\chi_{\alpha}=1$, then we remove the edge $(l_{\alpha},a_{\alpha})$ and connect $l_{\alpha}$ to $c_{\alpha}$ as well as $b_{\alpha}$ to $a_{\alpha}$. The orientation will be such that the orientation of $l_{\alpha}$ to $c_{\alpha}$ is the same as the orientation of $l_{\alpha}$ to $a_{\alpha}$; in addition, the orientation of $b_{\alpha}$ to $a_{\alpha}$ will match the orientation of $b_{\alpha}$ to $c_{\alpha}$. We also set $\tilde{a}_{\alpha}:= c_{\alpha}$. However, if $\chi_{\alpha}=0$, we make no change to the graph and set $\tilde{a}_{\alpha}:= a_{\alpha}$.

If $\chi_{\alpha}=1$, then we change the switching data $S_{\alpha}$ to the edge $(a_{\alpha},b_{\alpha})$ with the proper orientation. However, if $\chi_{\alpha}=0$, we make no change.  It is easy to check that $T$ is an involution and preserves probability.
\end{defn}

We now give the statement of the following proposition. This will show that all except an $O(1)$ indices $\alpha$ will satisfy $\chi_{\alpha}=1$.
\begin{Prop} \label{Prop:detswitch}
Fix some constant $\mathfrak{q} >0$ and some graph $\GG$ in either $\Omega$ or $\Omega_{S,\mathfrak{q}}$. Fix a vertex $o$ and its $\ell$ neighborhood $\mt$. Then with probability $1- O(N^{-\mathfrak{q}})$, one has that for all except $O_{\mathfrak{q}}(1)$ indices $\alpha$, that $\chi_{\alpha}=1$. Here, $O_{\mathfrak{q}}(1)$ means that this is bounded by a constant depending on the parameter $\mathfrak{q}$. Furthermore, for any vertex $x$ in $\mt^c$, the set of indices $\alpha$ such that $x$ is of distance less than $\mathfrak{R}/4$ from $\{a_{\alpha},b_{\alpha},c_{\alpha}\}$ is less than $O_\mathfrak{q}(1)$.
\end{Prop}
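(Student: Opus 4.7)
The randomness lies only in the independent uniform draws $S_\alpha=(b_\alpha,c_\alpha)$ of directed edges in $\GG\setminus\mt$, so the proof is a moment bound over $S$ after extracting the right deterministic inputs from $\GG\in\Omega\cup\Omega_{S,\mathfrak{q}}$: (i) the crude $d$-regular bound $|B_{\mathfrak{R}/2}(v,\GG)|\le N^\vartheta$ with $\vartheta=\tfrac{\mathfrak{c}}{8}\log_{d-1}(2d-1)<1/2$; (ii) the boundary-edge count $\mu=O((\log N)^{C_1})$ since $\ell=O(\log_{d-1}\log N)$; (iii) the cardinality of the bad set $\mathcal{B}:=\{v:B_\mathfrak{R}(v,\GG)\text{ is not a tree}\}$ is at most $2N^\mathfrak{c}$; and (iv) the excess-one hypothesis on $\mathfrak{R}/8$-balls combined with $\ell+1+\mathfrak{R}/4\le\mathfrak{R}/8$ ensures that $B_{\mathfrak{R}/4}(a_\alpha,\GG\setminus\mt)$ is a tree for all but $O_\mathfrak{q}(1)$ indices $\alpha$.

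\textbf{Single-index failure.} I decompose $\{\chi_\alpha=0\}\subseteq E^{\mathrm{det}}_\alpha\cup L_\alpha\cup M_\alpha$, where $E^{\mathrm{det}}_\alpha$ is the deterministic exceptional set from (iv); $L_\alpha$ is the local event that some endpoint of $S_\alpha$ lies within graph-distance $\mathfrak{R}/2$ of $\mathcal B\cup\{a_\alpha\}$; and $M_\alpha$ is the non-local event that some $\beta\ne\alpha$ has $\{a_\beta,b_\beta,c_\beta\}$ within graph-distance $\mathfrak{R}/4$ of $\{a_\alpha,b_\alpha,c_\alpha\}$. On the complement, $a_\alpha,b_\alpha,c_\alpha$ all have tree-like $\mathfrak{R}/4$-neighborhoods in $\GG\setminus\mt$, the three balls are pairwise disjoint (using that removing $\mt$ only increases distances), and adding $(a_\alpha,b_\alpha)$ merges two of them into a single tree, so $\chi_\alpha=1$. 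Uniformity of $S_\alpha$ over the $\Theta(Nd)$ edges of $\GG\setminus\mt$ together with (i)--(iii) yields
\[
\bP(L_\alpha)\le\frac{O(|\mathcal B|\cdot N^\vartheta+N^\vartheta)}{N}=O(N^{\mathfrak c+\vartheta-1})=O(N^{-\eta})
\]
for some $\eta=\eta(\mathfrak c,d)>0$, and an analogous computation gives $\bP(S_\alpha\text{ and }S_\beta\text{ are close})=O(N^{\vartheta'-1})$ with $\vartheta'$ the analogue of $\vartheta$ at radius $\mathfrak R/4$.

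\textbf{Concentration and uniformity.} Because distinct $S_\alpha$'s are independent, so are the $L_\alpha$; a $K$-th moment bound gives $\bP(\sum_\alpha\mathbbm 1_{L_\alpha}\ge K)\le(\mu N^{-\eta})^K$, which is $O(N^{-\mathfrak q})$ for $K=\lceil 2\mathfrak q/\eta\rceil$. For the symmetric pair count $\sum_{\alpha\ne\beta}\mathbbm 1_{\{(\alpha,\beta)\text{ close}\}}$, a $k$-th moment argument (disjoint pairs contribute independent Bernoullis) controls $\sum_\alpha\mathbbm 1_{M_\alpha}\le K'_\mathfrak q$ with probability $1-O(N^{-\mathfrak q})$; together with (iv) this gives the first claim with $O_\mathfrak q(1)$ exceptional indices. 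For the second claim, fix $x\in\mt^c$ and set $X_\alpha(x)=\mathbbm 1\{d(x,\{a_\alpha,b_\alpha,c_\alpha\})<\mathfrak R/4\}$; then $\bP(X_\alpha(x)=1)\le\mathbbm 1\{a_\alpha\in B_{\mathfrak R/4}(x)\}+O(N^{\vartheta'-1})$, the deterministic contribution is uniformly $O_\mathfrak q(1)$ by (iv), and independence of the random parts across $\alpha$ gives $\bP(\sum_\alpha X_\alpha(x)\ge K)\le(\mu N^{\vartheta'-1})^K=O(N^{-\mathfrak q-1})$ for $K=K(\mathfrak q)$ large enough; a union bound over the at most $N$ choices of $x$ concludes.

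\textbf{Main obstacle.} The subtle step is (iv): if $o$ lies near the single excess cycle of $B_{\mathfrak R/8}(o,\GG)$, many $a_\alpha$'s could see cycles in their $\GG$-balls and the probabilistic estimate $\bP(L_\alpha)=O(N^{-\eta})$ would not by itself suffice. Saving this requires exploiting the excess-one hypothesis together with the fact that, after removing $\mt$, at most $O_\mathfrak q(1)$ of the $a_\alpha$'s still see a cycle in their $(\GG\setminus\mt)$-ball---this is where the structural hypothesis on $\GG$ genuinely enters beyond being just a bound on $|\mathcal B|$, and it is what permits dealing with all $\GG\in\Omega\cup\Omega_{S,\mathfrak{q}}$ and all $o$ rather than only ``good'' $o$.
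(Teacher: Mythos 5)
The paper's own ``proof'' is simply a citation to \cite[Prop.\ 5.10]{HuangYau}, so your reconstruction is a genuine fill-in of that external argument; your overall scheme (split $\{\chi_\alpha=0\}$ into a deterministic exceptional set, an independent local event, and a pairwise-collision event, then run moment bounds against the polylog boundary size $\mu$) is the right one and matches the Huang--Yau philosophy for the undirected case.

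That said, there are a few genuine gaps. First, the inequality $\ell+1+\mathfrak R/4\le\mathfrak R/8$ in your item (iv) is false for trivial reasons ($\mathfrak R/4>\mathfrak R/8$ always); what you need is $\ell+1+\mathfrak R/4\le\mathfrak R/2$, which does hold since $\ell=O(\log\log N)\ll\mathfrak R$, and the relevant structural hypothesis is then excess $\le C_{\mathfrak q}$ on $\mathfrak R/2$-balls (from $\Omega_{S,\mathfrak q}$) rather than excess~$1$ on $\mathfrak R/8$-balls; for $\GG\in\Omega$ one can instead use $\ell+1+\mathfrak R/4\le\mathfrak R$ and excess $\le 1$ on $\mathfrak R$-balls. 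Second, your claim (iv) that at most $O_{\mathfrak q}(1)$ indices $\alpha$ have a non-tree $B_{\mathfrak R/4}(a_\alpha,\GG\setminus\mt)$ is stated as if it followed from the excess bound alone, but it needs the following counting step, which you flag as the ``main obstacle'' without supplying: if $k$ of the boundary vertices $a_{\alpha_1},\dots,a_{\alpha_k}$ lie in the same connected component of $B_{\ell+1+\mathfrak R/4}(o)\setminus\mt$, then $\mt$ (a tree) together with the $k$ boundary edges $(l_{\alpha_i},a_{\alpha_i})$ and a spanning tree of that component is a subgraph of $B_{\ell+1+\mathfrak R/4}(o)$ of excess at least $k-1$; combined with the bound $\le C_{\mathfrak q}$, each cycle-containing component contains $O_{\mathfrak q}(1)$ boundary vertices, and there are $\le C_{\mathfrak q}$ such components. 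The second claim of the proposition (about a fixed $x$) needs exactly the same excess count, not your item (iv) as stated, since the bad $\alpha$'s there are those with $a_\alpha$ near $x$ rather than with a cycle near $a_\alpha$. Third, your estimate $\bP(L_\alpha)=O(N^{\mathfrak c+\vartheta-1})$ can be vacuous: for $d=3$ and $\mathfrak c$ close to $1$ one has $\mathfrak c+\vartheta>1$. The $|\mathcal B|\cdot N^\vartheta$ term over-counts --- for the tree-likeness of $B_{\mathfrak R/4}(b_\alpha,\GG\setminus\mt)$ it suffices that $b_\alpha\notin\mathcal B$ (then $B_{\mathfrak R}(b_\alpha,\GG)$ is a tree and removing $\mt$ can only delete edges/vertices, leaving a tree since a ball is connected), so the correct bound is $\bP(L_\alpha)=O((|\mathcal B|+N^\vartheta)/N)=O(N^{\max(\mathfrak c,\vartheta)-1})$, which is $N^{-\eta}$ for some $\eta>0$ for every $\mathfrak c<1$.
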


\begin{proof}
This is \cite[Prop. 5.10]{HuangYau}.
\end{proof}

\subsubsection{The inductive method}

At this point, we will now introduce our preliminary induction hypothesis.





\begin{defn}[Induction Hypothesis] \label{def:IndHyp}

Let us define $\epsilon= \epsilon(z,w)$ as our control parameter,
\begin{equation}\label{def:eps}
\epsilon= (\epsilon(z,w)):=  \frac{(\log N)^{K \mathfrak{a}}}{N^{\mathfrak{r}}}+\frac{(\log N)^{K \mathfrak{a}}\sqrt{\text{Im}[m_{\infty}]}}{\sqrt{N \eta}},
\end{equation}
where $K$ is a large positive constant $K \ge 100$.


We let $\Omega^0(z,w)$ denote the set of graphs inside $\Omega$ from Definition \ref{def:Goodregular} such that the following estimates hold at the point $(z,w)$.
\begin{equation} \label{eq:IndHyp}
\begin{aligned}
&|Q_I + Q_O - 2 m_{\infty}| \le \frac{\epsilon}{S_g^1},\\
& |Q_I - Q_O| \le \frac{\epsilon}{S_g^2},\\
&|G_{ij} - G_{ij}(Ext(B_r(\{i,j\},\GG),Q_I,Q_O)| \le \epsilon.
\end{aligned}
\end{equation}
where the latter estimate holds for all pairs $i$ and $j$.   The singularity parameters, $S_g^{1,2}$, are formally defined in Definition \ref{def:Singularities}. 




\end{defn}

Now, we make a definition designed to capture the properties of graphs produced after one switching on the neighborhood $\mt$ around a vertex $o$.

\begin{defn}

We define the new error parameter $\epsilon'$ $(=\epsilon'(z,w))$ as,
\begin{equation}
\epsilon':= (\log N)^{\mathfrak{j}(d) \mathfrak{a}} \epsilon,
\end{equation}
where $\mathfrak{j}(d) = \log_{d-1} \left[ \frac{1}{2} + \frac{1}{2} \sqrt{\frac{2d-1}{d-1}} \right]<1/2$.

Let $\GG$ be a graph inside $\Omega_{S,\mathfrak{q}}$ from Definition \ref{def:Goodregular}, and we fix a vertex $\hat{r}$ in $\GG$ with radius $\ell$ neighborhood, $\mt$. We will include $\GG$ inside the set $\Omega^1_{S,\mathfrak{q},\hat{r}}(z,w)$ if it satisfies the following estimates at the point $(z,w)$,
\begin{equation} \label{eq:firstswitch}
\begin{aligned}
&|Q_I +Q_O - 2 m_{\infty}| \lesssim   \frac{\epsilon}{S_g^1},\\
&|Q_I - Q_O| \lesssim \frac{\epsilon}{S_g^2},\\
&|G_{i+o_i,j +o_j} - G_{i+o_i,j+o_j}(Ext(B_r(\{i,j\},S(\GG)),Q_I,Q_O))| \lesssim \frac{1}{\log N}, \forall i,j \in \GG \\
&|G^{(\mt)}_{i+o_i,j+o_j}- G_{i+o_i,j+o_j}(Ext(B_r(\{i,j\},S(\GG)) \setminus \mt,Q_I,Q_O))| \lesssim \epsilon', \forall i,j \in \mt^c.
\end{aligned}
\end{equation}
Here, $o_i$ and $o_j$ are symbols that can take the value $0$ or $N$, so that $i + o_i$ can correspond to one of the two labels corresponding to the vertex $i$.
The constant implicit in $\lesssim$ is uniform and does not depend on $N$.

\end{defn}

We have the following  relating $\Omega^0(z,w)$ to $\Omega^1_{S,\mathfrak{q},o}(z,w)$.
\begin{thm} \label{thm:firstswitch}
Fix a graph $\GG$in $\Omega^0(z,w)$, as well as a vertex $o$ with radius $\ell$ neighborhood $\mt$. In addition, fix the parameter $\mathfrak{q}>0$.  There is an event $S_G(\GG)$ of switching events $S$ in $\mathbb{S}_{\GG,o}$ with probability $\mathbb{P}(S_G(\GG)) = 1 - O(N^{-\mathfrak{q}})$, such that for every switching event $S \in S_G(\GG)$, we have that $\tilde{\GG}$ is in $\Omega^1_{S,\mathfrak{q},o}(z,w)$.
\end{thm}

The issue with the above statement is that it does not take into account improved concentration estimates from the switching. Our issue is that we cannot ensure that after a single switching, the event $\Omega^0(z,w) \setminus T(\Omega^0(z,w))$ is of vanishingly small probability. In order to avoid this issue, we use the fact that $T$ is an involution and two evaluations of $T$ on the set $\Omega^0(z,w)$ will result in a graph that belongs to $\Omega^0(z,w)$.

To this end, we introduce a new definition that describes the graphs that we would obtain after two switching.
\begin{defn} \label{def:Omegsecswitch}
Let $\GG$ be $d$-regular digraph with $N$ vertices, and let $\hat{r}$ be a vertex in $\GG$ with radius $\ell$ neighborhood $\mt$.  For all vertices $i$ in $\mt$, we define $\ell_i$ to be the distance from $\hat{r}$ to $i$. We set that the graph $\GG$ belongs to the set $\Omega^{2}_{S,\mathfrak{q},\hat{r}}(z,w)$ if it satisfies the following conditions at the point $(z,w)$:
\begin{equation} \label{eq:secswitchest1}
\begin{aligned}
& |Q_I + Q_O- 2 m_{\infty}| \lesssim \frac{\epsilon}{S_g^1},\\
 &|Q_{I}-Q_O| \lesssim \frac{\epsilon}{S_g^2}.
\end{aligned}
\end{equation}
\begin{equation}\label{eq:secswitchest2}
\begin{aligned}
&|G_{\hat{r},i} - G_{\hat{r},i}(Ext(B_r(\{\hat{r},i\}, \GG), Q_I,Q_O))| \lesssim \left(\frac{\log N}{ (d-1)^{\ell/2}} + \frac{(\log N)^2}{(d-1)^{\ell -\ell_i/2}} \right) \epsilon'\\
& + \log N(|S^1_g| |Q_I +Q_O - 2 m_{\infty}| + |S^2_g||Q_I - Q_O| + |Q_I-m_{\infty}|^2 + |Q_O-m_{\infty}|^2), \forall i \in \mt\\
\end{aligned}
\end{equation}

\begin{equation}\label{eq:secswitchest3}
\begin{aligned}
&|G_{\hat{r},i} - G_{\hat{r},i}(Ext(B_r(\{\hat{r},i\}, \GG), Q_I,Q_O))| \lesssim \frac{(\log N)^2 \epsilon'}{(d-1)^{\ell/2}}\\
&+ \log N(|S^1_g| |Q_I +Q_O - 2 m_{\infty}| + |S^2_g||Q_I - Q_O| + |Q_I - m_{\infty}|^2 + |Q_O-m_{\infty}|^2), \forall i \in \mt^c
\end{aligned}
\end{equation}

Furthermore, if the vertex $\hat{r}$ has a radius $\mathfrak{R}$ tree-like neighborhood in $\GG$, then we also have,

\begin{equation} \label{eq:estYi}
\begin{aligned}
&\Big|\frac{1}{d} \sum_{(i \to \hat{r})}G^{(i)}_{\hat{r}+N, \hat{r}+N} - Y_{i, \ell}(Q_I,Q_O) \Big| \lesssim \frac{\log N \epsilon'}{(d-1)^{\ell/2}},\\
&\Big|\frac{1}{d} \sum_{(\hat{r} \to i)}G^{(i)}_{\hat{r}, \hat{r}} - Y_{o, \ell}(Q_I,Q_O) \Big| \lesssim \frac{\log N \epsilon'}{(d-1)^{\ell/2}}.
\end{aligned}
\end{equation}

As always, $\lesssim$ represents a bound by a universal constant that does not depend on $N$.

\end{defn}

The improvement of the last estimate is found in the decay factors $\frac{1}{(d-1)^{\ell/2}}$ found in the denominator. Our main relationship between $\Omega^1_{S,\mathfrak{q},o}(z,w)$ and $\Omega^2_{S, \mathfrak{q},o}(z,w)$ is the following theorem.

\begin{thm}\label{thm:secondswitching}
Fix a graph $\GG$ in $\Omega^1_{S,\mathfrak{q},o}(z,w)$. This graph comes with a specially denoted vertex $o$ and neighborhood $\mt $. There is a collection of switches $S_C(\GG) \in \mathbb{S}_{\GG,o}$ with probability $\mathbb{P}(S_{C}(\GG)) = 1 - O(N^{-\mathfrak{q}})$ such that the image of $(\GG,s)$ under $T$ for $s \in S_C(\GG)$ is in $\Omega^2_{S,\mathfrak{q},o}(z,w)$.

\end{thm}

\subsection{A Multiscale Induction Scheme}

Our central Theorem \ref{lem:auxmain} is proven via a multiscale induction scheme.Namely, we start by proving the estimates in equation \eqref{eq:auxcollect}. We start by establishing these estimates  at some $z_0$ where $|z_0|$ is large and one can derive the inequalities by basic computations with expansions. Define $z_k = z_0 - k N^{-4}.$ Now, if we establish the equations $\eqref{eq:auxcollect}$ at some scale $z_k$, the goal is to prove these inequalities at $z_{k+1}$. On a high level, the key input is Theorem \ref{thm:secondswitching}; this shows that, with very high probability, a switched graph would satisfy stronger Green's function estimates at the scale $z_k$. Afterward, one can use basic Lipschitz continuity estimates to take these improved Greens function estimates at $z_{k}$ to obtain the inductive hypothesis at $z_{k+1}$. We can more precisely quantify the improvement from switching int he following definition. 



\begin{defn}
We let $\Omega_g(z,w) \subset \Omega$ be the set of $d$-regular digraphs $\GG$ that satisfy the following estimates at the point $(z,w)$,
\begin{equation}
\begin{aligned}
&|Q_I + Q_O - m_{\infty}| \leq \frac{\epsilon}{2 S_g^1},\\
&|Q_I - Q_O| \leq \frac{\epsilon}{2 S_g^2},\\
&|G_{ij} -G_{ij}(Ext(B_r(\{i,j\}, \GG), Q_I,Q_O))| \le \frac{\epsilon}{2}, \qquad \forall i,j.
\end{aligned}
\end{equation}

\end{defn}

Our first goal is to show that for $|z|$ large enough, we know that any graph lies in $\Omega_{g}(z,w)$ deterministically.

\begin{Prop} \label{Prop:DeterBnd}
Let $w$ lie in some compact domain $\mathbb{D}$ of $\mathbb{C}$ and choose $z$ with $|z| \ge 2d^2 + \max_{a \in \mathbb{D}} d|a|$. Then, any $d$-regular digraph $\GG$ on $N$ vertices belongs to $\Omega_g(z,w)$.
\end{Prop}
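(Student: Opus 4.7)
The plan is to exploit the Neumann series $G(z,w)=-z^{-1}\sum_{k\ge 0}z^{-k}H(0,w)^k$, which converges absolutely for $|z|>\|H(0,w)\|$. For any $d$-regular digraph the scaled adjacency satisfies $\|A\|\le d/\sqrt{d-1}$, hence $\|H(0,w)\|\le d/\sqrt{d-1}+|w|$, and the hypothesis $|z|\ge 2d^2+d\max_{a\in\bD}|a|$ ensures that $\rho:=\|H(0,w)\|/|z|$ is bounded by a small constant (in particular $2d\rho<1/(d-1)$ after at most a slight enlargement of $|z|$ within the allowed range). The three conditions defining $\Omega_g(z,w)$ can then be established by a walk-counting analysis of this series, with errors controlled by geometric tails in $\rho$ and by the smallness of the exceptional-vertex count for $\GG\in\Omega$.

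First I would derive absolutely convergent $z^{-1}$-expansions for $m_\infty(z,w)$, $Q_I(z,w)$, and $Q_O(z,w)$. For $m_\infty$, the implicit function theorem applied to \eqref{eq:self-conc} at $(z^{-1},m_\infty)=(0,0)$ gives such an expansion, whose coefficients count weighted walks on the infinite $d$-regular directed tree. The quantities $Q_I,Q_O$ of \eqref{eq:QIO} are $\GG$-wide edge-averages of entries of the submatrix resolvent $G^{(x,x+N)}$; expanding the latter via Neumann series produces analogous expansions whose coefficients count weighted walks in the Hermitized graph $H(0,w)$ avoiding the removed indices. Since $\GG\in\Omega$, all but $N^{\mathfrak c}$ vertices $y$ have a radius-$\mathfrak R$ tree-like neighborhood in $\GG$; at such a vertex every walk of length at most $\mathfrak R$ sees only tree structure and contributes exactly the tree coefficient, so deviations arise only from walks of length $>\mathfrak R$ or from the $O(N^{\mathfrak c})$ exceptional vertices.

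Consequently $|Q_I-m_\infty|+|Q_O-m_\infty|\lesssim \rho^{\mathfrak R+1}+N^{\mathfrak c-1}$, and the same bound governs both $|Q_I+Q_O-2m_\infty|$ and $|Q_I-Q_O|$; for the latter, the transpose symmetry of $\GG$ between in- and out-edges matches the tree-supported contributions to $Q_I$ and $Q_O$ term by term. Since $\mathfrak R=(\mathfrak c/4)\log_{d-1}N$ and $2d\rho<1/(d-1)$, the first term decays as $N^{-C}$ for some $C>\mathfrak r$, while $N^{\mathfrak c-1}$ is strictly smaller than $N^{-\mathfrak r}$ because $\mathfrak r\le\mathfrak c/32$. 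Both decay rates are therefore faster in $N$ than the upper bound on the thresholds $\epsilon/(2S_g^1)$ and $\epsilon/(2S_g^2)$ permitted by the lower bound $S_g^j\ge\tfrac12 N^{-\varepsilon}$, provided $\varepsilon$ is taken small enough (which is compatible with the constraints elsewhere in the multiscale scheme).

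For the entrywise inequality I would compare $G_{ij}$ with $G_{ij}(Ext(B_r(\{i,j\},\GG),Q_I,Q_O))$ by expanding both as Neumann series in $z^{-1}$. The two agree on walks supported inside $B_r(\{i,j\},\GG)$ up to a diagonal perturbation of size $|Q_{I/O}-m_\infty|$ on boundary vertices (already controlled by the previous step), whereas walks that leave the ball have length at least $r+1$ and contribute at most $(2d\rho)^{r+1}$. With $r=\mathfrak r\log_{d-1}N$ and $2d\rho<1/(d-1)$, this tail is $N^{-C'}$ for some $C'>\mathfrak r$, which comfortably beats $\epsilon/2\gtrsim (\log N)^{K\mathfrak a}N^{-\mathfrak r}$. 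The main technical nuisance is absorbing the $O(N^{\mathfrak c})$ exceptional non-tree vertices into the uniform-in-$(i,j)$ estimate: for pairs $(i,j)$ far from such vertices the walk-matching is clean, while for the remaining pairs the a priori bounds $|G_{ij}|\lesssim 1/|z|$ and $|G_{ij}(Ext(\cdot,Q_I,Q_O))|\lesssim 1/|z|$ suffice, contributing an error of order $N^{\mathfrak c-1}$ which fits within the allowed budget.
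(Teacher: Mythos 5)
Your plan --- expand $G(z,w)$ in a Neumann series for $|z|$ large, match walk contributions to the tree coefficients inside a tree-like neighborhood, and control the tail geometrically --- is the same as the paper's, and the rearranged order of steps (bounding $Q_I,Q_O$ directly from their defining edge-averages rather than passing first through the entrywise bound and the Schur-complement identity $G_{ii}^{(j)}=G_{ii}-G|_{i,\{j,j+N\}}(G|_{\{j,j+N\}})^{-1}G|_{\{j,j+N\},i}$) is an inessential variant. Two of the quantitative steps are, however, off and would need to be repaired.

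First, the assertion $2d\rho<1/(d-1)$ is false at $d=3$: with $\rho:=\|H(0,w)\|/|z|$, $\|A\|=d/\sqrt{d-1}$ and $|z|=2d^2+d\max_{a\in\bD}|a|$, one obtains only $\rho<1/(d-1)$ (and $\rho\to 1/d$ as $|w|$ grows), so $2d\rho>1$ and your claimed tail $(2d\rho)^{r+1}$ actually \emph{diverges} in $N$. The ``slight enlargement of $|z|$'' remark does not save this, since the statement must hold for every $z$ with $|z|\ge 2d^2+d\max|a|$, in particular the smallest. The fix is simply to drop the branching-factor count and use the operator-norm bound $\|H(0,w)^k\|\le\|H(0,w)\|^k$, which gives a tail of order $\rho^{r+1}/\bigl((1-\rho)|z|\bigr)$; since $\rho<1/(d-1)$ this is $N^{-\mathfrak r(1+o(1))}$, which does beat $\epsilon/2$ because $\epsilon\ge(\log N)^{K\mathfrak a}N^{-\mathfrak r}$.

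Second, your treatment of exceptional pairs $(i,j)$ for the entrywise requirement does not yield the needed conclusion. That requirement is a supremum over all pairs, so a bad pair contributing the a priori error $O(1/|z|)=O(1)$ cannot be ``absorbed'' into an $N^{\mathfrak c-1}$ budget --- such accounting is only valid for the averaged objects $\tfrac1N\sum_i G_{ii}$ and $Q_{I/O}$. The good news is that no exceptional case is needed here at all: the walk-matching between $G$ and $G(Ext(B_r(\{i,j\},\GG),\cdot,\cdot))$ up to radius $r$ holds for every pair $(i,j)$ purely by construction of the $Ext$ operator (the two matrices agree on $B_r(\{i,j\})$ away from the modified diagonal, and walks of length at most $r$ from $i$ to $j$ cannot reach that boundary), with tree-likeness entering only when identifying $G_{ii}(Ext(\cdot,m_\infty,m_\infty))$ with $m_{\TT}^d$ in the trace and $Q$ estimates. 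Restricting the $N^{\mathfrak c-1}$ budget to those averaged estimates, as the paper does, closes the gap.
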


\begin{proof}
We know that the largest eigenvalue of a Hermitized version $H$ of the adjacency matrix $A$ of a 
$d$-regular digraph can be bounded by $\sqrt{d} +|w|$; this can be obtained by taking the maximum sum of the elements in a row or in a column. This shows us that we may expand the Green's function of our Hermitized matrix as,
\begin{equation}
(H - z)^{-1} =  -z^{-1}  - \sum_{k=1}^{\infty} H^k z^{-(k+1) }.
\end{equation}

Now, one can see that the computation of $H^{k}_{a b}$ involves counting the contribution of all length $k$ paths between $a$ and $b$ in the graph $\GG$ with possible self-loops weighted by $w$ or $\overline{w}$.  From this logic, we can assert the following statement on graphs $\GG_1$ and $\GG_2$ that each have vertices $o$ that are identified to each other. If the radius $r$ neighborhood of $o$ in $\GG_1$ is isomorphic to the radius $r$ neighborhood of $o$ in $\GG_2$, then $\sum_{k=1}^r H^k z^{-(k+1)}$ agree with each other in both $\GG_1$ and $\GG_2$.

Thus, we would know that,
\begin{equation}
|G_{ij} -G_{ij}(Ext(B_r(\{i,j\}, \GG), m_{\infty},m_{\infty}))|  \le 2 \sum_{k= r+1}^{\infty}||H|| |z|^{-(k+1)} \le 4 \left(\frac{1}{d}\right)^{r} \lesssim \frac{\epsilon}{(\log N)^2}
\end{equation}

In particular, if $i$ has a radius $\mathfrak{R}$ tree-like neighborhood, then
\begin{equation}
|G_{ii} - m^{d}_{\TT}| \le 4  \left(\frac{1}{d}\right)^{r} .
\end{equation}
Here, we only used the fact that $G_{ii}(Ext(B_r(i, \GG),m_{\infty},m_{\infty}))$ will be equal to $m^{d}_{\TT}$.

 These perturbation estimates will also give us information on $G_{ii}^{(j)}$ via an application of the following identity,
\begin{equation}
G_{ii}^{(j)} = G_{ii} - G|_{i, \{j, j+N\}} (G|{\{j,j+N\}})^{-1} G|_{\{j,j+N\} ,i},
\end{equation}
as well as a similar one for the extension $G_{ii}(Ext(B_r(i, \GG^{(j)}),m_{\infty},m_{\infty}))$. This will show that,
\begin{equation}
|G_{ii}^{(j)} - m_{\infty}| \lesssim \frac{\epsilon}{(\log N)^2},
\end{equation}
for those $i$ with a tree-like neighborhood. Otherwise, we can assert that $G_{ii}^{(j)}$ is $O(1)$ for the $N^{\mathfrak{c}}$ many vertices that do not have a tree-like neighborhood.

This will show that,
\begin{equation}
\Big| \frac{1}{Nd} \sum_{(j \to i)} G_{ii}^{(j)} -  m_{\infty} \Big| \lesssim N^{\mathfrak{c}-1} + \frac{\epsilon}{(\log N)^2} \lesssim \frac{\epsilon}{(\log N)^2}.
\end{equation}
We have a similar statement for $Q_O$.

Finally, we can use Lemma \ref{lem:pert} to replace $$G_{ij}(Ext(B_r(\{i,j\}, \GG), m_{\infty},m_{\infty}))$$ with $$G_{ij}(Ext(B_r(\{i,j\}, \GG), Q_I, Q_O)).$$



\end{proof}

Our next Proposition discusses how the switching procedure will allow us to take an element from
$\Omega^0(z,w)$ from Definition \ref{def:IndHyp} and return an element of the improved set $\Omega_g(z,w)$ after two random switching.

\begin{Prop} \label{Prop:SwitchBnd}
Fix $w$ in a compact domain $\mathbb{D} \in \mathbb{C}$. There exists some $\varepsilon>0$  such that for any $z$ with $\text{Im}[z] \ge N^{-\varepsilon}$ and $d \ge 3$, we have that,
\begin{equation}
\mathbb{P}(\Omega^0(z,w) \setminus \Omega^0_g(z,w)) = O(N^{-\mathfrak{q} +1}).
\end{equation}
\end{Prop}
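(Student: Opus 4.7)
The plan is to apply the two-step switching machinery of Theorems \ref{thm:firstswitch} and \ref{thm:secondswitching} at every vertex $o$, then transfer the improved estimates from the switched graph back to $\GG$ by exploiting the measure-preserving nature of the switching map $T$. Concretely, I would fix $\GG \in \Omega^0(z,w)$ and a vertex $o$, sample independent random switching data $S_1$ and apply $T$ to form $\tilde{\GG}_1 = T_{S_1}(\GG)$; by Theorem \ref{thm:firstswitch}, $\tilde{\GG}_1 \in \Omega^1_{S,\mathfrak{q},o}(z,w)$ with probability $1 - O(N^{-\mathfrak{q}})$. Then I sample fresh $S_2$ and apply $T$ again to obtain $\tilde{\GG}_2 = T_{S_2}(\tilde{\GG}_1)$, which by Theorem \ref{thm:secondswitching} lies in $\Omega^2_{S,\mathfrak{q},o}(z,w)$ with the same probability. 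Since each individual switch is a measure-preserving involution on pairs (graph, switching data), $\tilde{\GG}_2$ inherits the uniform law of $\GG$, so $\mathbb{P}(\GG \in \Omega^0 \cap (\Omega^2_{S,\mathfrak{q},o})^c) \le O(N^{-\mathfrak{q}})$ for each $o$.

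The crux lies in the local averaging estimate \eqref{eq:estYi} that accompanies $\Omega^2_{S,\mathfrak{q},o}$: at every $\hat r$ having a radius $\mathfrak{R}$ tree-like neighborhood, the local averages $\tfrac{1}{d}\sum_{(i \to \hat r)} G^{(i)}_{\hat r+N,\hat r+N}$ and $\tfrac{1}{d}\sum_{(\hat r \to i)} G^{(i)}_{\hat r,\hat r}$ lie within $O(\log N \cdot \epsilon'/(d-1)^{\ell/2})$ of $Y_{i,\ell}(Q_I,Q_O)$ and $Y_{o,\ell}(Q_I,Q_O)$, respectively. Averaging over $\hat r$, with the $O(N^{\mathfrak{c}})$ non-tree-like vertices contributing $O(N^{\mathfrak{c}-1}) = o(\epsilon)$ via the trivial $O(1)$ bound on $G$, and substituting into the definition \eqref{eq:QIO} of $Q_I,Q_O$, I obtain $|Q_I - Y_{i,\ell}(Q_I,Q_O)| + |Q_O - Y_{o,\ell}(Q_I,Q_O)| \lesssim \log N \cdot \epsilon'/(d-1)^{\ell/2}$. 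I then linearize the fixed-point equations $Q_I = Y_{i,\ell},\ Q_O = Y_{o,\ell}$ around $m_\infty$, whose relevant eigenvalues are of order $S_g^1$ and $S_g^2$ by Lemma \ref{thm:Self-conc}, and invert to get
\begin{equation}
|Q_I + Q_O - 2m_\infty| \lesssim \frac{\log N \cdot \epsilon'}{S_g^1 (d-1)^{\ell/2}}, \qquad |Q_I - Q_O| \lesssim \frac{\log N \cdot \epsilon'}{S_g^2 (d-1)^{\ell/2}}.
\end{equation}

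Since $\epsilon' = (\log N)^{\mathfrak{j}(d)\mathfrak{a}}\epsilon$ with $\mathfrak{j}(d) < 1/2$ and $(d-1)^{\ell/2} \ge (\log N)^{\mathfrak{a}/2}$, the excess factor $(\log N)^{1 + (\mathfrak{j}(d) - 1/2)\mathfrak{a}}$ is less than $\tfrac{1}{2}$ once $\mathfrak{a} \ge 12$ and $N$ is large, delivering the factor-of-two reduction in the first two bounds defining $\Omega_g$. For the third bound $|G_{ij} - G_{ij}(Ext(B_r,Q_I,Q_O))| \le \epsilon/2$, I feed the sharpened $Q_I,Q_O$ into \eqref{eq:secswitchest2}--\eqref{eq:secswitchest3}: the terms $|S_g^1||Q_I + Q_O - 2m_\infty|$, $|S_g^2||Q_I - Q_O|$ and $|Q_{I/O} - m_\infty|^2$ all shrink to $o(\epsilon)$, while the structural prefactors $(\log N)^p/(d-1)^{\ell/2}$ times $\epsilon'$ are themselves $o(\epsilon)$ under the order of scales $\mathfrak{R} > r \gg \ell$ fixed in Definition \ref{def:ordparam}. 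Taking a union bound over the $N$ choices of $o$ accumulates a total failure probability of $O(N^{-\mathfrak{q}+1})$, as claimed.

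The main obstacle will be the stability step that inverts the linearized self-consistency: the singular values $S_g^1, S_g^2$ can degenerate as $\Im z \downarrow 0$ or as $|w|$ approaches the edge $\sqrt d$ of the support of $\mu_d$, so the inversion must preserve the small $\log N$-improvement in the numerator without being overwhelmed by the $1/S_g^1$ or $1/S_g^2$ factor. Controlling this degeneracy is precisely what dictates the restriction $\Im z \ge N^{-\varepsilon}$ with small $\varepsilon = \varepsilon(\mathfrak{c},d) > 0$, chosen so that the relevant lower bounds on $S_g^1, S_g^2$ hold uniformly on the allowed spectral window.
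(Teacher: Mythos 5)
The overall architecture you propose — a probabilistic transfer to $\Omega^2_{S,\mathfrak{q},o}$ via the two switching theorems, then a union bound over $o$ followed by a self-consistency step to conclude membership in $\Omega_g$ — matches the paper, and your second step tracks the paper's closely: averaging the local-law estimates of \eqref{eq:estYi} into the fixed-point equations, invoking Theorem \ref{thm:Self-conc}, and feeding the sharpened bounds on $Q_{I/O}-m_\infty$ back into \eqref{eq:secswitchest2}--\eqref{eq:secswitchest3}.

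Your first step, however, has a genuine gap. You sample two independent switches $S_1,S_2$, form $\tilde{\GG}_2=T_{S_2}(T_{S_1}(\GG))$, show that $\tilde{\GG}_2\in\Omega^2_{S,\mathfrak{q},o}$ with probability $1-O(N^{-\mathfrak{q}})$ given $\GG\in\Omega^0$, and then assert that because $\tilde{\GG}_2$ has the same (uniform) marginal law as $\GG$, it follows that $\mathbb{P}(\Omega^0\setminus\Omega^2_{S,\mathfrak{q},o})=O(N^{-\mathfrak{q}})$. It does not: $\tilde{\GG}_2$ is a different random graph from $\GG$, and equality of marginal laws does not let you transfer an event about $\tilde{\GG}_2$ (namely $\tilde{\GG}_2\in\Omega^2$) to an event about $\GG$. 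At best you would get $\mathbb{P}(\GG\notin\Omega^2)\le (1-\mathbb{P}(\Omega^0))+O(N^{-\mathfrak{q}})$, which is useless without already knowing $\mathbb{P}(\Omega^0)$ is close to $1$ — circular inside the multiscale induction. This is precisely the obstruction the paper discusses: a single random switch only controls $T(\Omega^0)\setminus\Omega^1$, not $\Omega^0\setminus\Omega^1$, and naive iteration doubles the error.

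The paper closes the loop differently: it applies $T$ once and reasons entirely in the pair space. Theorem \ref{thm:firstswitch} gives $\tilde{\mathbb{P}}(T(\tilde{\Omega}^0)\setminus\tilde{\Omega}^1)\lesssim N^{-\mathfrak{q}}$; since $T$ is a measure-preserving involution, this equals $\tilde{\mathbb{P}}(\tilde{\Omega}^0\setminus T(\tilde{\Omega}^1))$; Theorem \ref{thm:secondswitching} gives $\tilde{\mathbb{P}}(T(\tilde{\Omega}^1)\setminus\tilde{\Omega}^2)\lesssim N^{-\mathfrak{q}}$; and a union of the two small sets covers $\tilde{\Omega}^0\setminus\tilde{\Omega}^2$, which projects to $\Omega^0\setminus\Omega^2_{S,\mathfrak{q},o}$. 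The ``second switch'' is never performed with fresh data; it enters implicitly because the pair $(\GG',S')=T(\GG,S)$ already carries the canonical data $S'=T(S)$ for which a further application of $T$ returns you to $\GG$, so the Green's-function improvements recorded in $\Omega^2_{S,\mathfrak{q},o}$ end up attached to the \emph{original} graph. You should replace your double-sampling argument with this set-theoretic chain on the pair space.
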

\begin{proof}
The proof will be divided into two steps:
\begin{enumerate}
\item We will show for each vertex $o$ that $\mathbb{P}(\Omega^0(z,w) \setminus \Omega^2_{S, \mathfrak{q},o}(z,w)) = O(N^{- \mathfrak{q}})$. $\Omega^2_{S,\mathfrak{q},o}$ is the set from  Definition \ref{def:Omegsecswitch}. A union bound will then show that $$\mathbb{P}(\Omega^0(z,w) \setminus \bigcap_{o \in \GG} \Omega^2_{S,\mathfrak{q},o}(z,w)) \lesssim N^{1- \mathfrak{q}}. $$

\item After this, we show that a graph that is in $\Omega^0(z,w)\bigcap_
{o \in \GG} \Omega^2_{S, \mathfrak{q},o}(z,w)$ is in $\Omega_g(z,w)$ via some self-consistent estimates.

\end{enumerate}

\textit{Proof of Step 1}

Just to distinguish notation, we will let $\mathbb{P}$ denote the probability distribution over graphs, while we let $\tilde{\mathbb{P}}$ denote the joint probability distribution over graphs and switching data. Furthermore, for a set of graphs $\mathbb{G}$, we will use $\tilde{\mathbb{G}}$ to denote the set of the form $(\GG,S)$ with $\GG \in \mathbb{G}$ and $S$ can be any switching data in $\mathbb{S}_{\GG,o}$. (We assume that each graph has a specially marked vertex $o$).

Theorem \ref{thm:firstswitch} shows that,
$$
\tilde{\mathbb{P}}(T(\tilde{\Omega}^0(z,w)) \setminus \tilde{\Omega}^1_{S,\mathfrak{q},o}(z,w)) \lesssim N^{-\mathfrak{q}}.
$$
Since $T$ both preserves measure and is an involution, this will also show that
\begin{equation}
\tilde{\mathbb{P}}(\tilde{\Omega}^0(z,w) \setminus T(\tilde{\Omega}^1_{S,\mathfrak{q},o}(z,w)) \lesssim N^{-\mathfrak{q}}
\end{equation}

Theorem \ref{thm:secondswitching} shows that,
\begin{equation}
\tilde{\mathbb{P}}(T(\tilde{\Omega}^1_{S,\mathfrak{q},o}) \setminus \tilde{\Omega}^2_{S,\mathfrak{q},o}(z,w)) \lesssim N^{-\mathfrak{q}}.
\end{equation}

Combining these two estimates will give us,
\begin{equation}
\tilde{\mathbb{P}}(\tilde{\Omega}^0(z,w) \setminus \tilde{\Omega}^2_{S,\mathfrak{q},o}(z,w)) \lesssim N^{-\mathfrak{q}}.
\end{equation}

A simple projection shows that this will imply that $\mathbb{P}(\Omega^0(z,w) \setminus \Omega^2_{S,\mathfrak{q},o}(z,w)) \lesssim N^{-\mathfrak{q}}$.

\textit{Proof of Step 2}

We first form a self-consistent equation to understand the values of $Q_I$ and $Q_O$.
If we sum up the estimates in equation \eqref{eq:estYi}, we see that we have that,
\begin{equation}
\begin{aligned}
 &\Big|\frac{1}{Nd} \sum_{(i\to j) \in \GG} G_{j+N,j+N}^{(i)} - Y_{i,\ell}(Q_I,Q_O)\Big| \lesssim \frac{\log N \epsilon'}{(d-1)^{\ell/2}},\\
&\Big|\frac{1}{Nd} \sum_{(j\to i) \in \GG} G_{j,j}^{(i)} - Y_{i,\ell}(Q_I,Q_O)\Big| \lesssim \frac{\log N \epsilon'}{(d-1)^{\ell/2}}.
\end{aligned}
\end{equation}

This is the self-consistent equation for $Q_I$ and $Q_O$ that has been analyzed in Theorem \ref{thm:Self-conc}. 

Now, the estimates on $G_{oi}$ from \eqref{eq:secswitchest2} and \eqref{eq:secswitchest3} are good enough for the Green's functions estimates in  Theorem \ref{thm:Self-conc}. The main benefit is the presence of the $(d-1)^{\ell/2} = (\log N)^{\mathfrak{a}/2}$ factor in the denominator, which is sufficient to cancel  the $(\log N)$ factor in the numerator as well as the factor $(\log N)^{\mathfrak{j}(d) \mathfrak{a}}$ factor in the definition of $\epsilon'$ when $d \ge 3$.

This shows that,
\begin{equation}
\begin{aligned}
&|Q_{I}+Q_O- 2m_{\infty}| \lesssim \frac{\epsilon'}{S^1_g(\log N)^k},\\
&|Q_I - Q_O| \lesssim \frac{\epsilon'}{S^2_g (\log N)^k}.
\end{aligned}
\end{equation}
where $k$ can be arbitrarily large if $\mathfrak{a}$ is chosen large enough.

Now, this bound is good enough to use in the other Green's function estimates found in equation \eqref{eq:secswitchest2} and \eqref{eq:secswitchest3}. As we have mentioned before, the term $\frac{\log N \epsilon'}{(d-1)^{\ell/2}}\epsilon'$ is $ \lesssim \frac{\epsilon}{\log N}$ due to the presence of the factor $(d-1)^{\ell/2}$ in the denominator.  Now, our improved bounds on $|(Q_{I} - m_{\infty}) \pm (Q_O -m_{\infty})|$ can be substituted into the right hand sides of these expressions in order to deal with the remaining terms on the right hand sides of  equations \eqref{eq:secswitchest2} and \eqref{eq:secswitchest3}. This completes the proof of the proposition.
\end{proof}

At this point, one is left with preforming a standard multi-scale analysis with continuity estimates.

\begin{proof}[Proof of Theorem \ref{lem:auxmain}]

Consider the domain
$$
\{(w,z): w \in \mathbb{D}, \text{Im}[z] \ge N^{-\epsilon}, |z| \le d^2 + d \max_{y \in \mathbb{D}}|y| +1 \}.
$$ Now consider a dense discrete grid $\mathbb{L}$ inside this domain where the spacing between elements is $\frac{1}{N^5}$. By basic derivative estimates of the trace of a Green's function, we know that if $|z- z'| \le \frac{1}{N^5}, |w - w'| \le \frac{1}{N^5}$. Then, $\left|\frac{1}{N} \text{Tr}[G(z,w)] - \frac{1}{N} \text{Tr}[G(z',w')] \right| \lesssim \frac{1}{N}$, deterministically. Thus, it suffices to prove the local law with high probability on the elements of the discrete grid under question.

For fixed value of $\text{Re}[z]$ and $w$, we can order the values of $z$ in the grid with these values of $\text{Re}[z]$ and this $w$ in order of decreasing imaginary part as $\text{Im}[z_1] \ge \text{Im}[z_2] \ge \text{Im}[z_3] \ge \ldots \ge \text{Im}[z_M]$ with $M= O(N^5)$.

Now, we know that $\mathbb{P}(\Omega_g(z_1,w)) =1$ due to Proposition \ref{Prop:DeterBnd}. Out continuity bound shows that $\Omega_g(z_k,w) \subset \Omega^0(z_{k+1},w)$. Now, Proposition  \ref{Prop:SwitchBnd} shows that $\mathbb{P}(\Omega^0(z_{k+1},w) \setminus \Omega_g(z_{k+1},w)) \lesssim N^{-\mathfrak{q}}$. Thus, we know that $\mathbb{P}(\Omega_g(z_{k+1},w) \ge \mathbb{P}(\Omega_g(z_k,w)) - O(N^{1-\mathfrak{q}}).$ This is iterated up to at most $N^5$ steps so $\mathbb{P}(\Omega_g(z_M, w)) \ge 1 - O(N^{16 - \mathfrak{q}})$. We can now take a union bound over our dense grid. Choosing $\mathfrak{q}$ sufficiently large will show that the intersection $\mathbb{P}(\bigcap_{(z,w) \in \mathbb{L}} \Omega_g(z,w)) = 1- o(1)$ as desired.
\end{proof}

\subsection{The strategy of proving Theorems \ref{thm:firstswitch} and \ref{thm:secondswitching}}

 Now that we have introduced the notation necessary to discuss the switching arguments and the proofs of the Green's function estimates, we will begin to give an overview of the series of estimates that are required in order to derive the theorems that were previously mentioned in this section. However, before we detail the strategy, we will discuss on a high level the differences that arise when considering the Green's functions of the adjacency matrices of directed graphs as compared to undirected graphs. First of all, since we now have to deal with edges with orientations, this means that when performing a switching, we have to preserve the orientation; furthermore, since we now have two distinct classes of edges, we get two different order parameters $Q_I$ and $Q_O$ corresponding to the different orientations of the edges. Furthermore, it will be observed in the proof that many estimates of Green's function involve understanding the values of Green's functions for the infinite $d$-regular tree and obtaining appropriate $l^2$ and $l^1$ controls for the sum of Green's function for the infinite tree. In the undirected case, these Green's function values were a simple function of the distance. However, when considering the directed graph, the specific path and the orientation of edges along the path will drastically change the value of the Green's function between two vertices; keeping track of these differences occurring from the path structure is the main technical challenge that occurs in the directed case.

The proofs of these theorems will come a series of estimates. To simplify the organization, the proofs are divided into multiple parts. We will briefly describe the structure here. In the statements that follow, if we let $i$ be a vertex, then it could represent either $i$ or $i+\aleph$ when it appears as a sub-index of a Green's function value.

\textit{Steps to Prove Theorem \ref{thm:firstswitch}}

\begin{enumerate}

\item Our first step is to relate estimates of $G$ to $G^{(\mt)}$. This involves applying the Schur complement formula; a major ingredient in intermediate estimates are the Green's function estimates on tree-like neighborhoods from Section \ref{sec:Tree-like}.  This is the goal of Section \ref{sec:step1}.

The main result is the following:
\begin{Prop} \label{prop:removeTest}
Consider a graph $\mathcal{G}$ in $\Omega^0(z,w)$. Fix a vertex $o$ and let $\mathbb{T}$ be the radius $\ell$ neighborhood around $\mathbb{T}$. Assume, in addition, that $o$ has a radius $\mathfrak{R}$ neighborhood with excess at most 1.  Then, we have the following estimates on the Green's function of  the graph $\mathcal{G}$ with the neighborhood $\mathbb{T}$ removed. Let $i$ and $j$ be vertices outside of $\mathbb{T}$ in $\mathcal{G}$ and let $P$ denote $G(Ext(B_r(\mathbb{T}\cup\{i,j\},\mathcal{G}),Q_I,Q_O))$.  Then, we have that, 
\begin{equation} \label{eq:removeTest}
|G_{ij}^{(\mt)} - P_{ij}^{(\mt)}| \le  \epsilon'.
\end{equation}

\end{Prop}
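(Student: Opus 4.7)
The plan is to derive the bound via the Schur complement identity applied to both $G$ and $P$, tracking the propagation of the pointwise error $|G - P| \lesssim \epsilon$ through the removal of $\mathbb{T}$. For $i,j \in \mathbb{T}^c$ one has
\begin{equation*}
G^{(\mathbb{T})}_{ij} = G_{ij} - \sum_{a,b \in \widetilde{\mathbb{T}}} G_{ia}\bigl(G|_{\widetilde{\mathbb{T}}}\bigr)^{-1}_{ab}G_{bj},
\end{equation*}
where $\widetilde{\mathbb{T}}$ denotes the index set of both labels $v$ and $v+\aleph$ for $v \in \mathbb{T}$, together with the analogous identity for $P$. Subtracting, and using the resolvent identity $(G|_{\widetilde{\mathbb{T}}})^{-1} - (P|_{\widetilde{\mathbb{T}}})^{-1} = (P|_{\widetilde{\mathbb{T}}})^{-1}(P|_{\widetilde{\mathbb{T}}} - G|_{\widetilde{\mathbb{T}}})(G|_{\widetilde{\mathbb{T}}})^{-1}$, produces a decomposition
\begin{equation*}
G^{(\mathbb{T})}_{ij} - P^{(\mathbb{T})}_{ij} = (G-P)_{ij} - T_1 - T_2 - T_3,
\end{equation*}
in which each $T_k$ is a bilinear form carrying exactly one factor of $G - P$ indexed by $\widetilde{\mathbb{T}}$, together with rows or columns of $G$ or $P$ and the inverses $(G|_{\widetilde{\mathbb{T}}})^{-1}$ or $(P|_{\widetilde{\mathbb{T}}})^{-1}$.

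From $\mathcal{G} \in \Omega^0(z,w)$ and Lemma \ref{lem:nbdpert} (used to pass between the two extensions of $B_r(\{i,j\},\mathcal{G})$ and $B_r(\mathbb{T} \cup \{i,j\},\mathcal{G})$, which agree on the $r$-neighborhoods of $i$ and $j$), one has the entrywise bound $|G - P| \lesssim \epsilon$. Bounding each $T_k$ then reduces, after Cauchy--Schwarz, to the product of $\epsilon$, the operator norms of $(G|_{\widetilde{\mathbb{T}}})^{-1}$ and $(P|_{\widetilde{\mathbb{T}}})^{-1}$, and the $\ell^2$-row norms of $P$ (respectively $G$) restricted to $\widetilde{\mathbb{T}}$. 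The operator norms are controlled by applying Schur complement once more within $\widetilde{\mathbb{T}}$, combined with $\Im(z) \ge N^{-\varepsilon}$ and the excess-at-most-one hypothesis on $\mathbb{T}$, contributing at most poly-$\log N$ factors.

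The heart of the proof is the sharp control of the $\ell^2$-row norms of $P$ on the boundary of $\mathbb{T}$. A naive bound, using the worst-case decay $|P_{ia}| \lesssim (d-1)^{-\mathrm{dist}(i,a)/2}$ together with the $(2d-1)^k$ vertices of $\mathbb{T}$ at distance $k$ from $i$, yields an \emph{expansive} factor $((2d-1)/(d-1))^{\ell/2}$, which is fatal. To recover the \emph{contractive} factor
$\bigl[\tfrac{1}{2}\bigl(1 + \sqrt{(2d-1)/(d-1)}\bigr)\bigr]^{\ell} = (\log N)^{\mathfrak{j}(d)\mathfrak{a}}$ that appears in $\epsilon'$, we invoke the refined directed-tree Green's function estimates of Section \ref{sec:Tree-like}, and in particular Lemma \ref{lem:minftybnd}, which enumerate the $\ell^2$-sum according to the forward/backward orientation pattern of the path from $i$ to $a$ in the tree.

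The main obstacle, and the principal novelty relative to the unoriented analysis of \cite{HuangYau}, is precisely this refined enumeration: in the directed tree the Green's function value between two vertices depends not only on the unoriented distance but also on the sequence of forward and backward edges along the connecting path, and replacing this by its worst-case value destroys the contractive property needed to close the perturbation over $\ell = \mathfrak{a}\log_{d-1}\log N$ removed layers. Once the sharp tree estimates are in hand, the three $T_k$ contributions combine with the $\epsilon$ prefactor to deliver $|G^{(\mathbb{T})}_{ij} - P^{(\mathbb{T})}_{ij}| \lesssim (\log N)^{\mathfrak{j}(d)\mathfrak{a}}\epsilon = \epsilon'$, as claimed.
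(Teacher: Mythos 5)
Your decomposition and the overall shape of the argument agree with the paper's: you correctly start from the Schur complement identity for removing $\mt$, you telescope the difference $G^{(\mt)}-P^{(\mt)}$ into one term that is $(G-P)_{ij}$ and three terms each carrying exactly one factor of $G-P$ indexed over $\widetilde{\mt}$, and you correctly identify that the entrywise bound $|G-P|\lesssim\epsilon$ enters at this stage via the induction hypothesis and Lemma \ref{lem:nbdpert}. You also correctly identify that the crux is controlling a tree-type sum over $\widetilde{\mt}$, and you even write down the correct value $\lambda^\ell=\bigl[\tfrac12\bigl(1+\sqrt{(2d-1)/(d-1)}\bigr)\bigr]^\ell$ of the factor that $\epsilon'$ picks up.

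However, the analytic route you propose for the $T_k$ terms does not actually yield that factor. You apply Cauchy--Schwarz and reduce to ``the operator norms of $(G|_{\widetilde{\mt}})^{-1}$ and $(P|_{\widetilde{\mt}})^{-1}$, and the $\ell^2$-row norms of $P$ (respectively $G$) restricted to $\widetilde{\mt}$.'' But the $\ell^2$ norm of the factor $G-P$ over $\widetilde{\mt}$, obtained from the pointwise bound $|(G-P)_{bj}|\le\epsilon$, costs $\epsilon\,\sqrt{|\widetilde{\mt}|}\asymp\epsilon\,(2d-1)^{\ell/2}$. Since $(2d-1)^{\ell/2}=(\log N)^{\frac{\mathfrak{a}}{2}\log_{d-1}(2d-1)}$ and $\tfrac12\log_{d-1}(2d-1)>1>\mathfrak{j}(d)$, this is strictly worse than the claimed $(\log N)^{\mathfrak{j}(d)\mathfrak{a}}$ and in fact exceeds $(\log N)^{\mathfrak{a}/2}$, which would break the later cancellation with the $(d-1)^{\ell/2}$ gained from switching. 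The Cauchy--Schwarz step is precisely what introduces the ``expansive factor'' you warn against; invoking the directed-tree estimates after the fact does not remove the $\sqrt{|\widetilde{\mt}|}$ already paid.

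The paper avoids this by never passing to $\ell^2$ on $\widetilde{\mt}$. Its three ingredients are all $\ell^1$ in nature: (i) Lemma \ref{lem:compPT}, which shows $\sum_y|(P|_\mt)^{-1}_{xy}|\lesssim 1$ by exploiting the tree structure (the inverse is row-sparse up to $O(1)$ exceptions), so that pairing the pointwise $\epsilon$ with a column of $(P|_\mt)^{-1}$ costs only $O(1)$, not $\sqrt{|\mt|}$; (ii) Lemma \ref{lem:sumintree}, which bounds $\sum_{y\in\mt}|P_{iy}|\lesssim\lambda^\ell$ by diagonalizing the recursion matrix $\begin{bmatrix}\sqrt{\tfrac{d}{d-1}}\sqrt{Y}&\sqrt{X}\\\sqrt{X}&\sqrt{\tfrac{d}{d-1}}\sqrt{Y}\end{bmatrix}$ and optimizing over the constraint $X+Y\le 1$; and (iii) a maximum-principle argument controlling $|(G|_\mt)^{-1}-(P|_\mt)^{-1}|_{xy}\lesssim\epsilon$ entrywise, again feeding into $\ell^1$ sums. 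The bound $X+Y<1$ of Lemma \ref{lem:minftybnd}, which you cite as the key input, is only an ingredient inside the proof of Lemma \ref{lem:sumintree}; it is the Lagrange-multiplier optimization in Lemma \ref{lem:sumintree} that actually produces $\lambda<\sqrt{d-1}$ (equivalently $\mathfrak{j}(d)<1/2$), and the $\ell^1$ nature of that bound is essential. You would need to replace the Cauchy--Schwarz step with this $\ell^1/\ell^\infty$ pairing to close the proof.
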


\item Our second step is to then remove the vertices $W_S$ that take part in the switching. Again, this involves applying the Schur complement formula. However, this time we need to introduce a notion called Green's function distance. This uses the fact that the average case bound on the Green's functions $|G_{ij}|$ is usually better than the worst case bound; Green's function distance quantifies information on when we can apply the average case bound rather than the worst case bound. This is the goal of Section \ref{sec:step2}. Though the specific value of $\phi$ and other assorted definitions will be defined in the section, we state the Proposition we will prove here for convenience.

\begin{Prop}\label{prop:removeWS}
Consider a graph $\mathcal{G}$ that belongs to the setting of Proposition \ref{prop:removeTest}. Thus, we consider graphs that satisfy the important conclusion \eqref{eq:removeTest}. Given a switching event $S$ involving exchanging the edges on the boundary of $\mathbb{T}$ with arbitrary edges given by $\{b_1,c_1\}\ldots\{b_{\mu},c_{\mu}\}$ as in Definition \ref{defn:switchingtwo}, then $W_S$ is the set of those $b_{\alpha}$ with $\chi_{\alpha}=1$. Then, there is an event of switching, $S_{G}(\mathcal{G})$ that holds with probability $1- O(N^{-q})$ such that the following estimates hold uniformly.

Let $i$ and $j$ be vertices in of $\mathcal{G}\setminus(\mathbb{T} \cup W_S)$ and  let $P$ be a shorthand for $G(Ext(B_r(\{i,j\}) \cup W_S, \mathcal{G} \setminus \mathbb{T},Q_I,Q_O))$.

\begin{equation} \label{eq:Greendiscon}
|\tilde{G}_{ij}^{(\mathbb{T} \cup W_S)} - P_{ij}| \lesssim \phi.
\end{equation}

Otherwise, we have the worst case bound,
\begin{equation}\label{eq:Greencon}
 |\tilde{G}_{ij}^{(\mathbb{T} \cup W_S)} - P_{ij}| \lesssim \epsilon'. 
\end{equation}

\end{Prop}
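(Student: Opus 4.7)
The plan is to iterate the Schur-complement reduction of Proposition \ref{prop:removeTest} once more, this time removing the random vertex set $W_S$ from the already-reduced resolvent $\tilde{G}^{(\mt)}$. For indices $i,j$ outside $\mt \cup W_S$ the block resolvent identity gives
\begin{equation*}
\tilde{G}^{(\mt\cup W_S)}_{ij} \;=\; \tilde{G}^{(\mt)}_{ij}\;-\;\sum_{a,b}\tilde{G}^{(\mt)}_{ia}\,\bigl[(\tilde{G}^{(\mt)}|_{W_S})^{-1}\bigr]_{a,b}\,\tilde{G}^{(\mt)}_{bj},
\end{equation*}
where $a,b$ range over the two index copies ($v$ and $v+\aleph$) of each vertex $v \in W_S$. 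A parallel identity expresses $P^{(W_S)}_{ij}$ in terms of $P$, with $P := G(Ext(B_r(\{i,j\}\cup W_S,\mathcal{G}\setminus\mt),Q_I,Q_O))$. Subtracting the two identities and invoking \eqref{eq:removeTest} on the index pairs $(i,b)$, $(a,b)$, $(b,j)$ substitutes every factor of $\tilde{G}^{(\mt)}$ by its $P$-counterpart at a per-entry cost of $\epsilon'$.

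To convert those entrywise substitutions into the bounds \eqref{eq:Greendiscon} and \eqref{eq:Greencon} I would establish two ingredients. First, a Neumann-stability estimate for the $W_S$-block inverse: Proposition \ref{Prop:detswitch} supplies a switching event of probability $1-O(N^{-\mathfrak{q}})$ on which the triples $\{a_\alpha,b_\alpha,c_\alpha\}$ are pairwise $\mathfrak{R}/4$-separated and individually tree-like. On this event both $\tilde{G}^{(\mt)}|_{W_S}$ and $P|_{W_S}$ are close to a block-diagonal matrix whose diagonal blocks are tree Green's functions at the root of $\TT_1$ or $\TT_2$, and are therefore invertible with uniformly bounded inverse. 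A perturbative expansion then produces $\|(\tilde{G}^{(\mt)}|_{W_S})^{-1}-(P|_{W_S})^{-1}\|=O(\epsilon')$ up to at worst a logarithmic loss.

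Second, the dichotomy between the refined and worst-case bounds is driven by the off-diagonal decay of $P$ developed in Section \ref{sec:Tree-like} (and encoded in Lemma \ref{lem:minftybnd}). When the Green's function distance from $\{i,j\}$ to $W_S$ is large each factor $P_{ia}$ or $P_{bj}$ inherits a contractive power $\lesssim (d-1)^{-\mathrm{dist}/2}$, so after summing over the $|W_S|\le C(d-1)^{\ell}$ pairs the double sum is summable and produces the refined bound $\phi$ of \eqref{eq:Greendiscon}. In the residual close case I would invoke only the deterministic $O(1)$ entrywise bound on $P$, inheriting the worst-case $O(\epsilon')$ of \eqref{eq:Greencon}. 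A union bound over the $O(N^2)$ pairs $(i,j)$, absorbed into the choice of $\mathfrak{q}$, promotes the pointwise estimates to uniform ones.

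The hard part is stabilizing the $W_S$-block inverse when $|W_S|$ may grow as large as $(d-1)^{\ell} \asymp (\log N)^{\mathfrak{a}}$: the $\epsilon'$ per-entry error of \eqref{eq:removeTest} could naively blow up upon inversion by a factor of $|W_S|$. The near-block-diagonal approximation granted by Proposition \ref{Prop:detswitch} — rather than any crude spectral-gap argument — is what permits this step to go through with only a logarithmic loss. Carefully bookkeeping which sums benefit from Green's function decay, versus which are saturated by the uniform bound, is the remaining technical content and is precisely where the Green's function distance framework of Section \ref{sec:step2} does its work.
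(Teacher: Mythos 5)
Your high-level framework is right — one more Schur-complement step, near-block-diagonal control of the $W_S$-block inverse via Proposition \ref{Prop:detswitch}, and a Green's-function-distance dichotomy to separate the $\phi$-case from the $\epsilon'$-case. The paper uses exactly this skeleton, via the decomposition in \eqref{eq:differenceWS}.

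However, there is a genuine gap in the mechanism you propose for the refined bound \eqref{eq:Greendiscon}. You attribute it to ``the off-diagonal decay of $P$'': when the distance from $\{i,j\}$ to $W_S$ is large, you claim $P_{ia}$ or $P_{bj}$ picks up a contractive power $(d-1)^{-\mathrm{dist}/2}$, and that summing over the $\asymp (d-1)^{\ell}$ terms produces $\phi$. This is not how the proof works, for two reasons. First, $P$ is block diagonal — its underlying graph $B_r(\{i,j\} \cup W_S, \GG\setminus\mt)$ splits into components because the switching data is $\mathfrak{R}/4$-separated — so $P_{ia}$ is exactly zero, not merely small, unless $a$ belongs to $i$'s block, and likewise $(P|_{W_S})^{-1}$ is block diagonal; the sums therefore collapse to $O(1)$ terms immediately, not by summability of a geometric series. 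Second, and more importantly, the residual factors are entries of $G^{(\mt)}$ (and of the inverse-block difference), not of $P$, and those exhibit \emph{no} geometric decay in $a$ or $b$. What controls them is the distinction $\phi$ vs.\ $\epsilon'$ in the bound for $|G^{(\mt)}_{bj}|$ (when $P_{bj}=0$) and for $|(P|_{W_S})^{-1}-(G^{(\mt)}|_{W_S})^{-1}|_{ab}$, and that distinction comes from the Ward-identity pigeonhole argument underlying Definition \ref{defn:phi}: all but $O(N/(\log N)^{K\mathfrak a})$ indices $j$ satisfy $|G^{(\mt)}_{ij}|\le\phi$, and the random switching then makes each $b_\alpha,c_\alpha$ land with high probability on the set where the $\phi$-bound holds (Lemma \ref{lem:Grnfunccon}). ``Green's function connected'' is a statement about the \emph{magnitude} of a resolvent entry, not about graph distance, and conflating the two means your argument never actually produces the extra smallness needed for $\phi$ — in the worst-case Green's-function-connected configuration, geometric distance can be small and the entry can be $O(\epsilon')$ regardless. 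You need a counting argument (how many indices can be connected, via Ward identity and randomness of the switch) in place of the decay argument, together with Lemma \ref{lem:switchinv} for the block-inverse difference.
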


\item Our third step is to reintroduce the vertices in $W_S$, but with adjacency matrix given by the  switching procedure. This section will give us most of the estimates on $\tilde{G}^{(\mt)}$  for the switched graph; namely, we derive the third inequality of equation \eqref{eq:firstswitch}.  This is the goal of Section \ref{sec:Step3}. Our formal statement is as follows:
\begin{Prop}\label{Prop:SwitchGWS}
Consider the setting of the previous  Proposition \ref{prop:removeWS}. Thus, we are considering a graph $\mathcal{G}$ along with a switching event $S \in S_G(\mathcal{G})$ such that we satisfy both of the estimates \eqref{eq:Greendiscon} and \eqref{eq:Greencon}. Construct the switched graph $\tilde{\mathcal{G}}$ as in Definition \ref{defn:switchingtwo}. Let $\tilde{G}$ denote the Green's function of the switched graph $\tilde{G}$. Let $i$ and $j$ be arbitrary vertices in $\tilde{\mathcal{G}}\setminus{\mathbb{T}}$ and let $P$ be a shorthand for $G(Ext(B_r(\{i,j\} \cup (\mathbb{T} \cup W_S),\tilde{\mathcal{G}} \setminus T),Q_I,Q_O)$.

Then, we have the following estimates:
 If $i$ and $j$  are not Green's function connected and either both $i$ and $j$ belong to $W_S$ or both are outside $W_S$, then we have the bound,
\begin{equation} \label{eq:Gtilddiscon}
|\tilde{G}^{(\mathbb{T})}_{ij} - P_{ij}| \lesssim \phi.
\end{equation}

Recall our notation for $\{b_{\alpha},c_{\alpha}\}$ for the set of vertices that participate in the switching.
In the specific case where we let $i$ be of the form $b_{\beta}$ and $j$ be of the form $c_{\alpha}$ with $\alpha \ne \beta$, then we also have the estimate,
\begin{equation} \label{eq:Greendisconbal}
|\tilde{G}_{b_{\beta}c_{\alpha}}^{(\mathbb{T} \cup W_S)} - P_{b_{\beta}c_{\alpha}}| \lesssim \phi + (\epsilon')^2.
\end{equation}

Otherwise, we have the worst case bound,
\begin{equation} \label{eq:Gtildcon}
|\tilde{G}^{(\mathbb{T})}_{ij} - P_{ij}| \lesssim \epsilon'.
\end{equation}

\end{Prop}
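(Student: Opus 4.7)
The key observation is that the switching only alters edges incident to $\mathbb{T}$ (replacing $(l_\alpha,a_\alpha)$ with $(l_\alpha,c_\alpha)$) and edges incident to $W_S = \{b_\alpha : \chi_\alpha = 1\}$ (replacing $(b_\alpha,c_\alpha)$ with $(b_\alpha,a_\alpha)$). Consequently $\tilde{\mathcal{G}} \setminus (\mathbb{T} \cup W_S)$ and $\mathcal{G} \setminus (\mathbb{T} \cup W_S)$ are identical as digraphs, so $\tilde G^{(\mathbb{T} \cup W_S)} = G^{(\mathbb{T} \cup W_S)}$ on their common index set, and Proposition \ref{prop:removeWS} already controls these entries in the manner needed. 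Thus the proof reduces to reintroducing the vertices of $W_S$ with the \emph{new} adjacency structure and comparing the outcome to the corresponding extension of $G$.

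First, I would apply \eqref{eq:firstGreen}--\eqref{eq:thirdGreen} with $T = W_S \cup (W_S + \aleph)$ to the Hermitized matrix of $\tilde{\mathcal{G}}\setminus\mathbb{T}$. The block $\tilde H|_{T \times T^c}$ is supported only on the switched edges $(b_\alpha, a_\alpha)$, and the block $\tilde G^{(\mathbb{T})}|_{T \times T}$ decomposes into the $\alpha$-indexed $2\times 2$ diagonal blocks given by the inversion in \eqref{eq:firstGreen}. Each such block is a small perturbation, of size $\epsilon'$, of the tree self-energy matrix because $\tilde G^{(\mathbb{T}\cup W_S)}_{a_\alpha, a_\alpha}$ is close to $m_\infty$ and the cross entries are small; in particular the blocks are uniformly $O(1)$ and invertible. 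The same Schur expansion applied to the reference quantity $P$ (which is the extension with $Q_I, Q_O$ and with $W_S$ reinserted through tree-like edges) produces an analogous expression whose blocks are, by the tree-Green's function analysis of Section \ref{sec:Tree-like}, also close to the same limiting values.

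Next, I would perform a term-by-term comparison of the two Schur expansions. Each off-diagonal contribution takes the schematic form $\tilde G^{(\mathbb{T}\cup W_S)}_{i,a_\alpha} \cdot M_\alpha \cdot \tilde G^{(\mathbb{T}\cup W_S)}_{b_\alpha,j}$ (with $a_\alpha$ and $b_\alpha$ possibly shifted by $\aleph$ according to the edge orientation), and the error is split by the two bounds \eqref{eq:Greendiscon}--\eqref{eq:Greencon}: Green's function disconnected indices produce a factor $\phi$, while the $O(1)$ connected indices near any vertex (controlled by Proposition \ref{Prop:detswitch}) contribute $\epsilon'$. Summing over $\alpha$ then gives \eqref{eq:Gtilddiscon} in the disconnected case and the worst-case bound \eqref{eq:Gtildcon} otherwise, provided we use the tree Green's function decay along each path as in Section \ref{sec:Tree-like} to beat the $O(\log N)$ count of summands. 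Entries with one or both endpoints in $W_S$ are handled in parallel by \eqref{eq:secondGreen} and \eqref{eq:firstGreen}, noting that $\tilde G^{(\mathbb{T})}_{b_\beta,b_\beta}$ is already close to the tree value by the local block inversion.

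The main obstacle is the off-diagonal special case \eqref{eq:Greendisconbal} with $i = b_\beta$, $j = c_\alpha$ and $\alpha \neq \beta$. Here the naive Schur identity produces a term proportional to $\tilde G^{(\mathbb{T})}_{b_\beta, b_\beta} \cdot (\tilde H)_{b_\beta, a_\beta} \cdot \tilde G^{(\mathbb{T}\cup W_S)}_{a_\beta, c_\alpha}$ whose \emph{leading} (tree) contribution cancels against the analogous leading term in $P$, because both are computed from the same self-consistent parameters $Q_I, Q_O$. What survives is a product of two perturbative errors: one from comparing the $2\times 2$ block $\tilde G^{(\mathbb{T})}|_{\{b_\beta, b_\beta+\aleph\}^2}$ to its tree counterpart, and a second from the Green's function estimate on $\tilde G^{(\mathbb{T}\cup W_S)}_{a_\beta,c_\alpha}$, which is of order $\epsilon'$ because the indices are \emph{not} on the same switching edge ($\alpha \neq \beta$). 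This is the mechanism that yields $\phi + (\epsilon')^2$ rather than merely $\phi$. The bookkeeping challenge is to verify that no remaining linear-in-$\epsilon'$ term appears, which requires carefully tracking which products of tree entries produce off-diagonal matrix elements that cancel exactly between the two Schur expansions; this is enabled by the identities \eqref{eq:m-infty-rel} and the uniform bound on the tree resolvent from Lemma \ref{lem:minftybnd}.
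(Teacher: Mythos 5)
Your opening observation — that $\tilde{\mathcal{G}}\setminus(\mathbb{T}\cup W_S)$ and $\mathcal{G}\setminus(\mathbb{T}\cup W_S)$ coincide as digraphs, so $\tilde{G}^{(\mathbb{T}\cup W_S)}=G^{(\mathbb{T}\cup W_S)}$ and Proposition~\ref{prop:removeWS} already supplies the needed input — is correct, and your overall strategy (Schur complement \eqref{eq:firstGreen}--\eqref{eq:thirdGreen} on $W_S$, then a term-by-term comparison against $P$, with a case split via Green's function connectedness) is the same as the paper's (Lemmas~\ref{lem:ijws}, \ref{lem:switchbound}, \ref{lem:ijwsc}, all built on the resolvent identity \eqref{eq:resolWS} and the expansion \eqref{eq:expansionswitch}). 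The coverage of \eqref{eq:Gtilddiscon} and \eqref{eq:Gtildcon} is sketched at a reasonable level.

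Where the proposal goes wrong is in the mechanism claimed for \eqref{eq:Greendisconbal}. You assert that after cancellation ``what survives is a product of two perturbative errors,'' and that $\tilde G^{(\mathbb{T}\cup W_S)}_{a_\beta,c_\alpha}\lesssim\epsilon'$ ``because the indices are not on the same switching edge.'' Both points are off. First, being on \emph{different} switching edges (and hence, on the good event, not Green's function connected) is exactly what yields the \emph{better} bound $\phi$ from Proposition~\ref{prop:removeWS}, not the weaker bound $\epsilon'$ — you have the implication reversed. Second, and more importantly, the dominant contribution of order $\phi$ is not a product of two small errors: it comes from the single-error term $P|_{W_S}\tilde B\,[P^{(W_S)}-G^{(\mathbb{T}\cup W_S)}]$, where $(P|_{W_S})_{b_\beta x}$ is $O(1)$ (it is a bona fide tree-resolvent entry, not an error) and the lone error factor $[P^{(W_S)}-G^{(\mathbb{T}\cup W_S)}]_{y c_\alpha}$ with $y$ adjacent to $b_\beta$ is $\lesssim\phi$ by disconnectedness. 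The genuine product-of-two-errors terms (those with two factors of $G-P$) give the subdominant $(\epsilon')^2$; since $(\epsilon')^2\ll\phi$ by Definition~\ref{defn:phi}, your mechanism would, if it captured the whole picture, prove a \emph{stronger} estimate than is actually available, which should raise a red flag. Relatedly, the claim that $\tilde G^{(\mathbb{T})}|_{W_S\times W_S}$ ``decomposes into the $\alpha$-indexed $2\times2$ diagonal blocks'' is not accurate: only the reference $P|_{W_S}$ is exactly block diagonal, while $\tilde G^{(\mathbb{T})}|_{W_S}$ has small but nonzero off-block entries that must be controlled by a maximum-principle/fixed-point argument (this is what Lemma~\ref{lem:ijws} via identity \eqref{eq:resolWS} accomplishes), not read off from a block inversion.
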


\item Our fourth step is to finally re-introduce the vertices of $\mt$ to the switched graph. This is the goal of Section \ref{sec:step4}. Formally, our main result is the following,
\begin{Prop} \label{prop:introT}
Consider the setting of the previous Proposition \ref{Prop:SwitchGWS}; especially the important estimates \eqref{eq:Gtilddiscon}, \eqref{eq:Greendisconbal}, and \eqref{eq:Gtildcon}. When we add back the neighborhood $\mathbb{T}$ to the switched graph $\tilde{G}$, we will get the following Green's function estimates. Let $i$ and $j$ be arbitrary vertices in the graph $\tilde{G}$ and let $P$ be a shorthand for $G(\text{Ext}(B_{r}(\{i,j\} \cup \mt, \tilde{\GG}), \tilde{Q}_I,\tilde{Q}_O ))$. Here, $\tilde{Q}_I$ and $\tilde{Q}_O$ are the corresponding values of $Q_I$ and $Q_O$ corresponding to the switched graph $\tilde{G}$. 

If $i$ and $j$ both belong to the neighborhood $\mathbb{T}$, then we have,
\begin{equation} \label{eq:StabEst1}
|\tilde{G}_{ij} - P_{ij}| \lesssim \epsilon' (\log N)^2 \left(\frac{1}{\sqrt{d-1}} \right)^{2\ell -d_i -d_j} + (1 +|d_j  +d _i|)  \left( \frac{1}{d-1}\right)^{\frac{|d_j -d_i|}{2}} \epsilon'.
\end{equation}

If instead one of $i$ belongs to $\mathbb{T}$ and the other belongs to $\mathbb{T}^c$, then we have the estimates,
\begin{equation}\label{eq:step4io1}
|\tilde{G}_{ij}-P_{ij}| \lesssim (log N)^2\epsilon' \left(\frac{1}{\sqrt{d-1}} \right)^{\ell -d_i}.
\end{equation}

In addition, we also have,
\begin{equation} \label{eq:Qdiff1}
\begin{aligned}
&|Q_{I}+Q_O - 2 m_{\infty}| \lesssim \frac{\epsilon}{S^1_g},
&|Q_I - Q_O| \lesssim \frac{\epsilon}{S^2_g}.
\end{aligned}
\end{equation}

\end{Prop}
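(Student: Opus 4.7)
The plan is to re-introduce the tree-like neighborhood $\mt$ by applying the Schur complement formula \eqref{eq:firstGreen}--\eqref{eq:thirdGreen} to the switched graph $\tilde{\GG}$ with $\mt$ removed. Writing
$$\tilde{G}|_{\mt \times \mt} = \bigl(H^{\tilde{\GG}}|_{\mt \times \mt} - H^{\tilde{\GG}}|_{\mt \times \mt^c}\, \tilde{G}^{(\mt)}\, H^{\tilde{\GG}}|_{\mt^c \times \mt}\bigr)^{-1},$$
the key task is to show that the quadratic form $\mathcal{Q} := H^{\tilde{\GG}}|_{\mt \times \mt^c}\, \tilde{G}^{(\mt)}\, H^{\tilde{\GG}}|_{\mt^c \times \mt}$, which is supported on the boundary of $\mt$, is close to the diagonal matrix of $\tilde{Q}_I$'s and $\tilde{Q}_O$'s that would appear in $\text{Ext}(\mt, \tilde{Q}_I, \tilde{Q}_O)$.

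I would split $\mathcal{Q}$ into two pieces. The diagonal piece, at each boundary vertex, is a sum over the new switched neighbors of $\tilde{G}^{(\mt)}_{b,b}$ type entries; this sum, by \eqref{eq:Gtilddiscon}, agrees vertex-by-vertex with the Green's function of $\text{Ext}$ applied to $\tilde{\GG} \setminus \mt$, and since the switching makes the $c_\alpha$'s uniform over edges in $\mt^c$, averaging yields $\tilde{Q}_I$ or $\tilde{Q}_O$ up to an error of order $\phi$ per boundary index. The off-diagonal piece of $\mathcal{Q}$, indexed by pairs of distinct switching edges, is bounded using the cross estimate \eqref{eq:Greendisconbal}, which improves on \eqref{eq:Gtilddiscon} by an extra factor of $\epsilon'$ and is therefore small even when summed over all $O(\mu^2)$ such pairs (recall $\mu = O((\log N)^{2\mathfrak{a}})$). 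The residual cross terms involving the few indices $\alpha$ with $\chi_\alpha=0$ are bounded using Proposition \ref{Prop:detswitch}.

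Once $\mathcal{Q}$ is identified with the diagonal $\tilde{Q}$-parameter matrix, the Green's function identities of Section \ref{sec:GreenFunc} imply that $\tilde{G}|_{\mt \times \mt}$ is close to $G(\text{Ext}(\mt, \tilde{Q}_I, \tilde{Q}_O))|_{\mt \times \mt}$ with the error propagating along the directed tree $\mt$ from the boundary inward. The directed tree Green's function estimates from Section \ref{sec:Tree-like} precisely quantify this propagation: starting from an $O(\epsilon')$ error at the boundary (depth $\ell$), the error at depth $d_i$ is damped by the path factor $(d-1)^{-(\ell-d_i)/2}$, accounting for the factor $(d-1)^{-(2\ell - d_i - d_j)/2}$ in \eqref{eq:StabEst1}. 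The second term on the right of \eqref{eq:StabEst1} is produced by the perturbation Theorem \ref{thm:pert} applied to $\tilde{Q}_{I/O}$ versus $m_\infty$, together with the bounds on $|\tilde{Q}_{I/O}-m_\infty|$ established below. For $i \in \mt, j \in \mt^c$, I would use the mixed Schur formula \eqref{eq:secondGreen}, $\tilde{G}|_{\mt^c \times \mt} = -\tilde{G}^{(\mt)} H^{\tilde{\GG}}|_{\mt^c \times \mt} \tilde{G}|_{\mt \times \mt}$, which involves only one crossing of the boundary and yields \eqref{eq:step4io1} with the single decay factor $(d-1)^{-(\ell-d_i)/2}$.

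Finally, for \eqref{eq:Qdiff1} I would apply the same Schur-complement identity at \emph{every} vertex of $\tilde{\GG}$ having a radius-$\mathfrak{R}$ tree-like neighborhood (all but $O(N^\mathfrak{c})$ of them), and average. This produces, up to errors of order $\log N\cdot \epsilon'/(d-1)^{\ell/2}$ coming from \eqref{eq:estYi}, precisely the self-consistency system $\tilde{Q}_I = Y_{i,\ell}(\tilde{Q}_I, \tilde{Q}_O)$, $\tilde{Q}_O = Y_{o,\ell}(\tilde{Q}_I, \tilde{Q}_O)$ of \eqref{eq:fixed-pt}, and the stability of this fixed-point system established in Theorem \ref{thm:Self-conc} then delivers \eqref{eq:Qdiff1} in terms of $S_g^1, S_g^2$. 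The main obstacle I expect is the control of off-diagonal boundary terms while preserving the sharp decay: the worst-case bound \eqref{eq:Gtildcon} alone would contribute an error of order $\mu^2 \epsilon'$, which is hopelessly large, so it is essential to exploit the $\phi + (\epsilon')^2$ improvement of \eqref{eq:Greendisconbal} and to verify that the enumeration of directed-tree paths in Section \ref{sec:Tree-like} provides enough cancellation to keep the propagated error within the stated bounds.
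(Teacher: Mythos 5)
Your skeleton for \eqref{eq:StabEst1} and \eqref{eq:step4io1} (Schur complement to reattach $\mt$, split the boundary quadratic form into diagonal and off-diagonal parts, and control the off-diagonal part by Green's-function-disconnectedness) matches the route the paper takes in Lemmas \ref{lem:Stab} and \ref{lem:step4io}. But note that the paper runs a \emph{two-stage} comparison: it first compares $\tilde{G}$ to $P$ built with the \emph{unswitched} parameters $Q_I, Q_O$, and only then perturbs $Q_{I/O} \to \tilde{Q}_{I/O}$ via Theorem \ref{thm:pert} together with Lemma \ref{lem:Qtilde}. Your plan jumps straight to identifying the boundary quadratic form $\mathcal Q$ with a diagonal of $\tilde{Q}$'s; this is possible in spirit but the ingredients you actually have at hand (Proposition \ref{Prop:SwitchGWS}) give closeness to the $Q$-extension, not the $\tilde{Q}$-extension, so the two-stage route is what the lemmas support. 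Also a local misreading: \eqref{eq:Greendisconbal} does not ``improve on \eqref{eq:Gtilddiscon} by an extra factor of $\epsilon'$'' — both are of order $\phi$ (since $(\epsilon')^2 \ll \phi$), and what \eqref{eq:Greendisconbal} improves on is the worst-case bound \eqref{eq:Gtildcon}. Relatedly, the second term $(1+|d_i+d_j|)(d-1)^{-|d_i-d_j|/2}\epsilon'$ in \eqref{eq:StabEst1} does \emph{not} come from the $\tilde Q$-vs-$m_\infty$ perturbation, but from the on-diagonal boundary sum with the worst-case $\epsilon'$ bound combined with the tree factorization $\sum_\alpha |P_{i,l_\alpha}||P_{l_\alpha,j}| \lesssim (1+\text{dist}(i,j))(d-1)^{-\text{dist}(i,j)/2}$; it is precisely the term that does not decay with $\ell$ and is why a second switching is needed at all.

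The serious gap is your proposed derivation of \eqref{eq:Qdiff1}. You propose to re-derive the self-consistent system $\tilde{Q}_I = Y_{i,\ell}(\tilde{Q}_I,\tilde{Q}_O)$, $\tilde{Q}_O = Y_{o,\ell}(\tilde{Q}_I,\tilde{Q}_O)$ for the switched graph, invoking the averaged estimate \eqref{eq:estYi} and the stability Theorem \ref{thm:Self-conc}. But \eqref{eq:estYi} is a defining property of $\Omega^2_{S,\mathfrak{q},\hat r}$, which is established only \emph{after} the second switching (Theorem \ref{thm:secondswitching}) and is not available at this stage: Proposition \ref{prop:introT} is a step in the proof of Theorem \ref{thm:firstswitch}, which produces a graph in $\Omega^1$, where only the coarse third estimate of \eqref{eq:firstswitch} (of order $1/\log N$) is asserted. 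Invoking \eqref{eq:estYi} or Theorem \ref{thm:Self-conc} here is circular. The correct argument for \eqref{eq:Qdiff1} is the one in Lemma \ref{lem:Qtilde}: bound $|Q_{I/O}(\GG) - \tilde{Q}_{I/O}|$ directly by resolvent identities together with the Ward identity (this is where the factor $(2d-1)^{2\ell}\,\text{Im}[m^d_\TT]/(N\eta)$ comes from), then combine with the inductive hypothesis $|Q_{I/O}-m_\infty|\lesssim \epsilon/S_g^1$ by the triangle inequality and observe that $(2d-1)^{2\ell}/(N\eta)$ is negligible since $\ell = O(\log\log N)$ and $\eta \ge N^{-\varepsilon}$.
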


\end{enumerate}

Given this final statement, Theorem \ref{thm:firstswitch} is a collection of estimates proved already in the previous propositions. 
\begin{proof}[Proof of Theorem \ref{thm:firstswitch}]

Consider a graph $\mathcal{G}$ in $\Omega^0(z,w)$. To this graph, we can apply the conclusion of Proposition \ref{prop:introT}; from this we have a  set of good switching events $S_{G}(\mathcal{G})$.
On these good switching events, we can derive equations \eqref{eq:StabEst1} and \eqref{eq:step4io1}. This corresponds to the middle equation of \eqref{eq:firstswitch}. Additionally, \eqref{eq:Qdiff1} correspond to the first equation of \eqref{eq:firstswitch}. Finally, equation \eqref{eq:removeTest} is the final equation of \eqref{eq:firstswitch}.
\end{proof}

\textit{Steps to Prove Theorem \ref{thm:secondswitching}}

Most of the same estimates used to prove Theorem \ref{thm:firstswitch} can be used to prove many of the estimates necessary to prove Theorem \ref{thm:secondswitching}. The only new ingredient are new concentration estimates, that involve the fact that the switch used random edges. This step will be discussed in Section \ref{sec:concentration}.

\section{Perturbation theory for almost tree-like neighborhoods:\\
 The proof of Theorem \ref{thm:pert}}\label{sec:Tree-like}

The basic underpinning of our strategy is the ability to deal with the Green's function of graphs with almost tree-like neighborhoods via perturbations of the one for the infinite tree (and when the neighborhood is not tree-like, we use the relation between the Green's function of a graph and the Green's function of the covering graph). In this section we detail the relevant estimates one needs for this purpose.

Hereafter, for digraphs of a generic size we assign one standard index per vertex, 
denoted by lowercase letters, such as $i$. Pairing each such index $i$ with 
a second index, denoted $i+\aleph$, any forward edge $(i \to j)$ of the digraph 
yields two non-zero entries $H_{i,j+\aleph}$ and $H_{j +\aleph,i}$ of the matrix
$H(z,w)$. As we did in case of trees, the entry $H_{i,j+\aleph}$ is interpreted 
as traversing a forward edge, and $H_{j+\aleph,i}$ as traversing 
a backward edge. 

\subsection{General perturbation bounds}

The goal of this section if to prove general perturbations bounds for Green's functions of almost tree-like neighborhoods. Theorem \ref{thm:pert} is an example of a result of this type and is used directly in the proof of our main result. However, there are times where we need to use more sophisticated estimates and, thus, this subsection will also contain more delicate estimates for these cases.  Before we actually go on and prove Theorem \ref{thm:pert}, we will collect some necessary preliminary results; we remark here that many of these useful combinatorial results also appear in the Appendix. 
We start with some useful definitions and lemmas; our first definition will discuss the nature of singularities that appear in the self-consistent equations.

\begin{defn}[Singularities of the self-consistency equation] \label{def:Singularities}
Singularities that may occur in computations with
our (distinguished) solution $m_\infty$ of \eqref{eq:self-conc}, are parametrized by
\begin{equation} \label{eq:singularities}
S_g^1:= 1-X-Y  \le 1 +X - Y := S_g^2, 
\end{equation}
for $X,Y$ of \eqref{dfn:XY}.
\end{defn}
\begin{lemma} \label{lem:stability}
Suppose a tree-like digraph $\GG$ with root $\hat{r}$ is such that 
\[
 (1 + \text{diam}(\GG))^3 [|Q_+|+|Q_{-}|] \ll 1 \,,
\]
where $Q_+:= \frac{Q_I + Q_O-2 m_\infty}{\sqrt{2}}$ and $Q_- := \frac{Q_I - Q_O}{\sqrt{2}}$.
then we have in terms of $S_g^1$, $S_g^2$ 
and $\text{Ext}_i(\GG, Q_I, Q_O)$ of Definition \ref{def:GraphExt}, that
\begin{equation} \label{eq:stabilityestimate}
\begin{aligned}
&|G_{\hat{r}+\aleph, \hat{r}+\aleph}(\text{Ext}_i(\GG,Q_I,Q_O)) - Q_I| \lesssim \\ &
\hspace{1 cm} ( 1+ \text{diam}(\GG))^2 \left(S_g^1 |Q_+ | + S_g^2 |Q_-|   \right) 
 + ( 1+ \text{diam} (\GG))^3[|Q_+ |^2 + |Q_-|^2].
\end{aligned}
\end{equation}
One can obtain a similar expression comparing $G_{\hat{r},\hat{r}}(\text{Ext}_o(\GG,Q_I,Q_O))$ and $Q_O$.

\end{lemma}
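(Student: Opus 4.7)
The plan is to iteratively apply the Schur complement formula down the tree-like graph $\GG$, Taylor expand around the fixed point $(m_\infty,m_\infty)$, and identify the stability factors $S_g^1,S_g^2$ with the singular values of the linearized self-consistency map. Define $F(Q_I,Q_O) := G_{\hat{r}+\aleph,\hat{r}+\aleph}(\text{Ext}_i(\GG,Q_I,Q_O))$. At each $v \in \GG$, the $2\times 2$ block
$\mathcal{G}^{(v)} := [G]_{\{v,v+\aleph\} \times \{v,v+\aleph\}}$ satisfies the recursion derived from \eqref{eq:firstGreen}--\eqref{eq:thirdGreen},
\[
 (\mathcal{G}^{(v)})^{-1} = -\begin{bmatrix} z & w \\ \overline{w} & z \end{bmatrix} - \frac{1}{d-1}\sum_{u : u \to v}\mathcal{G}^{(u)}_{u+\aleph,u+\aleph}\,e_{v+\aleph,v+\aleph} - \frac{1}{d-1}\sum_{u : v \to u}\mathcal{G}^{(u)}_{u,u}\,e_{v,v} - \Delta_v(Q_I,Q_O),
\]
where $\Delta_v(Q_I,Q_O)$ collects the deficit terms from \eqref{eq:exti} at $v$, the sums run over neighbors of $v$ in $\GG$ distinct from its parent in the Schur iteration, and the leaves take their ``children'' contributions entirely from $\Delta_v$. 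When $Q_I = Q_O = m_\infty$, this recursion is satisfied at every level with $\mathcal{G}^{(v)} \equiv \mathcal{G}^{(v)}_\star$, the block corresponding to the infinite-tree values \eqref{eq:m-infty-rel}; in particular $F(m_\infty,m_\infty) = m_\infty$.

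Writing $Q_I - m_\infty = (Q_+ + Q_-)/\sqrt{2}$ and $Q_O - m_\infty = (Q_+ - Q_-)/\sqrt{2}$, a first-order Taylor expansion yields
\[
 F(Q_I,Q_O) - Q_I = -(1-\alpha_I)(Q_I - m_\infty) + \alpha_O (Q_O - m_\infty) + \RR_2,
\]
where $\alpha_I := \partial_{Q_I} F|_{(m_\infty,m_\infty)}$, $\alpha_O := \partial_{Q_O} F|_{(m_\infty,m_\infty)}$, and $\RR_2$ is the quadratic remainder. Differentiating the iterated Schur recursion implicitly, each derivative is expressible as a sum over vertices $v \in \GG$ of a product of infinite-tree Green's function entries along the unique path from $\hat{r}$ to $v$, weighted by the deficit $(d - d_o(v))/(d-1)$ or $(d - d_{in}(v))/(d-1)$. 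Regrouping along the symmetric/antisymmetric basis $(Q_+,Q_-)$, the linear response becomes
\[
 F(Q_I,Q_O) - Q_I = -\beta_+ Q_+ - \beta_- Q_- + \RR_2,
\]
with coefficients $\beta_\pm$ that coincide, at the per-vertex level, with the singular values of $I - DT$ for the self-consistency map $T$ of \eqref{eq:self-conc}: the identification $\alpha_I \leftrightarrow Y$, $\alpha_O \leftrightarrow X$ at the root gives $\beta_+ \sim S_g^1 = 1-X-Y$ in the symmetric direction and $\beta_- \sim S_g^2 = 1+X-Y$ in the antisymmetric direction. The sum over the tree of the path factors is controlled by Lemma \ref{lem:minftybnd}, which supplies contractive tree Green's function bounds with factor $X+Y<1$ compensating the $(2d-1)^\ell$ exponential growth of level $\ell$. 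The net result is that the coefficient multiplying each $S_g^i |Q_\pm|$ is at most $O((1+\text{diam}(\GG))^2)$, as one can telescope the sum using a pair of path-length indices.

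For the quadratic remainder, the same differentiation of the recursion gives $\RR_2$ as a double sum over pairs of vertices in $\GG$, weighted by second-order Green's function path products. Again invoking Lemma \ref{lem:minftybnd} to dominate the exponential tree growth by the contractive $X+Y$-factor, the combinatorial sum produces an extra diameter factor, yielding $|\RR_2| \lesssim (1+\text{diam}(\GG))^3[|Q_+|^2+|Q_-|^2]$; the smallness assumption $(1+\text{diam}(\GG))^3[|Q_+|+|Q_-|] \ll 1$ ensures the Taylor series converges and higher-order terms are absorbed into the stated bound. The companion estimate for $G_{\hat{r},\hat{r}}(\text{Ext}_o(\GG,Q_I,Q_O)) - Q_O$ follows by exchanging the roles of $\hat{r}$ and $\hat{r}+\aleph$ (equivalently $\TT_1 \leftrightarrow \TT_2$). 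The main obstacle is the careful enumeration, in the \emph{directed} tree, of the various path types coupling $\hat{r}+\aleph$ to each interior vertex $v$: unlike the undirected setting of \cite{HuangYau}, each consecutive pair of forward (resp.\ backward) edges introduces a factor whose contraction rate differs, so the worst-case per-edge bound $2 \max(X,Y)$ is expansive while the more delicate enumeration from Lemma \ref{lem:minftybnd} salvages a contractive factor of $X+Y<1$. Performing this enumeration separately in the $Q_+$ and $Q_-$ directions, with careful tracking of cancellations at each level of the Schur iteration, is what makes the coefficients of the linear term proportional to $S_g^1$ and $S_g^2$ respectively, rather than to the worst-case $1$.
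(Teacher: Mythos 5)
Your high-level skeleton matches the paper's: both proofs start from the resolvent expansion $G-P = \sum_{k\ge 1} P[(P^{-1}-G^{-1})P]^k$ (i.e., a Taylor expansion in $Q_{I/O}-m_\infty$), treat the $k=1$ term precisely to extract the $S_g^{1,2}$ factors, and bound the $k\ge 2$ terms combinatorially. However, both of the two steps you describe contain genuine gaps.

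First, your explanation of the linear term is not the correct mechanism. You attribute the appearance of $S_g^1, S_g^2$ to the identification $\alpha_I \leftrightarrow Y$, $\alpha_O \leftrightarrow X$ for the partial derivatives of $F$ at $(m_\infty,m_\infty)$; this is only true for a depth-zero graph. For a tree-like $\GG$ of larger depth $K$, the Jacobian of the map $F$ is (in the $Q_\pm$-eigenbasis) essentially $(X+Y)^{K+1}$ and $(Y-X)^{K+1}$, not $Y\pm X$, so $1-\alpha_I$ and $\alpha_O$ do not directly give $S_g^{1,2}$. The correct mechanism — which is what the paper's Part 2 implements via the decomposition \eqref{eq:firstorderdecomp}, adding and subtracting the contribution of the full truncated tree $T_{\text{diam}}$, and the explicit matrix identity \eqref{eq:matrixcalc} — is that per boundary vertex $v$ one obtains a factor $I - M^{\text{diam}(\GG)-\text{dist}(v,\hat r)}$ with $M = \bigl[\begin{smallmatrix} Y & X \\ X & Y\end{smallmatrix}\bigr]$, after which the $S_g$ factors emerge from $|1-a^k| \le k\,|1-a|$ applied to the eigenvalues $a = X+Y$ and $a = Y-X$. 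This identity both produces $S_g^{1,2}$ and accounts for one of the two $(1+\text{diam}(\GG))$ factors; the second comes from summing $|P_{\hat r,v}||P_{v,\hat r}|$ over boundary shells. Your phrase ``telescope the sum using a pair of path-length indices'' gestures at this, but your stated per-vertex Jacobian identification is wrong and would not yield the claimed factors.

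Second, and more seriously, you treat the higher-order remainder $\RR_2$ as an afterthought (``a double sum ... invoking Lemma \ref{lem:minftybnd}''). The bound on $\sum_{v_1,\dots,v_k} |P_{\hat r v_1}||P_{v_1 v_2}|\cdots|P_{v_k \hat r}|$ is in fact the delicate core of the argument, and it is precisely where the directed setting differs from \cite{HuangYau}. Summing naively in each $v_i$ using a worst-case per-edge bound $\max(X,Y)$ gives an expansive factor; one must exploit the contraction $X+Y<1$ by grouping terms. The paper does this with the basal-tree construction: each product is mapped to a tree-shaped diagram via its least-common-ancestor structure $A_1,\dots,A_{k-1}$, the product of Green's-function factors decomposes as $\prod_{\mathfrak e} K_\mathfrak{e}^2$ over edges of the basal tree via Lemma \ref{lem:Factorization}, and each leaf sum contributes $O(1+\text{diam})$ via Lemma \ref{lem:factsum}, giving $(1+\text{diam}(\GG))^{3k}$ in total. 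Without this re-parameterization (or an equivalent one) the sum cannot be controlled, so your proof as written has a genuine hole at this step.
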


\begin{rmk} We include the factors $1 \pm X- Y$ in order to match the singularity factors 
we expect in the stability equation for $\tilde{Q}_1 - \sqrt{2} m_{\infty}$ and $\tilde{Q}_2.$

\end{rmk}

In order to prove this lemma, we need to introduce the following definition that aids in counting.
\begin{defn}[Basal Tree]
Consider the $d$-regular infinite digraph tree $\mathcal{T}$ with root $\hat{r}$. For $k$ a positive integer, let $v_1,\ldots,v_k$ be vertices on this infinite tree. If we let $P$ be the Green's function corresponding to this infinite tree, consider the product $|P_{\hat{r}v_1}||P_{v_1v_2}||P_{v_2 v_3}|\ldots|P_{v_n \hat{r}}|$. This product can be associated with what we call a basal tree, $T_k$, that reflects the general structure of this product and is convenient for summation. 

If $k=1$, the basal tree, $T_1$, consists of a single edge between two vertices. The interpretation of this is as follows. The two vertices of the edge should correspond to the vertices $r$ and $v_1$, while the path between $r$ and $v_1$ corresponds to the edge between $r$ and $v_1$.  In general, our construction will correspond vertices of the basal tree to some vertices in $\mathcal{T}$ and edges of the basal tree to paths in $\mathcal{T}$.

 For $k >1$, we construct the basal tree inductively. Let $T_{k-1}$ denote the basal tree corresponding to the product $|P_{\hat{r}v_1}||P_{v_1v_2}||P_{v_2 v_3}|\ldots|P_{v_{n-1} \hat{r}}|$. Consider the subgraph in $\mathcal{T}$ consisting of the union of paths between $v_i$ to $v_{i+1}$ for $i=1,\ldots, n-2$ as well as the paths between $\hat{r}$ and $v_1$ and $v_{n-1}$; call this subgraph $S$. Now consider the path from $r$ to $v_n$. This path has some final ancestral vertex $A_n$, which is the last vertex in common between the path between $r$ and $v_n$ and $S$. 

 In the correspondence between the edges  of $T_{k-1}$ and paths in $\mathcal{T}$, the vertex $A_n$ must like on one of these paths; call this path $p$ and the edge corresponding to it in $T_{k-1}$ $e$. To construct $T_k$ from $T_{k-1}$, add a vertex in the middle of the edge $e$; this vertex in $T_n$ corresponds to $A_n$. Finally, add a new leaf vertex at this new vertex. The other endpoint of this leaf corresponds to the vertex $v_k$ while the leaf edge corresponds to the path between $A_n$ and $v_n$. This completes the inductive construction of the tree. 
\end{defn}

\begin{proof}
As in the proof of the stability equation, we consider the difference between 

\noindent $ G: =G_{\hat{r}+\aleph, \hat{r}+\aleph}(\text{Ext}_i(\GG,Q_I,Q_O)) $ and $P:= G_{\hat{r}+\aleph, \hat{r}+\aleph}(\text{Ext}_i(\GG,m_{\infty},m_{\infty}))$. The function $P$ gives the Green's function entries of the infinite tree $\TT_1$.

By applying the resolvent identity, we have,
\begin{equation}
G- P =  \sum_{k=1} ^{\infty}P[( P^{-1} - G^{-1}) P]^k.
\end{equation}
Notice that the terms of $P^{-1} - G^{-1}$ are of the form $ Q_I - m_{\infty}$ or $Q_O - m_{\infty}$. Thus, we see that higher order terms appearing in $P[(P^{-1} - G^{-1}) P]^k$ will give us errors of order $O( (Q_\cdot - m_{\infty})^k)$. To make this rigorous, we need to determine the coefficient.

\textit{ Part 1: Controlling the contribution of the higher order parameters}

Observe that the coefficients of $P[(P^{-1} - G^{-1}) P]^k$ can be bounded in absolute value by,
\begin{equation} \label{eq:kthordersum}
\sum_{v_1,\ldots, v_k} |P_{\hat{r} v_1}| |P_{v_1 v_2}||P_{v_2 v_3}|\ldots |P_{v_k \hat{r}}|,
\end{equation}
where the points $v_i$ are on the boundary of the graph $G$. This is too difficult to sum in each variable $v_k$ independently, so we introduce a new decomposition.

\textbf{Step 1: An alternative decomposition}

If we are considering the product $P_{\hat{r}v_1} P_{v_1 v_2}\ldots P_{v_k r}$, we can naturally associate this product to the subgraph of the tree consisting of all the unions of paths between $v_i$ to $v_{i+1}$. Since each term $P_{v_{i}v_{i+1}}$ is a function of the edges of the path between them, knowing the form of this subgraph is sufficient to bound the value of the desired product. We remark that this is the main point of Lemma \ref{lem:Factorization}. For example, we see that we can bound,
$$
|P_{\hat{r}v_1}| \le \prod_{i=1}^{k_1-1} K(\{e^1_i,e^1_{i+1}\}).
$$ 


Notice that, depending on the specific orientation of the edges, this path will have varying factors of $X$ and $Y$. Naively applying a worst case bound by using $X,Y \le \max(X,Y)$ will lead to a bound of $\sum_{v_1,\ldots, v_k} |P_{\hat{r} v_1}| |P_{v_1 v_2}||P_{v_2 v_3}|\ldots |P_{v_k \hat{r}}|$ that is useless. Instead, one must use the condition $X+Y \le 1$ by grouping together terms that have similar structures, but roughly exchange the roles of $X$ and $Y$. This grouping is formally encoded by finding all terms that map to the same basal tree structure.


To make this procedure clearer,  we will first consider the case where we only have to deal with two vertices $v_1$ and $v_2$. The construction used here is the linchpin of our method. Trees with more edges are constructed inductively. 


 \textbf{Two Vertex Case:} We let $A_1$ be the last common ancestor of $v_1$ and $v_2$. Pictorially,  to arrive at $v_2$ from $v_1$, we first proceed along the path $p^1$ to $A_1$. From the point $A_1$ we start branching out to the  point $v_2$ on the boundary; we let $p^2 = e_1^2,e_2^2,\ldots, e_{k_2}^2$ be the path from $A_1$ to $v_2$. If the path from $A_1$ to $v_1$ is $e^1_{b_1}, e^1_{b_1+1},\ldots, e^1_{k_1}$, then we see that the value of $|P_{v_1 v_2}|$ can be bounded as follows:

 $$ |P_{v_1 v_2}| \le C \prod_{i=b_1}^{k_1-1}K_{e^1_{i},e^1_{i+1}} \prod_{j=1}^{k_2-1} K_{e^2_j, e^2_{j+1}}$$. 

The path back from $v_2$ to $\hat{r}$ will first traverse $v_2$ to $A_1$ and then $A_1$ to $\hat{r}$. We can visualize this as a star with three edges.  This star is the tree associated with our subgraph. $A_1$ is the center of this star and it is connected to $v_1$,$v_2$ and $\hat{r}$. If we let $K_{A_1,\hat{r}/v_i}$ be a shorthand for the product of $K$ factors between $A_1$ and the respective vertex in question, then we have that,
\begin{equation}
|P_{\hat{r}v_1}| \le C K_{A_1 \hat{r}} K_{A_1 v_1}, \quad
|P_{v_1 v_2}| \le C K_{A_1 v_1} K_{A_1 v_2}, \quad  |P_{r v_2}| \le C K_{A_1 v_1} K_{A_1 v_2}.
\end{equation}

Thus, we would have that,
\begin{equation}
\sum_{v_1,v_2} |P_{\hat{r}v_1}| |P_{v_1 v_2}| |P_{v_2 \hat{r}}| \le \sum_{A_1,v_1,v_2} C^3 K_{A_1 \hat{r}}^2 K_{A_1 v_1}^2 K_{A_1 v_2}^2.
\end{equation}

\textbf{Step 2: The summation along the tree}

The purpose of the tree structure is to introduce the new summation variables $A_1\ldots A_{k-1}$. This also introduces a new way to sum along the edges. As we mentioned before, we use Lemma \ref{lem:Factorization} the fact that the products $|P_{v_i v_{i+1}}|$ decomposes as a product of $K$ factors along the edges of the path connecting $v_i$ and $v_{i+1}$. 

Assume that the path from $v_i$ to $v_{i+1}$ in the tree $T$ constructed earlier involves edges $\mathfrak{e}_1,\ldots, \mathfrak{e}_k$ of the tree $T$. Recall that any edge $\mathfrak{e}$ of $T$ corresponds to a path $p = (e_1,e_2,e_3,\ldots,e_m)$ of edges on our original graph. Given an edge $\mathfrak{e}$ of $T$ that corresponds to a path $p=(e_1,e_2,e_3,\ldots, e_m)$ in our original graph,  we let $K_{\mathfrak{e}}= \prod_{i=1}^{m-1} K_{e_i,e_{i+1}}$. We thus have that,
\begin{align*}
 |P_{v_i,v_{i+1}}| &\le C^k \prod_{j=1}^k P_{\mathfrak{e}_j},\\
 \prod_{i=0}^k |P_{v_i v_{i+1}}| &\le  C^{4k} \prod_{\mathfrak{e} \in T} K^2_{\mathfrak{e}},\\
 \sum_{v_1,v_2,\ldots,v_k} \prod_{i=0}^k |P_{v_i v_{i+1}}| &\le  \sum_{\mathcal{T}} \sum_{T \in \mathcal{T}}  C^{4k} \prod_{\mathfrak{e} \in T} K^2_{\mathfrak{e}}
\end{align*}
where $v_0$ and $v_{k+1}$ are both the root $\hat{r}$.
The constant $C^{2k}$ comes since we get a factor of at most $C$ each time we introduce a factor of the form $K_{\mathfrak{e}}$ for each edge $\mathfrak{e} \in \mathbb{R}$. Each edge $\mathfrak{e}$ will appear at most two times.  In the last line $\mathcal{T}$ represents all possible tree structures that could occur, while $T \in \mathcal{T}$ is a sum over all actual trees that have said tree structure.

Fix a tree structure $\mathcal{T}$; this will have a leaf edge $l$, and we can represent $\mathcal{T}= \mathcal{T}' \cup l$ where $\mathcal{T}'$ is the tree structure with the leaf edge removed. Then, each tree $T$ in $\mathcal{T}$ can be decomposed as the union of a tree $T'$ of the form $\mathcal{T}'$ and some path $p$ that will project down to the edge $l$.
\begin{equation}
\begin{aligned}
\sum_{T \in \mathcal{T}}  \prod_{\mathfrak{e} \in T} K^2_{\mathfrak{e}} &= \sum_{T' \in \mathcal{T}'}  \prod_{\mathfrak{e} \in T'} K^2_{\mathfrak{e}} \sum_{p \to l}K^2_{l}  
 \le (1+\text{diam}(\GG)) \sum_{T' \in \mathcal{T}'}  \prod_{\mathfrak{e} \in T'} K^2_{\mathfrak{e}} 
\end{aligned}
\end{equation}

The last line sums up $K^2_l$ over all paths $p$ that could possibly project down to $l$. For this quantity, the bound $(1+\text{diam}(\GG))$ is an application of  Lemma \ref{lem:factsum}. By iteration,  since there are $2k-1$ leaves, one can see that upon fixing the tree, one can derive a bound for the tree sum that is $\lesssim (\text{diam}(\GG)+1)^{2k-1}$. All that is left is to compute the sum over all possible tree structures $\mathcal{T}$. 

Since, we know that the distance between any two vertices $v_i$ and $v_{i+1}$ is less than $\text{diam}(\GG)$; we use the very generous bound that there are no more than $1+ \text{diam}(\GG)$ ways to chose the position of $A_i$ once the structure of the vertices from $\{v_0,\ldots,v_i\} \cup \{A_1,\ldots,A_{i-1}\}$ has been decided. This shows that there are no more than $(1 + \text{diam}(\GG))^{k-1}$ possible tree structures to consider. In total, this bounds the sum appearing in \eqref{eq:kthordersum} by $\lesssim (1 + \text{diam}(\GG))^{3k}$.

\textit{Part 2: Deriving the correct coefficient structure of the first order  term}

We have more exact expressions for the coefficient of the first order terms.
\begin{equation} \label{eq:firstorder}
\sum_{v \in \delta(G)} \frac{d- d_{\text{out}}(v)}{d-1} P_{\hat{r},v}  P_{v,\hat{ r}} ( Q_I - m_{\infty}) + \sum_{v \in \delta(G)} \frac{d - d_{\text{in}}(v)}{d-1} P_{\hat{r}, v+\aleph} P_{v+\aleph, \hat{r}} (Q_O - m_{\infty}).
\end{equation}
Here, $\delta(G)$ consists of the vertices that lie on the boundary of our graph $G$.

We can imbed our original graph $G$ into a tree $T_{\text{diam}}$ of depth $\text{diam}(G)$.
For each vertex $v$, we let $S^{out}_v$ denote the  vertices of $T_{\text{diam}}$ that are children of $v$ from an out-edge from $v$, while $S^{in}_v$ denotes the vertices of $T_{\text{diam}}$ that are children of $v$ from an in-edge of $v$.

An alternative way to write the expression in \eqref{eq:firstorder} is as,
\begin{equation} \label{eq:firstorderdecomp}
\begin{aligned}
&\sum_{v \in \delta(G)}  \Big[\frac{d- d_{\text{out}}(v)}{d-1} P_{\hat{r},v}  P_{v, \hat{r}}[Q_I - m_{\infty}] - \sum_{w \in S^{out}_v} P_{\hat{r},w} P_{w,\hat{r}}[Q_I - m_{\infty}] - \sum_{w \in S^{out}_v} P_{\hat{r},w+\aleph} P_{w+\aleph,\hat{r}} [Q_O - m_{\infty}] \Big]\\
&+ \sum_{v \in \delta(G)}   \Big[\frac{d- d_{\text{in}}(v)}{d-1} P_{\hat{r},v}  P_{v, \hat{r}}[Q_I - m_{\infty}] - \sum_{w \in S^{in}_v} P_{\hat{r},w} P_{w,\hat{r}}[Q_I - m_{\infty}] - \sum_{w \in S^{in}_v} P_{\hat{r},w+\aleph} P_{w+\aleph,\hat{r}} [Q_O - m_{\infty}] \Big]\\
& + \sum_{ v \in \delta(T_{\text{diam}})} \frac{d - d_{\text{out}}(v)}{d-1} P_{\hat{r},v} P_{v,\hat{r}} (Q_I - m_{\infty}) +  \frac{d - d_{\text{in}}(v)}{d-1} P_{\hat{r},v+\aleph} P_{v+\aleph,\hat{r}} (Q_O - m_{\infty}).
\end{aligned}
\end{equation}

The main point is that the last line can be computed exactly via a standard recursion, while the terms in the first two lines should correspond to a smaller order error term with the correct singularity we have to consider. We now treat bounding the last two lines. Indeed, for $w \in S^{out}_v$, we can first try to write $P_{\hat{r} w}P_{w \hat{r}}$ as $P_{\hat{r} v} P_{v \hat{r}} (G_{\TT_1})_{ vw} (G_{\TT_1})_{wv}$. Here, we treat $v$ as the root of a new tree of type $\TT_1$ and used the fact that the product $P_{\hat{r}w}$ splits as the product $P_{\hat{r}v}$ times this $(G_{\TT_1})_{vw}$.  The main benefit now is the fact that we have explicit ways to compute $\sum_{w \in S^{out}_v} (G_{\TT_1})_{ vw} (G_{\TT_1})_{wv}$.

Namely, the matrix $\begin{bmatrix} Y & X \\ X & Y \end{bmatrix}$ satisfies the property that we can write
 $$ \Big[\frac{d- d_{\text{out}}(v)}{d-1} P_{\hat{r},v}  P_{v, \hat{r}}[Q_I - m_{\infty}] - \sum_{w \in S^{out}_v} P_{\hat{r},w} P_{w,\hat{r}}[Q_I - m_{\infty}] - \sum_{w \in S^{out}_v} P_{\hat{r},w+\aleph} P_{w+\aleph,\hat{r}} [Q_O - m_{\infty}] \Big]$$
as,
\begin{equation} \label{eq:matrixcalc}
\frac{d- d_{\text{out}}(v)}{d-1} P_{\hat{r},v} P_{v,\hat{r}} \begin{bmatrix}
1 & 0
\end{bmatrix}
\Big( I  - \begin{bmatrix}
Y & X \\
X & Y
\end{bmatrix}^{\text{diam}(\GG) - \text{dist}(v,\hat{r})} \Big) \begin{bmatrix}
Q_I - m_{\infty}\\
Q_O - m_{\infty}
\end{bmatrix}.
\end{equation}

Now, we can diagonalize the matrix $ \begin{bmatrix}
Y & X \\
X & Y
\end{bmatrix}$
and show that we can rewrite \eqref{eq:matrixcalc} as,
\begin{equation}
\frac{d- d_{\text{out}}(v)}{d-1} P_{\hat{r},v}  P_{v, \hat{r}}\left[ \frac{1}{\sqrt{2}}[ 1- (X+Y)^k] Q_{+} + \frac{1}{\sqrt{2}} [ 1- (Y-X)^k] Q_{-} \right].
\end{equation}
In absolute value, this can be bounded by,
\begin{equation}
\frac{d - d_{\text{out}(v)}}{d-1}|P_{\hat{r},v}||P_{v,\hat{r}}| (1+\text{diam}(\GG)) \left[ |1- (X+Y)| |Q_{+}| + |1- (Y-X)| |Q_{-}|  \right]
\end{equation}
Finally, we see that,
\begin{equation}
\begin{aligned}
&\sum_{v} \frac{d - d_{\text{out}(v)}}{d-1}|P_{\hat{r},v}||P_{v,\hat{r}}| (1+\text{diam}(\GG)) \left[ |1- (X+Y)| |Q_{+}| + |1- (Y-X)| |Q_{-}|  \right]\\
&\le O((1+ \text{diam}(\GG)^2)\left[|1-(X+Y)| |Q_{+}| + |1-(Y-X)||Q_{-}| \right]
\end{aligned}
\end{equation}
We can treat the second line of \eqref{eq:firstorderdecomp} in a similar way.

Combining these error estimates gives us the desired result \eqref{eq:stabilityestimate}.
\end{proof}

Our next lemma gives us a perturbation analysis of $G_{ij}(Ext(\GG,Q_I,Q_O)) $ with respect to 

\noindent$G_{ij}(Ext(\GG,m_{\infty},m_{\infty}))$.
\begin{lemma} \label{lem:pert}
 Let $\GG$ be a graph with excess of at most $1$. Assume further that $\text{diam}(\GG)^3[|Q_I - m_{\infty}| + |Q_O - m_{\infty}|]\ll 1$. Then, we have, 
\begin{equation} 
\begin{aligned}
&|G_{ij}(Ext(\GG,Q_I,Q_O)) - G_{ij}(Ext(\GG,m_{\infty},m_{\infty}))| \\ &\lesssim \left(\frac{1}{d-1}\right)^{\text{dist}(i,j)}(1+ \text{dist}(i,j)) [|Q_I - m_{\infty}| + |Q_O - m_{\infty}|]
\end{aligned}
\end{equation}
\end{lemma}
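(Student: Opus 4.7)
Set $G := G(\text{Ext}(\GG,Q_I,Q_O))$ and $P := G(\text{Ext}(\GG,m_\infty,m_\infty))$. Observe that, by Definition \ref{def:GraphExt}, the matrix $D := P^{-1} - G^{-1}$ is \emph{diagonal}, supported on boundary vertices of $\GG$ (those with positive deficit), with entries of the form $\frac{d-d_o(v)}{d-1}(Q_I-m_\infty)$ on the $(v,v)$ block and $\frac{d-d_{in}(v)}{d-1}(Q_O-m_\infty)$ on the $(v+\aleph,v+\aleph)$ block. The plan is to expand via the resolvent identity
\begin{equation*}
G - P = \sum_{k \ge 1} P \, [D P]^k
\end{equation*}
and bound each term separately. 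Since the excess of $\GG$ is at most one, $P$ differs from the Green's function of a sub-tree of the infinite $d$-regular directed tree by the inclusion of at most a single extra edge, which can be absorbed using the Schur complement formula and contributes only an $O(1)$ multiplicative loss.

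For the first-order ($k=1$) term, I would write
\begin{equation*}
(P D P)_{ij} = \sum_{v \in \partial \GG} P_{iv} D_{vv} P_{vj} \, + \, \text{(analogous sum with } v+\aleph),
\end{equation*}
and bound $|D_{vv}| \le C(|Q_I-m_\infty| + |Q_O-m_\infty|)$. The key geometric observation on the tree is that for every boundary vertex $v$, the unique paths from $i$ to $v$ and from $v$ to $j$ share an initial segment with the $i$-$j$ path until they diverge at some vertex $A = A(v)$ on that path; hence
\begin{equation*}
\text{dist}(i,v) + \text{dist}(v,j) \;=\; \text{dist}(i,j) + 2\,\text{dist}(A,v).
\end{equation*}
Applying Lemma \ref{lem:Factorization} to factor $|P_{iv}||P_{vj}|$ as a product of two-edge $K$-factors along the path, one obtains a product which splits as (factors along the $i$-$j$ path) $\times$ (factors along the spur from $A$ to $v$). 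Summing the spur factors over $v$ for each fixed $A$ using Lemma \ref{lem:factsum} and the contractivity $X+Y < 1$ (as in the matrix-calculation step of Lemma \ref{lem:stability}, c.f.\ \eqref{eq:matrixcalc}) yields a bound uniform in $A$. Summing over the $\text{dist}(i,j)+1$ choices of $A$ on the $i$-$j$ path produces the factor $(1+\text{dist}(i,j))$, while the product of $K$-factors along the $i$-$j$ path contributes the decay $(1/(d-1))^{\text{dist}(i,j)}$ (using that each pair of consecutive edge factors contributes at most $K^2 \lesssim 1/(d-1)$).

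For the higher-order ($k \ge 2$) terms, I would follow exactly the basal-tree enumeration scheme used in the proof of Lemma \ref{lem:stability}. Namely, each summand in $P[DP]^k$ of the form $\sum_{v_1,\dots,v_k} P_{iv_1}P_{v_1v_2}\cdots P_{v_k j}$ is indexed by a sequence of boundary vertices which determines a subtree with at most $k+1$ branching vertices and $\le 2k+1$ leaves. The same two-step argument (enumerating tree topologies, then summing along each edge of the basal tree using $X+Y<1$) gives a bound of order $(1/(d-1))^{\text{dist}(i,j)}(1+\text{dist}(i,j)) \cdot (1+\text{diam}(\GG))^{3k}$, since the initial and final $K$-products again cover the $i$-$j$ path and factor out the desired decay. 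Combining with the $k$-th power of $C(|Q_I-m_\infty|+|Q_O-m_\infty|)$ from $D$ and summing the geometric series, convergent thanks to the hypothesis $\text{diam}(\GG)^3[|Q_I-m_\infty|+|Q_O-m_\infty|] \ll 1$, the $k=1$ term dominates and yields the claimed estimate.

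The main obstacle I anticipate is isolating the decay factor $(1/(d-1))^{\text{dist}(i,j)}$ from the tree-sum, because in the directed setting the per-edge weights $X$ and $Y$ are orientation-dependent and a naive worst-case substitution $X,Y \le \max(X,Y)$ destroys contractivity. As in Lemma \ref{lem:stability}, this is circumvented by grouping paths whose basal-tree structure is identical but whose $X \leftrightarrow Y$ assignments are swapped, and exploiting $X + Y < 1$; the extra bookkeeping needed here is that the $i$-$j$ path must be handled separately from the spur sums so that its contribution provides exactly the stated decay rather than being lost to the contractive factor.
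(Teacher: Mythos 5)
Your proposal follows essentially the same route as the paper's proof: resolvent expansion $G-P = \sum_k P[DP]^k$, the basal-tree/least-common-ancestor reorganization of the multi-index sums, exploiting $X+Y<1$ to preserve contractivity in the oriented setting, isolating the $(1/(d-1))^{\mathrm{dist}(i,j)}$ decay from the $i$-$j$ path, and summing the geometric series under the smallness hypothesis. Your identification of the source of the $(1+\mathrm{dist}(i,j))$ factor as the choice of branching point on the $i$-$j$ path is exactly the mechanism the paper uses, and your anticipated obstacle (naive $\max(X,Y)$ destroying contractivity) is precisely the issue the paper flags.

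The one place your plan is materially underspecified is the treatment of the excess-one case. You assert the extra edge can be "absorbed using the Schur complement formula" with an $O(1)$ loss, but the real difficulty is not adding a single matrix correction: it is that the entire factorization machinery (Lemma \ref{lem:Factorization}, the $K$-factor decomposition, the unique-path/LCA bookkeeping of the basal tree) is only valid on a tree, and once there is a cycle the paths between $i$, $j$, and the boundary vertices are not unique. The paper handles this by lifting to the universal cover of the graph (Lemma \ref{lem:excess1}): one fixes a lift $\tilde i$ of $i$, replaces each $|P_{l^m,l^{m+1}}|$ by a bound involving the two nearest lifts on the covering tree, performs the entire basal-tree computation on the tree cover, and then sums over all lifts $\tilde j$ of $j$ using the geometric decay $(1/(d-1))^{\mathrm{dist}(\tilde i,\tilde j)}$ (lifts of $j$ differ in distance by the cycle length $|C|$, so the sum converges). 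The paper also does bookkeeping you omit: each appeal to the factorization (\ref{eq:fact}) can drop at most $3$ natural $K$-factors, so at least $\mathrm{dist}(\tilde i,\tilde j)-3k$ factors survive along the $i$-$j$ path, and the lost $(\sqrt{d-1})^{3k}$ is absorbed into $[|Q_I-m_\infty|+|Q_O-m_\infty|]^k$. To make your plan rigorous you would need to replace the Schur-complement heuristic with this lifting argument, or supply an equivalent way to restore unique paths.
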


\begin{proof}
As a shorthand, we let $P_{ij}$ denote the Green's function $G_{ij}(Ext(\GG,m_{\infty},m_{\infty})$, while $G_{ij}$ will denote the Green's function of $G_{ij}(Ext(\GG,Q_I,Q_O))$. We can determine the value of $|G_{ij} -P_{ij}|$ via the resolvent formula. Let $l_{\alpha}$ denote the indices corresponding to vertices on the boundary of the graph $\GG$. We have the following,
\begin{equation}
|G_{ij} -P_{ij}| = \sum_k \sum_{l^1,\ldots,l^k} P_{i,l^1} \prod_{m=1}^k [Q_{j(l^m)} -m_{\infty}] P_{{l^m},l^{m+1}},
\end{equation}
where $j(l) = I$ or $O$
if $l^m$ is of the form $v+ \aleph$ or $v$, respectively. We use the convention that, upon fixing $k$,  the final $l$ index $l^{k+1}$ is the same as $\hat{r}$. 

As in the proof of Lemma \ref{lem:stability}, our biggest issue is to determine the value of the $k$ product,
\begin{equation}\label{eq:Exkdef}
Ex_k(i,j):=\sum_{l^1,\ldots,l^k} |P_{i,l^1}|\prod_{m=1}^k|P_{l^m.l^{m+1}}|.
\end{equation} Now, we cannot directly apply the analysis detailed in the aforementioned lemma, as the proof there assumed a purely tree-like structure.

What we have do instead is appeal to the fact that our graph has a tree-like covering and relate the values of $P_{l^i,l^j}$ to the Green's functions of the covering. Now, the results of Lemma \ref{lem:excess1} show that, up to a constant factor, $|P_{l^i ,l^{i+1}}|$ can be bounded by $2 \max(|P_{\tilde{l}^i,\tilde{l}^{i+1}_1}| , |P_{\tilde{l}^i, \tilde{l}^{i+1}_2}|)$. Here, $\tilde{l}^i$ is some chosen covering of $l^i$ and $\tilde{l}^{i+1}_1$ and $\tilde{l}^{i+1}_2$ are the two closest chosen coverings of $l^{i+1}$ to $\tilde{l}^i$.
Thus, when we consider the lift to the covering tree,
$$
 \sum_{l^1,\ldots,l^k} |P_{i,l^1}| \prod_{m=1}^k |P_{l^m,l^{m+1}}|,
$$
can be upper bounded by the sum
$$
2^{k+1}\sum_{\substack{ \text{dist}(\tilde{l}^n,\tilde{l}^{n+1}) \le 3  \text{ diam}(\GG)\\  \tilde{l}^{k+1} \text{ covers } j}} |P_{\tilde{i},\tilde{l}^1}| \prod_{m=1}^k |P_{\tilde{l}^m, \tilde{l}^{m+1}}|.
$$
Here, $\tilde{i}$ is a fixed lift of $i$.
The restriction that $\text{dist}(\tilde{l}^n, \tilde{l}^{n+1}) \le 3   \text{ diam}(\GG)$ comes from the fact that the closest and the second closest cover of any point differ by distance at most $|C| \le 2 \text{ diam}(\GG)$. To perform the above summation, we first fix the final cover of the point $j$, and then compute the above sum in the same manner as in the proof of Lemma \ref{lem:stability}.

Namely, we decompose the product $|P_{l^m,l^{m+1}}|$ into its $K$ factors, introduce the least common ancestor variables, and construct the associated tree. The only difference with the constructed tree is that the $K$ factors associated with the vertices appearing on the path $p_{ij}$ between the lift of $i$ and $j$ may not appear twice.  To deal with these vertices, we remove the edges in the tree that correspond to vertices that are connected via this path. The remaining forest graph can be bounded via the same methods of the proof of Lemma \ref{lem:stability} and will give the same bound.

We only need to reintroduce the $K$ factors that we did not consider on the path between $i$ and $j$. Observe that at least $dist(\tilde{i},\tilde{j}) - 3k$ of the vertices have $K$ factors associated with them, since each factorization from \eqref{eq:fact} can lose no more than $3$ natural $K$ factors.
 Once can see that the $K$ factors corresponding to this path between $i$ and $j$ will give a factor bounded by $\left(\frac{1}{\sqrt{d-1}}\right)^{\text{dist}(\tilde{i},\tilde{j})} (\sqrt{d-1})^{3k}$. We see that the last $(\sqrt{d-1})^{3k}$ can be absorbed by $|Q_I - m_{\infty}|^k$. This latter factor can allow us again to sum over the possible lifts of $\tilde{j}$, since the successive lifts of $\tilde{j}$ increase by distance $|C|$ from the lift $\tilde{i}$. This allows the term  $\left(\frac{1}{\sqrt{d-1}}\right)^{\text{dist}(\tilde{i},\tilde{j})}$ to be summable over successive lifts.

Now, if we want to be careful, we can improve the first order coefficient from $(1+ \text{diam}(\GG))^3$ to $1 + \text{dist}(i,j)$. The method of proof is similar, one only needs to more careful in the discussion of branching. First, consider the path $p_{ij}$ between the lifts of $i$ and $j$. To choose $\tilde{l}_1$, one first picks a point $Q_i$ on the path $p_{ij}$ and then starts branching from there. There are $1 + \text{dist}(\tilde{i},\tilde{j})$ choices for the point $Q_i$. Now, if one takes into account that $\tilde{l}_1$ is the lift of a boundary point, one can use the superior analysis of the first order coefficient from Lemma \ref{lem:stability} to get rid of the factor of $(1+ \text{diam}(\GG))^2$ and replace it with an order $1$ coefficient. This gives us our final answer.
\end{proof}

Because of the particular dependence of pointwise estimates on Green's functions from $i$ to $j$, the worst case bound on pointwise Green's function are generally worse than the averaged case bound. By a similar analysis, one can derive average case bounds.

\begin{cor} \label{cor:AverageCase}
Consider the setting of the previous Lemma \ref{lem:pert}. Assume also that we have the improved bound $(\log N)^9[|Q_I - m_{\infty}|+|Q_O - m_{\infty}|] \ll 1$. Let $\hat{r}$ be a distinguished vertex of $\GG$, which we will call the `root'. We have that,
\begin{equation}
\sum_{x \in \delta(\GG)} \sum_{o_x}|G_{\hat{r}+o_r,x+o_x}(Ext(\GG,Q_I,Q_O))|^2 | \lesssim 1.
\end{equation}
Here, $o_r$ and $o_x$ are allowed to be either $0$ or $\aleph$.
\end{cor}
\begin{proof}
Without loss of generality we may assume that $o_r=0$.

Let $P_{\hat{r}x}:= G_{\hat{r}x}(Ext(\GG,m_{\infty},m_{\infty}))$ and use $G_{rx}$ as a shorthand for the Green's function

\noindent $G_{\hat{r}x}(Ext(\GG,Q_I,Q_O))$. We have that,
\begin{equation}
\sum_{x \in \delta(\GG)} \sum_{o_x} |G_{\hat{r},x+o_x}|^2 \le 2 \sum_{x \in \delta(\GG)} \sum_{o_x}[|P_{\hat{r},x+o_x}|^ 2+|G_{\hat{r},x+o_x} -P_{\hat{r},x+o_x}|^2].
\end{equation}

Now, we have the correct bound for
$\sum_{x \in \delta(\GG)} \sum_{o_x}|P_{\hat{r},x+o_x}|^ 2$ by Corollary \ref{cor:firstcor}.
We perform the perturbation expansion for $G_{\hat{r},x+o_x} -P_{\hat{r},x+o_x}$.

Recall $Ex_k$ from equation \eqref{eq:Exkdef}.
We have,
\begin{equation}
\begin{aligned}
|G_{\hat{r},x+o_x} -P_{\hat{r},x+o_x}|^2& \le \Big(\sum_{k=1}^{\infty} Ex_k(\hat{r},x+o_x) [|Q_I - m_{\infty}| + |Q_O - m_{\infty}]^k\Big)^ 2\\
& \le \sum_{k=1}^{\infty} Ex_k(\hat{r},x+o_x)^2 (2[|Q_I - m_{\infty}| + |Q_O - m_{\infty}|])^{2k} \sum_{k=1} 2^{-2k}.
\end{aligned}
\end{equation}
Now, we see our mail goal is to understand the asymptotics of the averaged sum,
\begin{equation} \label{eq:avgsum}
\sum_{x} \sum_{o_x} Ex_k(\hat{r},x+o_x)^2 \le (1+ \text{diam}(\GG))^{k} \sum_{l^1,\ldots,l^k,l^{k+1}} |P_{i l^1}|^2 \prod_{m=1}^k |P_{l^m, l^{m+1}}|^2
\end{equation}

Now, we can write,

\begin{equation}
\sum_x \sum_{o_x} Ex_k(\hat{r},x+o_x)^2 = \sum_{l^1,\ldots, l^k, x,o_x, \hat{l}^1,\ldots,\hat{l}^k} |P_{i,l^1}||P_{i.\hat{l}^1}| \prod_{m=1}^{k-1}|P_{l^m,l^{m+1}}||P_{\hat{l}^m,\hat{l}^{m+1}}| |P_{l^k, j}| |P_{\hat{l}^k,\hat{j}},
\end{equation}
However, one can relabel $x+o_x$ as a new summation variable $l^{k+1}$ and relabel $\hat{l}^j$ to $l^{k+1 + (k-j)+1}$ to understand this as $Ex_{2k+1}(\hat{r},\hat{r})$, which we can bound by $\lesssim(1 + \text{diam}(\GG)^{6k+3}$. Under our assumption that $(1 + \text{diam}(\GG))^9[|Q_I - m_{\infty}| +|Q_O - m_{\infty}|] \ll 1$, this shows that the sum in \eqref{eq:avgsum} is $\ll 1$. This gives us our desired result.
\end{proof}

The stability result detailed above shows, as one would expect, that the approximation of $\text{Ext}_i(\GG,Q_I,Q_O)$ to the infinite tree would get better as $\GG$ becomes closer to the tree. This is an essential input to our next lemma, which details how the Green's functions change as the graph neighborhood around two points change.

\begin{lemma} \label{lem:nbdpert}
Let $\GG$ be a graph containing two vertices $i$ and $j$. Let $\mathcal{H}$ be a subgraph of $\GG$ that contains the radius $r$ neighborhood of $i$ and $j$.
 Then, we have the following estimate,
\begin{equation}
\begin{aligned}
&|G_{ij}(Ext(\GG, Q_I,Q_O)) - G_{ij}(Ext(\mathcal{H},Q_I,Q_O))| \le  \Big(\frac{1}{d-1} \Big)^r\\
&\hspace{1cm}  +(1+ \text{diam}(\GG)^2 [S_g^1 |Q_I + Q_O - 2 m_{\infty}(z,w)| + S_g^2 |Q_I - Q_O|]\\& \hspace{1 cm} + (\log N)^2 [|Q_I - m_{\infty}|^2 + |Q_O - m_{\infty}|^2],
\end{aligned}
\end{equation}
provided we have that $(\log N)^9 |Q_{I/O} - m_{\infty}| \ll 1$.
\end{lemma}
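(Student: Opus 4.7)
The plan is to bridge the two Green's functions through the self-consistent value $m_\infty$. Writing $\mathrm{Ext}_{\bullet}(Q) := \mathrm{Ext}(\bullet, Q_I, Q_O)$ and $\mathrm{Ext}_{\bullet}(m) := \mathrm{Ext}(\bullet, m_\infty, m_\infty)$, the triangle inequality yields
\begin{equation*}
 G_{ij}(\mathrm{Ext}_\GG(Q)) - G_{ij}(\mathrm{Ext}_\mathcal{H}(Q)) = \Delta_1 + \Delta_2 - \Delta_3,
\end{equation*}
with $\Delta_1 := G_{ij}(\mathrm{Ext}_\GG(m)) - G_{ij}(\mathrm{Ext}_\mathcal{H}(m))$, $\Delta_2 := G_{ij}(\mathrm{Ext}_\GG(Q)) - G_{ij}(\mathrm{Ext}_\GG(m))$, and $\Delta_3 := G_{ij}(\mathrm{Ext}_\mathcal{H}(Q)) - G_{ij}(\mathrm{Ext}_\mathcal{H}(m))$. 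In the target application $\GG$ is $d$-regular, so $\Delta_2 = 0$; otherwise $\Delta_2$ is handled by the same method as $\Delta_3$ below.

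For $\Delta_1$, both Green's functions are computed at the self-consistent parameter $m_\infty$, and by hypothesis $\mathcal{H}$ agrees with $\GG$ on $B_r(\{i,j\})$. A resolvent expansion therefore cancels all walks confined to $B_r(\{i,j\})$, leaving only walks that reach its boundary and return. Using the scaled-tree Green's function estimates (Lemma \ref{lem:minftybnd}) together with the path factorization of Lemma \ref{lem:Factorization}, each such contribution decays like $(1/\sqrt{d-1})^{2r} = (1/(d-1))^r$ after summing through the boundary; this produces the first term of the stated bound.

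For $\Delta_3$ I would follow the strategy underlying Lemma \ref{lem:stability}, applied now to the off-diagonal entry $G_{ij}$. Expand the difference as a resolvent series in the diagonal correction $\mathrm{Ext}_\mathcal{H}(Q) - \mathrm{Ext}_\mathcal{H}(m)$, which is supported on the boundary of $\mathcal{H}$. The first-order coefficient at a boundary vertex $v$ is $P_{i,v}P_{v,j}$ (with $P$ the $m$-extension Green's function) multiplied by $(Q_I - m_\infty)$ or $(Q_O - m_\infty)$. Re-indexing via the embedding-into-a-deeper-tree device of the proof of Lemma \ref{lem:stability}, the vector $(Q_I - m_\infty,\, Q_O - m_\infty)$ is acted on by the matrix $\begin{pmatrix} Y & X \\ X & Y \end{pmatrix}$, whose eigenvalues are $X+Y$ and $Y-X$; this produces the singularity factors $S_g^1 = 1 - X - Y$ and $S_g^2 = 1 + X - Y$ on the rotated coordinates $Q_\pm$, with combinatorial prefactor $(1 + \mathrm{diam}(\GG))^2$. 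The higher-order terms of the resolvent series are controlled by the basal-tree counting underlying Lemmas \ref{lem:stability} and \ref{lem:pert}; under the hypothesis $(\log N)^9|Q_{\cdot} - m_\infty|\ll 1$ they contract geometrically beyond order two, contributing the quadratic remainder $(\log N)^2\bigl[|Q_I - m_\infty|^2 + |Q_O - m_\infty|^2\bigr]$.

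The main obstacle will be the careful first-order identification for $\Delta_3$: the boundary of $\mathcal{H}$ does not lie at a uniform depth from $\{i,j\}$, so the matrix-power construction must be applied level by level and the resulting telescoping absorbed into the $(1+\mathrm{diam}(\GG))^2$ prefactor. In particular this requires redoing the orientation-sensitive computation of \eqref{eq:matrixcalc} in the off-diagonal setting, where the two endpoints of the $P$-factors no longer coincide with the root. The quadratic remainder, by contrast, is routine since the basal-tree summation of Lemma \ref{lem:pert} transfers verbatim. Assembling the three contributions via the triangle inequality then yields the stated bound.
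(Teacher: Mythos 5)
There is a genuine gap in your treatment of $\Delta_2$ and $\Delta_3$. You claim that expanding $\Delta_3 = G_{ij}(Ext(\mathcal{H},Q_I,Q_O)) - G_{ij}(Ext(\mathcal{H},m_\infty,m_\infty))$ to first order and ``re-indexing via the embedding-into-a-deeper-tree device of the proof of Lemma \ref{lem:stability}'' produces the singularity factors $S_g^1,S_g^2$. This conflates two different things: the matrix $\begin{pmatrix} Y & X \\ X & Y\end{pmatrix}$ has eigenvalues $X+Y$ and $Y-X$, while the singularity factors are $1-(X+Y)$ and $1+X-Y$. The latter only emerge from the $(I-M^k)$ structure in \eqref{eq:matrixcalc}, and that structure in turn comes from comparing $G(Ext_i(\GG,Q))$ to the self-consistent value $Q_I$ itself (via the decomposition \eqref{eq:firstorderdecomp}, where the deficit-sum at each boundary vertex is \emph{subtracted} from the full-tree continuation). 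When one instead compares $G(Ext(\mathcal{H},Q))$ to $G(Ext(\mathcal{H},m_\infty))$ --- which is exactly what $\Delta_3$ is --- there is no $Q_I$ against which to telescope, and the first-order term is $\sum_{v\in\partial\mathcal{H}} P_{iv}P_{vj}(Q_\cdot-m_\infty)$ with no cancellation. This is precisely the regime of Lemma \ref{lem:pert}, which correctly produces only the \emph{linear} factor $|Q_I-m_\infty|+|Q_O-m_\infty|$, not $S_g^1|Q_+|+S_g^2|Q_-|$.

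This matters quantitatively: under the induction hypothesis of Definition \ref{def:IndHyp} one has $|Q_{I/O}-m_\infty| \lesssim \epsilon/S_g^1$, and $S_g^1$ can be as small as $\Im(z)\asymp N^{-\varepsilon}$. So a bound of the form $|Q-m_\infty|$ evaluates to $\epsilon/S_g^1 \gg \epsilon$, whereas the lemma's bound evaluates to $\lesssim\epsilon$. Your proposed route would therefore be too lossy for the lemma's use in Proposition \ref{prop:removeTest} and elsewhere. Note also that the remark ``$\Delta_2=0$ since $\GG$ is $d$-regular'' does not help: in the applications, the outer graph of Lemma \ref{lem:nbdpert} is a finite local neighborhood such as $B_r(\mathbb{T}\cup\{i,j\},\GG)$, not the full $d$-regular digraph, so $\Delta_2\ne 0$ and inherits the same defect.

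The paper avoids the problem by not routing through $m_\infty$ at all. It writes the block decomposition of $Ext(\GG,Q_I,Q_O)$ with $\mathcal{H}$ in the top-left block, applies the Schur complement, and obtains the resolvent difference
\begin{equation*}
 G_{ij}(Ext(\GG,Q)) - G_{ij}(Ext(\mathcal{H},Q)) = -\sum_{a,b,c,d} G_{ia}^{\GG} B_{ab}\bigl(C^{-1}_{bc}-\delta_{bc}Q_k\bigr)B^*_{cd} G_{dj}^{\mathcal{H}}\,.
\end{equation*}
The off-diagonal ($b\ne c$) part is absorbed into the $(d-1)^{-r}$ term using the distance of $a,d$ from $\{i,j\}$. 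The diagonal ($b=c$) part produces $C^{-1}_{bb}-Q_k$, which \emph{is} a ``diagonal Green's function minus self-consistent value'' quantity, so Lemma \ref{lem:stability} applies verbatim and delivers the $S_g^1,S_g^2$ factors. The $\ell^2$-control of $\sum_a |G_{ia}||G_{aj}|$ from Corollary \ref{cor:AverageCase} then closes the sum. If you want to rescue your decomposition, you would need to bound the combined difference $\Delta_2-\Delta_3$ as a single object (so that the $\partial\GG$ and $\partial\mathcal{H}$ contributions telescope), which after rearrangement is essentially what the Schur complement does automatically.
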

\begin{proof}
We can write the adjacency matrices of $Ext(\GG,Q_I,Q_O)$ as,
\begin{equation}
\begin{bmatrix}
A & B\\
B^*& C
\end{bmatrix},
\end{equation}
Here, $A$ is the adjacency matrix for the subgraph $\mathcal{H}$. $B$ contains information on how $\mathcal{H}$ connects to the complement and $C$ is the adjacency information of the complement.

By the Schur complement formula, we have,
\begin{equation}
G_{ij}(Ext(\GG,Q_I,Q_O)) = [( A - B  C^{-1} B^*)^{-1}]_{ij}.
\end{equation}

At this point, we can apply the resolvent estimates,
\begin{equation}
\begin{aligned}
&|G_{ij}(Ext(\GG,Q_I,Q_O)) - G_{ij}(Ext(\mathcal{H},Q_I,Q_O))| \\
&\hspace{1 cm}= \sum_{a,b,c,d} G_{ia}(Ext(\GG,Q_I,Q_O)) B_{ab} (C^{-1}_{bc} - \delta_{b,c} Q_{k}) B^*_{cd} G_{dj}(Ext(\mathcal{H},Q_I,Q_O))
\end{aligned}
\end{equation}
Here, $Q_k= Q_I$ if $b$ is of the form $v + \aleph$ for a vertex $v$ in the complement of $\mathcal{H}$ while $Q_k= Q_O$ if $b$ is of the form $v$ for a vertex $v$ in the complement of $\mathcal{H}$. Let us first consider the off-diagonal terms with $b \ne c$.

In order to get non-zero terms,
we see from the above that $a$ and $d$ must be vertices on the boundary of $\mathcal{H}$, and are thus distance greater than $r$ from $a$ and $b$. Furthermore, $b$ and $c$ must be vertices on the boundary of $\GG \setminus \mathcal{H}$ that are adjacent to vertices in $\mathcal{H}$. Our combinatorial assumption on the graph $\GG$ allows us to make the following assertion:  there are only finitely many pairs of vertices $(b,c)$ with $b \ne c$ such that $C^{-1}_{bc} \ne 0$.

 Just to give a summary for the reasoning behind this point, we remark that $C^{-1}$ is a diagonal matrix on the connected components of $\GG\setminus \mathcal{H}$. Now, the fact that $\GG$ has at most one cycle will show that all but one connected neighborhood of $\GG \setminus \mathcal{H}$ is purely treelike. Furthermore, the only connected neighborhood that is not tree-like can contain no more than $O(1)$ many vertices that like on the boundary of $\GG\setminus \mathcal{H}$ lest there be too many cycles. For this $O(1)$ many off-diagonal entries, we can apply Lemma \ref{lem:pert} to show that the diagonal entry $|C^{-1}_{bc}|$ itself is $O(1)$.

We also have the Green's function bound $|B_{ab}| \le \left(\frac{1}{\sqrt{d-1}}\right)^{r}$ from Lemma \ref{lem:pert}. Thus, our bound on the off-diagonal terms is $O\left( \frac{1}{\sqrt{d-1}}\right)^{2r}$.

It is now left to understand the contribution of the terms that are on the diagonal.
Due to Lemma \ref{lem:stability}, we can argue that the size of the differences $|C^{-1}_{bc} - Q_k| \lesssim (\log N)^2 [|S_g^1| |Q_I + Q_O - 2 m_{\infty}(z,w)| + |S_g^2| |Q_I - Q_O|] + (\log N)^3 [|Q_I - m_{\infty}|^2 + |Q_O - m_{\infty}|^2]$. Now, given $b$, the only entries $B_{ab}$ and $B_{bd}$ that are non-zero correspond to vertices adjacent to $b$.

Furthermore, Corollary \ref{cor:AverageCase} can show that the contribution $\sum_{a}|G_{ia}||G_{aj}| \lesssim 1$. One can only gain a constant factor when considering $\sum_{a,b,d}|G_{ia}||B_{ab}||B_{bd}^*||G_{dj}|$. This shows that the contribution of the off-diagonal entries is
$$
\lesssim (\log N)^2 [|S_g^1| |Q_I + Q_O - 2 m_{\infty}(z,w)| + |S_g^2| |Q_I - Q_O|] + (\log N)^3 [|Q_I - m_{\infty}|^2 + |Q_O - m_{\infty}|^2]
$$
\end{proof}

\subsection{Stability Bounds}

\begin{thm} \label{thm:Self-conc}

Consider the following pairs of equations,
\begin{equation}
\begin{aligned}
& Q_I= Y_{i,K}(Q_I,Q_O) + E_I\\
& Q_O = Y_{i,K}(Q_I, Q_O) + E_O,
\end{aligned}
\end{equation}

Assume further that it is known that  $|Q_{I,O}- m_{\infty}|^2 \lesssim \frac{ S^1_g |E_O|}{(\log N)^4}$, and $N^{\delta}|Q_{I,O} - m_{\infty}| \ll 1$ for some $\delta>0$.
Recall the singularity functions from Definition \ref{def:Singularities}.

Fix some parameter $\mathfrak{a}>0$.
There exists $K$ in the domain $[\mathfrak{a} \log_{d-1} \log N, 2 \mathfrak{a} \log_{d-1} \log N]$ such that, we have the stability estimates,
\begin{equation}
\begin{aligned}
&|Q_I - Q_O| \le \frac{|E_I| + |E_O| }{|S_g^2|} + O \left(\frac{|E_I| +|E_O|}{\log N} \right)\\
& |Q_I + Q_O - 2 m_{\infty}| \le  \frac{|E_I| + |E_O|}{|S_g^1|} + O \left(\frac{(|E_I|+ |E_O|)}{\log N} \right)
\end{aligned}
\end{equation}

\end{thm}



\begin{proof}

This is mainly a consequence of perturbation theory using the resolvent identity.
$$
(A-z)^{-1} = (B-z)^{-1} + \sum_{p=1}^{\infty} (B-z)^{-1} [(B-A)(B-z)^{-1}]^p.
$$

We apply this identity to the compute the difference of the resolvents,
$$
Y_{i,K}(Q_I,Q_O) - Y_{i,K}(m_{\infty},m_{\infty})= G(\TT^1_K,Q_I,Q_O)_{\hat{r}+\aleph,\hat{r}+\aleph} - G(\TT^1_K,m_{\infty}, m_{\infty})_{\hat{r}+\aleph,\hat{r}+\aleph}.
$$

The main benefit is that the second quantity in the line above can be explicitly computed; these are the Green's function entries for the infinite tree.

Using that $ N^{\delta} |Q_I - m_{\infty}|, N^{\delta}|Q_O - m_{\infty}| \ll 1$ for some $\delta>0$ and furthermore that we have the a-priori bound that $|Q_I - m_{\infty}|^2 \ll \frac{S^1_g}{(\log N)^4}[|E_I|+|E_O|]$, then it would suffice to consider only the first order term in the resolvent expansion.



By applying the resolvent identity and the more detailed computations of Lemma \ref{lem:stability}, we have that,
\begin{equation}
\begin{aligned}
&G(\TT^1_K,Q_I,Q_O)_{r+\aleph,r+\aleph} - G(\TT^1_K,m_{\infty},m_{\infty})_{\hat{r}+\aleph,\hat{r}+\aleph} \\
&=  \frac{d-1}{d-1} (Q_I - m_{\infty}) \sum_{v \in V^K_{O}}  G_{\TT_1}(z,w)_{\hat{r}+\aleph,v} \overline{G_{\TT_1}(\overline{z},w)_{\hat{r}+\aleph,v}}  \\
&+ \frac{d}{d-1} (Q_O - m_{\infty}) \sum_{v \in V^K_O} G_{\TT_1}(z,w)_{\hat{r}+\aleph,v+\aleph} \overline{G_{\TT_1}(\overline{z},w)_{\hat{r}+\aleph,v+\aleph}} \\
&+ \frac{d}{d-1} (Q_I -m_{\infty}) \sum_{v \in V^K_{I} }  G_{\TT_1}(z,w)_{\hat{r}+\aleph,v} \overline{G_{\TT_1}(\overline{z},w)_{\hat{r}+\aleph,v}}\\
&+ \frac{d-1}{d-1}(Q_O - m_{\infty}) \sum_{v \in V^K_I} G_{\TT_1}(z,w)_{\hat{r}+\aleph,v+\aleph} \overline{G_{\TT_1}(\overline{z},w)_{\hat{r}+\aleph,v+\aleph}}\\
&+ O((\log N)^3(|Q_I - m_{\infty}|^2 + |Q_O - m_{\infty}|^2))
\end{aligned}
\end{equation}
Here, $V^K_O$ are the leaves of the tree $\TT^1_K$ whose only edge is an outgoing edge (so it has $d-1$ out-edges missing) while $V^K_I$ are those leaves whose only edge is an ingoing edge (so it has $d$ out-edges missing). When going to the second line, we used the fact that $G(\TT^1_K,m_{\infty},m_{\infty})$ are equal to the Green's functions of the infinite tree $\TT_1$. We also used the fact that $G_{ab}(z,w)= \overline{G_{ba}(\overline{z},w)}$.

Provided we know that the coefficient matrix corresponding to the first order terms are sufficiently well controlled, we can derive stability estimates on $Q_i - m_{\infty}$ by merely inverting the coefficients.  We can derive the first order coefficients by a direct recursion as follows:consider the root $\hat{r}$ and the unique path $\hat{r} \to v_1 \to v_2 \to \ldots \to v_n$ to some vertex $v_n$ (the path from $\hat{r}$ to $v_n$ does not care about the orientation of the edges.)




If $\hat{r} \to v_1$ is a forward edge, then $G_{\TT_1}(z,w)_{\hat{r} +\aleph, v_n + o_n} = G_{\TT_1}(z,w)_{\hat{r}+\aleph,\hat{r}} \frac{1}{\sqrt{d-1}} G_{\TT_1}(z,w)_{v_1+\aleph,v_n+o_n}$, where $o_n$ can either be $0$ or $\aleph$. If instead $\hat{r} \to v_1$ is a backward edge, then $G_{\TT_1}(z,w)_{\hat{r}+\aleph,v_n+o_n} = G_{\TT_1}(z,w)_{\hat{r}
+\aleph,\hat{r}+\aleph} \frac{1}{\sqrt{d-1}} G_{\TT_2}(z,w)_{v_1,v_n+o_n}$.

Combining these estimates can give us a recursion for the value of the coefficients of the linear term in the matrix.
Indeed, we have the following recursion to determine the coefficients.
\begin{equation}
\begin{aligned}
& \sum_{v \in V^K_O} G_{\TT_1}(z,w)_{\hat{r} +\aleph,v+\aleph} \overline{G_{\TT_1}(\overline{z},w)_{\hat{r} +\aleph,v+\aleph}}\\& = \frac{d}{d-1} G_{\TT_1}(z,w)_{\hat{r}+\aleph, \hat{r}} \overline{G_{\TT_1}(\overline{z},w)_{r+\aleph,r}}\sum_{v \in V^{K-1}_{O}} G_{\TT_1}(z,w)_{\hat{r}+\aleph,v+\aleph} \overline{G_{\TT_1}(\overline{z},w)_{\hat{r}+\aleph,v+\aleph}}\\& + \frac{d-1}{d-1} G_{\TT_1}(z,w)_{\hat{r}+\aleph,\hat{r}+\aleph}(z,w) \overline{G_{\TT_1}(z,w)_{\hat{r}+\aleph,\hat{r}+\aleph}(\overline{z},w)} \sum_{v \in V^{K-1}_{O}} G_{\TT_2}(z,w)_{\hat{r},v+\aleph} \overline{G_{\TT_2}(\overline{z},w)_{r,v+\aleph}} .
\end{aligned}
\end{equation}

We will more generally define the quantity,
\begin{equation}
A_{j}^{K,l, o}:= \sum_{v \in V^{K}_l} G_{T_j}(z,w)_{\hat{r}+o(j), v+o} \overline{G_{T_j}(\overline{z},w)_{\hat{r}+o(j),v+o}}.
\end{equation}
$l$ can be either $I$ or $O$ for in or for out vertices. $K$ is the depth of the nodes we are considering on the tree. $j$ can take the values either $1$ or $2$ (representing either the tree $\TT_1$ or $\TT_2$). $o$ can be either $\aleph$ or $0$ (representing the index of the vertex we are considering).  $o(j)$ is either $\aleph$ or $0$, when $j=1$, then $o(j)=\aleph$, otherwise $o(j)=0$ for $j=2$. $(3-j)$ will denote the opposite $o$ assignment. (Namely, it is $0$ when $j=1$ and $\aleph$ when $j=2$.)

We see that one way to write the recursive equation considered above is as follows:
\begin{equation}
\begin{aligned}
A_{j}^{K,l,o}&=\frac{d}{d-1} G_{T_j}(z,w)_{\hat{r}+o(j), \hat{r} +o(3-j))} \overline{G_{T_j}(\overline{z},w)_{\hat{r}+o(j),\hat{r}+o(3-j)}} A_{j}^{K-1,l,o} \\
&+ \frac{d-1}{d-1} G_{T_{j}}(z,w)_{\hat{r}+o(j),\hat{r}+o(j)}\overline{G_{T_{j}}(\overline{z},w)_{\hat{r}+o(j),\hat{r}+o(j)}} A_{3-j}^{K-1,l,o}
\end{aligned}
\end{equation}

From looking at the infinite tree, we would know that $$\frac{d}{d-1} G_{T_j}(z,w)_{\hat{r}+o(j), \hat{r} +o(3-j))} \overline{G_{T_j}(\overline{z},w)_{\hat{r}+o(j),\hat{r}+o(3-j)}} $$ is a constant in $j$, precisely matching 
$Y$ of \eqref{dfn:XY}. Similarly, $$ \frac{d-1}{d-1} G_{T_{j}}(z,w)_{\hat{r}+o(j),\hat{r}+o(j)}\overline{G_{T_{j}}(z,w)_{\hat{r}+o(j),\hat{r}+o(j)}} $$ is a constant in $j$, precisely matching $X$ of \eqref{dfn:XY}.

We see we have the general recursions,
\begin{equation}
\begin{bmatrix}
A_{1}^{K,l,o}\\
A_{2}^{K,l,o}
\end{bmatrix}=
\begin{bmatrix}
Y & X\\
X & Y
\end{bmatrix}
\begin{bmatrix}
A_1^{K-1,l,o}\\
A_2^{K-1,l,o}.
\end{bmatrix}
\end{equation}
The matrix in the middle has eigenvalues $X+Y$ with eigenvector $\begin{bmatrix} 1\\1 \end{bmatrix}$ and $Y-X$ with eigenvector $\begin{bmatrix} 1\\ -1 \end{bmatrix}$.

In fact, we observe that we have the same recursion matrix for the actual coefficients $\frac{d-1}{d-1} A_1^{K,O, 0} + \frac{d}{d-1} A_1^{K,I,0}$ and $\frac{d}{d-1}A_1^{K,O,\aleph} + \frac{d-1}{d-1} A_1^{K,I, \aleph}$ that we actually need.

Indeed, we see we have the coefficient matrix,
\begin{equation*}\begin{aligned}
&\begin{bmatrix}
& G(\TT^1_K,Q_I,Q_O)_{\hat{r}+\aleph,\hat{r}+\aleph} - G(\TT^1_K,m_{\infty},m_{\infty})_{\hat{r}+\aleph,\hat{r}+\aleph}\\
& G(\TT^2_K,Q_I,Q_O)_{\hat{r},\hat{r}} - G(\TT^2_K, m_{\infty},m_{\infty})_{\hat{r},\hat{r}}
\end{bmatrix}
\\
= & \begin{bmatrix}
\frac{d-1}{d-1} A_1^{K,O,0} + \frac{d}{d-1} A_1^{K,I,0} & \frac{d}{d-1}A_1^{K,O,\aleph} + \frac{d-1}{d-1} A_1^{K,I,\aleph}\\
\frac{d-1}{d-1} A_2^{K,O,0} + \frac{d}{d-1} A_2^{K,I,0} & \frac{d}{d-1}A_2^{K,O,\aleph} + \frac{d-1}{d-1} A_2^{K,I,\aleph}
\end{bmatrix}
\begin{bmatrix}
Q_I - m_{\infty}\\
Q_O - m_{\infty}
\end{bmatrix} + O(|Q_I - m_{\infty}|^2 + |Q_O - m_{\infty}|^2)
\\
=& \begin{bmatrix}
Y & X \\
X & Y
\end{bmatrix}^K
 \begin{bmatrix}
\frac{d-1}{d-1} A_1^{0,O,0} + \frac{d}{d-1} A_1^{0,I,0} & \frac{d}{d-1}A_1^{0,O,\aleph} + \frac{d-1}{d-1} A_1^{0,I,\aleph}\\
\frac{d-1}{d-1} A_2^{0,O,0} + \frac{d}{d-1} A_2^{0,I,0} & \frac{d}{d-1}A_2^{0,O,\aleph} + \frac{d-1}{d-1} A_2^{0,I,\aleph}
\end{bmatrix}
\begin{bmatrix}
Q_I - m_{\infty}\\
Q_O -m_{\infty}
\end{bmatrix}
+ O(|Q-m_{\infty}|^2)
\end{aligned}
\end{equation*}

Now we can apply our initial data conditions. This is the part where our coefficient dependence on the terms $I,O,o$ are seen.  For example, we have $A_1^{0,O,o}=0$ and $A_2^{0,I,o}=0$. In addition, $A_1^{0,I,\aleph} = X$, $A_1^{0,I,0} = \frac{d-1}{d}Y$, $A_2^{0,O,\aleph}= \frac{d-1}{d} Y$, and $A_2^{0, O,0}=X$.  Substituting these values inside the initial data matrix will give us the following equation.

\begin{equation}\label{eq:linearizedselfconsist}
\left(\begin{bmatrix}
1 & 0 \\
0 & 1
\end{bmatrix}-
\begin{bmatrix}
Y & X\\
X & Y
\end{bmatrix}^{K+1}\right)
\begin{bmatrix}
Q_I - m_{\infty}\\
Q_O - m_{\infty}
\end{bmatrix}=
\begin{bmatrix}
E_I + O(|Q- m_{\infty}|^2)\\
E_O + O(|Q- m_{\infty}|^2)
\end{bmatrix}
\end{equation}

If we let $C$ denote the coefficient matrix appearing in front of the vector of $Q_i - m_{\infty}$ in the above equation, then we see that $C$ has eigenvalues $1 - (X+Y)^{K+1}$ and $1- (Y- X)^{K+1}$.

By an appropriate choice of $K$, one can try to ensure that the singularity of these eigenvalues is no worse than the singularity of $1 - (X+Y)$ or $1 - (Y-X)$. 
\end{proof}

\section{Estimates on $G^{(\mt)}$: Proof of  Proposition \ref{prop:removeTest}} \label{sec:step1}

Recall we use $\mt$ to represent the neighborhood of size $\ell$ around a point. 
Our main strategy to prove Proposition \ref{prop:removeTest} would be to apply the Schur complement formula to relate the values of $G$ to $G^{(\mt)}$. To better visualize this, let us first write the adjacency matrix for the graph $\GG$ as follows.
\begin{equation}
G^{-1}=\begin{bmatrix}
A & B\\
B^* & D
\end{bmatrix}
\end{equation}
$A$ is a $2|\mt|$ by $2|\mt|$ matrix, $B$ is a $2|\mt|$ by $2N - 2|\mt|$ matrix, and $D$ is a $2N-2|\mt|$ by $2N-2|\mt|$ matrix.

 If we let  $\{v_1,\ldots,v_{|\mt|}\}$ be the indices of the vertices in $\mt$. Then, we find $A$ by considering the principal submatrix given by the indices $\{v_1,v_1+N,\ldots, v_{|\mt|}, v_{|\mt|}+N\}$.  $B$ is the adjacency graph of the edges connecting $\mt$ to $\mt^c$. $D$ is the adjacency graph of $\mt^c$.

\begin{rmk} [Notational convention for adjacency matrix decompositions]
We remark here for the reader that though we will use the same notation $A$, $B$, and $D$ to denote the upper-left diagonal block, the upper right off-diagonal block, and the lower-right diagonal blocks respectively for some adjacency matrix, the specific adjacency matrices will change based on the section. The exact matrix referred to by the notation $A$, $B$, or $D$ will be clear in context.
\end{rmk}

\begin{rmk}[Notational Convention for Perturbation Theory]

In all of the following few sections, our goal is to understand the Green's function of the Hermitized adjacency matrix of the graph $\GG$. These matrices will be denoted as $G$ or some modifications of $G$ such as $G^{(\mt)}$, or $\tilde{G}$ based on the context; for example, $\tilde{G}$ will correspond to the Green's function of a  switched graph.

These Green's functions will always be compared to a Green's function that will be constructed by understanding the local neighborhood around the vertices. For example, we will compare the Green's function $G_{xy}$ with $G(Ext(B_r(\mt \cup \{x,y\}, \GG),Q_I,Q_O))$. The specific comparison will highly depend on the context, but we will always denote this secondary matrix by $P$ for simplicity of presentation. We warn the reader here that the specific matrix $P$ found in the estimates will not be the same in different sections; however, their role as a characterizing the local neighborhood will always be the same.

\end{rmk}
In this section we let $P$ be a shorthand for $G(Ext(B_{r}(\mt\cup \{i,j\}, \GG) , Q_I,Q_O))$. Since our induction hypothesis is that $|G_{ij} - G(Ext(B_{r}(\{i,j\}, \GG),Q_I,Q_O)) | \lesssim \epsilon$, we can use Lemma \ref{lem:nbdpert} to assert that we would still have $|G_{ij} - P_{ij}| \lesssim \epsilon$. We only used the fact here that we contain the radius $r$ neighborhood around $i$ and $j$. The benefit of using $P_{ij}$ instead of $G(Ext(B_{r}( \{i,j\}, \GG) , Q_I,Q_O)$ is that we have a more convenient adjacency matrix to apply perturbation theory with in $P_{ij}$. In later sections, we will freely adjust the neighborhood to which we consider the extension to be more useful in perturbation theory without any explicitly mention; the justification of this change will always be Lemma \ref{lem:nbdpert}.

As we have mentioned before, the benefit of the matrix $P$ is that we have the adjacency matrix decomposition,
\begin{equation}
P^{-1} = \begin{bmatrix}
A & B\\
B^* & D_0
\end{bmatrix}.
\end{equation}

$A$ is the same adjacency matrix when of the graph restricted to the neighborhood $\mt$. We technically abuse notation when we use the matrix $B$, but the upper-left and lower-right blocks of $P^{-1}$ have the same non-zero elements as those of $G^{-1}$ (these are the elements that connect the vertices of $\mt$ to the complement $\mt^c$.)

Applying the Schur complement formula, we see that,
\begin{equation}
\begin{aligned}
&G^{(\mt)} = G - G (G|_\mt)^{-1}G ,\\
&P^{(\mt)} = P - P(P|_\mt)^{-1}P.
\end{aligned}
\end{equation}

If we take the difference of these expressions, we see that,
\begin{equation} \label{eq:SchurcompOutsideTree}
\begin{aligned}
G^{(\mt)} - P^{(\mt)}& = G-P - (G-P) (G|_\mt)^{-1}G\\
&- P[(G|_\mt)^{-1} - (P|_\mt)^{-1}] G - P (P|_\mt)^{-1} (G-P).
\end{aligned}
\end{equation}

This equation is the basis of the proof of Proposition \ref{prop:removeTest}, which we reproduce here. 
\begin{Prop} \label{Prop:GTminusPT}
Consider a graph $\mathcal{G}$ in $\Omega^0(z,w)$. Fix a vertex $o$ and let $\mathbb{T}$ be the radius $\ell$ neighborhood around $\mathbb{T}$. Assume, in addition, that $o$ has a radius $\mathfrak{R}$ neighborhood with excess at most 1.  Then, we have the following estimates on the Green's function of  the graph $\mathcal{G}$ with the neighborhood $\mathbb{T}$ removed. Let $i$ and $j$ be vertices outside of $\mathbb{T}$ in $\mathcal{G}$ and let $P$ denote $G(Ext(B_r(\mathbb{T}\cup\{i,j\},\mathcal{G}),Q_I,Q_O))$.  Then, we have that, 
\begin{equation}
|G_{ij}^{(\mt)} - P_{ij}^{(\mt)}| \le  \epsilon'(z,w).
\end{equation}
\end{Prop}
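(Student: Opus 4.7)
The plan is to control $|G^{(\mt)}_{ij}-P^{(\mt)}_{ij}|$ by estimating each term on the right-hand side of the Schur complement identity \eqref{eq:SchurcompOutsideTree}. Since $\GG\in\Omega^0(z,w)$, the induction hypothesis \eqref{eq:IndHyp} (combined with Lemma \ref{lem:nbdpert} to enlarge the radius-$r$ ball around a single pair $\{a,b\}$ to $B_r(\mt\cup\{i,j\})$) already gives the pointwise bound $|G_{ab}-P_{ab}|\lesssim\epsilon$ for all indices $a,b$. Thus the leading term $(G-P)_{ij}$ is already $\lesssim\epsilon\leq\epsilon'$, and we need only bound the three bilinear corrections.

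For the two outer terms, of the form $\sum_{a,b\in\mt}(G-P)_{ia}\,(G|_\mt)^{-1}_{ab}\,G_{bj}$ and its mirror, the key structural observation is that the products $(G|_\mt)^{-1}G|_{\mt\times\mt^c}=-H|_{\mt\times\mt^c}G^{(\mt)}$ vanish except on rows indexed by $\partial\mt$, the only rows where $H|_{\mt\times\mt^c}$ is non-zero. Thus the interior $\mt$-sum collapses to $\partial\mt$, leaving expressions of the form $\sum_{a\in\partial\mt}(G-P)_{ia}\cdot(HG^{(\mt)})_{aj}$. Using $|G-P|\lesssim\epsilon$ pointwise together with Cauchy-Schwarz over $\partial\mt$ reduces the task to controlling $\sum_{a\in\partial\mt}|(G-P)_{ia}|^2$ and the averaged $\ell^2$-sum of the $G^{(\mt)}$-entries attached to $\partial\mt$; the latter is handled by Corollary \ref{cor:AverageCase} applied to $P$ (with the $\epsilon$-error from $|G-P|$ absorbed at negligible cost), yielding an $O(1)$ bound. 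For the middle term, we invoke the resolvent identity
\begin{equation*}
(G|_\mt)^{-1}-(P|_\mt)^{-1}=(G|_\mt)^{-1}(P|_\mt-G|_\mt)(P|_\mt)^{-1},
\end{equation*}
expand the resulting five-fold product, and apply the same structural observation to both $(G|_\mt)^{-1}G|_{\mt\times\cdot}$ and $P|_{\cdot\times\mt}(P|_\mt)^{-1}$, which are each supported at $\partial\mt$. Iterated Cauchy-Schwarz with two further applications of Corollary \ref{cor:AverageCase}, now using $|(P-G)|_\mt|\lesssim\epsilon$ pointwise, then controls this term.

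The main obstacle, and the technical heart of the argument, is the careful tracking of the combinatorial factor produced by the sum over $\partial\mt$. A naive bound substituting the worst-case decay of each Green's function entry would yield an expansive growth factor $\sim 2X$ per level of $\mt$, which would be ruinous. As emphasized at the start of Section \ref{sec:sec2} and realized in the proof of Lemma \ref{lem:stability}, one must instead exploit the precise path-wise structure of Green's function decay on the directed tree, captured by the spectral radius $X+Y<1$ of the stability matrix $\bigl(\begin{smallmatrix}Y&X\\X&Y\end{smallmatrix}\bigr)$. The exponent $\mathfrak{j}(d)=\log_{d-1}[\tfrac{1}{2}+\tfrac{1}{2}\sqrt{(2d-1)/(d-1)}]$ is precisely calibrated to balance the boundary entropy $(2d-1)^{\ell/2}$ against this refined decay, producing the factor $(\log N)^{\mathfrak{j}(d)\mathfrak{a}}$ in $\epsilon'$ and hence the stated bound.
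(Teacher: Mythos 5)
Your decomposition via \eqref{eq:SchurcompOutsideTree} matches the paper's starting point, and the observation that $(G|_\mt)^{-1}G|_{\mt\times\mt^c}=-H|_{\mt\times\mt^c}G^{(\mt)}$ collapses the interior index to $\partial\mt$ is correct. However, the Cauchy--Schwarz step that you build on it does not close. After Cauchy--Schwarz the first factor is $\bigl(\sum_{a\in\partial\mt}|(G-P)_{ia}|^2\bigr)^{1/2}$, and the only available control on $(G-P)$ is the pointwise bound $\lesssim\epsilon$ from the induction hypothesis. This gives $\sqrt{|\partial\mt|}\,\epsilon$, and since $|\partial\mt|\sim(2d-1)^\ell\sim(\log N)^{\mathfrak{a}\log_{d-1}(2d-1)}$ with $\ell\ge\mathfrak{a}\log_{d-1}\log N$, you obtain a factor $(\log N)^{\mathfrak{a}\log_{d-1}(2d-1)/2}$. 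For every $d\ge 3$ one has $\tfrac12\log_{d-1}(2d-1)>\tfrac12>\mathfrak{j}(d)$, so this bound exceeds $\epsilon'=(\log N)^{\mathfrak{j}(d)\mathfrak{a}}\epsilon$ by a genuine polynomial-in-$\log N$ factor. There is no sharper $\ell^2$ bound on $G-P$ available at this stage of the argument (the Ward-identity improvements of Section \ref{sec:step2} only appear later and in a different context), so the Cauchy--Schwarz route is blocked.

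The paper instead stays at the $\ell^1$ level. It first inserts $(P|_\mt)^{-1}$ rather than $H$ in the middle slot, writing $(G-P)(G|_\mt)^{-1}G = (G-P)(P|_\mt)^{-1}G + (G-P)[(G|_\mt)^{-1}-(P|_\mt)^{-1}]G$, and controls $(G|_\mt)^{-1}-(P|_\mt)^{-1}$ entrywise by $\epsilon$ via a maximum-principle argument. Then it uses two $\ell^1$ estimates: Lemma \ref{lem:compPT}, $\sum_x|(P|_\mt)^{-1}_{xy}|\lesssim 1$, which leverages the tree-likeness of $\mt$ (the off-diagonal blocks of $D_0^{-1}$ vanish except for $O(1)$ pairs), and Lemma \ref{lem:sumintree}, $\sum_{y\in\mt}|P_{yj}|\lesssim(\log N)^{\mathfrak{j}(d)\mathfrak{a}}$, which is the actual source of the factor $\mathfrak{j}(d)$: it is obtained from the two-component recursion for the row sums $L_{1,k},L_{2,k}$ of tree Green's functions with transfer matrix $\begin{bmatrix}\sqrt{d/(d-1)}\sqrt{Y}&\sqrt{X}\\\sqrt{X}&\sqrt{d/(d-1)}\sqrt{Y}\end{bmatrix}$ followed by the Lagrange-multiplier optimization yielding $\lambda=\tfrac12+\tfrac12\sqrt{(2d-1)/(d-1)}$. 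Bounding $\|(G-P)_{i\cdot}\|_\infty\le\epsilon$, using the $O(1)$ row-$\ell^1$ bound on $(P|_\mt)^{-1}$, and applying Lemma \ref{lem:sumintree} to the remaining $\sum_{y}|G_{yj}|\lesssim\sum_y|P_{yj}|+\epsilon|\mt|$ produces exactly $\epsilon'$.

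Your closing paragraph correctly flags the relevant ideas --- the danger of a naive $2X$-type expansion, the role of $X+Y<1$, and the calibration of $\mathfrak{j}(d)$ --- but they are never actually wired into the estimates; as written, the Cauchy--Schwarz argument cannot deliver the exponent $\mathfrak{j}(d)\mathfrak{a}$. You would need to replace that step with the $\ell^1$ bounds of Lemmas \ref{lem:compPT} and \ref{lem:sumintree} and the intermediate bound on $|(G|_\mt)^{-1}-(P|_\mt)^{-1}|$ to obtain the claimed $\epsilon'$.
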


Before we can make assertions on the difference between $G$ and $P$, we must first make assertions on the values of $(G|_\mt)^{-1}$. This will be compared to the quantity $(P|_{\mt})^{-1}$; we start with a combinatorial lemma whose goal is to understand $(P|_{\mt})^{-1}$ using the fact that we are performing a computation on a treelike neighborhood.

\begin{lemma}\label{lem:compPT}
We have the following estimates of $(P|_{\mt})^{-1}$. For any index $x$ corresponding to a vertex in $\mt$, we have,
\begin{equation}
	\sum_{y} |(P|_{\mt})^{-1}_{xy}| \lesssim 1.
\end{equation}

\end{lemma}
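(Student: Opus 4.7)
The plan is to apply the Schur complement formula to express $(P|_\mt)^{-1}$ as the sum of a local sparse piece and a boundary correction, and then to exploit the (almost) tree-like structure of the extended neighborhood $B_r(\mt\cup\{i,j\},\GG)\setminus\mt$ to show that both pieces contribute only $O(1)$ to each row sum.

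First, splitting the indices of $P^{-1}$ according to whether they lie in $\mt$ or in $\mt^c\cap B_r(\mt\cup\{i,j\},\GG)$, one writes $P^{-1}=\left(\begin{smallmatrix}A&B\\B^*&D_0\end{smallmatrix}\right)$, and block-matrix inversion gives $(P|_\mt)^{-1}=A-BD_0^{-1}B^*$. Here $A$ is the Hermitized adjacency matrix of $\mt$, so each of its rows has at most $O(d)$ nonzero entries, each bounded by $\max(|z|,|w|,1/\sqrt{d-1})$; consequently $\sum_y|A_{xy}|=O(1)$ for every index $x\in\mt$.

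Second, I would analyze the correction $BD_0^{-1}B^*$. The matrix $B$ is sparse (each row has at most $d$ nonzero entries of magnitude $1/\sqrt{d-1}$, and vanishes identically unless $x$ is an index of a boundary vertex of $\mt$), so
\[
\sum_{y\in\mt}\bigl|(BD_0^{-1}B^*)_{xy}\bigr| \ \lesssim\ \frac{d^2}{d-1}\,\max_{a\sim x,\,a\in\mt^c}\,\sum_{b}\bigl|(D_0^{-1})_{ab}\bigr|,
\]
where $b$ ranges over indices of $\mt^c$ adjacent to $\mt$. The key structural observation is that $D_0^{-1}$ is the Green's function of $Ext(B_r(\mt\cup\{i,j\},\GG)\setminus\mt,Q_I,Q_O)$. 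Because $\GG\in\Omega^0$ and $r\le\mathfrak{R}/8$, the subgraph $B_r(\mt\cup\{i,j\},\GG)\setminus\mt$ decomposes, up to $O(1)$ extra edges, into a disjoint union of rooted sub-trees, each joined to $\mt$ through exactly one vertex. In the purely tree-like case this forces $D_0^{-1}$ to be block-diagonal, one block per sub-tree, so $(D_0^{-1})_{ab}=0$ whenever $a$ and $b$ lie in different components; since the at most $d$ neighbors of a given boundary vertex $x\in\mt$ are each the root of a distinct sub-tree, only the diagonal terms $a=b$ contribute. Each such $(D_0^{-1})_{aa}$ is the root Green's function of a perturbed rooted sub-tree, and is $O(1)$ by the stability bound of Lemma~\ref{lem:stability} (applied to the almost-tree extension with boundary self-energies $Q_I,Q_O$, which the induction hypothesis places close to $m_\infty$) together with the bound on $m_\infty$ in Lemma~\ref{lem:minftybnd}. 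Summing over the $O(d)$ neighbors $a$ of $x$ then yields $\sum_y|(BD_0^{-1}B^*)_{xy}|=O(1)$.

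The main obstacle will be to handle the excess-$1$ case and the possible overlap of $B_r(\mt)$, $B_r(i)$ and $B_r(j)$, which together introduce $O(1)$ extra edges into $B_r(\mt\cup\{i,j\},\GG)\setminus\mt$ and hence $O(1)$ additional pairs $(a,b)$ lying in nominally distinct sub-trees that become connected through a short extra path in $\mt^c$. I would absorb these into the overall constant by invoking Lemma~\ref{lem:pert} on the almost-tree extension: for each such pair, $|(D_0^{-1})_{ab}|$ is controlled by a quantity decaying exponentially in the length of the connecting path, and since only $O(1)$ extra pairs occur their total contribution is $O(1)$. Combining all of the above yields $\sum_y|(P|_\mt)^{-1}_{xy}|\lesssim 1$ as claimed.
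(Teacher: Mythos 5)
Your proposal is correct and follows essentially the same route as the paper: apply the Schur complement formula to write $(P|_\mt)^{-1}=A-BD_0^{-1}B^*$, bound the $A$-row trivially via sparsity, and exploit the block-diagonal structure of $D_0^{-1}$ (one block per connected component of $B_r(\mt\cup\{i,j\},\GG)\setminus\mt$, which are almost all trees with a single attachment vertex, thanks to the excess-at-most-$1$ hypothesis) so that only $O(1)$ entries per row survive. The only difference is that you make explicit the $O(1)$ bound on the surviving entries via Lemma~\ref{lem:pert} and Lemma~\ref{lem:stability}, whereas the paper leaves this implicit.
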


\begin{proof}

As is standard, we will apply the Schur complement formula. We have,
\begin{equation} \label{eq:firstschur}
(P|_{\mt})^{-1}_{xy} = A_{xy} -  \sum_{a_{\alpha},a_{\beta}}(B^*)_{x a_{\alpha}} [(D_0)^{-1}]_{a_{\alpha},a_{\beta}} B_{a_{\beta}, y}.
\end{equation}
Here, $a_{\alpha}$ and $a_{\beta}$ vary over the indices of vertices in $\mt^c$ that are adjacent to vertices $l_{\alpha}$ on the boundary of $\mt$. We see that the matrix product is trivial unless $x= l_{\alpha}$ and $y= l_{\beta}$ are indices corresponding to some vertices on the boundary of $\mt$.

If we fix $x$ not of the form $l_{\alpha}$ and we sum up over $y$, we do not have to worry about the last term on the \abbr{rhs} of equation \eqref{eq:firstschur}. Furthermore, $A_{xy}$ can only be nonzero for $O(1)$ many terms ( the indices $y$ must either correspond to vertices adjacent to the one corresponding to $x$, or they would correspond to the same index as $y$ itself).

 Let us now consider the case that $x$ corresponds to some vertex $l_{\alpha}$ on the boundary. Since $\mt$ is centered around a neighborhood of excess at most 1, this means that $[(D_0)^{-1}]_{a_{\alpha},a_{\beta}}=0$ when $a_{\alpha} \ne a_{\beta}$, except for at most $O(1)$ many pairs. Recall $D_0$ is a block diagonal matrix with the blocks corresponding to neighborhoods of $\mt^c$.  This further gives only $O(1)$ many options for $y$ to get a nonzero quantity.
\end{proof}

For the sake of completeness, we also present the corresponding combinatorial bound for $|P|_{iy}$.
\begin{lemma} \label{lem:sumintree}
If we let $i$ be a vertex in $\mt^c$, we have the following estimates of $P|_{\mt}$.

\begin{equation}
\sum_{y \in \mt} |P_{iy}| \lesssim (\log N)^{\mathfrak{j}(d) \mathfrak{a}},
\end{equation}
where $\mathfrak{j}(d)= \log_{d-1} \left[ \frac{1}{2} + \frac{1}{2} \sqrt{\frac{2d-1}{d-1}} \right]<1/2$ is a constant that decays to $0$ as $d$ increases to $\infty$.
\end{lemma}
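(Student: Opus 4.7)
\medskip
\noindent
\emph{Proof plan.} The strategy is to leverage the near-tree structure of $\mt$ (ensured by the excess-at-most-$1$ hypothesis from Definition \ref{def:Goodregular}) so that Green's function entries decompose along paths, and then to perform a path count weighted by the Green's function magnitudes via a $2 \times 2$ recursion matrix.

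Since $i \in \mt^c$, every path from $i$ to a vertex $y \in \mt$ in the extension $Ext(B_r(\mt \cup \{i,j\}, \GG), Q_I, Q_O)$ enters $\mt$ through a boundary vertex; write $l_\alpha$ for such an entry vertex adjacent to the component of $\mt^c$ containing $i$. By the factorization Lemma \ref{lem:Factorization} we have $|P_{iy}| \lesssim |P_{i,l_\alpha}| \cdot |P_{l_\alpha,y}|$, and $|P_{i,l_\alpha}| = O(1)$ by the bounded tree Green's function estimate (Lemma \ref{lem:minftybnd}), so the task reduces to bounding $\sum_{y \in \mt} |P_{l_\alpha,y}|$.

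I would organize this sum by the distance $k$ from $l_\alpha$ inside $\mt$ and by the orientation (in vs.\ out) of the final edge of the path from $l_\alpha$ to $y$. Letting $a_k$ (resp.\ $b_k$) collect the contributions from paths of length $k$ ending with an out- (resp.\ in-) edge, the factorization of $|P_{l_\alpha,y}|$ as a product of $K$-factors yields a linear recursion $(a_{k+1}, b_{k+1})^T \le M \, (a_k, b_k)^T$, in which $M$ is a $2 \times 2$ matrix encoding the $K$-factor bounds (related to $X$ and $Y$ from Definition \ref{def:Singularities}) together with the asymmetric branching of the directed tree ($d-1$ out-children and $d$ in-children at an out-vertex, and vice versa at an in-vertex). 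A direct eigenvalue computation then shows that the dominant eigenvalue of $M$ equals $\lambda_+ := \tfrac{1}{2}\bigl(1 + \sqrt{(2d-1)/(d-1)}\bigr) = (d-1)^{\mathfrak{j}(d)}$. Iterating and summing the resulting geometric series, one obtains
\[
\sum_{y \in \mt} |P_{l_\alpha,y}| \lesssim \sum_{k=0}^{\ell} \lambda_+^k \lesssim \lambda_+^{\ell} \lesssim (\log N)^{\mathfrak{j}(d)\mathfrak{a}},
\]
where the last inequality uses $\ell \lesssim \mathfrak{a} \log_{d-1} \log N$ from Definition \ref{def:ordparam}.

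The main technical obstacle is pinning down the dominant eigenvalue of $M$ precisely as $\lambda_+$. The crude alternative, replacing each entry of $M$ by the worst of the four and combining the $|P|^2$ bound $\sum_{y}|P_{l_\alpha,y}|^2 \lesssim 1$ (inherited from Corollary \ref{cor:AverageCase}) with Cauchy--Schwarz against the $\sim (2d-1)^k$ sphere count, only yields growth $\sqrt{(2d-1)/(d-1)}$ per step, which strictly exceeds $\lambda_+$ and is thus too weak for the claim. The improvement to $\lambda_+$ relies on the asymmetry between continuing along the same orientation (which in our setting contributes a weaker factor involving $w$ or $\bar w$) and flipping orientation, exactly the asymmetry exploited in Lemma \ref{lem:stability}, but now transferred from the second-moment to the first-moment sum.
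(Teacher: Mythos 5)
Your reduction to $\sum_{y \in \mt}|P_{l_\alpha,y}|$ and the idea of a $2\times 2$ orientation-tracking recursion are in the spirit of the paper's proof, but the central claim — that the dominant eigenvalue of the recursion matrix $M$ equals $\lambda_+ := \tfrac{1}{2}\bigl(1+\sqrt{(2d-1)/(d-1)}\bigr) = (d-1)^{\mathfrak{j}(d)}$ — is not correct, and the gap is not easily filled within your framework. The recursion matrix that encodes the $K$-factor magnitudes together with the directed branching is (in the paper's normalization, after cancelling a $\sqrt{d-1}$ against the branching factor) the symmetric matrix with diagonal entries $\sqrt{d/(d-1)}\sqrt{Y}$ and off-diagonal entries $\sqrt{X}$; its top eigenvalue is $\sqrt{d/(d-1)}\sqrt{Y}+\sqrt{X}$, whose maximum over the feasible region $X+Y\le 1$ is, by Cauchy--Schwarz, exactly $\sqrt{(2d-1)/(d-1)}$. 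This strictly exceeds $\lambda_+$ for every $d\ge 2$. You acknowledge in your last paragraph that $\sqrt{(2d-1)/(d-1)}$ per step ``is too weak,'' but the mechanism you then invoke (the same-orientation/flipped-orientation asymmetry) does not, by itself, bring a single eigenvalue down to $\lambda_+$; the asymmetry is already fully baked into $M$.

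The paper closes this gap with a two-factor decomposition that has no analogue in your plan. It groups the $y$'s by the branch point $a_j$ at which the geodesic from $i$ to $y$ leaves the fixed geodesic $p$ from $i$ to the center $o$. The portion of the path up to $a_j$ is \emph{fixed}, so each of its $j-1$ intermediate vertices is only charged the worst-case $K$-factor $\max\{\sqrt{X},\sqrt{(d-1)/d}\sqrt{Y}\}$, and no branching cost is incurred on this segment; the subtree hanging off $a_j$ is then summed via the recursion with eigenvalue $\sqrt{d/(d-1)}\sqrt{Y}+\sqrt{X}$. Crucially, the radius-$\ell$ constraint forces the branch depth at $a_j$ to be at most $j$ (a vertex at distance $\ell-j$ from $o$ that branches off $p$ stays inside $\mt$ for at most $j$ more steps). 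Hence the two factors appear to comparable powers $\lesssim\ell$, and it is the \emph{product} $\max\{\sqrt{X},\sqrt{(d-1)/d}\sqrt{Y}\}\cdot\bigl(\sqrt{d/(d-1)}\sqrt{Y}+\sqrt{X}\bigr)$ that is then maximized over $X+Y\le 1$ by Lagrange multipliers to yield precisely $\lambda_+$. Your reference to the Cauchy--Schwarz-plus-$\ell^2$ alternative is also misleading as a diagnosis: that route fails because it \emph{replaces} the product structure by a single-factor sphere count, whereas the correct fix is to preserve the product. Without the branch-point decomposition and the depth-truncation constraint, a single linear recursion cannot beat $\sqrt{(2d-1)/(d-1)}$ per step, and the exponent $\mathfrak{j}(d)$ in the statement would not come out right.
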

\begin{rmk}
The main difference between the case with $|w| =0$ and $|w| \ne 0$ is that our combinatorial estimates on the tree-like graph now are bounded by $(\log N)$ to a power depending on $\mathfrak{a}$, but the coefficient behind this $\mathfrak{a}$ can be made arbitrarily small by choosing $d$ large, but still finite.
\end{rmk}

\begin{proof}

For this lemma, we have to apply the estimate $|P_{iy}| \lesssim \left(\frac{1}{\sqrt{d-1}}\right)^{\text{dist}(i,y)}$ coming from perturbation theory from the infinite directed $d$-regular graph and as in Lemma \ref{lem:excess1}.

 Let $p$ denote the  shortest path between $i$ and the center $o$ of the neighborhood $\mt$. We let $a_k$ denote the vertex on the path $p$ that is of distance $k$ from $i$. We also let $\mathcal{C}_j$ denote the vertices on $\mt$ that are reachable from $a_k$ without using an edge in $p$. Note that it is possible that some sets $\mathcal{C}_{j_1} =\mathcal{C}_{j_2}$ for $j_1 \ne j_2$, but this is not possible in $\mt$ is purely tree-like.

Now, let us consider the computation of 
$\sum_{y \in \mt} |P_{yj}|$ is tree-like. If $\mt$ were not tree-like, then Lemma \ref{lem:excess1} would show that we would get the same bound up to a constant factor. Since we are only concerned about the correct size up to a constant factor, this is not a major loss for us.

By the factorization lemma \ref{lem:Factorization}, we see that $ \sum_{y \in \mathcal{C}_j} |P_{i y}| \lesssim \max \left \{\sqrt{X}, \sqrt{\frac{d-1}{d}} \sqrt{ Y} \right \}^{ j -1} \sum_{y \in  \mathcal{C}_j} |P_{a_j} y|$.

Now, the computation of $\sum_{y \in \mathcal{C}_j} |P_{a_j y}|$ is like a computation performed if $a_j$ were the root of a $j$ level tree. If we consider the quantities,
\begin{equation}
\begin{aligned}
L_{1,k}:= \sum_{\text{dist}(y,r) = k} |(G_{\mathcal{T}_1})_{r + \aleph, y+ o_y}|,
L_{2,k}:= \sum_{\text{dist}(y,r)=k} |(G_{\mathcal{T}_2})_{r, y + o_y}|,
\end{aligned}
\end{equation}
and $r$ is the root of $\mathcal{T}_1$ and $\mathcal{T}_2$, then we have the recursion,
\begin{equation}
\begin{bmatrix}
L_{1,k}\\
L_{2,k}
\end{bmatrix}=
\begin{bmatrix}
\sqrt{\frac{d}{d-1}}\sqrt{ Y } & \sqrt{X}\\
\sqrt{X} & \sqrt{\frac{d}{d-1}}\sqrt{Y}
\end{bmatrix}
\begin{bmatrix}
L_{1,k-1}\\
L_{2,k-1}
\end{bmatrix}.
\end{equation}

Thus, we see that $L_{1,k}, L_{2,k} \lesssim  \left[\sqrt{\frac{d}{d-1}} \sqrt{Y} + \sqrt{X} \right]^k$.

We thus see that 
$$
 \sum_{y \in \mathcal{C}_j} |P_{i y}| \lesssim  \ell \max \left\{\sqrt{X}, \sqrt{\frac{d-1}{d}} \sqrt{ Y}\right\}^{ j -1} \left[\sqrt{\frac{d}{d-1}} \sqrt{Y} + \sqrt{X} \right]^{j-1}.
$$

Thus,
\begin{equation}
\sum_{y \in \mt} |P_{iy}| \lesssim (\ell)^2 \left\{\sqrt{X}, \sqrt{\frac{d-1}{d}} \sqrt{ Y}\right\}^{ \ell -1} \left[\sqrt{\frac{d}{d-1}} \sqrt{Y} + \sqrt{X} \right]^{\ell-1}.
\end{equation}

We now let $a^2:=X$ and $b^2:=Y$.

We solve the maximization problem
\begin{equation}
\begin{aligned}
\max \Big\{ \sqrt{\frac{d}{d-1}} ab + a^2 :  a^2 + b^2 =1 \Big\}.
\end{aligned}
\end{equation}
We remark here that the other relevant maximization problem would be,
\begin{equation}
\max  \Big\{ ab + \sqrt{\frac{d-1}{d}} b^2 :  a^2 + b^2 =1 \Big\},
\end{equation}
which is manifestly smaller.

By Lagrange muiltipliers, this amounts to maximizing,
\begin{equation}
\sqrt{\frac{d}{d-1}} ab + a^2 - \lambda( a^2 + b^2).
\end{equation}
Taking derivatives in $b$ gives us,
\begin{equation}
a= 2   \lambda \sqrt{\frac{d-1}{d}} b. 
\end{equation}
Taking derivatives in $a$ give us,
\begin{equation}
2a - 2\lambda a + \sqrt{\frac{d}{d-1}}b =0.
\end{equation}
Substituting in our previous relation between $a$ and $b$ gives us the following relationship on $\lambda$.
\begin{equation}
 4 \lambda^2 - 4 \lambda  + 1 - \frac{d}{d-1} -1 = 0.
\end{equation}
Thus, $\lambda$ is given by,
\begin{equation}
\lambda = \frac{1}{2} + \frac{1}{2} \sqrt{\frac{2d-1}{d-1}}.
\end{equation}

We can now substitute this into the relation $a^2 + b^2 =1$ to derive,
\begin{equation}
b = \frac{1}{\sqrt{1 + 4 \frac{d-1}{d} \lambda^2}}, a = \frac{2 \lambda \sqrt{\frac{d-1}{d}}}{\sqrt{1 + 4 \frac{d-1}{d} \lambda^2}}
\end{equation}

The value of  $ \sqrt{\frac{d}{d-1}} ab + a^2$ is now given by,
\begin{equation}
\frac{2 \lambda \left[1 + 2 \lambda \frac{d-1}{d} \right]}{1 + 4 \frac{d-1}{d}\lambda^2}.
\end{equation}

Now the equation for $\lambda$ gives,
\begin{equation}
4 \frac{d-1}{d} \lambda^2 +1 = 4 \frac{d-1}{d}\lambda + 2.
\end{equation}
Thus, the value of  $ \sqrt{\frac{d}{d-1}} ab + a^2$ is now given by $\lambda$. 
This $\lambda$ is our upper bound on the sum $\sum_{y \in \delta(\mt)}|P_{iy}|$.




\end{proof}

Now, with combinatorial estimates for $(P|_{\mt})^{-1}$ in hand, we will be able to return to compute estimates on the difference $(G|_{\mt})^{-1} - (P|_{\mt})^{-1}$. This is encompassed in the following lemma,

\begin{lemma}

Recall the setting of the previous Lemma.


Then
\begin{equation}
|(G|_{\mt})^{-1} - (P|_{\mt})^{-1}|_{xy} \lesssim \epsilon.
\end{equation}
\end{lemma}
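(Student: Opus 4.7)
The plan is to exploit the second resolvent identity
\[
(G|_\mt)^{-1} - (P|_\mt)^{-1} = (P|_\mt)^{-1}\bigl(P|_\mt - G|_\mt\bigr)(G|_\mt)^{-1}
\]
and control each of the three factors on the right-hand side in turn. The outer factors are controlled via row-sum ($L^{\infty}\to L^{\infty}$) norms, the middle factor entry-wise.

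First I would show entry-wise closeness of $G|_\mt$ and $P|_\mt$. The induction hypothesis \eqref{eq:IndHyp} gives $|G_{ab} - G_{ab}(\text{Ext}(B_r(\{a,b\},\GG),Q_I,Q_O))| \le \epsilon$ for any $a,b \in \mt$, while Lemma \ref{lem:nbdpert} bounds the discrepancy between $G_{ab}(\text{Ext}(B_r(\{a,b\},\GG),Q_I,Q_O))$ and $P_{ab}$ by $(1/(d-1))^r$ plus corrections driven by $|Q_I+Q_O-2m_\infty|$, $|Q_I-Q_O|$, and $|Q_{I/O}-m_\infty|^2$. Under the induction hypothesis on $Q_{I/O}$ (with the singularity factors $S_g^1,S_g^2$), these are all $\lesssim \epsilon$ up to polylog prefactors that are absorbed into the definition of $\epsilon$ via the large exponent $K\ge 100$ in \eqref{def:eps}. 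Thus $|(P|_\mt - G|_\mt)_{ab}| \lesssim \epsilon$ uniformly on $\mt$.

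For the outer factors, Lemma \ref{lem:compPT} already provides $\sum_a |(P|_\mt)^{-1}_{xa}|\lesssim 1$ uniformly in $x$. The matching row-sum bound for $(G|_\mt)^{-1}$ is the only remaining ingredient, and I would obtain it by a bootstrap. Writing the entry-wise resolvent identity, bounding the middle factor by $\epsilon$, and summing over $y$ yields
\[
\sum_y |(G|_\mt)^{-1}_{xy}| \le \sum_y |(P|_\mt)^{-1}_{xy}| + \epsilon \Bigl(\sum_a |(P|_\mt)^{-1}_{xa}|\Bigr) \Bigl(\sum_{y,b} |(G|_\mt)^{-1}_{by}|\Bigr) \lesssim 1 + C\epsilon|\mt|\, K,
\]
where $K:=\max_x \sum_y |(G|_\mt)^{-1}_{xy}|$ is finite since $(G|_\mt)^{-1}$ is a fixed invertible matrix of dimension $2|\mt|$. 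Since $|\mt|\le (2d-1)^\ell = (\log N)^{O(\mathfrak{a})}$ while $\epsilon$ decays polynomially in $N$, the product $\epsilon|\mt|$ is $o(1)$; taking maxima over $x$ gives $K(1-C\epsilon|\mt|)\lesssim 1$, hence $K\lesssim 1$.

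With both row sums controlled, the claim follows immediately from the resolvent identity:
\[
|((G|_\mt)^{-1} - (P|_\mt)^{-1})_{xy}| \le \epsilon \Bigl(\sum_a |(P|_\mt)^{-1}_{xa}|\Bigr)\Bigl(\sum_b |(G|_\mt)^{-1}_{by}|\Bigr) \lesssim \epsilon.
\]
The main subtlety is the bootstrap: \emph{a priori} there is no useful operator norm bound on $(G|_\mt)^{-1}$, since it is only the inverse of a block of the Green's function and not of the Hermitian $H$ itself. Closing the self-consistent estimate therefore relies crucially on the separation of scales $\epsilon|\mt|=o(1)$, which is exactly what the polylog size of the $\ell$-neighborhood $\mt$ provides.
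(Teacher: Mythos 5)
Your proof is correct and rests on the same mechanism as the paper's: the second resolvent identity, row-sum ($\ell^1$) control of $(P|_{\mt})^{-1}$ from Lemma \ref{lem:compPT}, and a self-consistent argument closed by the separation of scales $\epsilon\,|\mt|=o(1)$. The bookkeeping differs slightly: the paper bootstraps directly on $\Lambda:=\max_{c,d}|E_{cd}|$, working from the implicit relation $E=-(P|_{\mt})^{-1}W(P|_{\mt})^{-1}-EW(P|_{\mt})^{-1}$ (which is algebraically equivalent to your resolvent identity), while you first bootstrap on the $\ell^\infty\!\to\!\ell^\infty$ norm of $(G|_{\mt})^{-1}$ as an intermediate and then substitute. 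Your version incidentally yields the operator-norm bound on $(G|_{\mt})^{-1}$ as a separately usable byproduct. One small loose end to tighten: the form $(G|_{\mt})^{-1}-(P|_{\mt})^{-1}=(P|_{\mt})^{-1}(P|_{\mt}-G|_{\mt})(G|_{\mt})^{-1}$ places $(G|_{\mt})^{-1}$ on the right, so the factorized entry-wise bound you invoke at the end calls for $\sum_b|(G|_{\mt})^{-1}_{by}|$, which is a \emph{column} sum, whereas your bootstrap quantity $K$ controls \emph{row} sums of a matrix that is not Hermitian for $z\in\bC_+$. Either run the identical bootstrap for column sums (this needs column sums of $(P|_{\mt})^{-1}$, which follow by the same sparsity argument that underlies Lemma \ref{lem:compPT}), or use the transposed form of the resolvent identity $(G|_{\mt})^{-1}(P|_{\mt}-G|_{\mt})(P|_{\mt})^{-1}$ so that everything is phrased in terms of what you already control. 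The paper's choice of $\Lambda$ as the bootstrap quantity sidesteps this transposition issue entirely, which is a modest simplification.
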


\begin{proof}
Let $E$ be the matrix of differences,
\begin{equation}
\begin{aligned}
&(G|_\mt)^{-1} = (P|_{\mt})^{-1} + E\\
& G|_\mt = P|_{\mt} +W.
\end{aligned}
\end{equation}
By our inductive assumption, the entries of $W$ are bounded by $W_{ab}\le \epsilon(z,w)$ for all $a,b$ found in $\mt$.

Multiplying these equations together give us,
\begin{equation}
I = I + E P|_{\mt} + (P|_{\mt})^{-1}W + EW.
\end{equation}
Manipulating this expression gives us,
\begin{equation}
E = -(P|_{\mt})^{-1}W (P|_{\mt})^{-1} - EW(P|_{\mt})^{-1}.
\end{equation}
Considering the appearance of $E$ on the right hand side, our hope is to use a maximum principle argument to show that the entries of $E$ are bounded.

Define $\Lambda:= max_{c,d \in \mt} |E_{cd}|$. Then, we see that,
\begin{equation}
|[E W (P|_{\mt})^{-1}]_{cd}| \le \sum_{ef} |E_{ce}| |W_{ef}| |[(P|_{\mt})^{-1}]_{fd}| \le 2 (2d-1)^{l} \Lambda \epsilon \sum_{f} |[(P|_{\mt})^{-1}]_{fd}|.
\end{equation}

Here, we used the fact that the number of possible indices $e$ in $\mt$ is $2(2d-1)^l$, which satisfies $(2d-1)^{l} \le (\log N)^2$. Furthermore, one can use Lemma \ref{lem:compPT} to assert
\begin{equation}
\sum_{f}  |[(P|_{\mt})^{-1}]_{fd}| \lesssim 1.
\end{equation}


Thus, we can bound the second term as,
\begin{equation}
|[E W (P|_{\mt})^{-1}]_{cd}|  \le o(1) \Lambda.
\end{equation}

Now, we can try to bound the first term,
we see that,
\begin{equation}
\begin{aligned}
|[(P|_{\mt})^{-1}W (P|_{\mt})^{-1}]_{cd}| &\le \sum_{ef}|[(P|_{\mt})^{-1}]_{ce}| |W_{ef}| |[(P|_{\mt})^{-1}]_{fd}| \\
&\le \epsilon (\sum_{e} |[(P|_{\mt})^{-1}]_{ce}|) (\sum_{f}|[(P|_{\mt})^{-1}]_{fd}|)
\\& \lesssim  \epsilon.
\end{aligned}
\end{equation}
The last line is again based on Lemma \ref{lem:compPT}.

Combining these estimates together, we see that,
\begin{equation}
\Lambda \le o(1) \Lambda + C \epsilon.
\end{equation}

This gives us our desired statement.
\end{proof}

We now return to the proof of Proposition \ref{prop:removeTest}

\begin{proof}[Proof of Proposition \ref{prop:removeTest}]


To compare $|G_{ij}^{(\mt)} - P_{ij}^{(\mt)}|$, we use equation \eqref{eq:SchurcompOutsideTree} and deal with each term separately. We can bound $|G_{ij} -P_{ij}|$ by $\epsilon$ by our induction hypothesis.
We need to perform a little bit more manipulation on our first term $(G-P) (G|_\mt)^{-1}G$.

We see that we have,
\begin{equation}
(G-P)(G|_\mt)^{-1} G = (G-P) (P|_\mt)^{-1} G + (G-P) [(G|_\mt)^{-1} - (P|_\mt)^{-1}] G.
\end{equation}

We now bound the first term appearing above,
\begin{equation} \label{eq:appear1}
|[(G-P) (P|_\mt)^{-1} G ]_{ij}| \le \sum_{x,y \in \mt}|[G-P]_{ix}| |[(P|_{\mt})^{-1}]_{xy} ||G_{yj}|.
 \end{equation}
We can bound $[[G-P]_{ix}|$ by $\epsilon$.
If we fix $y$ and sum over $x$, we observe that,
\begin{equation}
\sum_{x\in \mt} |[(P|_{\mt})^{-1}]_{xy}| \lesssim1,
\end{equation}
uniformly over $x$.

Thus, the preceding equation \eqref{eq:appear1} is bounded by,
\begin{equation}
\epsilon \sum_{y \in \mt}|G_{yj}| \le \epsilon \sum_{y \in \mt} [\epsilon+ |P_{yj}|]
\end{equation}
We have, from Lemma \ref{lem:sumintree}, that
$$
\sum_{y \in \mt} |P_{yj}| \le (\log N)^{\mathfrak{j}(d) \mathfrak{a}} .
$$


The term  $(G-P) [(G|\mt)^{-1} - (P|\mt)^{-1}] G $ will give an error of the form $(\log N)^{8\mathfrak{a}} \epsilon^2$. This is of even smaller order than our proposed main error term.

Similar logic can be used to estimate the other terms remaining in \eqref{eq:SchurcompOutsideTree}. Ultimately, we can get our desired bound,
\begin{equation}
|G_{ij}-P_{ij}| \lesssim \epsilon'.
\end{equation}
\end{proof}

\section{Removal of the switched vertices $W_S$: Proof of Proposition \ref{prop:removeWS} } \label{sec:step2}

As one can see from the statement of Proposition \ref{prop:removeWS}, one needs many auxiliary notions for optimal estimates. As we will describe later, it is not good enough to simply use the worst case bound at all time. In general, most elements will be much smaller than the worst case bound $\epsilon'$. It is this property which we exploit in what follows. 

\subsection{Introduction of Green's function distance metric}

In the past section, we used $\epsilon$ as a control parameter to determine the maximum value of the distance of the Green's functions $|G_{ij} - P_{ij}|\le \epsilon.$  However, we find that there is a small difference between this worst case estimate and the average case estimate.

By the Ward Identity, we have that,
\begin{equation}
\sum_{j }|G^{(\mt)}_{ij}|^2 = \frac{\text{Im}[G^{(\mt)}_{ii}]}{\eta}.
\end{equation}
Thus, for any function $f(N)$, we see that no more than $\frac{N}{f(N)}$ indices $j$ (for a fixed $i$) can satisfy the property that,
\begin{equation}
|G^{(\mt)}_{ij}| \ge \frac{\sqrt{f(N) \text{Im}[G^{(\mt)}_{ii}]}}{\sqrt{N\eta}}.
\end{equation}
By replacing $\text{Im}[G_{ii}]$ with the deterministic value $m_{\infty}$ with small error, we see that the pigeonhole principle will assert the concrete estimate that,
\begin{equation}
|G^{(\mt)}_{ij}| \ge \frac{\sqrt{f(N) \text{Im}[m_{\infty}]}}{\sqrt{N\eta}},
\end{equation}
for no more than $2\frac{N}{f(N)}$ indices $j$ ( This merely used the fact that $\text{Im}[G^{(\mt)}_{ii}] \le 2 \text{Im}[m_{\infty}]$ from our inductive local law estimate on $\text{Im}[G^{(\mt)}_{ii}]$.)


It might look like this gives very little gain at first; however, we will soon see that improperly applied stability estimates would ruin the stability estimates by vastly increasing the prefactor of $(\log N)$ in front of the epsilon. Namely, instead of $(\log N)^{j(d) \mathfrak{a}}$, we may get $(\log N)^{C \mathfrak{a}}$ for very large $C$ after our stability iteration. Unfortunately, the concentration estimate we apply later is not good enough to cancel out the increase of this multiplicative prefactor.

 \begin{rmk} the concentration estimate which we will prove in Section \ref{sec:concentration} can kill a factor of say $(\log N)^{ \mathfrak{a}}$. Thus, it unfortunately cannot kill a large multiplicative factor of the form $(\log N)^{K \mathfrak{a}}$. However, choosing $\mathfrak{a}$ large enough can kill any power of the form $(\log N)^p$ for some constant power $p$. The notions in this section allow us to reduce the decay in the stability estimate from $(\log N)^{K\mathfrak{a}}$ to $(\log N)^p$.
\end{rmk}


\begin{defn} \label{defn:phi}
Fix the parameter
\begin{equation}
\phi(z,w) := \frac{(\log N)^{\frac{K}{2} \mathfrak{a}}}{N^{1-\mathfrak{c}}} + \frac{(\log N)^{\frac{K}{2} \mathfrak{a}} \sqrt{\text{Im}[m_{\infty}]}}{\sqrt{N \eta}}.
\end{equation}
The important part of $\phi$ is that $\phi \ll  (\log N)^{-\frac{K}{2} \mathfrak{a}}\epsilon(z,w)$. We also use $(\epsilon')^2 \ll \phi$ many times for convenience of the final expression.

We say two indices $i$ and $j$  corresponding to vertices in $(\mt)^c$ are Green's function connected if there is any neighbor $i'$ of $i$ and a neighbor $j'$ of $j$ such that $|G^{(\mt)}_{i'j'}|\ge \phi$. We will use the notation $i \sim j$ if this is the case. Note that this is not an equivalence relation due to lack of transitivity.

\end{defn}

We will use the fact that most Green's function would have the slightly better error estimate $|G^{(\mt)}_{ij} - P^{(\mt)}_{ij}| \le \phi$ so that we do not need to worry as much about the decay of the multiplicative factor. In order to see this, we prove the following lemma, which shows how most vertices in $\mt^c$ cannot be Green's function connected to too many ( more than $O(\log N)$) of the vertices that we want to switch.


\begin{lemma} \label{lem:Grnfunccon}
Fix our random $d$regular digraph $\GG$ as well as a vertex $v$ whose local $\mathfrak{R}$-like neighborhood is treelike. Consider the local $l$ neighborhood $\mt$ around the vertex $v$. Let $e_1=(v_1,w_1),\ldots, e_\mu=(v_\mu,w_\mu)$ be randomly selected edges that lie outside $\mt$ in $\GG$ and $\mu$ is the number of edges connecting $\mt$ to $\mt^c$. Then, with probability $1 - O(N^{-\mathfrak{q}})$, we have the following two conditions:
\begin{enumerate}
\item Any index $v$ in $\mt^c$ is Green's function connected to no more than $O(\log N)$ of the indices corresponding to vertices $\{b_1,c_1,\ldots, b_\mu, c_\mu\}$.
\item The set $\alpha$ such that $\{b_{\alpha},c_{\alpha}\}$ is Green's function connected to some $\{b_{\beta},c_{\beta}\}$ for $\beta \ne \alpha$ is $O(\log N)$.
\end{enumerate}

 Any vertex $v$ in $\mt^c$ is Green's function connected to no more than $O(\log N)$ vertices in $\cup_{i=1}^m \{v_i,w_i\}$.

\end{lemma}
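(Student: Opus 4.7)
The plan is to combine the Ward identity for $G^{(\mt)}$ with the independence of the randomly selected switching edges, following the standard pigeonhole-plus-Chernoff template familiar from local-law arguments. The Ward identity $\sum_j |G^{(\mt)}_{ij}|^2 = \text{Im}[G^{(\mt)}_{ii}]/\eta$ combined with the a priori diagonal bound $\text{Im}[G^{(\mt)}_{ii}] \lesssim \text{Im}[m_\infty]$---inherited from $\GG \in \Omega^0(z,w)$ via Proposition \ref{prop:removeTest} specialized at $i=j$ together with the tree-like estimates of Lemma \ref{lem:minftybnd}---yields, by the choice of $\phi$ in Definition \ref{defn:phi}, that for each fixed index $i$ the cardinality $|\{j:|G^{(\mt)}_{ij}| \geq \phi\}|$ is at most $C N/(\log N)^{K\mathfrak{a}}$.

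For part (1), fix a vertex $v \in \mt^c$. Since $v$ has at most $2d$ neighbors and every vertex in $\GG$ is incident to at most $2d$ edges, the set $B_v$ of all $u \in \mt^c$ that are Green's function connected to $v$ has cardinality $|B_v| \leq C N/(\log N)^{K\mathfrak{a}}$. Because each $(b_\alpha, c_\alpha)$ is drawn uniformly and independently from the $\Theta(N)$ edges in $\mt^c$, we obtain $\mathbb{P}(\{b_\alpha, c_\alpha\} \cap B_v \neq \emptyset) \leq C/(\log N)^{K\mathfrak{a}}$. The count $X_v = |\{\alpha : \{b_\alpha, c_\alpha\} \text{ is Green's function connected to } v\}|$ is thus a sum of $\mu = (\log N)^{O(\mathfrak{a})}$ independent Bernoulli variables with mean $\leq C/(\log N)^{K\mathfrak{a}}$, and Chernoff's inequality gives $\mathbb{P}(X_v \geq \log N) \leq N^{-\mathfrak{q}-1}$ (recalling $K \geq 100$ from Definition \ref{def:IndHyp}, so that $K$ may be taken as large as needed). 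A union bound over $v \in \mt^c$ concludes part (1) with probability $1 - O(N^{-\mathfrak{q}})$.

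For part (2), I would repeat the pigeonhole bound conditionally: given $(b_\alpha, c_\alpha)$, the set of vertices Green's function connected to $\{b_\alpha, c_\alpha\}$ has cardinality $\leq CN/(\log N)^{K\mathfrak{a}}$, so for each ordered pair $\alpha \neq \beta$ the conditional probability that $\{b_\beta, c_\beta\}$ is connected to $\{b_\alpha, c_\alpha\}$ is $\leq C/(\log N)^{K\mathfrak{a}}$. Summing over $\beta$, the count $Y$ of bad $\alpha$'s satisfies $\mathbb{E}[Y] \leq C\mu^2/(\log N)^{K\mathfrak{a}} = (\log N)^{O(\mathfrak{a})-K\mathfrak{a}}$. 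The main obstacle is upgrading this expectation bound to a high-probability statement, since the ``$\alpha$ bad'' events across distinct $\alpha$ share randomness. My plan is to invoke Azuma--Hoeffding on the Doob martingale obtained by sequentially revealing $S_1, \ldots, S_\mu$, noting that altering a single $S_\alpha$ changes $Y$ by at most $1 + X_{b_\alpha} + X_{c_\alpha}$, which is $O(\log N)$ on the event from part (1); this yields $\mathbb{P}(Y \geq C\log N) \leq N^{-\mathfrak{q}}$. An elementary alternative would be to bound $Y$ by the pair-count $W = \sum_{\alpha \neq \beta} I_{\alpha,\beta}$ and apply Chebyshev, using that the dependence graph of $\{I_{\alpha,\beta}\}$ has degree $O(\mu)$ so $\text{Var}[W] = O(\mu^3/(\log N)^{K\mathfrak{a}})$, which is sufficient for $K$ large.
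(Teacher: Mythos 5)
Your argument for part (1) is sound and matches what the paper intends: the Ward-identity pigeonhole bound on $|B_v|$, stochastic domination of $X_v$ by a binomial with tiny success probability, and then a binomial tail of the form $\binom{\mu}{k}(C/(\log N)^{K\mathfrak{a}})^{k} \le (C\mu/(\log N)^{K\mathfrak{a}})^{k}$, which at $k=\log N$ and $K\ge 100 \gg c_0$ (where $\mu = (\log N)^{c_0\mathfrak{a}}$) yields $\exp(-c\,\log N\,\log\log N) \ll N^{-\mathfrak{q}-1}$, followed by a union bound over $N$ vertices. This is precisely the ``straightforward counting'' the paper has in mind.

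Your part (2), however, has a genuine gap: neither Azuma--Hoeffding nor Chebyshev can deliver the required $O(N^{-\mathfrak{q}})$ tail. Take Azuma first: even granting the (non-deterministic) increment bound $c_\alpha = O(\log N)$, the resulting tail is $\exp\bigl(-t^2/(2\sum_\alpha c_\alpha^2)\bigr) = \exp\bigl(-t^2/(C\mu (\log N)^2)\bigr)$; with $t = O(\log N)$ and $\mu = (\log N)^{\Theta(\mathfrak{a})}$ this exponent is $O(1/\mu) \to 0$, so the bound is vacuous. To extract anything polynomial in $N$ from Azuma one would need $t \gtrsim \sqrt{\mu}\,(\log N)^{3/2}$, which is a far weaker conclusion than $Y = O(\log N)$. (There is also the separate issue that McDiarmid/Azuma requires \emph{deterministic} bounded differences, and $1 + X_{b_\alpha} + X_{c_\alpha}$ is only $O(\log N)$ on a high-probability event.) Chebyshev fares no better: with $\operatorname{Var}[W] = O(\mu^3/(\log N)^{K\mathfrak{a}})$ and $t = \log N$ you get $\mathbb{P}(W \ge t) = O\bigl((\log N)^{3c_0\mathfrak{a} - K\mathfrak{a} - 2}\bigr)$, a bound that is polynomially small in $\log N$ for $K$ large, but never $O(N^{-\mathfrak{q}})$ for any fixed $K$. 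Both methods yield at best $(\log N)^{-O(1)}$ failure probability, several orders short of what the lemma claims.

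The missing idea is an exponential-moment / structured-enumeration argument exploiting that the switch data $S_1,\ldots,S_\mu$ are revealed independently. Concretely, if $Y$ is the number of non-isolated vertices of the ``bad-pair graph'' $H$ on $\{1,\ldots,\mu\}$ (edges $\{\alpha,\beta\}$ when $\{b_\alpha,c_\alpha\} \sim \{b_\beta,c_\beta\}$), then $Y \ge m$ forces $H$ to contain a spanning forest $F$ with $n \ge m$ vertices and $k \ge n/2$ edges. Revealing the edges of a fixed forest in BFS order (each new edge introducing a fresh vertex) and conditioning at each step gives $\mathbb{P}(F \subset H) \le p^{k}$ with $p = C/(\log N)^{K\mathfrak{a}}$. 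Union-bounding over forests, $\mathbb{P}(Y \ge m) \le \sum_{n \ge m} \binom{\mu}{n}(n+1)^{n-1} p^{n/2} \lesssim \sum_{n \ge m}\bigl(\mu^2\sqrt{p}\bigr)^n$, and for $K$ large enough that $\mu^2\sqrt{p} = (\log N)^{-c}$ with $c > 0$, setting $m = C\log N$ gives $\mathbb{P}(Y \ge m) \lesssim \exp(-cC\,\log N\,\log\log N) \ll N^{-\mathfrak{q}}$. This recovers the exponential-in-$\log N\log\log N$ tail that both Azuma and Chebyshev fail to see, and is the mechanism behind the reference to \cite[Prop.~5.14]{HuangYau} in the paper's proof.
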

\begin{proof}Let $X$ be any vertex in $\mt^c$. For the vertex $X$, the set $X_{GF}$ of the vertices that are Green's function connected to $X$ is of size less than $\frac{N}{(\log N)^{K \mathfrak{a}}}$. For each $i$ in $\{1,\ldots,m\}$, the chance that $X$ is Green's function connected to one of $v_i,w_i$is an independent event. At this point, one can obtain the desired result by straightforward counting arguments. One can see \cite[Prop. 5.14]{HuangYau} for details.
\end{proof}

At this point, we can finally define the event $S_G(\GG)$ of good switching events.

\begin{defn}
Consider a graph $\GG$ in $\Omega_g(z,w)$ as well as a distinguished vertex $o$ along with its radius $\ell$ neighborhood $\mt$. Recall $\mathbb{S}_{\GG,o}$, the switching events that correspond to the graph $\GG$ around vertex $o$, as well as the characteristic variables $\chi_{\alpha}$. We define the event $S_G(\GG) \subset \mathbb{S}_{\GG}$ as follows: a switch $S$ will belong to $S_G(\GG)$ if it satisfies the following conditions,
\begin{enumerate}
\item All except for $O_\mathfrak{q}(1)$ of the vertices $\{c_1,\ldots, c_{\mu}\}$ have radius $\mathfrak{R}$ tree neighborhoods in $\GG^{(\mt)}$.
\item $\chi_{\alpha}=1$ for all except $O_{\mathfrak{q}}(1)$ many vertices.
\item The Green's function connectivity conditions of Lemma \ref{lem:Grnfunccon} hold.

\end{enumerate}

Due to Lemmas \ref{lem:Grnfunccon} and Proposition \ref{Prop:detswitch}, the event $S_G(\GG)$ holds with probability $\mathbb{P}(S_G(\GG))= 1- O(N^{-\mathfrak{q}})$.

\end{defn}

In what follows in Sections \ref{sec:step2} ,\ref {sec:Step3}, and \ref{sec:step4}, we will always implicitly assume we are considering a switching event $S$ found in $S_G(\GG)$.

\subsection{Switching estimates involving the Green's function distance}

We can apply a very similar type of analysis for the removal of the vertices in $W_S$. Given a switching event $S$, where we elect to switch the vertices on the boundary of $\mathbb{T}$ with arbitrary edges, the vertices $W_S$ consist of those  vertices $\{b_{1},\ldots, b_{\mu}\}$. 
Again, we introduce, as a shorthand, the matrix $P:= G(Ext(B_r(\{i,j\} \cup W_S, \GG\setminus \mt),Q_I,Q_O))$.
As before, the point of this matrix is that we can write $G^{(\mt)}$ as,
\begin{equation}
(G^{(\mt)})^{-1} = \begin{bmatrix} A & B \\
B^* &D			\end{bmatrix},
\end{equation}

and
\begin{equation}
P^{-1} = \begin{bmatrix} A&  B \\
B^* & D'
\end{bmatrix}.
\end{equation}
Here, $A$ is the adjacency matrix of the graph restricted to $W_S$ and $B$ will be the adjacency matrix of the edges connecting $W_S$ to $W_S^c$. At this point, we remark that we can restrict to the event that all the edges that are chosen for switching are far away from each other; more specifically, two vertices in $b_{\alpha}, b_{\beta}$ in $W_S$ are of distance at least $2r$ from each other.

Due to this restriction, the matrix $P|_{W_S}$ is much easier to understand. It is a block diagonal matrix (with $2 \times 2$ blocks) where the blocks consist of the indices $b_{\alpha}$ and $b_{\alpha}+N$ of the vertices that border the edge used in the switching. For this reason, it is also easy to describe the matrix $(P|_{W_S})^{-1}$.

Another remark that we will have to make is that in addition to proving the stability of the worst case bound $|G^{(T)}_{ij} - P^{(T)}_{ij}|\le \epsilon$, we would further need to prove the stability of the average case bound for pairs of vertices that are not Green's function connected. With this remark in mind, we now return to the proof.

By applying the Schur complement formula, we have that,
\begin{equation}
\begin{aligned}
G^{(\mt \cup W_S)} = G^{(\mt)} - G^{(\mt)} (G^{(\mt)}|_{W_S})^{-1} G^{(\mt)},
P^{( W_s)} = P - P (P|_{W_S})^{-1} P.
\end{aligned}
\end{equation}

We can explicitly write out our difference as,
\begin{equation} \label{eq:differenceWS}
\begin{aligned}
G^{(\mt \cup W_S)} - P^{(W_S)} &= [G^{(\mt)} - P] - [ G^{(\mt)} -P] (G^{(\mt)}|_{W_S})^{-1} G^{(\mt)} \\&- P[(G^{(\mt)}|_{W_S})^{-1} - (P|_{W_S})^{-1}] G^{(\mt)} - P(P|_{W_S})^{-1}[G^{(\mt)} - P].
\end{aligned}
\end{equation}


As before, one main issue would be to try to estimate the value of $(G^{(\mt)}|_{W_S})^{-1}$

\begin{lemma} \label{lem:switchinv}
We have the following estimates on
$$
\left|(P|_{W_S})_{ij}^{-1} - (G^{(\mt)}|_{W_S})_{ij}^{-1}\right| \lesssim  \epsilon'.
$$

Furthermore, if we know that $i$ and $j$ are not Green's function connected, then we are able to get the superior estimate,
\begin{equation}
|(P|_{W_S})_{ij}^{-1} - (G^{(\mt)}|_{W_S})_{ij}^{-1}| \lesssim \phi,
\end{equation}
for some constant $C$.
We remark here that if $i$ and $j$ are not Green's function connected, then they would belong to different blocks of $P|_{W_S}$. In this case, $[P|_{W_S}]_{ij}$ and $(P|_{W_S})^{-1}_{ij}$ are both zero.
\end{lemma}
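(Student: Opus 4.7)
The plan is to set up a Schur-complement/resolvent comparison between the two inverses, entirely analogous to the argument carried out for $(G|_\mt)^{-1}-(P|_\mt)^{-1}$ in Section \ref{sec:step1}, but now exploiting the much sparser block structure that $P|_{W_S}$ inherits from the construction of the extension. Setting $E := (G^{(\mt)}|_{W_S})^{-1} - (P|_{W_S})^{-1}$ and $W := G^{(\mt)}|_{W_S} - P|_{W_S}$, the identity $(G^{(\mt)}|_{W_S})^{-1}\,G^{(\mt)}|_{W_S} = (P|_{W_S})^{-1}\,P|_{W_S} = I$ rearranges to the self-consistent equation
\begin{equation*}
E = -(P|_{W_S})^{-1} W (P|_{W_S})^{-1} - E W (P|_{W_S})^{-1} \,.
\end{equation*}
Proposition \ref{Prop:GTminusPT} supplies the uniform bound $|W_{ef}| \lesssim \epsilon'$, and I would then solve for $E$ via a max-principle bootstrap.

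The structural input that makes this work is that the switching events in $S_G(\GG)$ force (by Proposition \ref{Prop:detswitch}) the vertices $b_\alpha$ to be pairwise separated by distance at least $\mathfrak{R}/4 \gg r$ in $\GG\setminus\mt$. In the extension graph defining $P$, the radius-$r$ balls around distinct $b_\alpha$ are therefore mutually disjoint, so $P|_{W_S}$ itself splits as an orthogonal direct sum of $2 \times 2$ blocks, one per pair $\{b_\alpha, b_\alpha+\aleph\}$. Consequently $(P|_{W_S})^{-1}$ inherits the same block-diagonal form, each row has at most two non-zero entries, and $\sum_f |(P|_{W_S})^{-1}_{ef}| = O(1)$ uniformly in $e$ (using the stability of the $2\times 2$ inverse via Lemma \ref{lem:minftybnd}). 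Plugging this sparsity into the two terms of the self-consistent equation, and writing $\Lambda := \max_{ij}|E_{ij}|$, one finds $|[(P|_{W_S})^{-1} W (P|_{W_S})^{-1}]_{ij}| \lesssim \epsilon'$ and $|[E W (P|_{W_S})^{-1}]_{ij}| \lesssim |W_S|\,\epsilon'\,\Lambda$; since $|W_S| \lesssim (\log N)^{2\mathfrak{a}}$ and $\epsilon'$ is polynomially small in $N$, the latter contribution is $o(\Lambda)$, the bootstrap closes and delivers the worst-case bound $\Lambda \lesssim \epsilon'$.

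For the improved $\phi$ estimate when $i$ and $j$ are not Green's-function connected, the plan is to exploit that such $i,j$ automatically sit in different $2\times 2$ blocks of $W_S$, so $(P|_{W_S})^{-1}_{ij}=0$ and it only remains to bound $|E_{ij}|$ itself. In the first term on the right of the self-consistent equation, the block-diagonality of $(P|_{W_S})^{-1}$ restricts the contributing pairs to $e$ in the block of $i$ and $f$ in the block of $j$; for such $e,f$ one has $P_{ef}=0$ (different extension-components) and so $W_{ef} = G^{(\mt)}_{ef}$, which by the not-Green's-function-connected hypothesis and Definition \ref{defn:phi} is bounded by $\phi$, giving an $O(\phi)$ contribution. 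The $E W (P|_{W_S})^{-1}$ term is more delicate, and I expect it to be the main obstacle: one must split the inner sum over $e \in W_S$ into the at most $O(\log N)$ blocks that are Green's-function connected to $j$ (controlled by the already-established $|E_{ie}| \lesssim \epsilon'$ together with $|W_{ef}| \lesssim \epsilon'$, giving a contribution of order $\log N\,(\epsilon')^2 \ll \phi$ thanks to $(\epsilon')^2 \ll \phi$ from Definition \ref{defn:phi}) and the remaining blocks (where $|W_{ef}| \leq \phi$ by the definition of Green's-function connectivity, contributing $o(\phi)$). This bookkeeping is precisely where Lemma \ref{lem:Grnfunccon} is indispensable, and it is the reason why the worst-case bound $\Lambda \lesssim \epsilon'$ must be established first before the refined $\phi$-bound can be bootstrapped on top of it.
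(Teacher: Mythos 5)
Your proposal matches the paper's own proof almost exactly: same self-consistent equation $E = -(P|_{W_S})^{-1}W(P|_{W_S})^{-1} - EW(P|_{W_S})^{-1}$, same use of the block-diagonal sparsity of $(P|_{W_S})^{-1}$ (each row/column has at most two non-zero $O(1)$ entries by the separation of the $b_\alpha$'s), same maximum-principle bootstrap for $\Lambda \lesssim \epsilon'$, and the same observation that $(\epsilon')^2 \ll \phi$ absorbs the second-order contributions. The one place you over-anticipate difficulty is the $EW(P|_{W_S})^{-1}$ term in the $\phi$-bound: the paper disposes of it by the crude uniform estimate $|E|\,|J|\,|W_S| \lesssim (\log N)^{4\mathfrak{a}}(\epsilon')^2 \ll \phi$ directly, so the split over Green's-function-connected versus unconnected blocks that you sketch is not needed there (that split is needed in later lemmas, e.g.\ Lemma~\ref{lem:ijwsc}, but not here).
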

\begin{proof}
Again, we write,
\begin{equation}
\begin{aligned}
& (G^{(\mt)}|_{W_S})^{-1} = (P|_{W_S})^{-1} + E\\
& G^{(\mt)}|_{W_S}= P|_{W_S} + J.
\end{aligned}
\end{equation}

Multiplying these equations, we again can derive that,
\begin{equation} \label{eq:multWS}
 E = -(P|_{W_S})^{-1} J (P|_{W_S})^{-1} - EJ (P|_{W_S})^{-1}.
\end{equation}

Let us estimate the first term. We have,
\begin{equation} \label{eq:lattersum}
|[(P|_{W_S})^{-1} J (P|_{W_S})^{-1}]_{ij}| \le \sum_{a,b \in W_S} | (P|_{W_S})^{-1}_{ia}| |J_{ab}|| (P|_{W_S})^{-1}_{bj}|.
\end{equation}

We can bound $|J_{ab}| \le \epsilon'$. Furthermore, observe that there are at most $2$ values $a$ such that $(P|_{W_S})^{-1}_{ia}$ is non-zero and $2$ values $b$ such that $(P|_{W_S})^{-1}_{bj}$; this is due to our assumption that the switched edges in $W_S$ are far apart from one another.  Furthermore, these values are $O(1)$ quantities; this can be shown be a direct application of the Schur complement formula on a pure tree-like neighborhood combined with  some perturbation computations.

Let us define $$P_{\infty}:= G(Ext(B_r(\{i,j\} \cup W_S, \GG \setminus \mathbb{T}),m_{\infty},m_{\infty})).$$ By Lemma \ref{lem:pert}, one can show that $$||P|_{W_S} - P_{\infty}|_{W_S}||_{\infty} \ll 1.$$ By $\infty$ norm here, we refer to the supremum over all entries. Furthermore, since the vertices in $W_S$ are chosen so that they have tree-like neighborhoods, we additionally have that $(P_{\infty}|_{W_S})^{-1}$ is explicitly given by
$$
\begin{bmatrix}
-z - \frac{d}{d-1} m_{\infty} & w\\
\overline{w} & - z -\frac{d}{d-1}m_{\infty}
\end{bmatrix},
$$
on each of its diagonal blocks. Clearly, all of these terms are $O(1)$.  By the same analysis we have done earlier, we can argue that $ ||(P|_{W_S})^{-1} - (P_{\infty}|_{W_S})^{-1}||_{\infty} \ll 1$; this is especially simple since we only have to consider only $2 \times 2$ diagonal blocks.  This shows that the entries of $(P_{\infty}|_{W_S})^{-1}$ are of $O(1)$, as desired.

Thus, the sum from equation \eqref{eq:lattersum} can be bounded by $C |J_{ab}|$ for the unique possible values of $a$ and $b$ (if they exist), that return a non-zero quantity in the computation  $ | (P|_{W_S})^{-1}_{ia}| |J_{ab}|| (P|_{W_S})^{-1}_{bj}|$. Since we have earlier shown that $|J_{ab}|\le  \epsilon'$ for any pair $a$ and  $b$, we know that $\eqref{eq:lattersum} \le C\epsilon.$

Now, we estimate the second term from \eqref{eq:multWS},
\begin{equation}
|\sum_{a,b \in W_S} E_{ia} J_{ab}  (P|_{W_S})^{-1}_{bj}| \le 2 (\sup_{a,b \in W_S} E_{ab})|W_S| \epsilon'.
\end{equation}

We applied the triangle inequality and used the fact that $(P|_{W_S})^{-1}_{bj}$ is zero for all values of $b$ except for $2$ of them. We can bound $E_{ia}$ by the supremum over all values and $J_{ab}$ by $\epsilon'$. Now, $ |W_S| \epsilon'(z,w) \ll 1$.  Combining this inequality and the previous one, we can derive the inequality,
\begin{equation}
 \sup_{ab} E_{ab} \le (\sup_{ab} E_{ab}) |W_S| \epsilon' +  \epsilon'.
\end{equation}

This would imply that $\sup_{a,b \in W_S} E_{ab} \le 2 \epsilon'$ as desired.

With the estimate on the supremum in hand, we can improve our estimate for the difference on terms that are not Green's function connected.
Observe, that we can bound
\begin{equation}
|\sum_{a,b} E_{ia} J_{ab}  (P|_{W_S})^{-1}_{bj}| \le (\sup_{ab} E_{ab}) |W_S| \epsilon' \le  (\log N)^{4 \mathfrak{a}} (\epsilon')^2 \ll \phi.
\end{equation}

Furthermore, we have the following improvement when considering the first term from equation \eqref{eq:multWS}.
\begin{equation}
\sum_{a,b \in W_S} | (P|_{W_S})^{-1}_{ia}| |J_{ab}|| (P|_{W_S})^{-1}_{bj}|,
\end{equation}
Let $i$ correspond to one of $b_{\alpha},b_{\alpha}+N$ and and $j$ correspond to one of $b_{\beta},b_{\beta}+N$, where $b_{\alpha}$ and $b_{\beta}$ are vertices in $W_S$. The only non-zero terms $[(P|_{W_S})^{-1}]_{ia}$ has $a= b_{\alpha}$ or $b_{\alpha}+N$ . Similarly, the only nonzero terms $(P|_{W_S})^{-1}_{bj}$ has $b=b_{\beta}$ or $b_{\beta}+N$. For these indices, we can apply the bound $|J_{ij}|\le \phi$ due to our assumption that $i$ and $j$ are not Green's function connected.
\end{proof}

With this inequality in hand, we can start estimating the terms that appear in equation \eqref{eq:differenceWS}. This will complete the proof of Proposition \ref{prop:removeWS}.



\begin{proof}[Proof of Proposition \ref{prop:removeWS}]
The simplest term to estimate is the last one in equation \eqref{eq:differenceWS}.

Consider the following expression,

\begin{equation}
[P (P|_{W_S})^{-1} [G^{(\mt)} - P]]_{ij} = \sum_{a,b \in W_S} |P_{ia}| |(P|_{W_S})^{-1}_{ a b} ||[G^{(\mt)} -P]_{bj}|.
\end{equation}

Let $i$ correspond to one of $b_{\alpha},b_{\alpha}+N$ and $j$ correspond to one of $b_{\beta},b_{\beta}+N$ for $b_{\alpha},b_{\beta} \in W_S$.

As we have argued before, due to the block diagonal structure of $P$, $P_{ia}$ is nonzero for at most $2$ values of $a$. These values can be $b_{\alpha}$ or $b_{\alpha}+N$. Given this value of $a$, $(P|_{W_S})_{ab}^{-1}$ is nonzero for at most $2$ values of $b$. This $b$ can still be only one of $b_{\alpha}$ or $b_{\alpha}+N$. Finally, we can apply the bound that $|[G^{(\mt)} -P]_{bj}|\le \epsilon'$ in general, or $\le \phi$ if $i$ and $j$ are not Green's function connected.

Now, let us consider bounding the second term in equation \eqref{eq:differenceWS}.
This is,
\begin{equation}
\sum_{a,b\in W_S} |P_{ia}||[(P|_{W_S})^{-1} - (G^{(\mt)}|_{W_S})^{-1}]_{ab}| |G^{(\mt)}|_{bj}.
\end{equation}

We first remark that since we know that $|G^{(\mt)}_{bj} - P_{bj}| \le  \epsilon'$, this shows that $|G^{(\mt)}|$ will  have the worst case bound $O(1)$ for at most $2$ values of the index $b$ (those that are in the same block as $j$ in $P$). There are a further $O(\log N)$ indices of $b$ where we would need to apply the worst case bound $|G^{(\mt)}|_{bj} \le \epsilon'$ and $P_{bj}=0$. Finally, we can bound the Green's function for all remaining indices that are possibilities for $b$ as $|G^{(\mt)}|_{bj} \le \phi$, due to our assumption that the indices are not Green's function connected.

Further, we also know that for each fixed index $a$, there are at most $O(\log N)$ indices where we would need to apply the worst case bound $|[(P|_{W_S})^{-1} - (G^{(\mt)}|_{W_S})^{-1}]_{ab}| \le \epsilon'$. These are the indices that $a$ is Green's function connected to $b$. Otherwise, we could apply the bound $\phi$. As before, $|P_{ia}|$ is $0$ for all except for $2$ possible indices $a$.

Now, consider the case that $i$ is not Green's function connected to $j$. Then, if $b$ is one of the two indices connected to $j$ in $W_S$, then we would have the improved bound $|[(P|_{W_S})^{-1} - (G^{(\mt)}|_{W_S})^{-1}]_{ab}| \le \phi$ for these indices $b$ and all indices $a$ such that $|P_{ia}| \ne 0$.

Thus, we would derive the bound,
$$
\begin{aligned}
& \sum_{a,b\in W_S} |P_{ia}||[(P|_{W_S})^{-1} - (G^{(\mt)}|_{W_S})^{-1}]_{ab}| |G^{(\mt)}|_{bj} \\ &
\lesssim \phi +  \log N (\epsilon')^2 +   \log N |W_S| \epsilon' \phi + |W_S|^2 \phi.
\end{aligned}
$$

Similar techniques allow us to treat the first term in equation \eqref{eq:differenceWS}.
\end{proof}

\section{Relating $G^{(\mt \cup W_S)}$ to $\tilde{G}^{(\mt)}$ of the switched graph: Proof of Proposition \ref{Prop:SwitchGWS}} \label{sec:Step3}

As in the previous sections, we will prove Proposition \ref{prop:removeWS} by applying the Schur complement formula and other Green's function manipulations. Since we are adding vertices back into the graph, we will have slightly different formulas for $\tilde{G}_{ij}^{(\mt)}$ from $G_{ij}^{(\mt \cup W_S)}$ depending on whether $i$ or $j$ is in $W_S $ or not. The full conclusion of Proposition \ref{prop:removeWS} can be derived from the conclusions of Lemmas \ref{lem:ijws}, \ref{lem:switchbound}, and \ref{lem:ijwsc}. 

We consider $P$ given by $G(Ext(B_r(\{i,j\} \cup W_S, \tilde{\GG} \setminus \mt),Q_I,Q_O))$ where $\tilde{\GG}$ is the graph obtained by switching the edges associated to the switching set $W_S$. We observed that $P^{(W_S)} = G(Ext(B_r(\{i,j\} \cup (\mt \cup W_S), \GG \setminus \mt),Q_I,Q_O))$. Namely, $P^{(W_S)}$ is exactly the Green's function we were relating $G$ to in the previous section.

In this section we will apply the Schur complement formula in the form,
\begin{equation}
\begin{aligned}
& \tilde{G}^{(\mt)}|_{W_S} = (A - \tilde{B}G^{(\mt \cup W_S)} \tilde{B}^*)^{-1},\\
& P|_{W_S} = (A - \tilde{B} P^{(W_S)} \tilde{B}^*)^{-1},
\end{aligned}
\end{equation}
where $\tilde{B}$ represents the adjacency matrix between the vertices of $W_S$ to those outside of $W_S$.

The Schur complement formula give us
\begin{equation}
\begin{aligned}
& \tilde{G}^{(\mt)}|_{W_S \times W_S^c} = - \tilde{G}^{(\mt)}|_{W_S} \tilde{B} G^{(\mt \cup W_S)}\\
& P|_{W_S \times W_S^c} = - P|_{W_S} \tilde{B}  P^{(W_S)},
\end{aligned}
\end{equation}
and an application of the resolvent identity would give us,
\begin{equation}
\begin{aligned}
& \tilde{G}^{(\mt)}|_{W_S^c \times W_S^c} = G^{(\mt \cup W_S)} + G^{(\mt \cup W_S)} \tilde{B}^* \tilde{G}^{(\mt)}|_{W_S} \tilde{B} G^{(\mt \cup W_S)}\\
& P= P^{(W_S)} + P^{(W_S)} \tilde{B}^* P|_{W_S} \tilde{B} P^{(W_S)}.
\end{aligned}
\end{equation}

We see first that it is crucial to understand the difference of $\tilde{G}^{(\mt)}$ and $P$ on the set $W_S$ at the first step. This can be done by the resolvent identity,
\begin{equation} \label{eq:resolWS}
\tilde{G}^{(\mt)}|_{W_S} - P|_{W_S} = \tilde{G}^{(\mt)}|_{W_S} \tilde{B}(P^{(W_S)} - G^{(\mt \cup W_S)}) \tilde{B}^* P|_{W_S}.
\end{equation}

We can estimate the maximum difference of the right hand side.
\begin{lemma} \label{lem:ijws}
For any two indices $i$ and $j$ corresponding to vertices in $W_S$, we have,
\begin{equation}
|[\tilde{G}^{(\mt)}|_{W_S} - P|_{W_S}]_{ij}| \lesssim \epsilon'.
\end{equation}

Furthermore, if $i$ and $j$ correspond to vertices that are not Green's function connected, then we have the improved bound,
\begin{equation}
    |[\tilde{G}^{(\mt)}|_{W_S} - P|_{W_S}]_{ij}| \lesssim \phi.
\end{equation}

\end{lemma}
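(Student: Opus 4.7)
The plan is to exploit the resolvent identity \eqref{eq:resolWS}, which expresses the difference $J:=\tilde{G}^{(\mt)}|_{W_S}-P|_{W_S}$ as a product with the already-controlled quantity $P^{(W_S)}-G^{(\mt\cup W_S)}$ sandwiched between $\tilde{B}$ and $\tilde{B}^{*}$. First I would record two structural facts that collapse the sum implicit in the identity. (a) The matrix $P|_{W_S}$ is block-diagonal with $2\times 2$ blocks indexed by the switched vertices $b_\alpha\in W_S$, each block of size $O(1)$, since $P$ itself decomposes as a direct sum over the connected components of $B_r(\{i,j\}\cup W_S,\tilde{\GG}\setminus\mt)$ while the $b_\alpha$ were chosen at mutual distance exceeding $2r$. (b) The matrix $\tilde{B}$, encoding edges from $W_S$ to $W_S^c$ in the switched graph $\tilde{\GG}$, has at most $2d$ nonzero entries per row and per column. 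Together these facts reduce the entrywise expansion of \eqref{eq:resolWS} to $O(d^2)=O(1)$ nonzero summands per entry $(i,j)$.

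Next I would run a bootstrap in the spirit of Lemma \ref{lem:switchinv}. Setting $\Lambda:=\max_{i,j\in W_S}|J_{ij}|$, the elementwise inequality $|(\tilde{G}^{(\mt)}|_{W_S})_{ik}|\le |(P|_{W_S})_{ik}|+\Lambda$ together with the expansion
\begin{equation*}
J_{ij}=\sum_{k,a,b,l}(\tilde{G}^{(\mt)}|_{W_S})_{ik}\,\tilde{B}_{ka}\,(P^{(W_S)}-G^{(\mt\cup W_S)})_{ab}\,\tilde{B}^{*}_{bl}\,(P|_{W_S})_{lj}
\end{equation*}
yields, upon invoking the worst-case bound $|(P^{(W_S)}-G^{(\mt\cup W_S)})_{ab}|\lesssim\epsilon'$ from Proposition \ref{prop:removeWS}, the inequality $\Lambda\lesssim(1+\Lambda)\epsilon'$. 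Since $\epsilon'\ll 1$, the $\Lambda$ on the right can be absorbed, giving $\Lambda\lesssim\epsilon'$, which establishes the first claim.

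For the improved estimate when $i\not\sim j$, I would split the $k$-summation. The contribution from $k$ outside the block of $i$ is bounded by $\Lambda\lesssim\epsilon'$ times the sparse coefficients, which, combined with the worst-case factor for $(P^{(W_S)}-G^{(\mt\cup W_S)})_{ab}$, contributes a total of at most $(\epsilon')^2\ll\phi$. For $k$ in the block of $i$, the factor $|(\tilde{G}^{(\mt)}|_{W_S})_{ik}|$ is $O(1)$, while the matrix-neighbor $a$ of $k$ shares its underlying vertex with a graph-neighbor of $i$'s vertex (and analogously for $b$ relative to $j$). The hypothesis $i\not\sim j$ then translates into the Green's-function-disconnection of the relevant $(a,b)$ pair, via the identification of indices $b_\alpha$ and $b_\alpha+N$ with the same underlying vertex. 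Invoking the improved bound $|(P^{(W_S)}-G^{(\mt\cup W_S)})_{ab}|\lesssim\phi$ from Proposition \ref{prop:removeWS} then yields the desired $|J_{ij}|\lesssim\phi$.

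The main obstacle I anticipate is the bookkeeping in the last step: carefully verifying that the condition $i\not\sim j$, phrased on matrix-neighbors of $i$ and $j$, legitimately transfers to disconnectedness of the $(a,b)$ pair appearing in the sum. This will rely on the fact that matrix-adjacency in the Hermitization corresponds to graph-adjacency in $\tilde{\GG}$, and on the tree-like structure of neighborhoods of $W_S$-vertices guaranteed by the good switching event $S_G(\GG)$ and Lemma \ref{lem:Grnfunccon}, ensuring that the cascade of neighbor relations does not inflate combinatorial depth beyond a single step.
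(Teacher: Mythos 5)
Your proposal takes essentially the same route as the paper. Both start from the Schur/resolvent identity \eqref{eq:resolWS}, exploit the block-diagonal structure of $P|_{W_S}$ (so that each entry of the second, ``main'' term is a sum of boundedly many products), bound the sandwiched factor $(P^{(W_S)}-G^{(\mt\cup W_S)})_{ab}$ by $\epsilon'$ using Proposition \ref{prop:removeWS}, and then close the loop by a maximum-principle/bootstrap on $\Lambda=\max|J_{ij}|$. Your inequality ``$\Lambda\lesssim(1+\Lambda)\epsilon'$'' is a compressed form of the paper's splitting of $\tilde G^{(\mt)}|_{W_S}=P|_{W_S}+J$ into the two terms $(P|_{W_S}\tilde B(\cdot)\tilde B^*P|_{W_S})_{ij}$ and $(J\tilde B(\cdot)\tilde B^*P|_{W_S})_{ij}$, with the latter bounded by $(\log N)^{O(\mathfrak a)}\epsilon'\Lambda\ll\Lambda$; since $|W_S|\approx(2d-1)^{\ell}$ is polylogarithmic, the implicit absorption you carry out is correct. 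For the improved $\phi$-bound, you and the paper both split on whether $k$ lies in the block of $i$: the off-block part yields $|W_S|\Lambda\epsilon'\lesssim(\epsilon')^2\ll\phi$, and the on-block part invokes the improved bound from Proposition~\ref{prop:removeWS} on the $(a,b)$ entry.

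One remark on the ``bookkeeping obstacle'' you flag. You are right that the passage from the hypothesis $i\not\sim j$ to the Green's-function disconnection of the $(a,b)$ pair that actually appears in the sum needs an argument, and the paper is equally terse here. In the paper's expansion $(P|_{W_S}\tilde B(\cdot)\tilde B^*P|_{W_S})_{ij}$, the inner indices $y,z$ are adjacent (in $\tilde\GG\setminus\mt$) to the vertices underlying $i$ and $j$ respectively; the definition of Green's-function connectedness (Definition~\ref{defn:phi}) is phrased precisely in terms of neighbors so that $i\not\sim j$ translates directly to smallness of $|G^{(\mt)}_{y'z'}|$ for the $y',z'$ that matter. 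The step you should spell out — and the paper only implicitly does — is that the $\phi$-bound supplied by Proposition~\ref{prop:removeWS} applies to the $(y,z)$ at hand because it is an entry-level estimate on $G^{(\mt\cup W_S)}-P^{(W_S)}$ whose proof only uses smallness of $|G^{(\mt)}_{\cdot\cdot}|$ at the one-neighbor level that $i\not\sim j$ controls; no second layer of adjacency is consumed. This matches the paper's usage, so your proof is correct once that one sentence is made explicit.
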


\begin{proof}

Let $\Gamma := \max_{i,j \in W_S} |[\tilde{G}^{(\mt)}|_{W_S} - P|_{W_S}]_{ij}|.$
Using this definition inside \eqref{eq:resolWS}, we would derive the relation,

\begin{equation} \label{eq:7771}
\begin{aligned}
 |[\tilde{G}^{(\mt)}|_{W_S} - P|_{W_S}]_{ij}| &=( [\tilde{G}^{(\mt)}|_{W_S} - P|_{W_S}] \tilde{B}(P^{(W_S)} - G^{(\mt \cup W_S)}) \tilde{B}^* P|_{W_S})_{ij} \\
&+ (P|_{W_S}\tilde{B}(P^{(W_S)} - G^{(\mt \cup W_S)}) \tilde{B}^* P|_{W_S})_{ij}.
\end{aligned}
\end{equation}

Let $i$ correspond to one of the indices $b_{\alpha}$ or $b_{\alpha}+N$ and let $j$ correspond to one of the indices $b_{\beta}$ or $b_{\beta}+N$, where $b_{\alpha}$ and $b_{\beta}$ are vertices that belong to $W_S$

The first quantity on the right hand size can be bounded by $(\log N)^{8 \frak a} \epsilon' \Gamma \ll \Gamma$. We write the second term on the right hand side as,
\begin{equation}
\begin{aligned}
&(P|_{W_S}\tilde{B}(P^{(W_S)} - G^{(\mt \cup W_S)}) \tilde{B}^* P|_{W_S})_{ij} \\
&= \sum_{x,y,z,w}(P|_{W_S})_{ix} \tilde{B}_{xy}(P^{(W_S)} - G^{(\mt \cup W_S)})_{yz}(\tilde{B}^*)_{zw} (P|_{W_S})_{wj}.
\end{aligned}
\end{equation}

 For the second quantity on the right hand size, we remark that $[P|_{W_S}]_{ix} =1$ only if $x$ corresponds to one of the indices $b_{\alpha}$ or $b_{\alpha}+N$. This is due to the fact that $b_{\alpha}$ and $b_{\beta}$ for $\alpha \ne \beta$ are still far apart even after $b_{\alpha}$ is connected to $l_{\alpha}$ for all $\alpha$ in $W_S$. Thus, $P|_{W_S}$ is still a block-diagonal matrix. Similarly, $w$ must be one of $b_{\beta}$ or $b_{\beta}+N$ if $(P|_{W_S})$ were to be non-zero. Since $B$ and $B'$ are the adjacency matrices for 
 $d$-regular graphs, this would further imply that there are only finitely many values that $y$ could take and finitely many values that $z$ could take. By our choice of the construction, we would know that these $y$ vertices and these $z$ vertices would all have a local treelike neighborhood.

Since we have the bound $|P^{(W_S)}_{cd} - G^{(\mt \cup W_S)}_{cd}| \le 2 \epsilon'(z,w)$ for any generic pair $c$,$d$ that has a tree-like neighborhood, we would apply this bound everywhere and observe that,
\begin{equation} \label{eq:71bound}
\sum_{x,y,z,w}(P|_{W_S})_{ix} \tilde{B}_{xy}(P^{(W_S)} - G^{(\mt \cup W_S)})_{yz}(\tilde{B}^*)_{zw} (P|_{W_S})_{wj} \lesssim C\epsilon'.
\end{equation}
From this point, a standard maximum principle argument completes the the proof of the bound on $P- G$ for general $i$ and $j$.

If $i$ and $j$ are not Green's function connected, the only observation we need is that we can improve the bound on $|P^{(W_S)} - G^{(\mt \cup W_S)|}$ in equation \eqref{eq:71bound} to $\phi$.  The first term in equation \eqref{eq:7771} can be bounded by $\epsilon'^2 \ll \phi$.
\end{proof}

We can apply  these estimates in order to understand the differences of $\tilde{G}^{(\mt)}$ and $P$ on the rectangular sub-matrix $W_S \times W_S^c$. The following lemma encompasses this information. However, we mention that some of these estimates can improve if we instead consider some Green's function connected information on the specific vertices $\{c_{\alpha},b_{\alpha}\}$ that take part in the switching. We introduce a definition to encompass this information.

\begin{defn} \label{def:U}
We define a new set $U$ of  switched vertices $b_{\alpha},c_{\alpha}$ that satisfy some specific properties. The vertices $b_{\alpha},c_{\alpha}$ are included in the set $U$ if the following properties are satisfied:

\begin{enumerate}
\item  For $\beta \ne \alpha$, the set of vertices $\{b_{\beta},c_{\beta}\}$ is not Green's function connected to $\{b_{\alpha},c_{\alpha}\}$.
\item The $\mathfrak{R}$ neighborhood around $c_{\alpha}$ in $\GG \setminus \mt$ is tree-like.
\end{enumerate}

\end{defn}

\begin{lemma} \label{lem:switchbound}
Let $i$ and $j$ be two vertices: $i$ belonging to $W_S$ and $j$ belonging to $W_S^c$. Then, we have the following comparison bound,
\begin{equation}
|\tilde{G}^{(\mt)}_{ij} -P_{ij}| \lesssim \epsilon'.
\end{equation}

Recall our notation $\{a_{\alpha},b_{\alpha},c_{\alpha}\}$ for the types of vertices that participate in the switching. Assume that $j$ is of the form $c_{\alpha}$ for $c_{\alpha}$ a vertex in $U$ and that $i= b_{\beta}$ for $\beta \ne \alpha$.
\begin{equation}
|\tilde{G}^{(\mt)}_{b_{\beta} c_{\alpha}} -P_{b_{\beta} c_{\alpha}}| \lesssim \phi + (\epsilon')^2.
\end{equation}

\end{lemma}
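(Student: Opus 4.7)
The plan is to apply the Schur complement identities
\[
\tilde{G}^{(\mt)}|_{W_S \times W_S^c} = -\tilde{G}^{(\mt)}|_{W_S}\, \tilde{B}\, G^{(\mt \cup W_S)}, \qquad P|_{W_S \times W_S^c} = -P|_{W_S}\, \tilde{B}\, P^{(W_S)},
\]
and, after adding and subtracting the hybrid term $P|_{W_S}\, \tilde{B}\, G^{(\mt \cup W_S)}$, obtain the decomposition
\[
\tilde{G}^{(\mt)}_{ij} - P_{ij} = -\bigl[(\tilde{G}^{(\mt)}|_{W_S} - P|_{W_S})\, \tilde{B}\, G^{(\mt \cup W_S)}\bigr]_{ij} - \bigl[P|_{W_S}\, \tilde{B}\, (G^{(\mt \cup W_S)} - P^{(W_S)})\bigr]_{ij}.
\]
For the worst-case bound, I would estimate each of the two summands separately. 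In the second summand, the matrix $P|_{W_S}$ is block-diagonal in $2 \times 2$ blocks because distinct switching sites $b_\alpha$ are kept pairwise far apart on the good switching event $S_G(\GG)$; hence $(P|_{W_S})_{ix}$ vanishes except for the two values $x \in \{b_\beta, b_\beta + \aleph\}$, and $\tilde{B}_{xy}$ is nonzero for only $O(1)$ neighbors $y \in W_S^c$. Applying Proposition \ref{prop:removeWS} to bound $|(G^{(\mt \cup W_S)} - P^{(W_S)})_{yj}| \lesssim \epsilon'$ and using $|P|_{W_S}| = O(1)$ (derived earlier by reducing to the infinite-tree calculation), the second summand contributes $O(\epsilon')$. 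In the first summand, Lemma \ref{lem:ijws} yields $|(\tilde{G}^{(\mt)}|_{W_S} - P|_{W_S})_{ix}| \lesssim \epsilon'$, and the averaging estimates of Corollary \ref{cor:AverageCase} (applied through $G^{(\mt \cup W_S)}$) control the $y$-sum, again giving $O(\epsilon')$.

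\textbf{The improved bound.} For the sharper bound when $j = c_\alpha$ with $c_\alpha \in U$ and $i = b_\beta$ with $\alpha \ne \beta$, I would exploit the defining property of $U$: the pair $\{b_\alpha, c_\alpha\}$ is not Green's function connected to $\{b_\beta, c_\beta\}$. This upgrades Lemma \ref{lem:ijws} to give $|(\tilde{G}^{(\mt)}|_{W_S} - P|_{W_S})_{b_\beta x}| \lesssim \phi$ whenever $x$ lies in a switching block not Green's function connected to $b_\beta$; by Lemma \ref{lem:Grnfunccon}, only $O(\log N)$ values of $x$ violate this, and for each such violating $x$ I can combine the worst-case bound $\epsilon'$ on the first factor with the worst-case bound $\epsilon'$ on $G^{(\mt \cup W_S)}_{yj}$ (after subtracting $P^{(W_S)}_{yj}$, which itself is small because $y$ and $c_\alpha$ lie in disjoint tree-like neighborhoods of the restricted graph $\GG \setminus (\mt \cup W_S)$). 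This yields a contribution of order $(\log N)(\epsilon')^2 \lesssim (\epsilon')^2$. The second summand is treated analogously using the improved $\phi$ bound of Proposition \ref{prop:removeWS} on the $O(1)$ nonzero terms, which produces $O(\phi)$.

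\textbf{Main obstacle.} The principal difficulty is that Green's function connectivity is defined through neighbors' neighbors rather than through the vertices themselves, so the non-connectivity of $b_\beta$ and $c_\alpha$ does not translate directly into the required bounds on factors like $|(G^{(\mt \cup W_S)} - P^{(W_S)})_{yc_\alpha}|$ where $y$ is a neighbor of $b_\beta$ in $\tilde{\GG}^{(\mt)}$. Resolving this requires unpacking the definition of $U$ one level deeper: for each such $y$, one uses the tree-like structure guaranteed by $c_\alpha \in U$ together with Lemma \ref{lem:nbdpert} to propagate the $\phi$ estimate from $G^{(\mt)}$ to $G^{(\mt \cup W_S)} - P^{(W_S)}$. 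Equally delicate is the precise accounting of the $O(\log N) \cdot (\epsilon')^2$ residual contributions from the few Green's function connected exceptions, which must be absorbed into the $(\epsilon')^2$ term of the stated bound (using that $(\log N)(\epsilon')^2 \ll \phi$ fails only by at most constant factors given our choice of $\phi$).
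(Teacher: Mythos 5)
Your decomposition of $\tilde{G}^{(\mt)}_{ij} - P_{ij}$ coincides (up to sign) with the paper's identity \eqref{eq:firstswitchstab}, and your treatment of the second summand $-[P|_{W_S}\,\tilde{B}\,(G^{(\mt\cup W_S)} - P^{(W_S)})]_{ij}$, via the block-diagonal sparsity of $P|_{W_S}$ plus the $\epsilon'$ bound from Proposition~\ref{prop:removeWS}, matches the paper. The gap is in your estimate of the first summand. You invoke ``the averaging estimates of Corollary~\ref{cor:AverageCase} (applied through $G^{(\mt \cup W_S)}$) control the $y$-sum,'' but this cannot work: in
\[
\sum_{x \in W_S} (\tilde{G}^{(\mt)}|_{W_S} - P|_{W_S})_{ix}\, \sum_{y}\tilde{B}_{xy}\, G^{(\mt \cup W_S)}_{yj},
\]
Lemma~\ref{lem:ijws} only gives the uniform bound $|(\tilde{G}^{(\mt)}|_{W_S} - P|_{W_S})_{ix}|\lesssim\epsilon'$ for \emph{all} $x\in W_S$, and the matrix $G^{(\mt\cup W_S)}$ is neither sparse nor uniformly small. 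The sum runs over $|W_S|\asymp(\log N)^{O(\mathfrak{a})}$ scattered sites, and the Ward identity with Cauchy--Schwarz only gives $\sum_{y\in\delta(W_S)}|G^{(\mt\cup W_S)}_{yj}|\lesssim |W_S|^{1/2}\sqrt{\operatorname{Im} m_\infty/\eta}$, which is not $O(1)$ for $\eta\gtrsim N^{-\varepsilon}$. Corollary~\ref{cor:AverageCase} is an $\ell^2$ estimate over the boundary of a \emph{single} deterministic tree extension and simply does not address the sum over scattered boundary vertices of $W_S$ in the random Green's function $G^{(\mt\cup W_S)}$. Naively one only obtains $|W_S|\cdot\epsilon'\cdot O(1)\gg\epsilon'$.

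The missing step is the paper's extra split of the first summand, $G^{(\mt\cup W_S)}=P^{(W_S)}+(G^{(\mt\cup W_S)}-P^{(W_S)})$. The $P^{(W_S)}$-piece is block-diagonal (because the $b_\alpha$ are far apart in $\GG\setminus\mt$), so $P^{(W_S)}_{yj}\ne 0$ only for $O(1)$ boundary vertices $y$ near $j$, forcing $O(1)$ values of $x$ and giving $O(\epsilon')$; the remainder piece has \emph{two} factors of size $\lesssim\epsilon'$ and so contributes $|W_S|(\epsilon')^2\ll\epsilon'$. You do subtract $P^{(W_S)}_{yj}$ in your sketch of the improved $\phi+(\epsilon')^2$ bound, but since that estimate sits on top of the general $\epsilon'$ bound, the gap there carries over; and in any case the careful accounting there should be organized around the linear/quadratic (in $G-P$) split, as the paper does, rather than around counting $O(\log N)$ Green's-function-connected exceptions in a sum over all of $W_S$.
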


\begin{proof}

\textit{General Case:}

We start with the identity,
\begin{equation} \label{eq:firstswitchstab}
\tilde{G}^{(\mt)}|_{W_S \times W_S^c} - P|_{W_S \times W_S^c}= [P|_{W_S} - \tilde{G}^{(\mt)}|_{W_S}] \tilde{B}G^{(\mt \cup W_S)} + P|_{W_S} \tilde{B}[P^{(W_S)} - G^{(\mt \cup W_S)}].
\end{equation}

Let us first deal with the second term $(P|_{W_S} \tilde{B}[P^{(W_S)} - G^{(\mt \cup W_S)}])_{ij}$. We have,
\begin{equation}
(P|_{W_S} \tilde{B}[P^{(W_S)} - G^{(\mt \cup W_S)}])_{ij} = \sum_{x,y} (P|_{W_S})_{ix} \tilde{B}_{xy}[P^{(W_S)} - G^{(\mt \cup W_S)}]_{yj}.
\end{equation}
Now,  $(P|_{W_S})_{ix}$ can only be non-zero only when $x$ is one of the indices corresponding to the same vertex as $i$. Since $y$ is connected to $x$, there are finitely many choices for $y$ and all of these $y$ vertices have tree-like neighborhoods. We can finally apply the bound that $[P^{(W_S)} - G^{(\mt \cup W_S)}]_{yj}$ is less than $2\epsilon'$.

We can deal with the first term similarly, by first writing,
\begin{equation}
 [P|_{W_S} - \tilde{G}^{(\mt)}|_{W_S}] \tilde{B}G^{(\mt \cup W_S)} =  [P|_{W_S} - \tilde{G}^{(\mt)}|_{W_S}] \tilde{B}P^{( W_S)}+ [P|_{W_S} - \tilde{G}^{(\mt)}|_{W_S}] \tilde{B}[G^{(\mt \cup W_S)} - P^{(W_S)}].
\end{equation}

The second term is quadratic in the error $(\epsilon')^2$ and can compensate for any factor of the form $(\log N)^{4 \mathfrak{a}}$ that might appear. The first term above can be dealt with the exact same way as the previous bound we considered in this proof.

\textit{Specific Case of $i= c_{\alpha}$ and $j= b_{\beta}$ with $\beta \ne \alpha$:}

We can apply the same identity detailed in \eqref{eq:firstswitchstab}. We do not need to consider the contribution of terms with two factors of $G-P$ (as these will give an error of the form $(\epsilon')^2$.) There are two terms remaining using only a single factor of the form $G-P$ and the analysis of both of these terms is very similar.

Let us consider the term,
\begin{equation}
(P|_{W_S})_{b_{\beta} x} \tilde{B}_{x y} [P^{(W_S)} - G^{(\mt \cup W_S)}]_{y c_{\alpha}}.
\end{equation}
We abuse notation slightly as we let $b_{\beta},c_{\alpha}$ refer to any of the two indices that correspond to the vertex $b_{\beta},c_{\alpha}$ rather than the vertex itself. Since $P|_{W_S}$ is diagonal, $x$ again has to be an index corresponding to $b_{\beta}$. In addition, $y$ has to correspond to a vertex adjacent to $b_{\beta}$ , since $\tilde{B}$ is an adjacency matrix. Finally, in the last term, we use the fact that $y=b_{\beta}$ and $c_{\alpha}$ are no longer Green's function connected to bound $[P^{(W_S)} - G^{(\mt \cup W_S)}]_{y c_{\alpha}}$ by $\phi$. This shows that the above term is bounded by a constant multiple of $\phi$, as desired.
\end{proof}

At this point, we can finally continue and prove bounds of the difference when the indices $i$ and $j$ are both in $W_S^c$. Furthermore, we can prove superior bounds on some Green's function entries if we take into account the Green's function connectivity properties. 

\begin{lemma} \label{lem:ijwsc}
Let $i$ and $j$ be vertices of $W_S^c$ in $\tilde{\GG} \setminus \mt$. We have the following general estimate on Green's function bounds,
\begin{equation}
|\tilde{G}_{ij}^{(\mt)} - P_{ij}| \lesssim \epsilon'.
\end{equation}

Recall the notation $c_{\alpha}, b_{\alpha},a_{\alpha}$ for the vertices that participate in the switching. Let $c_{\alpha}$ be a vertex in $U$ and let $j$ be a vertex that is not Green's function connected to $\{c_{\alpha},b_{\alpha}\}$ in $\GG \setminus \mt$. Then, we have the bound,
\begin{equation}
|\tilde{G}_{ij}^{(\mt)} - P_{ij}| \lesssim \phi.
\end{equation}

\end{lemma}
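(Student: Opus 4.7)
The plan is to apply the Schur complement / resolvent identity one more time, this time to rejoin the switched vertices $W_S$ into the Green's function, reducing everything to objects already controlled by Proposition \ref{prop:removeWS} and Lemmas \ref{lem:ijws}, \ref{lem:switchbound}. Since $\GG$ and $\tilde{\GG}$ coincide after deletion of $\mt \cup W_S$, we have
\begin{equation*}
\tilde{G}^{(\mt)}|_{W_S^c \times W_S^c} = G^{(\mt \cup W_S)} + G^{(\mt \cup W_S)} \tilde{B}^*\, \tilde{G}^{(\mt)}|_{W_S}\, \tilde{B}\, G^{(\mt \cup W_S)},
\end{equation*}
and the analogous identity for $P$ involving $P^{(W_S)}$ and $P|_{W_S}$, where $\tilde{B}$ is the adjacency matrix between $W_S$ and $W_S^c$ in the switched graph. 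Subtracting yields
\begin{equation*}
\tilde{G}_{ij}^{(\mt)} - P_{ij} = [G^{(\mt \cup W_S)} - P^{(W_S)}]_{ij} + T_1 + T_2 + T_3,
\end{equation*}
where $T_1, T_2, T_3$ arise from telescoping the triple product along the three differences $G^{(\mt \cup W_S)} - P^{(W_S)}$, $\tilde{G}^{(\mt)}|_{W_S} - P|_{W_S}$, and again $G^{(\mt \cup W_S)} - P^{(W_S)}$.

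For the first term, Proposition \ref{prop:removeWS} directly supplies the $\epsilon'$ bound in general and the improved $\phi$ bound when $i, j$ are not Green's function connected in the unswitched graph (and hence in any graph agreeing with it on $\mt^c \setminus W_S$). For the correction terms $T_1, T_2, T_3$, the sparsity of $\tilde{B}$ (each $b \in W_S$ has $O(1)$ neighbors in $W_S^c$) together with the block-diagonal structure of $P|_{W_S}$ restrict the index range in the sums, while the operator identity $P^{(W_S)} \tilde{B}^* P|_{W_S} \tilde{B} P^{(W_S)} = P - P^{(W_S)}$ controls the unperturbed piece; the remaining substitutions incur an additive $O(\epsilon')$ error by Proposition \ref{prop:removeWS} and Lemma \ref{lem:ijws}, combined with the Cauchy--Schwarz/Ward-type control on $\sum_a |G^{(\mt \cup W_S)}_{ia}|$ along neighbors of $W_S$ from Corollary \ref{cor:AverageCase}.

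For the improved $\phi$ bound, suppose $c_\alpha \in U$ and $j$ is not Green's function connected to $\{c_\alpha, b_\alpha\}$. In each of $T_1, T_2, T_3$, the right-most factor attached to $j$ is one of $G^{(\mt \cup W_S)}_{dj}$ or $[G^{(\mt \cup W_S)} - P^{(W_S)}]_{dj}$, with $d$ ranging over $W_S^c$-neighbors of some $b \in W_S$. When $b$ is either $b_\alpha$ or $c_\alpha$, the disconnection hypothesis propagates through one adjacency step (using property (2) of $U$, which ensures $c_\alpha$ has a tree-like $\mathfrak{R}$-neighborhood and thus $O(1)$ neighbors all sharing the disconnection with $j$ up to negligible error). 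When $b$ corresponds to some other switched pair $\{b_\beta, c_\beta\}$ with $\beta \neq \alpha$, property (1) of $U$ provides disconnection of $\{c_\alpha, b_\alpha\}$ (and hence their neighbors $d$) from $\{b_\beta, c_\beta\}$, which via Proposition \ref{prop:removeWS} delivers the $\phi$ bound on $[G^{(\mt \cup W_S)} - P^{(W_S)}]_{dj}$. Quadratic-in-$\epsilon'$ contributions are absorbed into $(\epsilon')^2 \ll \phi$.

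The main technical obstacle is the bookkeeping of Green's function connectivity across the switch: one must verify that the disconnection data, which is intrinsically a property of the unswitched graph $\GG$, translates into the required bounds for the Green's functions of $\tilde{\GG}$ appearing in the triple product. This is done by noting that $G^{(\mt \cup W_S)}$ and $P^{(W_S)}$ are intrinsic to the common restriction $\GG \setminus (\mt \cup W_S) = \tilde{\GG} \setminus (\mt \cup W_S)$, so Green's function connectivity of these objects is identical in both graphs; the only role of the switch is in which adjacencies $\tilde{B}$ encodes, and these are precisely the ones covered by the definition of $U$.
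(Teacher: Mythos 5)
Your proposal follows essentially the same route as the paper's proof: the Schur-complement/resolvent identity reintroducing $W_S$, telescoping the triple product $A B C - A_0 B_0 C_0$, exploiting the block-diagonal structure of $P|_{W_S}$ and $P^{(W_S)}$ (so the index sums collapse to $O(1)$ nonzero terms), and using the disconnection data to upgrade $\epsilon'$ to $\phi$. The paper does the fully expanded eight-term version rather than the three-term telescope, but this is cosmetic; with your form one must still re-expand each $T_i$ to isolate the pure $\epsilon'$ term from the $O((\epsilon')^2)$ corrections, which is the same work.

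Two small slips worth fixing. First, $c_\alpha \notin W_S$ (the set $W_S$ consists only of the $b_\alpha$'s), so the middle index $b$ cannot be $c_\alpha$; the relevant case distinction is $b=b_\alpha$ versus $b=b_\beta$, $\beta\neq\alpha$. For $b=b_\alpha$ the paper argues this cannot contribute: if $b_\alpha$ were the element of $W_S$ adjacent to $j$'s block in $\GG \setminus(\mt\cup W_S)$, then $j$ would be Green's function connected to $b_\alpha$, contradicting the hypothesis; so in fact only $b=b_\beta$, $\beta\ne\alpha$, survives, and then property (1) of $U$ supplies the $\phi$ bound on $(G^{(\mt\cup W_S)}-P^{(W_S)})_{c_\alpha x}$ with $x$ a neighbor of $b_\beta$. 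This exclusion argument is cleaner than trying to propagate the disconnection through an adjacency step at $b_\alpha$. Second, the appeal to Corollary \ref{cor:AverageCase} is unnecessary here: the block-diagonality of $P|_{W_S}$ and $P^{(W_S)}$ together with the sparsity of $\tilde B$ already forces a finite number of nonzero summands, which is all that is needed.
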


\begin{proof}

\textit{General Case:}

We see we can  write the difference between $\tilde{G}^{(\mt)}$ and $P$ on $W_S^c \times W_S^c$ as,
\begin{equation} \label{eq:expansionswitch}
\begin{aligned}
&G^{(\mt \cup W_S)} - P^{(W_S)} + (G^{(\mt \cup W_S)} - P^{(W_S)}) \tilde{B}^* P|_{W_S} \tilde{B} P^{( W_S)}\\
&+ P^{(W_S)}\tilde{B}^* (\tilde{G}^{(\mt)}|_{W_S}- P|_{W_S}) \tilde{B} P^{( W_S)} + P^{(W_S)}\tilde{B}^* P|_{W_S} \tilde{B}(G^{(\mt \cup W_S)} - P^{(W_S)})\\
&+ (G^{(\mt \cup W_S)} -P^{(W_S)}) \tilde{B}^* (\tilde{G}^{(\mt)}|_{W_S}- P|_{W_S}) \tilde{B}P^{(W_S)} \\
& +(G^{(\mt \cup W_S)} -P^{(W_S)}) \tilde{B}^*  P|_{W_S} \tilde{B}(G^{(\mt \cup W_S)}-P^{(W_S)} )\\
&+(G^{(\mt \cup W_S}) -P^{(W_S)}) \tilde{B}^* (\tilde{G}^{(\mt)}|_{W_S}- P|_{W_S}) \tilde{B}(G^{(\mt \cup W_S)}-P^{(W_S)}) \\
& +  P^{(W_S)}\tilde{B}^* (\tilde{G}^{(\mt)}|_{W_S}- P|_{W_S}) \tilde{B} (G^{(\mt \cup W_S)} - P^{(W_S)})
\end{aligned}
\end{equation}

The terms that involve two differences of $G- P$ would have an error of at most quadratic order in $\epsilon'^2$. This is already much smaller than $\epsilon'$, even with prefactors of the form $(\log N)^{4 \mathfrak{a}}$. To derive the worst case bound that is of order $\epsilon'$, it suffices to understand the term $[(G^{(\mt \cup W_S)} - P^{(W_S)}) \tilde{B}^* P|_{W_S} \tilde{B} P^{( W_S)}]_{ij}$ (since the other terms are similar). We will write this term as,
\begin{equation}
\sum_{x,y,z,w}(G^{(\mt \cup W_S)} - P^{(W_S)})_{ix} (\tilde{B}^*)_{xy} (P|_{W_S})_{yz} (\tilde{B})_{zw} (P^{(W_S)})_{wj}.
\end{equation}

We remark first that since $P^{(W_S)}$ is a block diagonal matrix $w$ must belong to the same diagonal block as $j$. Furthermore, $\tilde{B}$ is an adjacency matrix connecting $w$ to some element $z$ in $W_S$. In the general case, this must imply that $z$ is the unique element of $W_S$ that is adjacent to the block of $W_S^c$ containing $j$. There are then at most $2d$ choices of $w$ (these are indices corresponding to vertices adjacent to $z$ in $W_S$).   As $P|_{W_S}$ is a block diagonal matrix matrix, with each block corresponding to the two indices mapped to a vertex, there are only $2$ choices for $y$. Finally, there are at most $2d$ choices of $x$ since $\tilde{B}$ is an adjacency matrix.  Thus, we have a finite number of choice for $x,y,z$ and $w$ that give nonzero values. We can bound $(G^{(\mt \cup W_S)} - P^{(W_S)})_{ix}$ by $\epsilon'$ for every $x$ in this finite set. In general, we see this term is bounded by $C \epsilon'$. The analysis of terms involving only a single factor of $G -P$ is similar. Other terms involving two $G-P$'s will be squared and have decay at least $(\epsilon')^2$.


\textit{Specific case of $i= c_{\alpha}$ and $j$ being Green's function separated}

Now, we give the superior analysis in the case that $i$ is the vertex $c_{\alpha}$ for some $\alpha$. These superior estimates are necessary when we reintroduce the tree $\mt$ into the switched graph $\tilde{\GG} \setminus \mt$.

Let us first consider the case that $j$ is not Green's function connected to the vertex $c_{\alpha}$ and that the distance of $j$ to $a_{\alpha}$ is greater than $r$ in $\tilde{\GG} \setminus \mt$. We use the same expansion detailed in equation \eqref{eq:expansionswitch}. As before, we mention that those terms that involve two factors of $G-P$ will incur a factor of $(\epsilon')^2 \ll \phi$. This means that we will only have to deal with the contributions from those terms involving a single $G-P$ factor. It would suffice as before to deal with one such term; the analysis from the contribution of the other terms would be very similar.

Let us consider the term,
\begin{equation}
\sum_{x,y,z,w}(G^{(\mt \cup W_S)} - P^{(W_S)})_{c_{\alpha}x} (\tilde{B}^*)_{xy} (P|_{W_S})_{yz} (\tilde{B})_{zw} (P^{(W_S)})_{wj}.
\end{equation}

Applying the same logic as before , we know that there is a single choice for $z$, the single vertex in $W_S$ that is adjacent to the block of $W_S^c$ containing $j$. This vertex $z= b_{\beta}$ cannot be $b_{\alpha}$, since otherwise $j$ would be Green's function connected to $b_{\alpha}$. Thus, $\beta \ne \alpha$.$y$ must also be an index corresponding to this vertex $b_{\beta}$ and $x$ is an index corresponding to a vertex adjacent to $b_{\beta}$.  Finally, we can use the fact that $c_{\alpha}$ and $b_{\beta}$ are not Green's function connected to bound $|G^{(\mt \cup W_S)} - P^{(W_S)}|$ by $\phi$. This shows that the above term considered is necessarily less than some multiple of $\phi$, as desired.


\end{proof}

\section{Reintroduction of the tree and relating $\tilde{G}^{(\mt)}$ to $\tilde{G}$:Proof of Proposition \ref{prop:introT}} \label{sec:step4}

As we have done previously, our basic step is to apply the Schur complement formula and resolvent estimates in order to treat the removal of the subset $\mt$. We make one important remark before proceeding. At the first step, it is easier to compare $\tilde{G}$ with extensions involving the parameters $Q_I$ and $Q_O$ for the original graph $\mathcal{G}$. Lemmas \ref{lem:Stab} and \ref{lem:step4io} do exactly this. It is only later in Lemma \ref{lem:Qtilde} where we consider the difference between $Q_I$ and $\tilde{Q}_I$. A simple perturbation bound as from Theorem  \ref{thm:pert} will complete the proof. 

We will decompose the adjacency matrix as,
\begin{equation}
\tilde{G}^{-1} = \begin{bmatrix}
A & B\\
B^* & D
\end{bmatrix},
\end{equation}
where $A$ will be the adjacency matrix of the neighborhood $\mt$ and $B$ contains the adjacency information of the boundary of $\mt$ and the vertices they were switched with.

As we have done previously, we will relate the element $\tilde{G}_{ij}$ to the term $P_{ij}$, where $P$ is defined to be $G(\text{Ext}(B_{r}(\{i,j\} \cup \mt, \tilde{\GG}), Q_I,Q_O ))$.
We see that $P^{-1}$ has the adjacency matrix,
\begin{equation}
P^{-1} = \begin{bmatrix}
A & B\\
B^* & D'
\end{bmatrix}.
\end{equation}
The inverse of the matrix $D'$ (will be $G(\text{Ext}(B_{r}(\{i,j\} \cup \mt, \tilde{\GG}^{(\mt)}), Q_I, Q_O))$, which we have analyzed in the previous section. Here, $B_{r}(\mt)$ should be understood as the $r$ neighborhood of $\mt$ in $\GG$ restricted to $\mt^c$.

For vertices $i$ and $j$ inside $\mt$, we can apply the Schur complement formula and derive,
\begin{equation}
\begin{aligned}
&\tilde{G}|_{(\mt)}= ( A + B \tilde{G}^{(\mt)} B^*)^{-1}\\
& P|_{(\mt)} = (A + B P^{(\mt)} B^*)^{-1}.
\end{aligned}
\end{equation}

We can now use the resolvent formula to compute the difference,
\begin{equation} \label{eq:resol4}
\tilde{G}|_{(\mt)} - P|_{(\mt)} = P|_{(\mt)} B (\tilde{G}^{(\mt)} - P^{(\mt)})B^* P + (\tilde{G}|_{(\mt)} - P|_{(\mt)}) B (\tilde{G}^{(\mt)} - P^{(\mt)})B^*.
\end{equation}

Our formal statement on the value of the difference is as follows.
\begin{lemma} \label{lem:Stab}
Let $i$ and $j$ be two points found in our neighborhood $\mt$.
 Let $d_{i}$ denote the distance from $i$ to the root and $d_j$ denote the distance between $j$ and the root.
We have the following estimates,
\begin{equation} \label{eq:StabEst}
|\tilde{G}|_{(\mt)} - P|_{(\mt)}| \lesssim \epsilon' (\log N)^2 \Big(\frac{1}{\sqrt{d-1}} \Big)^{2\ell -d_i -d_j} + (1 +|d_j  +d _i|)  \Big( \frac{1}{d-1}\Big)^{\frac{|d_j -d_i|}{2}} \epsilon'.
\end{equation}

\end{lemma}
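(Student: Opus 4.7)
The plan is to apply the resolvent identity \eqref{eq:resol4} and estimate each contribution, combining the sharp decay of $P|_{(\mt)}$ (coming from the near-tree structure of $\mt$ and Lemma \ref{lem:pert}) with the switching estimates on $\tilde{G}^{(\mt)} - P^{(\mt)}$ already established in Proposition \ref{Prop:SwitchGWS}. The quadratic self-correction term on the right-hand side of \eqref{eq:resol4} will be absorbed by a maximum principle, so the real work is in bounding the linear term entry-wise.

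Expanding the linear term for fixed $i,j \in \mt$ gives
\[
[P|_{(\mt)} B (\tilde{G}^{(\mt)} - P^{(\mt)}) B^* P|_{(\mt)}]_{ij} = \sum_{x,w \in \partial \mt}\,\sum_{\substack{y \sim x,\, z \sim w \\ y,z \in \mt^c}} P|_{(\mt)}_{i,x}\, B_{x,y}\, (\tilde{G}^{(\mt)} - P^{(\mt)})_{y,z}\, B^*_{z,w}\, P|_{(\mt)}_{w,j},
\]
where the inner sums have only $O(1)$ non-vanishing terms per $(x,w)$ because $B$ is sparse. I would then split $(y,z)$ by Green's function connectivity: Proposition \ref{Prop:SwitchGWS} yields $|(\tilde{G}^{(\mt)} - P^{(\mt)})_{y,z}| \lesssim \phi$ for all but the $O(\log N)$ ``bad'' Green's-function-connected pairs identified in Lemma \ref{lem:Grnfunccon}, while the bad pairs enjoy only the worst-case $\lesssim \epsilon'$. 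On the other hand, combining Lemma \ref{lem:pert} with the tree-like structure of $\mt$ yields $|P|_{(\mt)}_{i,x}| \lesssim (1/\sqrt{d-1})^{\mathrm{dist}(i,x)}$, and the argument of Lemma \ref{lem:sumintree}, adapted to the case $i \in \mt$, gives $\sum_{x \in \partial \mt} |P|_{(\mt)}_{i,x}| \lesssim (\log N)^{\mathfrak{j}(d)\mathfrak{a}}\,(1/\sqrt{d-1})^{\ell - d_i}$, with the analogous bound on the $w$-side.

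Multiplying these estimates, the non-connected contribution is bounded by
\[
\phi \cdot (\log N)^{2 \mathfrak{j}(d)\mathfrak{a}}\, (1/\sqrt{d-1})^{2\ell - d_i - d_j} \lesssim \epsilon' (\log N)^2 (1/\sqrt{d-1})^{2\ell - d_i - d_j},
\]
using $\phi \ll (\log N)^{-K\mathfrak{a}/2}\epsilon'$, which matches the first summand in \eqref{eq:StabEst}. For the $O(\log N)$ bad pairs we cannot save by averaging and must use $\epsilon'$, but the corresponding boundary vertices $x,w$ are specific, so the products $|P|_{(\mt)}_{i,x}||P|_{(\mt)}_{w,j}|$ can be controlled by a tree-path estimate. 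Since any path from $i$ to $j$ through $\partial \mt$ has length at least $\mathrm{dist}(i,j) \ge |d_j - d_i|$ (attained when one is an ancestor of the other), the product is bounded by $(1/\sqrt{d-1})^{|d_j - d_i|} = (1/(d-1))^{|d_j - d_i|/2}$, and summing over the logarithmically many admissible LCA locations and branches yields the $(1 + |d_i + d_j|)\epsilon'$ prefactor of the second summand.

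Finally, for the quadratic term $(\tilde{G}|_{(\mt)} - P|_{(\mt)}) B (\tilde{G}^{(\mt)} - P^{(\mt)}) B^*$, set $\Gamma := \max_{i,j \in \mt}|[\tilde{G}|_{(\mt)} - P|_{(\mt)}]_{i,j}|$. The sparsity of $B$ together with $|\tilde{G}^{(\mt)} - P^{(\mt)}|_{y,z} \lesssim \epsilon'$ and $|\partial \mt| \lesssim (\log N)^{\mathfrak{a}}$ yield $\Gamma \le C \cdot (\text{linear-term bound}) + \Gamma \cdot O(\epsilon' (\log N)^{2\mathfrak{a}}) = C \cdot (\text{linear-term bound}) + o(1)\,\Gamma$, which can be absorbed. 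The main obstacle in executing this plan is the careful bookkeeping in the bad-pair step: matching exactly the factor $(1+|d_i+d_j|)(1/(d-1))^{|d_j - d_i|/2}$ requires tracking not only the minimal tree distance between $i$ and $j$ via boundary vertices, but also counting admissible LCA configurations without losing an additional power of $\log N$, and it is crucial that the bad pairs inherit their locations from the Green's-function-connectedness structure controlled in Lemma \ref{lem:Grnfunccon} rather than being arbitrary.
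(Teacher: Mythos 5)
Your decomposition misses the diagonal pairs, and this is exactly where the second term of \eqref{eq:StabEst} comes from. When you split the sum $\sum_{\alpha,\beta} P_{i,l_{\alpha}}(\tilde G^{(\mt)}-P^{(\mt)})_{\tilde c_\alpha,\tilde c_\beta}P_{l_\beta,j}$ by ``Green's function connectedness,'' the diagonal entries ($\alpha=\beta$) can never be improved to $\phi$: the on-diagonal of $\tilde G^{(\mt)}-P^{(\mt)}$ only ever enjoys the worst-case bound $\epsilon'$ (and, more formally, any vertex is Green's-function connected to itself since $|G^{(\mt)}_{ii}|=O(1)\ge \phi$). There are $|\partial\mt|\sim (2d-1)^\ell\sim (\log N)^{2\mathfrak{a}}$ such diagonal terms, not $O(\log N)$. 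So your statement that ``all but $O(\log N)$ pairs are $\phi$-bounded'' is wrong in a way that matters quantitatively: a pointwise bound $|P_{i,l_\alpha}||P_{l_\alpha,j}|\le (1/\sqrt{d-1})^{2\ell-d_i-d_j}$ on each diagonal term, multiplied by the $(2d-1)^\ell$ count, blows up, since $(2d-1)^\ell(d-1)^{-\ell}=((2d-1)/(d-1))^\ell\to\infty$.

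The paper's proof first separates $\alpha=\beta$ from $\alpha\ne\beta$ in \eqref{eq:somesplit}. The diagonal sum is then controlled not term by term, but by the averaged LCA (``basal tree'') estimate from the proof of Lemma \ref{lem:pert}:
\[
\sum_{\alpha}|P_{i,l_\alpha}||P_{l_\alpha,j}| \lesssim (1+\mathrm{dist}(i,j))\Big(\tfrac{1}{\sqrt{d-1}}\Big)^{\mathrm{dist}(i,j)},
\]
which, since $|d_i-d_j|\le \mathrm{dist}(i,j)\le d_i+d_j$ in the tree $\mt$, yields exactly $(1+|d_i+d_j|)(d-1)^{-|d_i-d_j|/2}\epsilon'$. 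The first summand of \eqref{eq:StabEst} comes from the $O((\log N)^2)$ \emph{off-diagonal} pairs $(\alpha\ne\beta)$ outside $U$, using only the crude decay $|P_{i,l_\alpha}|\le (1/\sqrt{d-1})^{\ell-d_i}$, while the $\phi$-bounded pairs are dominated by it. Your ``tree-path plus LCA'' heuristic for the bad pairs applies the right idea in the wrong place: the LCA aggregation is needed for the $\sim(\log N)^{2\mathfrak{a}}$ diagonal terms, whereas for the handful of genuinely connected off-diagonal pairs you only need the crude pointwise decay and a $(\log N)^2$ count, which already produces the first summand.
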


\begin{rmk}
The second error term on the right hand side of \eqref{eq:StabEst}  has the issue that it does not have a decay of the power $\left(\frac{1}{d-1}\right)^{\ell}$. By contrast, the first error term on the right hand side of $\eqref{eq:StabEst}$ has the power of $(d-1)^{-2 \ell}$, which can suppress powers of $\log$ as long as we ensure that $d_i$ and $d_j$ are small relative to $\ell$.
\end{rmk}

\begin{proof}

The computations of the previous section show that entries of $(\tilde{G}^{(\mt)} -P^{(\mt)})$ are all bounded by $C\epsilon'$ and a few other terms have superior bounds.

We have for \eqref{eq:resol4}, that
\begin{equation} \label{eq:somesplit}
\begin{aligned}
(P B (\tilde{G}^{(\mt)} - P^{(\mt)})B^* P)_{ij} & \le  \frac{1}{d-1}\sum_{l_{\alpha}, l_{\beta}} P_{i, l_{\alpha}} (\tilde{G}^{(\mt)} - P^{(\mt)})_{\tilde{c}_{\alpha},\tilde{c}_{\beta}} P_{l_{\beta}, j}\\
& \le \frac{1}{d-1} \sum_{\alpha} P_{i,l_{\alpha}} P_{l_{\alpha},j} (\tilde{G}^{(\mt)} - P^{(\mt)})_{\tilde{c}_{\alpha},\tilde{c}_{\alpha}} \\& + \frac{1}{d-1} \sum_{\alpha \ne \beta} P_{i,l_{\alpha}} P_{l_{\beta},j} (\tilde{G}^{(\mt)} - P^{(\mt)})_{\tilde{c}_{\alpha},\tilde{c}_{\beta}}.
\end{aligned}
\end{equation}
$\alpha$ enumerates the vertices on the boundary of $(\mt)$. In a slight abuse of notation, $l_{\alpha}$ and $c_{\alpha}$ actually represent the possible indices corresponding to the vertices on the boundary rather than the vertices themselves; we hope that this usage of notation is clear in context.  The sum is split into two parts that will have different estimates based on whether $\alpha$ is equal to $\beta$ or not.  When $\alpha \ne \beta$ we will be able to get superior estimates when we use the separation of Green's function distance.

Let us now treat the second term on which $\alpha \ne \beta$. This can be divided even further based on whether we are considering an index from the set $U$ from Definition \ref{def:U}. For $\alpha$ corresponding to a vertex in $U$, we can use the fact that $
|(\tilde{G}^{(\mt)} - P^{(\mt)})_{\tilde{c}_{\alpha},\tilde{c}_{\beta}}| \le \phi$. There are $O((\log N)^2)$ that have both of their entries from outside $U$.  Furthermore, we can use the bound $|P_{i,l_{\alpha}}| \le \left(\frac{1}{\sqrt{d-1}}\right)^{\ell -d_i}$.  Applying this, we see that we have the following bound on our cross terms,
\begin{equation}
\frac{\phi}{d-1} \sum_{\alpha \ne \beta} |P_{i,l_{\alpha}}| |P_{l_{\beta},j}| + \frac{\epsilon' (\log N)^2}{d-1} \left(\frac{1}{\sqrt{d-1}}\right)^{\ell -d_i} \left(\frac{1}{\sqrt{d-1}}\right)^{\ell - d_j}
\end{equation}

For the terms that are not cross terms (with $\alpha =\beta$), we merely apply the bound that $|\tilde{G}^{(\mt)} - P^{(\mt)}| \le \epsilon '$.

We see that our full bound on the left hand side of equation \eqref{eq:somesplit}  is the following,
\begin{equation}
\begin{aligned}
&\frac{\epsilon'}{d-1} \sum_{\alpha} |P_{i,l_{\alpha}}||P_{l_{\alpha},j}| + \frac{\phi}{d-1} \sum_{\alpha \ne \beta} |P_{i,l_{\alpha}}||P_{l_{\beta},j}| + \frac{\epsilon'(\log N)^2}{d-1}  \left(\frac{1}{\sqrt{d-1}}\right)^{\ell -d_i} \left(\frac{1}{\sqrt{d-1}}\right)^{\ell - d_j}
\end{aligned}
\end{equation}

  Lemma \ref{lem:sumintree} assures us $\sum_{\alpha} |P_{i,l_{\alpha}}| \le (\log N)^{\mathfrak{j(d)} \mathfrak{a}}$. Thus, we see that $$ \phi \sum_{\alpha \ne \beta}|P_{i,l_{\alpha}}||P_{l_{\beta},j}| \frac{(d-1)^{(2 \ell  -d_i -d_j)/2}}{(d-1)^{(2\ell -d_i -d_j)/2}}\le \frac{\phi (\log N)^{2\mathfrak{j}(d) \mathfrak{a} +4 \mathfrak{a} }}{(d-1)^{(2 \ell - d_i -d_j)/2}}.  $$ Now, since $\phi \ll \epsilon (\log N)^{-K \mathfrak{a}/2}$, we see that for sufficiently large, but still finite, values of $d$ we can show that the factor on the numerator is still less than $\epsilon$.

We finally need to find a bound on $\sum_{\alpha} |P_{i,l_{\alpha}}||P_{l_{\alpha},j}|$ that depends on the distance between  $i$ and $j$ in $\mt$. From the proof of Lemma \ref{lem:pert}, we see that this decay factor is $\lesssim (1 + \text{dist}(i,j))\left(\frac{1}{\sqrt{d-1}}\right)^{\text{dist}(i,j)}$. Just to give a brief summary of the method here, we use the factor that $|P_{i,l_{\alpha}}|$ and $|P_{l_{\alpha},j}|$ can decompose as a product based on the edges of the path connecting $i$ to $l_{\alpha}$ and $l_{\alpha}$ to $j$ respectively. Let $\hat{r}$ be the first common point of these paths.

The product $|P_{i,l_{\alpha}}||P_{l_{\alpha},j}|$ roughly decomposes as $K_{i,\hat{r}} K_{\hat{r},j}|P_{\hat{r},l_{\alpha}}|^2$, where $K_{i,\hat{r}}$ and $K_{\hat{r},j}$ are two quantities that depend on the path between $i$ to $\hat{r}$ and $j$ to $\hat{r}$ respectively. A naive bound on $K_{i,\hat{r}}$ is $\left(\frac{1}{\sqrt{d-1}}\right)^{\text{dist}(i,\hat{r})}$, while a naive bound on $K_{\hat{r},j}$ is $\left( \frac{1}{\sqrt{d-1}} \right)^{\text{dist}(\hat{r},j)}$. The product of these two terms gives the decay $\left( \frac{1}{\sqrt{d-1}} \right)^{\text{dist}(i,j)}$.

If we fix $\hat{r}$ and sum over all $l_{\alpha}$ that would be possible descendants or $\hat{r}$, the computations in Lemma \ref{lem:pert} show that $\sum_{l_{\alpha}} |P_{\hat{r} l_{\alpha}}|^2$ is $O(1)$. Finally, there are $1 + \text{dist}(i,j)$ choices for the first common point $\hat{r}$.  This is the basic strategy that bound $\sum_{\alpha}|P_{i,l_{\alpha}}||P_{l_{\alpha},j}|$ and finishes the proof of our bound.
\end{proof}

To complete this section, we also include estimates on the differences $|\tilde{G}_{ij}-P_{ij}|$ when at least one of $i$ and $j$ are outside of $\mt$.  First consider the case that $i$ is in $\mt$ and $j$ is in $\mt^c$. Applying the resolvent identity, we have $\tilde{G}|_{\mt \times \mt^c} = \tilde{G}|_{\mt} B \tilde{G}^{(\mt)}$.

\begin{lemma} \label{lem:step4io}
Let $i$ be a vertex in $\mt$ and $j$ be a vertex in $\mt^c$. We have the following estimate on $\tilde{G}_{ij} - P_{ij}$.
\begin{equation}\label{eq:step4io}
|\tilde{G}_{ij}-P_{ij}| \lesssim (\log N)\epsilon' \left(\frac{1}{\sqrt{d-1}} \right)^{\ell -d_i}.
\end{equation}
\end{lemma}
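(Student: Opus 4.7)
The plan is to proceed analogously to Lemma \ref{lem:Stab}, now using the Schur complement identity $\tilde G|_{\mt\times\mt^c}= -\tilde G|_{\mt}\, B\, \tilde G^{(\mt)}$ (and the corresponding identity for $P$) to compare the off-diagonal block of $\tilde G$ with that of $P$. Taking the difference yields the resolvent identity
\[
\tilde G_{ij} - P_{ij} \,=\, -\,[(\tilde G|_{\mt} - P|_{\mt})\, B\, \tilde G^{(\mt)}]_{ij} \,-\, [P|_{\mt}\, B\, (\tilde G^{(\mt)} - P^{(\mt)})]_{ij}\,,
\]
and I would bound the two terms on the right separately by expanding over the boundary index set $\{l_\alpha\}$ of $\mt$, where $\tilde c_\alpha$ denotes the vertex in $\mt^c$ adjacent to $l_\alpha$ in the switched graph $\tilde\GG$ (recall that $|B_{l_\alpha,\tilde c_\alpha}|=1/\sqrt{d-1}$).

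For the second term, the pointwise tree Green's function bound $|P_{i,l_\alpha}| \lesssim (1/\sqrt{d-1})^{\ell-d_i}$ from Lemma \ref{lem:pert} (using that each $l_\alpha$ sits at distance $\ell$ from the root of $\mt$) pulls uniformly out of the summation. To estimate the other factor $|(\tilde G^{(\mt)} - P^{(\mt)})_{\tilde c_\alpha,j}|$ I would split the index set $\{\alpha\}$ according to whether $\tilde c_\alpha \in U$ and $j$ is Green's-function-connected to $\{b_\alpha,c_\alpha\}$: by Lemma \ref{lem:Grnfunccon} only $O(\log N)$ of the switching vertices are Green's-function-connected to $j$, so Lemma \ref{lem:ijwsc} gives $O(\log N)$ exceptional contributions of size $\epsilon'$ while the remaining $O((\log N)^{\mathfrak{a}})$ terms contribute $O(\phi)$ each. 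Since $\phi \ll \epsilon'(\log N)^{-K\mathfrak{a}/2}$, the exceptional piece dominates and produces a total of size $O((\log N)\,\epsilon' (1/\sqrt{d-1})^{\ell-d_i+1})$.

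For the first term I would invoke Lemma \ref{lem:Stab} with $l_\alpha$ (at distance $d_{l_\alpha}=\ell$ from the root) substituted for $j$, giving $|(\tilde G|_{\mt} - P|_{\mt})_{i,l_\alpha}| \lesssim \epsilon'(\log N)^2 (1/\sqrt{d-1})^{\ell-d_i}$ uniformly in $\alpha$. Pulling this factor out of the sum reduces the task to bounding $\sum_\alpha |\tilde G^{(\mt)}_{\tilde c_\alpha,j}|$; Green's-function-connectedness once again splits the sum into $O(\log N)$ terms of size $O(1)$ and the remainder each of size $O(\phi)$, summing to $O(\log N)$. Combining with the prefactor produces a bound of the same overall shape as in the second term.

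The main obstacle will be controlling the cumulative powers of $\log N$: the statement advertises only a single $\log N$ factor, so the $(\log N)^2$ produced by Lemma \ref{lem:Stab} together with the $O(\log N)$ arising from Green's-function-connectivity counting must either be sharpened at one of these steps or the target $(\log N)$ should be understood as shorthand for $(\log N)^C$ absorbed into the overall $\lesssim$. A secondary point to verify is that the boundary vertices $\{l_\alpha\}$ of $\mt$ are unchanged by the switching (only their external partners $\tilde c_\alpha$ migrate), so Lemma \ref{lem:Stab}, being an internal-to-$\mt$ estimate, applies verbatim in the switched graph $\tilde\GG$.
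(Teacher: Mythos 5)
Your decomposition and the paper's are algebraically the same identity, but grouped differently, and the grouping matters for the log count. The paper writes
\[
\tilde{G}|_{\mt\times\mt^c} - P|_{\mt\times\mt^c}
= (\tilde G|_{\mt}-P|_{\mt})\,B\,P^{(\mt)}
+ P|_{\mt}\,B\,(\tilde G^{(\mt)}-P^{(\mt)})
+ (\tilde G|_{\mt}-P|_{\mt})\,B\,(\tilde G^{(\mt)}-P^{(\mt)}),
\]
so that in the first term the factor to the right of $B$ is $P^{(\mt)}$, not $\tilde G^{(\mt)}$. Since $P^{(\mt)}$ is block-diagonal over the connected components of $B_r(\mt\cup\{i,j\})\setminus\mt$, the index $j$ lies in a single block and $P^{(\mt)}_{\tilde c_\alpha,j}\ne 0$ for only $O(1)$ choices of $\alpha$. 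This collapses the sum to $O(1)$ terms, each bounded directly by Lemma \ref{lem:Stab} at $d_j=\ell$. In your version the right-hand factor is $\tilde G^{(\mt)}$, which is not block-diagonal, so you were forced to sum $\sum_\alpha|\tilde G^{(\mt)}_{\tilde c_\alpha,j}|$ and pay an $O(\log N)$ from the Green's-function-connectivity count. Your treatment of the second term $P|_{\mt}\,B\,(\tilde G^{(\mt)}-P^{(\mt)})$ — pull out $|P_{i,l_\alpha}|\lesssim(1/\sqrt{d-1})^{\ell-d_i}$ and split into $O(\log N)$ exceptional $\epsilon'$-contributions versus $O(\phi)$ bulk contributions — is precisely the paper's argument.

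Your observation about accumulating powers of $\log N$ is correct and deserves emphasis: Lemma \ref{lem:Stab} already produces a $(\log N)^2$ prefactor even before the first-term sum, so the paper's own bound is really $(\log N)^2\epsilon'(1/\sqrt{d-1})^{\ell-d_i}$, which is what appears in \eqref{eq:step4io1} of Proposition \ref{prop:introT} and in \eqref{eq:secswitchest3}. The single $\log N$ in the lemma statement \eqref{eq:step4io} is a slip. Your grouping would further inflate this to $(\log N)^3$; to recover the intended power you should switch to the paper's grouping, trading the extra $\log N$ from connectivity for the $O(1)$ count from block-diagonality of $P^{(\mt)}$. With that change the rest of your argument goes through as written.
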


\begin{proof}
Applying the resolvent identity and taking the difference, we have,
\begin{equation}
\tilde{G}|_{\mt \times \mt^c} - P|_{\mt \times \mt^c} = (G|_{\mt} - P|_{\mt}) B P^{(\mt)} +P|_{\mt} B (G^{(\mt)} -P^{(\mt)}) + (G|_{\mt} - P|_{\mt}) B (G^{(\mt)} - P^{(\mt)}).
\end{equation}

As before, the term with two factors of the form $(G - P)$ would give an error of the form $(\epsilon')^2$, which is far smaller than any main term error we might have to consider.

Now, let us analyze the first term; writing out the indices, it will be,
\begin{equation}
\sum_{\alpha} (G|_{\mt} - P|_{\mt})_{i,l_{\alpha}} P^{(\mt)}_{\tilde{c}_{\alpha},j}.
\end{equation}
We first remark that $P^{(\mt)}$ is a  block-diagonal matrix and that $j$ would belong to a block containing at most $O(1)$ elements $\tilde{c}_{\alpha}$. Thus, $P^{(\mt)}_{\tilde{c}_{\alpha},j}$ would be zero  for all except for $O(1)$ many elements. Now, we can apply the estimate from equation \eqref{eq:StabEst} in order to bound the first error term $(G|_{\mt} - P|_{\mt})$.

Now, let us analyze the second error term. Again, in terms of coordinates, we have,
\begin{equation}
\sum_{\alpha} (P|_{(\mt)})_{i l_{\alpha}} (G^{(\mt)} - P^{(\mt)})_{\tilde{c}_{\alpha},j}.
\end{equation}
Again, we have to apply the logic of connectedness. We first remark that $j$ cannot be Green's function connected to more than $O(\log N)$ vertices of the form $c_{\alpha}$ by the construction of our switching event. For all $c_{\alpha}$ to which $j$ is not Green's function  connected, we apply the improved bound $|G^{(\mt)} - P^{(\mt)}| \le (\epsilon')^2 + \phi$. For the other $O(\log N)$ terms, we merely use the error bound $\epsilon'$ instead.

Our total error can be bounded by,
\begin{equation}
\sum_{\alpha} |(P|_{(\mt)})_{i l_{\alpha}}|  ((\epsilon')^2 + \phi) + C \log N \epsilon' \left( \frac{1}{\sqrt{d-1}} \right)^{\ell -d_i},
\end{equation}
where $d_i$ is the distance of the vertex $i$ from the center of the tree $\mt$. From Lemma \ref{lem:treesum}, we can bound $\sum_{\alpha}|(P|_{(\mt)})_{i l_{\alpha}}| (d-1)^{\ell/2} \le (\log N)^{\mathfrak{j}(d) \mathfrak{a} + \mathfrak{a}/2}$. Even after multiplying this constant by $\phi$, we obtain a value that is less than $\epsilon$. Therefore, the error coming from this term is still less than $ C \log N \left( \frac{1}{\sqrt{d-1}} \right)^{\ell -d_i} $.
\end{proof}

\subsection{Changing $Q_{I/O}$ to $\tilde{Q}_{I/O}$}

Our stability estimates are finally enough to understand the shift of $\GG$ to $\tilde{ \GG}$ for $Q_I$ and $Q_O$. The proofs of these two quantities are similar, so we will only prove the results for the shift of $Q_I$.
\begin{lemma}\label{lem:Qtilde}
We have that,
\begin{equation} \label{eq:Qtilde}
|Q_{I/O}(\GG) - \tilde{Q}_{I/O}| \lesssim \frac{1}{N^{1-\mathfrak{c}}}  + (2d -1)^{2\ell}\frac{\text{Im}[m_{\TT}^d(z,w)] + \epsilon' + \frac{\epsilon}{S_g^1}}{N\eta}.
\end{equation}

\end{lemma}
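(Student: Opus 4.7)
The plan is to exploit the sparsity of the perturbation $V := H^{\tilde{\GG}} - H^{\GG}$, whose support consists of the $O((2d-1)^{\ell})$ entries corresponding to the switched edges (the boundary edges of $\mt$ that are re-routed to $W_S$ and the associated removed/added edges in $\mt^c$). First I would write
\[
N d \big[ Q_I(\GG) - \tilde Q_I \big]
= \!\!\! \sum_{(x\to y) \in E(\GG)\triangle E(\tilde{\GG})} \!\!\! \pm\, G^{(x,x+N)}_{y+N,y+N}
\;+\!\!\! \sum_{(x\to y) \in E(\GG)\cap E(\tilde{\GG})} \!\!\! \big[ G^{(x,x+N)} - \tilde G^{(x,x+N)}\big]_{y+N,y+N}\,.
\]
The symmetric difference has at most $O((2d-1)^{\ell})$ edges; combined with the uniform bound $|G^{(x,x+N)}_{y+N,y+N}|\lesssim 1$ (which follows from the stability estimate Lemma \ref{lem:stability} applied on $\Omega^0(z,w)$), its contribution to $|Q_I - \tilde Q_I|$ is at most $O((2d-1)^{\ell}/N)$, which is absorbed into the second term on the right-hand side of \eqref{eq:Qtilde} since $1/N \le (2d-1)^{\ell}/(N\eta)$ trivially.

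Next, among the common edges I would separate those whose tail $x$ fails to have a tree-like radius-$\mathfrak{R}$ neighborhood. By Definition \ref{def:Goodregular} there are at most $N^{\mathfrak{c}}$ such vertices, hence at most $d N^{\mathfrak{c}}$ such edges. Using again the $O(1)$ deterministic Green's function bound, this contributes at most $O(N^{\mathfrak{c}-1})$, which exactly produces the first term $1/N^{1-\mathfrak{c}}$. For the remaining edges I would express the difference via the Schur complement identity
\[
G^{(x,x+N)}_{y+N,y+N} = G_{y+N,y+N} - \big(G_{y+N,x}\,,\, G_{y+N,x+N}\big) \, G|_{\{x,x+N\}}^{-1} \big(G_{x,y+N}\,,\, G_{x+N,y+N}\big)^{\!T},
\]
so that $G^{(x,x+N)}_{y+N,y+N} - \tilde G^{(x,x+N)}_{y+N,y+N}$ reduces to differences of entries of $G$ and $\tilde G$ at index pairs whose one coordinate is $y$ (or $y+N$). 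These differences are controlled by the resolvent identity $G - \tilde G = G V \tilde G$ applied entrywise:
\[
(G - \tilde G)_{ab} = \sum_{(u,v)\in \mathrm{supp}(V)} G_{au}\, V_{uv}\, \tilde G_{vb},
\qquad |\mathrm{supp}(V)|\lesssim (2d-1)^{\ell}\,.
\]

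The core averaging step is then to sum these identities over $(x\to y)$, apply Cauchy--Schwarz in the edge variable, and invoke Ward's identity $\sum_a |G_{ab}|^2 = \Im G_{bb}/\eta$ together with the local law approximation $\Im G_{bb} = \Im m_\TT^d + O(\epsilon)$ (the $\epsilon'$ and $\epsilon/S_g^1$ corrections come from Lemma \ref{lem:stability} and the induction hypothesis \eqref{eq:IndHyp}). Each of the two Green's function factors $G_{au}$ and $\tilde G_{vb}$ contributes a factor of $(2d-1)^{\ell}$ from the Cauchy--Schwarz over the $O((2d-1)^{\ell})$ switched positions, while Ward provides the $\Im m_\TT^d/\eta$ factor. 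The $1/N$ emerges after dividing by the normalization $Nd$ and collecting the $N$-factor that appears from Ward's identity summed against the other variable. The resulting bound matches $(2d-1)^{2\ell}(\Im m_\TT^d + \epsilon' + \epsilon/S_g^1)/(N\eta)$.

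The main obstacle will be performing the Cauchy--Schwarz/Ward step tightly enough to extract the $1/(N\eta)$ decay with the correct placement of the $(2d-1)^{2\ell}$ factor; a careless L$^1$-triangle-inequality step would only give the much weaker bound $(2d-1)^{\ell}/\eta$. One has to carefully pair the Ward-type sums $\sum_{x,y} |G^{(x)}_{y+N,u}|^2 = \sum_x \Im G^{(x)}_{uu}/\eta$ against their $\tilde G$ counterparts, and use that after dividing by $(Nd)^2$ (before taking square roots) one recovers the $1/(N\eta)$ factor rather than $1/\eta$. A secondary technical point is that the removal of rows/columns $\{x,x+N\}$ in $G^{(x,x+N)}$ must be compatible with the sparsity of $V$: for the $O((2d-1)^{\ell})$ edges whose tail $x$ lies in the switched region itself, the resolvent expansion must be supplemented by the Schur complement step handled in Section \ref{sec:step4}, which has already been carried out to produce the bounds \eqref{eq:StabEst1} and \eqref{eq:step4io1} that we invoke here.
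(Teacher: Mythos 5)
The approach you propose — comparing $G$ and $\tilde G$ directly via the sparse perturbation $V = H^{\tilde{\GG}} - H^{\GG}$ and then applying resolvent expansion, Cauchy--Schwarz, and Ward — is a genuinely different route from what the paper does. The paper never compares $G$ with $\tilde G$ at all: it observes that $\GG$ and $\tilde{\GG}$ are \emph{identical} after removing $\mt \cup W_S$, so $G^{(i\mt W_S)} = \tilde{G}^{(i\mt W_S)}$, and then compares $G^{(i)}_{jj}$ with $G^{(i\mt W_S)}_{jj}$ (respectively $\tilde G^{(i)}_{jj}$ with $\tilde G^{(i\mt W_S)}_{jj}$) separately, via a Schur-complement correction localized to the boundary $\delta(\mt W_S)$. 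Your sparsity-of-$V$ viewpoint is structurally dual to this, and in principle can even yield the slightly better factor $(2d-1)^{\ell}$ in place of $(2d-1)^{2\ell}$.

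However, there is a genuine gap in the Ward step as you state it. You write the Ward identity as $\sum_{x,y} |G^{(x)}_{y+N,u}|^2 = \sum_x \Im G^{(x)}_{uu}/\eta$, which is an \emph{unconstrained} double sum over all $x$ and all $y$, and is of order $N \, \Im m / \eta$. But the quantity defining $Q_I - \tilde Q_I$ is a sum over directed \emph{edges} $(x\to y)$: for each $x$ only $d$ values of $y$ appear. Feeding the unconstrained Ward bound into Cauchy--Schwarz against the normalization $\tfrac{1}{Nd}$ yields $\tfrac{1}{Nd}\cdot \tfrac{N\Im m}{\eta} = \tfrac{\Im m}{d\eta}$, which is missing the crucial factor $1/N$. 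Replacing the normalization by $1/(Nd)^2$ ``before taking square roots,'' as you suggest, is not Cauchy--Schwarz (the correct split is $\tfrac{1}{Nd}\sum AB \le (\tfrac{1}{Nd}\sum A^2)^{1/2}(\tfrac{1}{Nd}\sum B^2)^{1/2}$) and does not repair the overcount. The actual repair — and the step you need but did not carry out — is to first reduce the minor Green's function to entries of the full resolvent, e.g. $|G^{(x)}_{y+N,u}| \lesssim |G_{y+N,u}| + |G_{[x],u}|$ (using boundedness of $(G|_{[x]})^{-1}$), so that the $x$- and $y$-dependencies decouple. Then the edge-constrained sums become $\sum_{(x\to y)} |G_{y+N,u}|^2 = d\sum_y |G_{y+N,u}|^2 \lesssim \tfrac{d\,\Im m}{\eta}$ and $\sum_{(x\to y)}|G_{[x],u}|^2 = d\sum_x |G_{[x],u}|^2 \lesssim \tfrac{d\,\Im m}{\eta}$, and after dividing by $Nd$ you do get $\Im m/(N\eta)$. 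This is precisely the mechanism behind the paper's step ``we apply the resolvent identity again to relate $G^{(i\mt W_S)}$ to $G^{(\mt W_S)}$,'' which eliminates the $(i,j)$-dependence from the minor's superscript before Ward is invoked. Without that decoupling, your Ward step is off by a factor of $N$ and the argument does not close.
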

\begin{proof}
Clearly, once we remove the neighborhood $\mt$ and switching set $W_S$, the graphs $ \GG$ and $\tilde{\GG}$ are the same. Thus, our goal is to relate terms of the form $G^{(i)}_{jj}$ to those of the form $G^{(\{i\} \cup \mt \cup W_S)}_{jj}$ via resolvent estimates. We will use $i\mt W_S$ as a shorthand for  set $\{i\} \cup \mt \cup W_S$. Let $\chi(\cdot)$ be the characteristic function indicating whether the vertex in question is of at least distance $\frac{\mathfrak{R}}{4}$ away from any of the vertices $a_{\alpha},b_{\alpha},c_{\alpha}$ that take part in the switching and, furthermore, $j$ has a radius $\frac{\mathfrak{R}}{4}$ tree-like neighborhood. We note that there are $\ll N^{\mathfrak{c}}$ vertices $j$ such that $\chi(j) \ne 1$.

Namely, our goal is to prove the following two inequalities,
\begin{equation} \label{eq:firstpair}
\begin{aligned}
& \frac{1}{Nd} \sum_{j \to i} \chi(j)|G_{jj}^{(i)} - G_{jj}^{(i \mt W_S)}| \le (2d-1)^{2\ell}\frac{\text{Im}[m^d_{\TT}(z,w)] +\epsilon' + \frac{\epsilon}{S_g^1}}{N\eta}\\
& \frac{1}{Nd} \sum_{j \to i} \chi(j) |\tilde{G}_{jj}^{(i)} - G_{jj}^{( i \mt W_S)}| \le (2d-1)^{2\ell}\frac{\text{Im}[m^d_{\TT}(z,w)]+ \epsilon' + \frac{\epsilon}{S_g^1}}{N\eta},
\end{aligned}
\end{equation}

as the proof is similar, we only give the details for the first inequality.

For those values $j$ such that $\chi(j) =0$, we use the fact that $G_{jj}^{(i)}$ and $\tilde{G}_{jj}^{(i)}$ are both $O(1)$. Thus,
\begin{equation}
\frac{1}{Nd} \sum_{j \to i} (1- \chi(j)) |G_{jj}^{(i)} - \tilde{G}_{jj}^{(i)}| \lesssim N^{\mathfrak{c}-1},
\end{equation}
since the sum is non-zero for $\ll N^{\mathfrak{c}}$ many vertices.

By the resolvent identity, we have,
\begin{equation}
G_{jj}^{(i)} = G_{jj}^{(i\mt W_S)} + [G^{(i\mt W_S)} B^* G|_{i\mt W_S} B G^{(i \mt W_S)}]_{jj},
\end{equation}
Here, $B$ is the adjacency matrix for the graph $\GG$ between $i \mt W_S$ and the complement.

We would also have,
\begin{equation}
\tilde{G}_{jj}^{(i)} = G_{jj}^{(i \mt W_S)} + [G^{(i \mt W_S)} \tilde{B}^* \tilde{G}^{(i)}|_{i \mt W_S} \tilde{B} G^{(i \mt W_S)}]_{jj}.
\end{equation}
Here, $\tilde{B}$ is the switched adjacency matrix for the matrix $\tilde{\GG}$. We will consider the matrix product that appears in the end (the $G B^* G B G$) to be purely an error term.

We see that we have,
\begin{equation}
\begin{aligned}
&\frac{1}{Nd} \sum_{(j \to i) \in E} \chi(j) |G_{jj}^{(i)} - G_{jj}^{(i \mt W_S)}| \\ & \le \frac{1}{Nd} \sum_{(j \to i) \in E}  \sum_{x, y \in \delta(\mt W_S)} \sum_{ a,b \in \mt W_S} |G_{jx}^{(i \mt W_S)}| B_{xa} |[G^{(i)}|_{\mt W_S}]_{ab}| B_{by} |G_{yj}^{(i \mt W_S)}|.
\end{aligned}
\end{equation}
$\delta(\mt W_S)$ consists of the boundary vertices in $\GG \setminus \{i\}$ that border a vertex of $\mt \cup W_S$.
Now, for given $x$ and $y$, there are at most $O(1)$ choices for $a$ and $b$ for which one would not have $B_{xa} \ne 0$, and $B_{by} \ne 0$. Now, we argue that $|[G^{(i)}|_{\mt W_S}]_{ab}|$ is $O(1)$ for all pairs of vertices $a$ and $b$ in $\mt \cup W_S$.  

An application of the resolvent identity would give,
\begin{equation}
G_{ab} = G^{(i)}_{ab} +G|_{a \times \{i,i+N\}} (G|_{\{i,i+N\}})^{-1}G|_{\{i,i+N\} \times b}.
\end{equation}
By applying our induction hypothesis from Definition \ref{def:IndHyp} on the terms of $G$, we would know that $G_{ab}$ and the entries of  $G_{a \times \{i,i+N\}}$ and $G_{\{i,i+N\} \times b}$ are all $O(1)$ quantities. One can also apply the induction hypothesis and simple perturbation theory to show that the entries of $(G|_{\{i,i+N\}})^{-1}$ are themselves $O(1)$ quantities. By rearrangement, this means that

\begin{equation}
\frac{1}{Nd} \sum_{(j \to i) \in E} \chi(j) |G_{jj}^{(i)} - G_{jj}^{(i \mt W_S)}|  \lesssim \frac{1}{Nd} \sum_{(j \to i) \in E}  \sum_{x,y \in \delta( \mt W_S)} \chi(j) |G^{(i \mt W_S)}_{jx}| |G^{(i \mt W_S)}_{yj}|.
\end{equation}
Now, we apply the Cauchy-Schwarz inequality to bound this by,
\begin{equation} \label{eq:cauchyschwartzapp}
\frac{1}{Nd}  \sum_{x,y \in \delta(\mt W_S)}  \Big[\sum_{(j \to i) \in E} \chi(j) |G_{jx}^{(i \mt W_S)}|^2 \sum_{(j \to i) \in E} \chi(j) |G_{yj}^{( i \mt W_S)}|^2 \Big]^{1/2}.
\end{equation}

We apply the resolvent identity again to relate $G^{(i \mt W_S)}$ to $G^{(\mt W_S)}$, which we do have estimates to control. Namely, we again see that,
\begin{equation}
G_{jx}^{(i \mt W_S)} = G_{jx}^{(\mt W_S)} - G^{(\mt W_S)}|_{j \times \{i, i + N\}} (G^{(\mt W_S)}|_{\{i,i+N\}})^{-1} G^{(\mt W_S)}|_{\{i,i+N\} \times x}.
\end{equation}
We know that the terms of $G^{(TW_S)}|_{j \times \{i,i+N\}}$ are of $O(1)$ by  Proposition \ref{prop:removeWS}. Recall that since $\chi(j)$ is $1$, $j$ must have a tree-like neighborhood of radius $\frac{\mathfrak{R}}{4}$ in the graph $\mathcal{G}\setminus(\mt \cup W_S)$; as $i$ is a vertex adjacent to $j$. Thus, we can apply estimates from perturbation theory to ensure that   the terms of $ (G^{(\mt W_S)}|_{\{i,i+N\}})^{-1}$ are $O(1)$. From these two facts, we now can assert that, 

\begin{equation}
\chi(j)|G_{jx}^{(i \mt W_S)}|^2 \lesssim |G_{jx}^{( \mt W_S)}|^2 + |G_{ix}^{(\mt W_S)}|^2,
\end{equation}

Now, by applying the Ward identity, we see that,
\begin{equation}
\sum_{(j \to i)} |G_{jx}^{(i \mt W_S)}|^2 \lesssim  \frac{\text{Im}[G_{jj}^{(\mt W_S)}]}{\eta} \lesssim \frac{\text{Im}[m_{\TT}^{d}(z,w)]  + \epsilon' + \frac{\epsilon}{S_g^1}}{\eta}.
\end{equation}

We used the following triangle inequality to estimate $G_{jj}^{(\mt W_S)}$,
\begin{equation}
\begin{aligned}
|G_{jj}^{(\mt W_S)} -m_{\TT}^{d}(z,w)|  \le & |G_{jj}^{(\mt W_S)} - G_{jj}(Ext(B_r(j, \GG), Q_O,Q_I))|\\ &+ |G_{jj}(Ext(B_r(j,\GG), Q_O,Q_I)) -G_{jj}(Ext(B_r(j,\GG), m_{\infty},m_{\infty}))|.
\end{aligned}
\end{equation}
The term $G_{jj}(Ext(B_r(\{j\},\GG), m_{\infty},m_{\infty}))$ is the same as $m_{\TT}^{d}(z,w)$. Now, we can estimate the first term on the right hand side of the above inequality by $\epsilon'$ by using Proposition  \ref{prop:removeTest}. The second term on the right hand side of the above inequality is bounded by $\frac{\epsilon}{S_g^1}$ by our inductive hypothesis on the distance between $Q_{I/O}$ and $m_{\infty}$ from Definition \ref{def:IndHyp} .

We can return to the expression in \eqref{eq:cauchyschwartzapp} to bound the quantity inside the square root by the term above. Now, the number of $x$ vertices on the boundary of $\mt W_S$ is less than $(2d-1)^{\ell}$; the same is true of $y$. Thus, the outer sum in equation \eqref{eq:cauchyschwartzapp} will give a factor of no more than $(2d-1)^{2\ell}$. This will give us the desired inequality in equation \eqref{eq:Qtilde}.
\end{proof}

With this estimate on the difference between $Q_I/Q_O$ and $\tilde{Q}_I/\tilde{Q}_O$,  we can finally return to our proof of Proposition \ref{prop:introT} is simple.

\begin{proof}[Proof of Proposition \ref{prop:introT}]

From Lemmas \ref{lem:Stab}and \ref{lem:step4io}, we have a comparison between $\tilde{G}$ and a $P$ whose extension is given by parameters $Q_I$ and $Q_O$. The last Lemma \ref{lem:Qtilde} suggests that the difference between $Q_I/O$ and $\tilde{Q}_{I/O}$ are small enough that we can apply the perturbation estimates from Theorem \ref{thm:pert}. These estimates are sufficient for the proof of Proposition \ref{prop:introT}. 
\end{proof}

\section{Concentration Estimates} \label{sec:concentration}

The main issue with our stability estimate in equation \eqref{eq:StabEst} is the presence of the term whose decay is $\left(\frac{1}{d-1}\right)^{\frac{|d_j-d_i|}{2}}$. As long as $d_j$ and $d_i$ are relatively small, this term will not decay  as $N$  and $\ell$ increase. Recall that this error term came from estimating the following term,
$$
\frac{1}{d-1}\sum_{\alpha} P_{i,l_{\alpha}} P_{l_{\alpha},j} (\tilde{G}^{(\mt)} - P^{(\mt)} )_{c_{\alpha},c_{\alpha}}.
$$

While the triangle inequality will not show an improvement, the main point is that this new graph came from random switching. Thus, we can try to apply a concentration argument to show that that the quantity that appears above has superior bounds. Loosely speaking, the improvement will improve the constant $\sum_{\alpha} |P_{i,l_{\alpha}}||P_{l_{\alpha},j}|$ to $\sqrt{\sum_{\alpha} |P_{i,l_{\alpha}} P_{l_{\alpha},j}|^2}$. Under the $l_2$ norm on $|P_{i,l_{\alpha}}||P_{l_{\alpha},j}|$, the branching factor of the tree no longer cancels out the effect of the decrease. This  is the main improvement that we want to seek.  To derive this improvement, we use the fact that the $c_{\alpha}$'s are randomly chosen, independent from each other. Using this fact, one would expect better concentration results for most switching neighborhoods.

The main result of this section is the following Theorem,
\begin{thm} \label{thm:mainconcentration}
Let $\GG$ be a graph from $\Omega^{1}_{S, \mathfrak{q},o}$ with $\ell$ neighborhood $\mt$. Consider the following weight families.  These weight families will allow us to understand $Q_I$.

\begin{enumerate}
 \item Let $v$ be a vertex in $\mt$ and let $\tilde{v}$ be an index corresponding to $v$ (so it is either $v$ or $v+N$). Recall our indexing of boundary vertices of $\mt$ as $l_{\alpha}$. We define the in-weights $w^{\text{in}}_{\alpha}:= P_{x l_{\alpha}}P_{l_{\alpha} x}$.
 There are $\lesssim |\mt|$ of these types of indices depending on the choice of the index $x$.
\item  If $\mt$ is a truncated $d$-regular tree, fix a vertex $k$ that is connected to $o$; assume for now that the edge is oriented $k \to o$. Define $A$  to be a set of indices as follows. Consider a vertex $l_{\alpha}$ on the boundary of $\mt$; if the path from $o$ to $v_{\alpha}$ does not pass through $k$, then we include the index $\alpha$ in $A$. We then define the weight $w^{\text{in}}_{\alpha}:= P^{(k)}_{o,l_{\alpha}} P^{(k)}_{l_{\alpha},o}$. $P^{(k)}$ is a shorthand for $G_{o,l_{\alpha}}(Ext(B_r(\mt,\GG^{(i)}),Q_I,Q_O))$. If instead the edge were oriented $o \to k$, we would instead choose the weights $P^{(k)}_{o+N,l_{\alpha}} P^{(k)}_{l_{\alpha},o+N}$.  There are $2d$ such weights of this type.
\end{enumerate}

There exists an event $S_{C}(\GG) \subset S_{G}(\GG)$ such that $1-\mathbb{P} (S_{C}(\GG)) = 1- \mathbb{P}(S_{G}(\GG)) - (|T|+d) \exp[-(\log N)^2]$ such that we have the following estimate,
\begin{equation*}
\begin{aligned}
&\left|\sum_{\alpha} w_{\alpha} (\tilde{G}_{\tilde{a}_{\alpha} +N, \tilde{a}_{\alpha}+N}^{(\mt)} - G_{\tilde{a}_{\alpha} +N, \tilde{a}_{\alpha}+N}(Ext_i(B_{r}(\tilde{a}_{\alpha}, \tilde{\GG}^{(\mt)}), Q_I,Q_O) )) - \sum_{\alpha} w_{\alpha} (Q_I - Y_{i,r}(Q_I, Q_O)) \right|\\
& \lesssim (\log N) \epsilon' \sqrt{\sum_\alpha |w_{\alpha}|^2} + \Big((2d-1)^{2\ell}\frac{\text{Im}[m_{\TT}^{d}] + \epsilon' + \frac{\epsilon}{S_g^1}}{N\eta} + N^{\mathfrak{c} -1} \Big) + \frac{1}{N^{1-\mathfrak{c}}}.
\end{aligned}
\end{equation*}

\end{thm}
\begin{rmk}
We have a similar concentration statement to understand the concentration around $Q_O$. We do not write out the whole statement for simplicity, but mention the appropriate changes to the weights and computed function. In the first type of weight family, we change the weights to $w_{\alpha}^{\text{out}}:= P_{x, l_{\alpha} + N} P_{l_{\alpha}+N, x}$. The function which we try to compute using concentration estimates will be of the form $\sum_{\alpha} w_{\alpha} \tilde{G}_{\tilde{a}_{\alpha}, \tilde{a}_{\alpha}} $.

\end{rmk}

The proof of the above statement will be divided into smaller parts. Since the random variables $\tilde{a}_{\alpha}$ are chosen independently of each other given the graph, $\tilde{G}^{(\mt)}_{\tilde{a}_{\alpha}+N, \tilde{a}_{\alpha}+N}$ are random variables that are almost independent of the weights $w_{\alpha}$ and of each other . However, the fact that they are not truly independent prevents us from directly applying standard concentration estimates.  (The value of the Green's function $\tilde{G}^{(\mt)}_{\tilde{a}_{\alpha} + N, \tilde{a}_{\alpha}+N}$ does not depend on only the local geometry, but on the full graph structure of the switched graph. Thus, changing the value of $\tilde{a}_{\alpha}$ for a single $\alpha$, while keeping all others the same, will have subtle effects on $\tilde{G}^{(\mt)}_{\tilde{a}_{\beta},\tilde{a}_{\beta}}$ for $\beta\ne \alpha$.  The distribution of $\tilde{G}^{(\mt)}_{\tilde{a}_{\alpha},\tilde{a}_{\alpha}}$ will also depend on $l_{\alpha}$ as well.) Our first step is to derive concentration estimates for a closely related family of independent random variables. 

\subsection{Concentration estimates for a related family of independent random variables}

For a vertex $v \in \GG$, we use $(v \mt)$ as a short way to express the union $\{v\}  \cup \mt$. Also let $\chi(\cdot)$ is the indicator function of the event that the vertex is question is of distance at least $\mathfrak{R}/4$ from the vertices that bound $\mt$ and that the vertex in question has a radius $\mathfrak{R}/4$ tree-like neighborhood.
As a shorthand for what will follow, we will define $D_{\alpha}$ as a shorthand for the difference,
\begin{equation}
D^{\text{in}}_{\alpha}:=\chi(c_{\alpha}) (G^{(b_{\alpha}\mt)} - P^{(b_{\alpha}\mt)})_{c_\alpha+N,c_{\alpha}+N},
\end{equation}
where recall that $P^{(b_{\alpha}\mt)}$ was defined to be the Green's function of the extension

\noindent $P^{(b_{\alpha} \mt)}_{ij}:= G(Ext(B_r(\{i,j\}, \GG \setminus (b_{\alpha}\mt )),Q_I,Q_O))$, while $P_{ij}^{(\mt)}$ is simply $G_{ij}(Ext(B_{r}(\{i,j\}), \GG \setminus \mt), Q_I,Q_O)) $.  Note that we specifically chose the unswitched graph here.

\begin{lemma} \label{lem:auxconc}

Consider all of the families of weights $w_{\alpha}$ as given in Theorem  \ref{thm:mainconcentration}
Let $S_C(\GG) \subset S_G(\GG)$ be the set of switching events such that the following event holds for all of the weight families.
\begin{equation}
\begin{aligned}
&\Big| \sum_{\alpha} w_{\alpha}D^{\text{in}}_{\alpha} - \sum_{\alpha} w_{\alpha}(Q_I - Y_{i,r}(Q_I,Q_O))\Big|\lesssim\\&
(\log N) \epsilon' \sqrt{\sum_{\alpha} |w_{\alpha}|^2} +   \Big((2d-1)^{2\ell}\frac{\text{Im}[m_{\TT}^{d}] + \epsilon' + \frac{\epsilon}{S_g^1}}{N\eta} + N^{\mathfrak{c} -1} \Big).
\end{aligned}
\end{equation}
Then, $\mathbb{P}(S_C(\GG) = \mathbb{P}(S_G(\GG)) - (|T|+d) \exp[-(\log N)^2]$.
\end{lemma}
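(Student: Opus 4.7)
The plan is to exploit the fact that, since the underlying graph $\GG$ is the \emph{unswitched} graph, the quantities $D_\alpha^{\text{in}}$ are (essentially) deterministic functions of the independently chosen random edge data $(b_\alpha, c_\alpha)$, so the sum $\sum_\alpha w_\alpha D_\alpha^{\text{in}}$ is a sum of (almost) independent bounded random variables to which a Hoeffding--Azuma bound applies.

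First, I would condition on the graph $\GG \in \Omega^1_{S,\mathfrak{q},o}$ and view each pair $(b_\alpha, c_\alpha)$ as uniform over directed edges of $\GG|_{\mt^c}$, independent across $\alpha$ (repetitions occur with negligible probability and contribute to the $N^{\mathfrak{c}-1}$ term). On the event $S_G(\GG)$, only $O_\mathfrak{q}(1)$ of the vertices $c_\alpha$ fail the tree-like requirement encoded by $\chi(c_\alpha)$, and for the rest the perturbation machinery of Section \ref{sec:Tree-like} (combined with the hypothesis that $\GG \in \Omega^1_{S,\mathfrak{q},o}$) gives a deterministic bound $|D_\alpha^{\text{in}}| \lesssim \epsilon'$.

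Next, I would compute the conditional expectation $\mathbb{E}[D_\alpha^{\text{in}} \mid \GG]$ and show it equals $Q_I - Y_{i,r}(Q_I,Q_O)$ up to an admissible error. The $G^{(b_\alpha \mt)}$ piece averages, via a resolvent expansion identical in spirit to the one in Lemma \ref{lem:Qtilde}, to $Q_I$ plus an error of order $(2d-1)^{2\ell}(\Im[m_\TT^d]+\epsilon'+\epsilon/S_g^1)/(N\eta) + N^{\mathfrak{c}-1}$, the first piece coming from the mismatch between $G^{(b_\alpha\mt)}$ and $G^{(b_\alpha,b_\alpha+N)}$ (which defines $Q_I$), and the second from the non-tree-like exceptional vertices. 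The $P^{(b_\alpha \mt)}$ piece, when $c_\alpha$ has a radius-$\mathfrak{R}/4$ tree-like neighborhood disjoint from $\mt$, is by definition the Green's function of an $Ext_i$ of a truncated $(2d-1)$-ary directed tree with parameters $Q_I,Q_O$, which is exactly $Y_{i,r}(Q_I,Q_O)$ (cf.\ \eqref{dfn:YioK}).

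Finally, I would apply Hoeffding's inequality to the centered sum
\begin{equation*}
\sum_\alpha w_\alpha \bigl(D_\alpha^{\text{in}} - \mathbb{E}[D_\alpha^{\text{in}}\mid \GG]\bigr),
\end{equation*}
whose summands are independent (given $\GG$), mean zero, and bounded in modulus by $C|w_\alpha|\epsilon'$. This gives a Gaussian tail of the form $\exp(-ct^2/(\epsilon')^2\sum_\alpha |w_\alpha|^2)$, and choosing $t = C\log N \cdot \epsilon' \sqrt{\sum|w_\alpha|^2}$ yields exceptional probability $\exp(-(\log N)^2)$; a union bound over the $O(|\mt|+d)$ weight families defining $S_C(\GG)$ preserves this probability estimate. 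Combining the concentration bound with the previous step's identification of the mean produces precisely the stated inequality. The main obstacle is the careful identification of $\mathbb{E}[D_\alpha^{\text{in}} \mid \GG]$: one must track, via a Schur-complement expansion around $\mt \cup \{b_\alpha\}$ with the subsequent error-control analogous to Lemma \ref{lem:Qtilde}, the various discrepancies between $G^{(b_\alpha\mt)}$, $G^{(b_\alpha,b_\alpha+N)}$ and the tree-like surrogate $P^{(b_\alpha\mt)}$, and ensure that their accumulated contributions exactly produce the $(2d-1)^{2\ell}(\Im[m_\TT^d]+\epsilon'+\epsilon/S_g^1)/(N\eta)$ error without destroying the leading-order cancellation.
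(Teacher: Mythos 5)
Your proposal follows essentially the same route as the paper's proof: establish the uniform bound $|D_\alpha^{\text{in}}|\lesssim\epsilon'$ via resolvent expansions around $\mt$ and $\{b_\alpha,b_\alpha+N\}$, observe that conditionally on $\GG^{(\mt)}$ the $D_\alpha^{\text{in}}$ are i.i.d.\ across $\alpha$, identify the mean with $Q_I-Y_{i,r}(Q_I,Q_O)$ up to the stated errors by the same argument as in \eqref{eq:firstpair} together with the observation that $\chi(j)=1$ forces $P^{(i\mt)}_{j+N,j+N}=Y_{i,r}(Q_I,Q_O)$, then apply Hoeffding with deviation $\asymp \log N\cdot\epsilon'\sqrt{\sum|w_\alpha|^2}$ and union over the $O(|\mt|+d)$ weight families. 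The only cosmetic differences are your mention of Hoeffding--Azuma (plain Hoeffding suffices since the summands are genuinely independent given the graph) and your brief remark on edge repetitions, which the paper subsumes into the definition of $S_G(\GG)$; neither affects correctness.
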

\begin{proof}
We have shown in  Proposition \ref{prop:removeTest} that each of the differences satisfy $|(G^{(\mt)} - P^{(\mt)})_{c_{\alpha}+N,c_{\alpha}+N}|\le \epsilon'$. We can apply the resolvent identity to understand $|G^{(b_{\alpha} \mt)}_{c_{\alpha}+N,c_{\alpha}+N} - P^{(b_{\alpha} \mt)}_{c_{\alpha}+N,c_{\alpha}+N}|$. First of all, we see that, for edges $(i \to j)$,
\begin{equation}
\chi(j)G^{(i\mt)}_{j+N,j+N} =\chi(j)(G^{(\mt)}_{j+N,j+N} - G^{(\mt)}|_{j+N \times \{i,i+N\}} (G^{(\mt)}|_{\{i,i+N\}})^{-1} G^{(\mt)}|_{\{i,i+N\} \times j+N}).
\end{equation}

In addition, we have that,
\begin{equation}
\chi(j)P^{(i \mt)}_{j+N, j+N} =\chi(j)( P^{(\mt)}_{j+N,j+N} - P^{(\mt)}|_{j+N \times \{i,i+N\}}(P^{(\mt)}|_{\{i,i+N\}})^{-1} P^{(\mt)}|_{\{i,i+N\} \times j+N}).
\end{equation}

We have the estimate that $|G^{(\mt)} - P^{(\mt)}| \lesssim \epsilon'$ from Proposition \ref{Prop:GTminusPT}. We also know that entries of $(P^{(\mt)}|_{\{i,i+N\}})^{-1}$ are $O(1)$ quantities when $\chi(j) \ne 0$, since, in this case,  $i$ has at least a radius $\mathfrak{R}$ treelike-neighborhood in $\GG \setminus T$. This allows us to apply a simple perturbation argument to assert that,

\begin{equation}
\begin{aligned}
&\chi(j)\bigg|  G^{(\mt)}|_{j+N \times \{i,i+N\}} (G^{(\mt)}|_{\{i,i+N\}})^{-1} G^{(\mt)}|_{\{i,i+N\} \times j+N} \\
& \hspace{1 cm} - P^{(\mt)}|_{j+N \times \{i,i+N\}}(P^{(\mt)}|_{\{i,i+N\}})^{-1} P^{(\mt)}|_{\{i,i+N\} \times j+N} \bigg| \lesssim \epsilon'.
\end{aligned}
\end{equation}

These facts ensure that,
\begin{equation}
\chi(j)|G^{(i \mt)}_{j+N,j+N} - P^{(i \mt)}_{j+N,j+N}| \le \epsilon',
\end{equation}
uniformly. Furthermore, once we have fixed the graph $\GG^{(\mt)}$, the switched edges are chosen independently of each other. This means that the variables $D^{\text{in}}_{\alpha}$ are independent and identically distributed.

By Hoeffding's inequality, we have the following probability bound,
\begin{equation}
\mathbb{P}\Big( \left| w_{\alpha}(D^{\text{in}}_{\alpha}- \mathbb{E}[D^{\text{in}}_{\alpha}]  ) \right| \ge t \sum_{\alpha}|w_{\alpha}|^2 \epsilon' \Big) \le \exp[-t^2/C],
\end{equation}

for some constant $C$. Here, the probability $\mathbb{P}$ and the expectation $\mathbb{E}$ is only over the randomness of the choice of switched sets, only, not on the randomness of the random graph.

Our goal now is to relate $\mathbb{E}[D^{\text{in}}_{\alpha}]$ to the quantities $Q_I$. We first define the set $\mathcal{E}^{(\mt)}$ to be the set of all edges in the complement of $\mt$ in $\GG$.  By definition,
\begin{equation}
\mathbb{E}[D^{\text{in}}_{\alpha}] =  \frac{1}{|\mathcal{E}^{(\mt)}|}\sum_{(i \to j)} \chi(j) (G^{(i\mt)}_{j+N,j+N} - P^{(i\mt)}_{j+N,j+N}).
\end{equation}
We have earlier shown that all of these quantities are less than $\epsilon'$ uniformly. In addition, we have that $|\mathcal{E}^{(\mt)}| \ge Nd - O((2d-1)^{\ell}) \ge Nd - O(N^{\mathfrak{c}})$.
By adjusting the normalization factor, we have that
\begin{equation}
\mathbb{E}[D^{\text{in}}_{\alpha}] = \frac{1}{Nd} \sum_{(i \to j)} \chi(j)(\tilde{G}^{(i\mt)}_{j+N,j+N} - P^{(i\mt)}_{j+N,j+N}) + O\left( \frac{N^{\mathfrak{c}}}{Nd}\right).
\end{equation}

Now, we relate $G^{(i)}_{j+N,j+N}$ to $G^{(i \mt)}_{j+N,j+N}$ via the Schur complement formula.  We can apply the same argument as found in the proof of equation \eqref{eq:firstpair} to show that,
\begin{equation}
\frac{1}{Nd} \sum_{(i \to j)}\chi(j)|G^{(i)}_{j+N, j+N} -  G^{(i \mt)}_{j+N,j+N}| \lesssim (2d-1)^{2\ell}\frac{\text{Im}[m_{\TT}^{d}] + \epsilon' + \frac{\epsilon}{S_g^1}}{N\eta},
\end{equation}
Furthermore,  $\chi(j)=1$ implies that the vertex $j$ has a radius $\mathfrak{R}/4$ tree-like neighborhood ensures. Thus, we have that,
\begin{equation}
\frac{1}{Nd} \sum_{(i,j)} \chi(j) P^{(i \mt)}_{j+N,j+N} = \frac{1}{Nd} \sum_{(i \to j)} \chi(j) Y_{1,r}(Q_I,Q_O).
\end{equation}

For the remaining quantities with $\chi(j) = 0$, we know that $P^{(i )}_{j+N,j+N}$ is $O(1)$ from Lemma \ref{lem:pert}. Using  Proposition \ref{prop:removeTest} would further imply that the entries $G^{(i )}_{j+N,j+N}$ are also $O(1)$. This bounds the sum of terms $\frac{1}{Nd} \sum_{i \to j} (1- \chi(j)) G^{(i)}_{j+N,j+N}$ by $N^{\mathfrak{c} -1}$. 

Combining these facts, we see that,
\begin{equation}
\sum_{\alpha} w_{\alpha}\mathbb{E}[D^{in}_{\alpha}]= \sum_{\alpha} w_{\alpha} (Q_I(z,w) - Y_{i,r}(Q_i,Q_O)) + O \Big((2d-1)^{2\ell}\frac{\text{Im}[m_{\TT}^{d}] + \epsilon' + \frac{\epsilon}{S_g^1}}{N\eta} + N^{\mathfrak{c} -1} \Big).
\end{equation}

In the final line, we used the fact that $\sum_{\alpha} w_{\alpha} \lesssim 1$.
\end{proof}

To use our concentration estimates for the unswitched graph onto the switched graph, we apply the following lemma, which will show that the values of $\tilde{G}^{(\mathbb{T}})_{c_{\alpha}+N,c_{\alpha}+N}$ are close to those of $G^{(b_{\alpha}\mt) }_{c_{\alpha}+N,c_{\alpha}+N}$ as well as similar results for the index $c_{\alpha}$.

\begin{lemma}\label{lem:rmvsingvert}
Assume that $c_{\alpha}$ has distance at least $\mathfrak{R}/4$ from all of the other vertices $\{a_{\beta},b_{\beta},c_{\beta}\}, \beta\ne \alpha$ that participate in the switching and that the pair ${c_{\alpha},b_{\alpha}}$ is not Green's function connected to $\{c_{\beta},b_{\beta}\}$ for $\beta \ne \alpha$. Then, we have that,
\begin{equation}
|G^{(b_{\alpha} \mt)}_{c_{\alpha},c_{\alpha}} - \tilde{G}^{(\mt)}_{c_{\alpha},c_{\alpha}}| \lesssim (2d-1)^{\ell} \phi^2
\end{equation}

\end{lemma}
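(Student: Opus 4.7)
The natural strategy is to relate both Green's functions to an auxiliary one, constructed so as to absorb exactly the difference between the two graph operations involved (single-vertex removal versus random switching). Specifically, let $\hat\GG := \GG^{(\mt)} \setminus \{(b_\alpha, c_\alpha)\}$ denote the graph obtained from $\GG$ by deleting $\mt$ together with the single switched edge $(b_\alpha, c_\alpha)$, let $\hat H$ be its Hermitization, and set $\hat G := \hat H^{-1}$. Two combinatorial identities follow directly from Definition \ref{defn:switchingtwo}: first, removing the vertex $b_\alpha$ from $\hat\GG$ recovers exactly $\GG^{(b_\alpha\mt)}$, since the edge $(b_\alpha, c_\alpha)$ disappears with $b_\alpha$ in any case; second, $\hat\GG$ and $\tilde\GG^{(\mt)}$ carry the same vertex set $V \setminus \mt$ but differ in exactly the $\mu-1$ edges $\{(b_\beta, c_\beta) : \beta \neq \alpha\}$, which are present in $\hat\GG$ but switched away in $\tilde\GG$. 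These observations let me split
\[
G^{(b_\alpha\mt)}_{c_\alpha, c_\alpha} - \tilde G^{(\mt)}_{c_\alpha, c_\alpha}
= \bigl( G^{(b_\alpha\mt)}_{c_\alpha, c_\alpha} - \hat G_{c_\alpha, c_\alpha} \bigr)
+ \bigl( \hat G_{c_\alpha, c_\alpha} - \tilde G^{(\mt)}_{c_\alpha, c_\alpha} \bigr),
\]
and handle the first difference by a Schur complement and the second by a resolvent expansion.

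For the first difference, applying the Schur-complement formula \eqref{eq:thirdGreen} to $\hat G$ with $T = \{b_\alpha, b_\alpha + \aleph\}$ expresses $\hat G_{c_\alpha, c_\alpha} - G^{(b_\alpha\mt)}_{c_\alpha, c_\alpha}$ as a bilinear form in the four off-diagonal entries $\hat G_{c_\alpha, b_\alpha}$, $\hat G_{c_\alpha, b_\alpha + \aleph}$ and their conjugates. Since $c_\alpha \in U$ forces the $\mathfrak R/4$-neighborhood of $c_\alpha$ in $\GG^{(\mt)}$ to be tree-like, the unique path from $c_\alpha$ to $b_\alpha$ inside that neighborhood was the very edge we deleted; hence in $\hat\GG$ the vertex $b_\alpha$ lies at distance at least $\mathfrak R/4$ from $c_\alpha$. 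Invoking Lemmas \ref{lem:pert} and \ref{lem:excess1} then provides $|\hat G_{c_\alpha, b_\alpha}| + |\hat G_{c_\alpha, b_\alpha + \aleph}| \lesssim (d-1)^{-\mathfrak R/8}$, super-polynomial in $N^{-1}$ and thus negligible compared to the target bound $(2d-1)^\ell \phi^2$.

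For the second difference, writing $M := \tilde H^{(\mt)} - \hat H = -(d-1)^{-1/2} \sum_{\beta \neq \alpha} E_{(b_\beta, c_\beta)}$ with $E_{(b_\beta, c_\beta)}$ the Hermitized edge matrix, the resolvent identity yields
\[
\hat G_{c_\alpha, c_\alpha} - \tilde G^{(\mt)}_{c_\alpha, c_\alpha}
= -\frac{1}{\sqrt{d-1}} \sum_{\beta \neq \alpha} \bigl[ \hat G \, E_{(b_\beta, c_\beta)} \, \tilde G^{(\mt)} \bigr]_{c_\alpha, c_\alpha},
\]
a sum of $\mu - 1 = O((2d-1)^\ell)$ terms, each a product of two cross-Green's-function entries with one index fixed at $c_\alpha$ and the other ranging over $\{b_\beta, c_\beta, b_\beta + \aleph, c_\beta + \aleph\}$. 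The $\tilde G^{(\mt)}$ factor is bounded by $\phi$ using Proposition \ref{Prop:SwitchGWS}---specifically \eqref{eq:Gtilddiscon} when both indices lie in $W_S^c$ and \eqref{eq:Greendisconbal} in the mixed case---both bounds relying on the hypothesis that $\{c_\alpha, b_\alpha\}$ is GF-disconnected from $\{c_\beta, b_\beta\}$. The $\hat G$ factor is in turn compared to $G^{(\mt)}$ through one more single-edge resolvent perturbation (the edge $(b_\alpha, c_\alpha)$), with the additional correction again controlled by the tree-like decay of the previous paragraph; the resulting $G^{(\mt)}$ cross-entry is itself $\lesssim \phi$ by the very definition of GF-disconnectedness. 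Each summand is thus $O(\phi^2)$, and summation over $\beta \neq \alpha$ gives the advertised bound.

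The principal obstacle is keeping the GF-disconnection hypothesis---formulated at the level of $G^{(\mt)}$---intact after the two graph modifications that produce $\hat G$ and $\tilde G^{(\mt)}$. Since every modification is spatially localized at the pair $\{b_\alpha, c_\alpha\}$, and by assumption this pair is GF-disconnected from every other $\{b_\beta, c_\beta\}$, the propagation of errors through single-edge resolvents introduces at most $O(\epsilon' \phi + \phi^2)$ into any cross entry, safely below the $(2d-1)^\ell \phi^2$ budget once summed over $\beta \neq \alpha$.
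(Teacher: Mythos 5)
Your approach takes a genuinely different route from the paper's. The paper pivots through $G^{(\mt W_S)}$ --- the Green's function with all of $W_S$ removed --- and proves the two intermediate estimates
$|G^{(\mt W_S)}_{c_\alpha,c_\alpha} - G^{(b_\alpha\mt)}_{c_\alpha,c_\alpha}| \lesssim (2d-1)^\ell\phi^2$ (Schur complement for removing $W_S\setminus b_\alpha$ from $\GG^{(b_\alpha\mt)}$, using Lemma \ref{lem:switchinv}) and $|G^{(\mt W_S)}_{c_\alpha,c_\alpha} - \tilde G^{(\mt)}_{c_\alpha,c_\alpha}| \lesssim (2d-1)^\ell\phi^2$ (Schur complement for reinserting $W_S$ into $\tilde\GG^{(\mt)}$, using Lemma \ref{lem:switchbound}). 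You instead pivot through $\hat G$, the Green's function of $\GG^{(\mt)}$ with only the single edge $(b_\alpha,c_\alpha)$ deleted. This is a creative idea, but it runs into two concrete gaps.

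First, your combinatorial claim is wrong. Per Definition \ref{defn:switchingtwo}, when $\chi_\beta=1$ the switching \emph{removes} $(b_\beta,c_\beta)$ \emph{and adds} $(b_\beta,a_\beta)$. Hence, on the vertex set $V\setminus\mt$, the graphs $\hat\GG$ and $\tilde\GG^{(\mt)}$ do not differ only in the $\mu-1$ edges $\{(b_\beta,c_\beta):\beta\neq\alpha\}$; they also differ in all of the added edges $\{(b_\beta,a_\beta):\chi_\beta=1\}$ (including $\beta=\alpha$). Your resolvent expansion therefore has the wrong perturbation matrix $M=\tilde H^{(\mt)}-\hat H$, and the corrected $M$ forces you to control cross-entries $\hat G_{c_\alpha,a_\beta\bullet}$ and $\tilde G^{(\mt)}_{a_\beta\bullet,c_\alpha}$. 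These $a_\beta$-terms are not covered by the Green's-function disconnection hypothesis (which only concerns $\{c_\beta,b_\beta\}$), so you would need to appeal separately to the spatial-distance condition $\mathrm{dist}(c_\alpha,a_\beta)\ge\mathfrak{R}/4$ and the exponential decay from Lemmas \ref{lem:pert} and \ref{lem:excess1}. That is fixable, but it is not in your write-up.

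Second, and more importantly, none of Propositions \ref{prop:removeWS}--\ref{Prop:SwitchGWS} or Lemmas \ref{lem:switchinv}--\ref{lem:ijwsc} give bounds on your auxiliary $\hat G$; they address $G^{(\mt)}$, $G^{(\mt\cup W_S)}$, $G^{(b_\alpha\mt)}$ and $\tilde G^{(\mt)}$, but not the single-edge deletion $\hat G$. You wave at this with ``one more single-edge resolvent perturbation'', but the issue is that the deleted edge $(b_\alpha,c_\alpha)$ sits \emph{at} $c_\alpha$, so the resolvent identity $\hat G_{c_\alpha,x}=G^{(\mt)}_{c_\alpha,x}+[G^{(\mt)}E_{(b_\alpha,c_\alpha)}\hat G]_{c_\alpha,x}$ couples the unknown $\hat G_{c_\alpha,x}$ (and $\hat G_{b_\alpha,x}$) back in with $O(1)$ coefficients $G^{(\mt)}_{c_\alpha,b_\alpha\bullet}$. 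One would need to solve the resulting $2\times2$ (or $4\times4$) linear system --- effectively, a Schur-complement computation relating $\hat G$ to $G^{(b_\alpha\mt)}$ --- and check it is non-degenerate, which is precisely the computation the paper avoids by using the block removal $G^{(\mt W_S)}$ rather than single-edge removal. In that sense the paper's choice of intermediate is not cosmetic: it lets each of the two legs be a direct Schur complement against $W_S$-type blocks for which Lemmas \ref{lem:switchinv} and \ref{lem:switchbound} already supply the needed inverse estimates.
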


\begin{proof}

The desired estimate is a consequence of the following intermediate comparison estimates,
\begin{equation}
\begin{aligned}
& |G^{(\mt W_S)}_{c_{\alpha},c_{\alpha}} - G^{(\mt b_{\alpha})}_{c_{\alpha},c_{\alpha}}| \lesssim (2d-1)^{\ell}(\phi)^2\\
&|G^{(\mt W_S)}_{c_{\alpha},c_{\alpha}} - \tilde{G}^{(\mt)}_{c_{\alpha},c_{\alpha}}| \lesssim (2d-1)^{\ell}(\phi^2)
\end{aligned}
\end{equation}

The proof of both of these estimates are similar, so it will suffice to just prove the first one.

By the resolvent identity, we have,
\begin{equation} \label{eq:intermediate1}
|G^{(\mt W_S)}_{c_{\alpha},c_{\alpha}} - G^{(b_{\alpha}\mt )}_{c_{\alpha},c_{\alpha}}| \le \Big| \sum_{x,y \in W_s \setminus b_{\alpha}} G^{(b_{\alpha} \mt )}_{c_{\alpha},x} ( G^{(b_{\alpha}\mt)}|_{W_s \setminus b_{\alpha}})_{xy}^{-1} G^{(b_{\alpha} \mt)}_{y,c_{\alpha}} \Big|
\end{equation}

Now, the proof of Proposition \ref{prop:removeWS} would also show that the elements $G^{(b_{\alpha}\mt)}_{c_{\alpha},x}$ and $G^{(b_{\alpha} \mt)}_{y,c_{\alpha}}$ are both $\lesssim \phi$. Furthermore, Lemma \ref{lem:switchinv} would show that $|(G^{(b_{\alpha}\mt)}|_{W_S \setminus b_{\alpha}})^{-1}_{xy}|$ is  bounded by $\epsilon'$ if $x$ and $y$ are not indices that correspond to the same vertex, while we can bound this by $O(1)$ if they do correspond to the same index. Using these facts, we see that the bound on the right hand side of \eqref{eq:intermediate1} is $(2d-1)^{\ell} \phi^2 + (2d-1)^{2\ell} \epsilon'\phi^2$ which is dominated by the first term.

For the term involving the switched graph, the Schur complement formula gives us,
\begin{equation}
|\tilde{G}^{(\mt)}_{c_{\alpha},c_{\alpha}} - G^{(\mt W_S)}_{c_{\alpha},c_{\alpha}}| \le \Big|\sum_{x,y \in \delta(W_S)}(G^{(\mt W_S)} B)_{c_{\alpha},x} (\tilde{G}^{(\mt)})_{xy} (B' G^{(\mt W_S)})_{y, c_{\alpha}} \Big|,
\end{equation}
where $\delta(W_S)$ correspond to the vertices that neighbor those of $W_S$ in $\GG \setminus W_S$. We have shown earlier in Proposition \ref{prop:removeWS} that we can bound the entries of $(G^{(\mt W_S)} B)_{c_{\alpha},x}$ and $(B'  G^{(\mt W_S)})_{y, c_{\alpha}}$ by $\phi$. Furthermore, we would also know that $\tilde{G}^{(\mt)}_{xy}$ is $O(1)$ if $x$ and $y$ correspond to the same vertex or is bounded by $\epsilon'$ otherwise by Lemma \ref{lem:switchbound}. The same counting gives us the desired bound.
\end{proof}

With these two lemmas in hand, we can return to a proof of Theorem \ref{thm:mainconcentration}.

\begin{proof}[Proof of Theorem \ref{thm:mainconcentration}]

From Lemma \ref{lem:auxconc}, we know that $\sum_{\alpha} w_{\alpha} D_{\alpha}^{in}$ is close to the desired estimate $\sum_{\alpha} w_{\alpha}(Q_I - Y_{i,r}(Q_I,Q_O))$ on the event $S_C{\GG}$. It suffices to understand the differences,
\begin{equation}
 \sum_{\alpha}|w_{\alpha}||D_{\alpha}^{in} - (\tilde{G}_{\tilde{a}_{\alpha} +N, \tilde{a}_{\alpha}+N}^{(\mt)} - G_{\tilde{a}_{\alpha} +N, \tilde{a}_{\alpha}+N}(Ext(B_{r}(\tilde{a}_{\alpha}, \tilde{\GG}^{(\mt)}), Q_1,Q_2) ) |.
\end{equation}

We know that for all terms in which the pair $c_{\alpha},b_{\alpha}$ is not Green's function connected to $c_{\beta},b_{\beta}$ for $\beta \ne \alpha$ that we know $|\tilde{G}_{a_{\alpha}+N,a_{\alpha}+N}^{(\mt)} - G^{(b_{\alpha}\mt) }_{a_{\alpha}+N,a_{\alpha}+N}|\lesssim (2d-1)^{\ell}\phi^2$ via Lemma \ref{lem:rmvsingvert}.   Furthermore, we would also know that since $\tilde{a}_{\alpha}$ would have a radius $\mathfrak{R}/4$ tree-like neighborhood that,
\begin{equation}
 G_{\tilde{a}_{\alpha} +N, \tilde{a}_{\alpha}+N}(Ext_i(B_{r}(\tilde{a}_{\alpha}, \tilde{\GG}\setminus\mt), Q_I,Q_O) ) = G_{\tilde{a}_{\alpha}+N,\tilde{a}_{\alpha}+N}(Ext_i(B_{r}(\tilde{a}_{\alpha},\GG\setminus \mt),Q_I,Q_O)).
\end{equation}

This deals with all except for $O(1)$ many indices $\alpha$. For the remaining indices, we may use the trivial bound $\epsilon'$.

Thus, we see that,
\begin{equation*}
\begin{aligned}
&\sum_{\alpha} w_{\alpha} (\tilde{G}_{\tilde{a}_{\alpha} +N, \tilde{a}_{\alpha}+N}^{(\mt)} - G_{\tilde{a}_{\alpha} +N, \tilde{a}_{\alpha}+N}(Ext_i(B_{r}(\tilde{a}_{\alpha}, \tilde{\GG}^{(\mt)}), Q_1,Q_2) )) - \sum_{\alpha} w_{\alpha} (Q_I - Y_{i,r}(Q_I, Q_O))\\
&= \sum_{\alpha} w_{\alpha}D^{in}_{\alpha} + O(  \sum_{\alpha}|w_{\alpha}|(2d-1)^{\ell} \phi^2 + \max_{\alpha} |w_{\alpha}| \epsilon')
\end{aligned}
\end{equation*}
Manifestly, $\max_{\alpha}|w_{\alpha}| \epsilon'$ is still less than $(\log N) \epsilon'  \sqrt{ \sum_{\alpha} |w_{\alpha}|^2}$.    Furthermore, we can use the fact that $\sum_{\alpha}|w_{\alpha}| \lesssim 1$  and the value of $\phi$ from Definition \ref{defn:phi} to assert that we can bound $\sum_{\alpha}|w_{\alpha}| (2d-1)^{\ell} \phi^2  \ll N^{1- \mathfrak{c}}$
Since $\max_{\alpha} |w_{\alpha}|$ decays as $(d-1)^{- \ell/2}$, we have sufficient control of  this completes the proof of Theorem \ref{thm:mainconcentration}.
\end{proof}

\subsection{Improved Bounds on $\tilde{G}$}

Given the graph $\GG$ in $\Omega^1_{S,\mathfrak{q},o}$, we choose a switching event $S\in \mathbb{S}_{\GG,o}$ that belongs in $S_C(\GG)$ as in Theorem \ref{thm:mainconcentration}.
With these concentration estimates in hand, we are now in position to prove improved bounds on our Green's functions in order to prove Theorem \ref{thm:secondswitching}.
\begin{proof}[Proof of Theorem \ref{thm:secondswitching}]

We consider the event $S_{C}(\GG)$. 
The proof of Theorem \ref{thm:secondswitching} involves proving the three estimates \eqref{eq:secswitchest1}, \eqref{eq:secswitchest2}, and \eqref{eq:secswitchest3}.

We start with the equation \eqref{eq:secswitchest3}. This is a rather simple consequence of the estimates of Section \ref{sec:step4}.  From Lemma  \ref{lem:step4io}, equation \eqref{eq:step4io}, we know that,
$|G_{oi} - P_{oi}| \lesssim \frac{(\log N)^2 \epsilon'}{(d-1)^{\ell/2}}$. Here,
$$
P_{oi} = G_{oi}(Ext(B_r(i\mt,\tilde{\GG}), Q_I, Q_O)),
$$
In the above expression, we have to replace $Q_I$ and $Q_O$ with $\tilde{Q}_I$ and $\tilde{Q}_O$ as well as change the domain to $B_r(\{i,o\}$ instead of $B_r(i \mt)$.

First of all, we have bounds on the difference $Q_I - \tilde{Q}_I$ as well as $Q_O - \tilde{Q}_O$ from Lemma \ref{lem:Qtilde}. Combining this bound with a slight modification of the perturbation bound from Lemma \ref{lem:pert}.

Namely, we have that,
\begin{equation}
\begin{aligned}
&|G_{oi}(Ext(B_r(i\mt,\tilde{\GG}), Q_I, Q_O)) - G_{oi}(Ext(B_r(i\mt,\tilde{\GG}), \tilde{Q}_I,\tilde{Q}_O))|\\ & \lesssim
\Big(\frac{1}{d-1}\Big)^{\text{dist}(i,j)}(1+ \log N)^3 [|Q_I - \tilde{Q}_I| + |Q_O - \tilde{Q}_O|]
\\& \lesssim \Big(\frac{1}{d-1} \Big)^{\text{dist}(i,j)}(\log N)^3 \Big[\frac{1}{N^{1-\mathfrak{c}}}  + (2d -1)^{2\ell}\frac{\text{Im}[m_{\TT}^d(z,w)] + \epsilon' + \frac{\epsilon}{S_g^1}}{N\eta} \Big].
\end{aligned}
\end{equation}
Finally, we can use Lemma \ref{lem:nbdpert} to change the domain used in the Ext operator from $B_r(i \mt,\GG)$ to $B_r( \{i,o\})$. This introduces the terms involving $|\tilde{Q}_I +\tilde{Q}_O - 2 m_{\infty}|$, etc. on the right hand side of equation \eqref{eq:secswitchest3}. Finally, we can combine all of these estimates using a triangle inequality to finish the proof of \eqref{eq:secswitchest3}.

{ We remark here that, for the purposes of an induction argument, it is important to ensure that the term $\frac{(\log N)^2 \epsilon'}{(d-1)^{\ell/2}}$ would be less than the original error term $\epsilon$ . This guarantees that a switching would give a provable improvement to the error bounds. It is very important that we had the constant bound $\mathfrak{j}(d) < \frac{1}{2}$; this constant bound is what allows us to compensate the $(\log N)$ factors that appear in $\epsilon'$ with $(d-1)^{\ell/2}$ in the denominator}. 

The derivation equation \eqref{eq:secswitchest2} from equation \eqref{eq:StabEst} is quite similar to the strategy we used to prove equation \eqref{eq:secswitchest3}. On a high level, we know that $G_{oi}$ should be close to $ G_{oi}(Ext(B_r(i\mt,\tilde{\GG}), Q_I, Q_O))$ due to the estimates from Section \ref{sec:step4}, specifically equation \eqref{eq:StabEst}. We still need to replace the appearances of $Q_I$ and $Q_O$ in this bound with the appropriate $\tilde{Q}_I$ and $\tilde{Q}_O$. These rely on the same perturbation estimates we have used earlier.


However, we now need to find an improvement in the second term of the right hand side of equation \eqref{eq:StabEst}; the first term is sufficiently small.  Tracing through the proof of equation \eqref{eq:StabEst}, This term comes from an attempt to bound the term on the second line of equation \eqref{eq:somesplit}. Namely, the sum of the on-diagonal terms.

\begin{equation} \label{eq:impterm}
\frac{1}{d-1} \sum_{\alpha} P_{i,l_{\alpha}} P_{l_{\alpha},j}(\tilde{G}^{(\mt)}- P^{(\mt)})_{\tilde{c}_{\alpha}, \tilde{c}_{\alpha}}
\end{equation}
We can replace $P^{(\mt)}_{\tilde{c}_{\alpha},\tilde{c}_{\alpha}}$ is defined with respect to the domain $B_r( c_{\alpha}\mt,\GG^{(\mt)})$. This can be replaced with the extension with respect to the domain $B_r(c_{\alpha}, \GG^{(\mt)})$ with an error term of the same order of magnitude as the right hand side of \eqref{eq:secswitchest2}. Now, since $\sum_{\alpha}|P_{i,l_{\alpha}}||P_{l_{\alpha},i}| \lesssim 1$, it causes not issue to apply a triangle inequality.

After the appropriate replacement of the the domain of the extension, the term in \eqref{eq:impterm} is controlled by the estimates of Theorem \ref{thm:mainconcentration}. This completes the desired bound of equation \eqref{eq:secswitchest2}. The proof of \eqref{eq:secswitchest1} requires very similar manipulations as the previous two, but uses the second family of weights $w_{\alpha}$ from Theorem \ref{thm:mainconcentration} rather than the first.
\end{proof}

\appendix

\section{Collected estimates on Green's function of infinite graphs}

\subsection{ Bounds on $G_{ij}$ for tree-like and almost tree-like graphs}
Recall our inductive procedure to compute for a directed tree, all Green's function values 
(via the Schur complement formula), starting from the solution $m_\infty$ of \eqref{eq:self-conc}. 
The success of this procedure requires that the sum of $|G_{ij}|^2$ over all vertices $j$  
of graph-distance $k$ from $i$, be bounded uniformly in $k$. While the naive bound 
$G_{ij} \le \left(\sqrt{d-1}\right)^{-\text{dist}(i,j)}$ is not strong enough to compensate 
the branching factor of the tree, our next bound on $m_{\infty}$, helps 
control the latter branching factor. 



\begin{lemma} \label{lem:minftybnd}
For  some $C_0=C_0(d)$, $d \ge  3$ finite, any $z \in \mathbb{C}_+$, $w \in \mathbb{C}$ 
and any solution $m_{\infty}: \mathbb{C}_+ \to \mathbb{C}_+$ of \eqref{eq:self-conc}, setting 
\begin{equation}\label{dfn:XY}
X:= |m_{\infty}|^2\,, \qquad
Y:= \frac{d}{d-1} |m_{\infty}^{uod}|^2 
= \frac{d}{d-1} \frac{|w|^2 |m_\infty|^2}{\big|z + \frac{d}{d-1} m_{\infty}\big|^2} \,,
\end{equation}
we have that for some finite $C_0=C_0(d)$,
\begin{equation}\label{eq:lem31}
S_g^1 = 1 - X - Y \ge \frac{1}{2} (\Im(z) \wedge 1) > 0  \,, \qquad |m^{sd}_\infty| \le C_0 \, \qquad |m^d_{\mathcal{T}} | \le C_0.
\end{equation}
\end{lemma}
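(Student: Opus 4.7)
The proof is based on viewing $m_\infty$, $m_\infty^{sd}$, and $m_\TT^d$ as diagonal entries of Green's functions of self-adjoint Hermitizations of the (normalized) adjacency operators on the infinite directed regular trees $\TT_1$, $\TT_1$, and $\TT$ respectively; each is therefore a Herglotz function, representable as a Stieltjes transform $\int d\mu(\lambda)/(\lambda-z)$ against a probability measure of compact support, whose diameter is controlled by the operator norm of the corresponding tree Hermitization and is thus uniform for $w$ in compacta.

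For the lower bound on $S_g^1$, the plan is to derive a closed-form identity. Rewriting \eqref{eq:self-conc} as $1/m_\infty = -(z+m_\infty) + |w|^2/u$ with $u := z + \tfrac{d}{d-1}m_\infty$, then taking imaginary parts and using $\Im u = \Im z + \tfrac{d}{d-1}\Im m_\infty$ together with the algebraic identity $1 + |w|^2/|u|^2 = (X + \tfrac{d-1}{d}Y)/X$, routine rearrangement yields
\begin{equation*}
\Im m_\infty \cdot (1 - X - Y) \;=\; \Im z \cdot \bigl(X + \tfrac{d-1}{d}\, Y\bigr).
\end{equation*}
Since $m_\infty \ne 0$ (otherwise \eqref{eq:self-conc} forces $u=z=0$, contradicting $\Im z > 0$), the RHS is strictly positive, so $S_g^1 > 0$. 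Combining with $\Im m_\infty \le |m_\infty|$ and $X + \tfrac{d-1}{d}Y \ge X = |m_\infty|^2$ gives $S_g^1 \ge \Im z \cdot |m_\infty|$, so the target $\tfrac{1}{2}(\Im z \wedge 1)$ reduces to a uniform lower bound $|m_\infty(z,w)| \ge c(d)$ for $\Im z \in (0,1]$; the regime $\Im z \ge 1$ follows from Herglotz/compactness plus the series expansion $m_\infty \sim -1/z$ at $|z|\to\infty$ (which drives $S_g^1 \to 1$). The required lower bound $|m_\infty| \ge c(d)$ in turn follows from the Herglotz picture: when $\Re z$ lies in $\operatorname{supp}\mu_\infty^w$, $|m_\infty| \ge \Im m_\infty$ is controlled below by a uniform lower bound on the density of $\mu_\infty^w$ (obtained by analyzing the branches of the cubic \eqref{eq:self-conc}), and otherwise $|m_\infty|$ is bounded below by $1/\operatorname{dist}(z,\operatorname{supp}\mu_\infty^w)$.

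For the upper bounds $|m_\infty^{sd}|, |m_\TT^d| \le C_0(d)$, I would exploit the algebraic simplification afforded by $\det M = -u/m_\infty$ (an immediate consequence of \eqref{eq:self-conc}), which yields $|w|^2 - u^2 = u(d-1-m_\infty^2)/[(d-1)m_\infty]$ and hence the clean identities
\begin{equation*}
m_\TT^d \;=\; \frac{(d-1)\,m_\infty}{d-1 - m_\infty^2}\,, \qquad m_\infty^{sd} \;=\; \frac{(z+m_\infty)\,m_\infty}{u}\,.
\end{equation*}
These reduce the upper bounds to (i) an upper bound $|m_\infty| \le \bar C(d)$, again from the Herglotz representation and the uniform support of $\mu_\infty^w$; (ii) a quantitative separation of $m_\infty^2$ from the real value $d-1$, using that $m_\infty$ stays in $\mathbb{C}_+$ on compacta; and (iii) a lower bound on $|u|$, itself a consequence of $|u| = |m_\infty||\det M|$ and an upper bound on $|\det M|$ provided by the bounds on $|m_\infty|$ and $|z + m_\infty|$.

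The main obstacle will be the uniform (in $w$ on compacta and $\Im z > 0$) control of the density of $\mu_\infty^w$, both above and below, and the quantitative separation of $m_\infty^2$ from $d-1$ near the spectral edge $|w| = \sqrt{d}$, where \eqref{eq:self-conc} becomes degenerate and two of its branches coalesce. This requires a careful case analysis of the solutions of the cubic across the three regimes $|w| < \sqrt{d}$, $|w| = \sqrt{d}$, and $|w| > \sqrt{d}$, in the spirit of the explicit tree computations in \cite{BD13}.
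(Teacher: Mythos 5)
Your starting point is exactly right: rewriting \eqref{eq:self-conc} as an equation for $1/m_\infty$ and taking imaginary parts yields an exact algebraic identity relating $1-X-Y$ to $\Im z/\Im m_\infty$, and the coefficient $\frac{d-1}{d}$ you obtain on the $Y$-term is what the computation actually gives. But from there you discard the structure of that identity in a way that does not close the proof.

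The paper's argument uses the identity in a self-improving way. Writing $\Gamma := \Im z/\Im m_\infty > 0$, the identity has the form $1-X-Y = \Gamma\,(X + cY)$ with $c>0$, so both sides are positive and at once $X+Y<1$; hence $\Im m_\infty \le |m_\infty| = \sqrt{X} < 1$ and therefore $\Gamma \ge \Im z$. Substituting $X+Y = 1 - S_g^1$ back into the right-hand side yields $S_g^1 \gtrsim \Im z\,(1 - S_g^1)$, which solves to $S_g^1 \gtrsim \Im z \wedge 1$ by elementary algebra, with no spectral input whatsoever. You instead drop the $Y$-term, using $X+\frac{d-1}{d}Y \ge X$ to land at $S_g^1 \ge \Im z\cdot|m_\infty|$, and then try to finish by proving $|m_\infty|\ge c(d)$. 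That reduction is false: for $\Im z\in(0,1)$ fixed and $|\Re z|\to\infty$ one has $m_\infty \sim -1/z \to 0$, while the target bound on $S_g^1$ remains of order $\Im z$. Your $\Im z\ge 1$ patch does not cover this regime, and the Herglotz/density machinery you propose to rescue it (with the acknowledged difficulty at $|w|=\sqrt{d}$) is both incomplete as written and vastly heavier than what the lemma requires.

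On the upper bounds, you do recover $m_\TT^d = \frac{(d-1)m_\infty}{\,d-1-m_\infty^2\,}$, which matches the paper, but you miss the one-line conclusion available once $X<1$ is in hand: $|m_\infty^2|<1\le d-2$, hence $|d-1-m_\infty^2|\ge d-2$ and $|m_\TT^d|\le\frac{d-1}{d-2}$, with no need for a ``quantitative separation of $m_\infty^2$ from $d-1$.'' Likewise the paper bounds $|m_\infty^{sd}|$ by taking the \emph{real} part of the same identity \eqref{eq:ident} and deducing $\Re(m_\infty/z)\ge -\tfrac12$ from $X,Y\in[0,1]$, a short calculation that replaces your Herglotz/compact-support argument entirely. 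In summary, you found the right identity but not its two key uses: that it forces $X+Y<1$, and that it can be fed back into itself. Those two observations make the whole lemma a few lines of algebra and remove the need for any analysis of the measure $\mu_\infty^w$.
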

\begin{proof} Taking $(1/\cdot)$ of both sides of \eqref{eq:self-conc}, results with
\begin{equation}\label{eq:ident}
\frac{1}{m_{\infty}} =  \frac{|w|^2}{z+ \frac{d}{d-1} m_{\infty} } -z - m_{\infty} \,.
\end{equation}
The imaginary part of this identity is precisely
\begin{equation*}
-\frac{\Im(m_{\infty})}{|m_{\infty}|^2} = -\frac{|w|^2 \big(\Im(z) + \frac{d}{d-1}\Im(m_{\infty})\big)}{|z + \frac{d}{d-1} m_{\infty}|^2} - \Im(z) - \Im(m_{\infty}).
\end{equation*}
Set $\Gamma:=\frac{\Im(z)}{\Im(m_\infty)}>0$ and multiply the preceding by 
$-\frac{|m_\infty|^2}{\Im(m_\infty)}$,  to arrive after some algebra at 
\begin{equation}\label{eq:Sg-ident}
S_g^1 = 1 - X - Y = \Gamma \Big( X + \frac{d}{d-1} Y \Big). 
\end{equation}
With $\Gamma>0$,  necessarily $X + Y < 1$ and in particular  
$\Im(m_\infty) \le \sqrt{X} \le 1$.   The \abbr{rhs} of \eqref{eq:Sg-ident} is thus at least
$\Im(z) (1-S_g^1)$ and 
the \abbr{lhs} of \eqref{eq:lem31} follows.

Next, the real part of the identity \eqref{eq:ident} amounts to 
\[
\frac{\Re(m_{\infty})}{|m_{\infty}|^2} = \frac{|w|^2 \big(\Re(z) + \frac{d}{d-1}\Re(m_{\infty})\big)}{|z + \frac{d}{d-1} m_{\infty}|^2} - \Re(z) - \Re(m_{\infty}).
\]
With $X,Y \in [0,1]$, it follows after some algebra that 
\[
\frac{\Re(m_{\infty})}{\Re(z)} =  \frac{\frac{d-1}{d} Y -X}{1-Y+X} \ge  - \frac{1}{2} \,.
\]
Since $\Im(m_\infty)>0$ and $\Im(z)>0$, this implies in turn that also $\Re(m_\infty/z) \ge -1/2$. 
With $|m_\infty| \le 1$, we thus deduce from \eqref{eq:m-infty-rel} that for any $d \ge 3$,
\[
|m_{\infty}^{sd}| \le \frac{|1 + m_{\infty}/z|}{|\frac{d-1}{d} + m_{\infty}/z|} 
\le \sup_{\Re(S) \ge -1/2} \Big\{ \frac{|1+S|}{|\frac{d-1}{d} + S|} \Big\} := C_0(d)
\]
is finite, as claimed.

Now, for $m^d_{\mathcal{T}}$, we see that we have,
\begin{equation}
\begin{aligned}
    \frac{1}{m^d_{\mathcal{T}}} &= \frac{|w|^2}{z + \frac{d}{d-1}m_{\infty}} - \left(z + \frac{d}{d-1}m_{\infty} \right) = \frac{1}{m_\infty} - \frac{m_{\infty}}{d-1}\\
    & = \frac{(d-1) - m_{\infty}^2}{(d-1)m_{\infty}}
\end{aligned}
\end{equation}
Thus,
\begin{equation}
|m^d_{\mathcal{T}}| = \frac{(d-1)|m_{\infty}|}{|d-1 - m_{\infty}^2|} \le \frac{d-1}{d-2}.
\end{equation}
\end{proof}

Using Lemma \ref{lem:minftybnd}, we next show that $\sum_{y \in \partial B_k(\hat{r})} |G_{\hat{r},y}|^2$
is bounded, uniformly in $k$, for any one of the directed trees of interest to us. 
This computation from the root is relevant for
the coefficient of various first order terms that will appear in our perturbation series. 
\begin{lemma} \label{lem:treesum}
For the trees $\TT_1$ and $\TT_2$ from Definition \ref{def:digraphtree} 
and any $k \ge 1$,
\begin{equation}\label{eq:treesum}
\begin{aligned}
&A_{1,k}:=\sum_{v \in \partial B_k(\hat{r}) ,o_v\in \{0,\aleph\}} |(G_{\TT_1})_{\hat{r} + \aleph, v +o_v}|^2 \lesssim 1,\\
&A_{2,k}:=\sum_{v \in \partial B_k(\hat{r}), o_v\in \{0,\aleph\}} |(G_{\TT_2})_{\hat{r},v +o_v}|^2 \lesssim 1,
\end{aligned}
\end{equation}
where the implicit constants do not depend on $k$. The corresponding quantities 
for the Green's function $G_{\TT}$ of the `pure' $d$-regular directed tree, are 
also bounded uniformly in $k$.


\end{lemma}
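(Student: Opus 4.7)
The plan is to show that the pair $(A_{1,k}, A_{2,k})$ obeys a closed linear recursion driven by the symmetric matrix
$$M := \begin{bmatrix} Y & X \\ X & Y \end{bmatrix},$$
whose operator norm is $X+Y = 1 - S_g^1 < 1$ by Lemma \ref{lem:minftybnd}, and then to iterate from bounded initial data. This is the same structure already exploited in the proof of Theorem \ref{thm:Self-conc}; the only novelty is applying it to $|G|^2$ rather than to $G(z,w)\overline{G(\bar z,w)}$, which works identically since $|m_\infty^{uod}| = |m_\infty^{lod}|$ by \eqref{eq:m-infty-rel}.

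First, I apply the Schur complement identity \eqref{eq:secondGreen} to $G_{\TT_1}$ with $T = \{\hat r, \hat r + \aleph\}$. Every vertex $v$ at distance $k \ge 1$ from $\hat r$ lies in a unique child subtree, which is either one of the $d$ out-subtrees (of type $\TT_1$, rooted at some $c_i$) or one of the $d-1$ in-subtrees (of type $\TT_2$, rooted at some $b_j$). Using that the only non-zero entries of $H|_{T \times T^c}$ are $H_{\hat r, c_i + \aleph} = H_{\hat r + \aleph, b_j} = 1/\sqrt{d-1}$, together with the identifications \eqref{eq:m-infty-rel}, I obtain
\begin{equation*}
(G_{\TT_1})_{\hat r + \aleph, v + o_v} \;=\; -\frac{m_\infty^{lod}}{\sqrt{d-1}} (G_{\TT_1(c_i)})_{c_i+\aleph,\,v+o_v} \quad \text{or} \quad -\frac{m_\infty}{\sqrt{d-1}} (G_{\TT_2(b_j)})_{b_j,\,v+o_v},
\end{equation*}
depending on the subtree containing $v$. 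Squaring, summing over $v \in \partial B_k(\hat r)$ and $o_v \in \{0, \aleph\}$, and noting that each child subtree is isomorphic to $\TT_1$ or $\TT_2$ (so the inner sums equal $A_{1,k-1}$ or $A_{2,k-1}$), gives
$$A_{1,k} \;=\; d \cdot \frac{|m_\infty^{lod}|^2}{d-1} A_{1,k-1} + (d-1) \cdot \frac{|m_\infty|^2}{d-1} A_{2,k-1} \;=\; Y A_{1,k-1} + X A_{2,k-1}.$$
The same argument applied to $\TT_2$ — whose root has $d-1$ out-neighbors (to $\TT_1$ subtrees) and $d$ in-neighbors (to $\TT_2$ subtrees), with the roles of the root coefficients $m_\infty, m_\infty^{lod}$ correspondingly permuted — produces $A_{2,k} = X A_{1,k-1} + Y A_{2,k-1}$, so $(A_{1,k}, A_{2,k})^\top = M^k (A_{1,0}, A_{2,0})^\top$.

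Second, I control the iterates: $M$ is real symmetric with eigenvalues $X+Y$ and $Y-X$, so $\|M\|_{\mathrm{op}} = X+Y$. Since \eqref{eq:lem31} gives $X+Y = 1 - S_g^1 < 1$, the sequence $\|M^k\|$ is uniformly bounded (indeed geometrically small). The base case is $A_{1,0} = |m_\infty|^2 + |m_\infty^{uod}|^2 \le X + \tfrac{d-1}{d} Y \le 1$ and similarly $A_{2,0} \lesssim 1$, using $X, Y \in [0,1]$ from Lemma \ref{lem:minftybnd}. This yields $A_{j,k} \lesssim 1$ uniformly in $k$. For the pure tree $\TT$, removing the root yields $d$ copies of $\TT_1$ and $d$ copies of $\TT_2$, so the identical Schur step gives
$$\sum_{v \in \partial B_k(\hat r),\, o_v} |(G_\TT)_{\hat r,\,v+o_v}|^2 \;=\; \frac{d|m_\TT^{lod}|^2}{d-1} A_{1,k-1} + \frac{d|m_\TT^d|^2}{d-1} A_{2,k-1} \;\lesssim\; 1,$$
invoking the boundedness of $m_\TT^d$ from \eqref{eq:lem31} (and of $m_\TT^{lod}$, which follows analogously from \eqref{eq:m-infty-rel}).

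The main obstacle is really just the coefficient bookkeeping in the Schur step: keeping track of which of $m_\infty, m_\infty^{sd}, m_\infty^{uod}, m_\infty^{lod}$ multiplies each subtree contribution, and noting that $|m_\infty^{uod}| = |m_\infty^{lod}|$ makes the resulting $2 \times 2$ matrix actually symmetric. Once this is done correctly, everything reduces to the spectral bound $X + Y < 1$, which is precisely the content of $S_g^1 > 0$ in Lemma \ref{lem:minftybnd}.
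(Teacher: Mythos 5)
Your proof is correct and follows essentially the same route as the paper's: a Schur-complement step removing the root of $\TT_1$ (resp.\ $\TT_2$) produces exactly the coupled recursion $A_{1,k}=YA_{1,k-1}+XA_{2,k-1}$, $A_{2,k}=XA_{1,k-1}+YA_{2,k-1}$, which is then closed by the spectral bound $X+Y=1-S_g^1<1$ from Lemma \ref{lem:minftybnd} and bounded initial data. Two small remarks: in the final display for the pure tree $\TT$, the two coefficients are swapped (the term multiplying $A_{1,k-1}$ should be $\tfrac{d}{d-1}|m_\TT^d|^2$ and the one multiplying $A_{2,k-1}$ should be $\tfrac{d}{d-1}|m_\TT^{uod}|^2$), though this is harmless since both coefficients are bounded; and the framing that this is the same recursion as in Theorem~\ref{thm:Self-conc} is slightly off, since that theorem iterates $G_{ij}(z)G_{ji}(z)$ rather than $|G_{ij}(z)|^2$, so its $X,Y$ coefficients are genuinely different objects even though the linear structure is parallel.
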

\begin{proof} Suppose $l \in \TT_1$ and $\hat{r}=v_0, v_1,\ldots,v_k=l$ is the (undirected)
path in $\TT_1$ from $r$ to $l$. 
Note that upon removal of the edge from $r$ to $v_1$, the vertex $l$ is in a copy 
$\TT^{(\hat{r})}_1$ or $\TT^{(\hat{r})}_2$, respectively, 
of $\TT_1$ or $\TT_2$ rooted at $v_1$.  Thus,
similarly to the derivation of \eqref{eq:zeroselfconc}-\eqref{eq:firstselfconc}, but now 
utilizing \eqref{eq:secondGreen}, we get in case of an out-edge from $\hat{r}$ leading to $v_1$, that
\begin{equation}\label{eq:recurs1}
\begin{aligned}
& (G_{\TT_1})_{\hat{r} + \aleph, l +o_l} = \frac{-1}{\sqrt{d-1}} (G_{\TT_1})_{\hat{r} + \aleph, \hat{r}} (G_{\TT^{(\hat{r})}_1})_{v_1 +\aleph, l+o_l}, \\
& (G_{\TT_2})_{\hat{r}, l +o_l} = \frac{-1}{\sqrt{d-1}} (G_{\TT_2})_{\hat{r},\hat{r}} (G_{\TT^{(\hat{r})}_1})_{v_1 +\aleph, l+o_l},
\end{aligned}
\end{equation}
whereas if an in-edge of $\hat{r}$ is connected to $v_1$, we get instead that 
\begin{equation}\label{eq:recurs2}
\begin{aligned}
&(G_{\TT_1})_{\hat{r} + \aleph, l +o_l}= \frac{-1}{\sqrt{d-1}} (G_{\TT_1})_{\hat{r} + \aleph, \hat{r}+\aleph} (G_{\TT^{(\hat{r})}_2})_{v_1, l+o_l} \,,\\
& (G_{\TT_2})_{\hat{r}, l +o_l}= \frac{-1}{\sqrt{d-1}} (G_{\TT_2})_{\hat{r},\hat{r}+\aleph} (G_{\TT^{(\hat{r})}_2})_{v_1, l+o_l} \,.
\end{aligned}
\end{equation}
We take $|\cdot|^2$ of both sides, sum over $o_l$ and all vertices $l \in \partial B_k(\hat{r})$ 
and substitute our expressions \eqref{eq:self-conc}--\eqref{eq:m-infty-rel}
for  $(G_{\TT_1})_{\hat{r} + \aleph, \hat{r}}$, $(G_{\TT_1})_{\hat{r}+\aleph,\hat{r}+\aleph}$, 
$(G_{\TT_2})_{\hat{r},\hat{r}+\aleph}$ and $(G_{\TT_2})_{\hat{r},\hat{r}}$, to arrive at
the following recursion relations 
\begin{equation}\label{eq:recurs}
\begin{aligned}
&A_{1,k} = Y  A_{1,k-1} + X A_{2,k-1}, \\
& A_{2,k} = X A_{1,k-1} + Y A_{2,k-1},
\end{aligned}
\end{equation}
in terms of $(X,Y)$ of \eqref{dfn:XY}. Solving the recursion \eqref{eq:recurs} leads to 
\begin{equation}
\begin{bmatrix}
A_{1,k}\\ A_{2,k}
\end{bmatrix}
=\begin{bmatrix}
Y & X  \\
X & Y
\end{bmatrix}^k
\begin{bmatrix}
A_{1,0}\\
A_{2,0}
\end{bmatrix}.
\end{equation}
The central matrix has eigenvalues $Y \pm X$. From \eqref{eq:lem31} we know
that $|Y \pm X| < 1$, which upon transforming into the basis of 
eigenvectors, suffices for the uniform in $k$ boundedness of $A_{1,k}$ and $A_{2,k}$.

In case of the tree $\TT$, we derive the corresponding recursion via the Schur 
complement formula after the removal of the root. Indeed, we have that,
\begin{equation}
\begin{aligned}
\sum_{l \in \partial B_{k}(\hat{r}),o_l}|(G_{\TT})_{r + o_r , l+ o_l}|^2 &= \frac{d}{d-1} |(G_{\TT})_{\hat{r} + o_{\hat{r}},\hat{ r}}|^2  \sum_{l \in \partial B_{k-1}(\hat{r}),o_l} |(G_{\TT_1})_{\hat{r}+\aleph, l  +o_l }|^2\\&  + \frac{d}{d-1} |(G_{\TT})_{\hat{r}+o_{\hat{r}},\hat{r}+\aleph}|^2 \sum_{l \in \partial B_{k-1}(\hat{r}),o_l}|(G_{\TT_2})_{\hat{r},l+o_l}|^2.
\end{aligned}
\end{equation}
We conclude by observing that both terms are bounded, uniformly in $k$, per \eqref{eq:treesum}. 
\end{proof}

Proceeding to generalize the computation in Lemma \ref{lem:treesum} to other higher
order terms in perturbation expansions, we first prove a general factorization property 
of the Green's functions on directed trees. 


\begin{lemma}\label{lem:Factorization} 
Denote by $p$ the path in $\TT$, $\TT_1$, or $\TT_2$,
between two indices $x$ and $y$ (each being either $u$ or $u+\aleph$ for some vertex $u$).
For $\TT_1$ and $\TT_2$, specify the root as $\hat{r}$ and denote by $x \wedge y$ the 
common ancestor of $x$ and $y$ of the largest depth. 
Let $E_{v}$ denote the types of the pair of edges in $p$ adjacent to each 
vertex $v \in p$ (i.e., whether they are in-edges or out-edges). For some explicit 
function $K(E) \le \frac{1}{\sqrt{d-1}}$ depending only on $z,w,d$ and the
edge types and some universal constant $C$, we have the following bound
on the corresponding Green's function
\begin{equation} \label{eq:fact}
|G_{x,y}| \le C \prod_{\substack{v \in p\\ v \ne x,y,x \wedge y}} K(E_v),
\end{equation}
uniformly over $w \in \mathbb{D}$, a fixed compact region in $\mathbb{C}$  
(where $v \ne x \wedge y$ means that $v$ is not the ancestor of the points $x$ and $y$ on the path to the root in $\mathcal{T}_1$ and $\mathcal{T}_2$.)

\end{lemma}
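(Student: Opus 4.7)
The plan is to establish \eqref{eq:fact} by iterating the Schur complement formula along the path $p$, reducing $G_{x,y}$ to a product over the internal vertices of Green's function entries at sub-tree roots. The key recursions are already displayed as \eqref{eq:recurs1}--\eqref{eq:recurs2}: removal of the current root $\hat{r}$ from $\TT_1$ (or $\TT_2$) factors the Green's function between an index at $\hat{r}$ and a distant index $l + o_l$ as $\pm (d-1)^{-1/2}$ times an entry of $G$ at $\hat{r}$, multiplied by the Green's function of the subtree rooted at the neighbor $v_1$. The analogous formulas starting from index $\hat{r}$ (rather than $\hat{r}+\aleph$), and the corresponding formula for $\TT$ obtained from \eqref{eq:secondselfconc}, are derived in exactly the same way.

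First I would treat the monotone case, in which $x \wedge y \in \{x, y\}$. After re-rooting the relevant tree at $x$ (a legitimate operation since the recursion only uses local structure), the path descends as $v_0 = x, v_1, \ldots, v_n = y$. Iterating the recursion $n$ times peels off $v_0, v_1, \ldots, v_{n-1}$ in succession, producing
\begin{equation*}
G_{x,y} \;=\; \Big( \prod_{i=0}^{n-1} \tfrac{-1}{\sqrt{d-1}}\, (G_{\TT_{j(i)}^{(v_{i-1})}})_{v_i + o^{(i)},\, v_i + \tilde{o}^{(i)}} \Big) \cdot (G_{\TT_{j(n)}})_{v_n + o^{(n)},\, v_n + o_{v_n}} \,,
\end{equation*}
where at each step the type $j(i) \in \{1,2\}$ of the subtree rooted at $v_i$ and the entering index $o^{(i)}$ are determined by the direction of the edge $(v_{i-1}, v_i)$: an out-edge at $v_{i-1}$ produces a $\TT_1$-type subtree at $v_i$ entered at index $v_i + \aleph$, and an in-edge produces a $\TT_2$-type subtree entered at $v_i$. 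The crucial observation is that for every internal vertex $v_i$ (i.e.\ $1 \le i \le n-1$), this coupling between tree-type and entering index forces the Green's function factor at $v_i$ to be one of $m_\infty, m_\infty^{uod}, m_\infty^{lod}$ from \eqref{eq:m-infty-rel}, never $m_\infty^{sd}$. By Lemma \ref{lem:minftybnd} together with the bound $Y \le 1$, each such entry has absolute value at most $1$ uniformly in $z \in \bC_+$ and $w$ in any fixed compact set, so we may define $K(E_{v_i})$ as $(d-1)^{-1/2}$ times this entry and obtain $K(E_{v_i}) \le 1/\sqrt{d-1}$. The prefactor associated with $v_0$ (which may involve the less-sharp bound on $m_\infty^{sd}$) and the terminal entry at $v_n$ are each bounded by $C_0(d)$ and are absorbed into the constant $C$.

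Next I would handle the non-monotone case $z := x \wedge y \notin \{x, y\}$. Removal of $\{z, z+\aleph\}$ disconnects $x$ from $y$ on the tree, so $G^{(z)}_{x,y} = 0$ and the Schur complement formula yields
\begin{equation*}
G_{x,y} \;=\; \sum_{a, b \in \{z, z+\aleph\}} G_{x,a}\, (G|_{\{z, z+\aleph\}}^{-1})_{a,b}\, G_{b,y} \,.
\end{equation*}
Re-rooting at $z$, each term $G_{x, z+\cdot}$ and $G_{z+\cdot, y}$ is now a Green's function along a monotone sub-path (the up-leg from $x$ to $z$ or the down-leg from $z$ to $y$), to which the previous case applies; this generates exactly the factors $K(E_v)$ for every $v \in p$ strictly between $x$ and $z$, and strictly between $z$ and $y$. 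The $2 \times 2$ matrix $(G|_{\{z, z+\aleph\}}^{-1})$ is explicitly given by \eqref{eq:zeroselfconc}--\eqref{eq:secondselfconc} (depending on tree type at $z$) and has entries bounded in $(z, w)$ on compact subsets, contributing only to $C$. The case of $\TT$ is identical, since after the first peel the resulting subtrees are $\TT_1$ and $\TT_2$; the initial factor at $\hat{r}$ involves $m_{\TT}^d$ or $m_{\TT}^{uod/lod}$, all bounded.

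The main bookkeeping obstacle is the case analysis of the four pairs of edge orientations at each internal $v_i$, verifying that the "bad" $m_\infty^{sd}$ factor never arises internally because after the first Schur step the entering index and subtree type are locked in the combinations $(v_i+\aleph, \TT_1)$ or $(v_i, \TT_2)$, both of which yield only diagonal or cross-type off-diagonal entries on the appropriate subtree. All $m_\infty^{sd}$ or $m_{\TT}^{d}$ factors that do appear occur at the finitely many distinguished vertices $\{x, y, x\wedge y\}$ and are absorbed into the universal prefactor $C$.
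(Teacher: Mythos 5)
Your approach is genuinely different from the paper's. The paper iteratively peels off the \emph{actual} root $\hat{r}$ of $\TT_1$ or $\TT_2$: each removal produces a small correction term (carrying a factor $(d-1)^{-\ell}$) plus a residual Green's function on a tree rooted one level closer to $x\wedge y$, and one then sums a geometric series, handling the final case $\hat{r}=x\wedge y$ separately via \eqref{eq:recurs-wedgexy}. You instead remove $\{x\wedge y,\, x\wedge y+\aleph\}$ in one step, exploiting that $G^{(x\wedge y)}_{x,y}=0$ to obtain an exact three-factor decomposition with no remainder and no geometric series. That is more direct and conceptually cleaner, and the monotone-case case analysis confirming that only $m_\infty$, $m_\infty^{uod}$, $m_\infty^{lod}$ (never the larger $m_\infty^{sd}$) appear at internal vertices is correct.

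However, there is a gap in the non-monotone case. You invoke the resolvent identity $G_{x,y}=\sum_{a,b}G_{x,a}\big((G|_{T})^{-1}\big)_{a,b}G_{b,y}$ with $T=\{x\wedge y,\,x\wedge y+\aleph\}$ and assert that $(G|_{T})^{-1}$ has ``entries bounded in $(z,w)$ on compact subsets, contributing only to $C$.'' But by \eqref{eq:zeroselfconc}--\eqref{eq:secondselfconc} the diagonal entries of $(G|_T)^{-1}$ equal $-\big(z+\tfrac{d}{d-1}m_\infty\big)$ (or $-\big(z+m_\infty\big)$), which grow like $|z|$ as $|z|\to\infty$. The lemma asserts a \emph{universal} constant $C$, and the paper's proof indeed achieves uniformity in $z\in\bC_+$ (via Lemma~\ref{lem:minftybnd}, where $|m_\infty|\le 1$, $|m_\infty^{uod}|\le\sqrt{(d-1)/d}$, $|m_\infty^{sd}|,|m^d_\TT|\le C_0(d)$ for all $z\in\bC_+$). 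Your argument as written only yields a $z$-dependent constant. The cleanest fix is to use \eqref{eq:thirdGreen} directly: removing $T$ gives $G_{x,y}=\sum G^{(T)}_{x,a}\,H_{a,b}\,(G|_T)_{b,b'}\,H_{b',a'}\,G^{(T)}_{a',y}$, where the central factor is now $G|_T$ --- whose entries are $m^d_\TT$, $m^{uod}_\TT$, $m^{lod}_\TT$, all uniformly bounded --- and the two outer factors $G^{(T)}_{x,a}$, $G^{(T)}_{a',y}$ are root-to-leaf Green's functions on $\TT_1$ or $\TT_2$ subtrees rooted at the children of $x\wedge y$, to which the recursion \eqref{eq:recurs1}--\eqref{eq:recurs2} applies verbatim. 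Alternatively, one can keep your decomposition but refrain from absorbing the terminal factors of $G_{x,T}$ and $G_{T,y}$ at the vertex $x\wedge y$ into the constant: these factors are $O(1/|z|)$ and precisely cancel the $O(|z|)$ growth of $(G|_T)^{-1}$. Finally, the ``re-rooting at $x$'' phrase in the monotone case is informal --- the correct justification, which you effectively use, is that removing $\{x,x+\aleph\}$ from $\TT_1$ (or $\TT_2$) at an interior vertex $x$ leaves subtrees of type $\TT_1$ or $\TT_2$ at the children, and the factor at $x$ is an entry of the $\TT$ block $m^d_\TT$, $m^{uod}_\TT$, $m^{lod}_\TT$, bounded by Lemma~\ref{lem:minftybnd}.
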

\begin{rmk} While this shows that $G_{x,y}$ has `nice' factorization properties, one has
to be careful here. Indeed, \eqref{eq:fact} is more complicated due to the dependence of 
$G_{x,y}$ on the specific orientations along the path connecting $x$ and $y$.
\end{rmk}
\begin{proof} In principle, this factorization is a consequence of the Schur complement formula. However, 
as mentioned, one has to be aware of the orientations along $p$ while performing a case by case analysis. 
First, if either the index $x$ or $y$ corresponds to the root vertex, then \eqref{eq:fact} is fairly straightforward.
Indeed, as we saw in the recursions \eqref{eq:recurs1}-\eqref{eq:recurs2}
 leading to Lemma \ref{lem:treesum}, in this case each of the 
$K(E_v)$ is the absolute value of one of 
$(G_{\TT_1})_{v+\aleph,v}, (G_{\TT_1})_{v+\aleph,v+\aleph}, (G_{\TT_2})_{v,v}$ 
or $(G_{\TT_2})_{v,v+\aleph}$ times $\frac{1}{\sqrt{d-1}}$. Further, for any intermediate vertex 
$v \in p$,  $v \ne x,y$, the specific Green's function entry to choose among these
four options depends only on the orientation of the edges along $p$ 
immediately before it and immediately after it. Each of the factors corresponding to our pair of
fixed indices $x$ and $y$ is covered by a universal  finite constant $C_0$ 
as long as $|w|$ is uniformly bounded.

Since $\TT$ is regular, in that case we shall assume with no loss of generality that $x$ is the root. 
The more delicate case is when neither of $x$ or $y$ are at the root in either $\TT_1$ 
or $\TT_2$. Due to duality, it suffices to proceed with the detailed analysis only for 
$\TT_1$, and we assume first that $x \wedge y \ne \hat{r}$ and denote by $\ell \ge 1$ the length of
the path $p_{x \wedge y}$ from $\hat{r}$ to $x \wedge y$. Denoting by $c$ the child of $\hat{r}$ which is 
on this path, upon removing the edge from $\hat{r}$ to $c$, we get from \eqref{eq:thirdGreen} that
\begin{equation}\label{eq:recurs-xy}
G_{x,y} = G^{(\hat{r})}_{x,y} + \frac{1}{d-1} G^{(\hat{r})}_{x,c+\tilde{o}} (G_{\TT_{1}})_{c+o,c+o} G^{(\hat{r})}_{c+\tilde{o},y}
\end{equation}
$o$ and $\tilde{o}$ are either $\aleph$ or $0$ depending on the orientation of the 
edge connecting $c$ to $\hat{r}$. The quantity $(G_{\TT_{1}})_{c+o,c+o}$ is either
$m_\infty$ or $m_\infty^{sd}$ and in either case bounded in absolute value by the 
 universal, finite $C_0$. Note that the tree $\TT_1^{(\hat{r})}$ after the 
removal of $r$ is either a $\TT_1$ rooted at $c$ or a $\TT_2$ rooted at $c$. 
Thus, following \eqref{eq:recurs1}-\eqref{eq:recurs2} yields the product bound of \eqref{eq:fact} 
for both $G^{(\hat{r})}_{x,c+\tilde{o}}$ and $G^{(\hat{r})}_{c+\tilde{o},y}$. Combining the latter 
two bounds gives the terms $K(E_v)$ for all vertices $v \in p$ except possibly 
$x$, $y$ and $x \wedge y$, as well as the product of $K(E_u)^2 \le \frac{1}{d-1}$ over
all $u \in p_{x \wedge y}$, $u \ne \hat{r}$. Consequently, 
\[
|G_{x,y}| \le |G^{(\hat{r})}_{x,y}| + C_0^3 (d-1)^{-\ell} \prod_{\substack{v \in p\\ v \ne x,y,x \wedge y}} K(E_v) \,.
\]
By the preceding reasoning, when starting with $G^{(\hat{r})}_{x,y}$ instead of $G_{x,y}$, 
we get the same bound on the second term on the \abbr{rhs} of \eqref{eq:recurs-xy}, 
except for reducing $\ell$ by one. Thus, setting $\widehat{C}_i=C_0^3 \sum_{k \ge i} (d-1)^{-k}$, $i  \ge 0$,
and iterating till all that remains from $p_{x \wedge y}$
is the common ancestor $x \wedge y$, we conclude that 
\begin{equation}\label{eq:bound-star}
|G_{x,y}| \le |(G_{\TT_{(\star)}})_{x,y}| + \widehat{C}_2 \prod_{\substack{v \in p\\ v \ne x,y,x \wedge y}} K(E_v) \,,
\end{equation}
where $\TT_{(\star)}$ is either a $\TT_1$ tree or a $\TT_2$ tree, now rooted
at $x \wedge y$. Turning to deal with $|(G_{\TT_{(\star)}})_{x,y}|$, we appeal once more to 
\eqref{eq:thirdGreen}, where since now $\hat{r}=x \wedge y$, the first term on the \abbr{rhs} is zero 
at $x,y$. Assuming \abbr{wlog} that $\TT_{(\star)}$ is a $\TT_1$ tree and 
with $c_x \ne c_y$ denoting the children of $x \wedge y$ along the path $p_x$ and $p_y$
leading in that tree to $x$ and $y$ respectively, we find that 
\begin{equation}\label{eq:recurs-wedgexy}
(G_{\TT_{(\star)}})_{x,y} = \frac{1}{d-1} G^{(\star)}_{x, c_x + \tilde{o}_1} 
(G_{\TT_1})_{r+ o_1, r + o_2} G^{(\star)}_{c_y + \tilde{o}_2, y},
\end{equation}
where $G^{(\star)}$ denotes the Green function of $\TT_{(\star)} \setminus (x \wedge y)$,
and $\tilde{o}_i$, $o_i$ are $0$ or $\aleph$, depending on the 
types of edges connecting $c_x$ and $c_y$ to $x \wedge y$. We use 
as before the factorization properties of the disjoint sub-trees of 
$\TT_{(\star)} \setminus (x \wedge  y)$ containing $x$ and $y$, to see that 
\[
|G^{(\star)}_{x, c_x + \tilde{o}_1}| \le C_0  \prod_{\substack{v \in p_x \\ v \ne x, x \wedge y}} K(E_v) \,, \qquad 
|G^{(\star)}_{c_y+\tilde{o}_2,y}| \le C_0 \prod_{\substack{v \in p_y  \\ v \ne y, x \wedge y}} K(E_v) \,.
\] 
Plugging the latter bounds and our usual bound $|(G_{\TT_1})_{\hat{r} + o_1, r + o_2}| \le  C_0$
into \eqref{eq:recurs-wedgexy} then combining with \eqref{eq:bound-star}, we finally arrive at
\eqref{eq:fact} (with $C=\widehat{C}_1$ a universal constant).
\end{proof}

Building on the preceding factorization property and bounding the expressions $\{K(E_v)\}$, we now
generalize Lemma \ref{lem:treesum}, starting at any $x  \in \TT_1$ (and not only at
its root).

\begin{lemma} \label{lem:factsum}
Let $\mathcal{K}_{xy}$ denote the \abbr{rhs} of \eqref{eq:fact}. Then,  for the tree 
$\TT_1$ with root $r$, uniformly in $k \ge 1$, $w \in \mathbb{D}$ compact 
and  $x \in \TT_1$,
\begin{equation}
\sum_{y \in B_k(x)} |\mathcal{K}_{xy}|^2 \lesssim  k.
\end{equation}
\end{lemma}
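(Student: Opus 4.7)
The plan is to partition the sum over $y \in B_k(x)$ according to the common ancestor $\ell(y) := x \wedge y$ in $\mathcal{T}_1$. Let $u_0 = x, u_1, \ldots, u_m$ be the unique path from $x$ to the root of $\mathcal{T}_1$. For any $y \in B_k(x) \setminus \{x\}$, there is a unique $j = j(y) \in \{0, 1, \ldots, \min(k, m)\}$ with $x \wedge y = u_j$; when $j = 0$, $y$ is a descendant of $x$, and when $j \ge 1$, $y$ lies in one of the $2d-2$ subtrees rooted at the children of $u_j$ distinct from $u_{j-1}$. Writing $y = x$ contributes $|\mathcal{K}_{xx}|^2 = C^2$.

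For fixed $j$, the path $p(x,y)$ passes through $u_j$, so the definition of $\mathcal{K}$ yields the factorization
\begin{equation*}
|\mathcal{K}_{xy}|^2 = C^2 F_j \prod_{\substack{v \in p(u_j, y) \\ v \ne u_j, y}} K(E_v)^2, \qquad F_j := \prod_{i=1}^{j-1} K(E_{u_i})^2,
\end{equation*}
with the convention $F_0 = F_1 = 1$. Since $K(E) \le 1/\sqrt{d-1}$, we have $F_j \le 1$ uniformly in $j$. The plan is then to bound the inner sum
\begin{equation*}
I_j := \sum_{\substack{y: x \wedge y = u_j \\ 1 \le d(u_j, y) \le k - j}} \prod_{\substack{v \in p(u_j, y) \\ v \ne u_j, y}} K(E_v)^2
\end{equation*}
by a constant depending only on $d, z, w$, uniformly in $k$ and $j$.

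To control $I_j$, I would observe that the sum decomposes, across the $O(d)$ relevant children $c$ of $u_j$ and across depths $s = d(u_j, y) \in \{1, \ldots, k-j\}$, into quantities satisfying the same matrix recursion derived in the proof of Lemma~\ref{lem:treesum}: grouping the $2d-1$ branch factors at each step reproduces exactly the pair $(X, Y)$ entering \eqref{eq:recurs}, since $K(E_v)^2$ equals $\tfrac{1}{d-1}$ times one of $|m_\infty|^2, |m_\infty^{sd}|^2, |m_\infty^{uod}|^2, |m_\infty^{lod}|^2$. By Lemma~\ref{lem:minftybnd}, the eigenvalues $Y \pm X$ of $\bigl(\begin{smallmatrix} Y & X \\ X & Y \end{smallmatrix}\bigr)$ satisfy $|Y \pm X| \le X + Y = 1 - S_g^1 < 1$, so the sum over $s$ is a convergent geometric series whose value is controlled by $1/S_g^1$, hence by a constant uniform over $w$ in any fixed compact set (for fixed $\Im(z) > 0$). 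This yields $I_j \le C'$ for all $j$ and $k$.

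Combining, we obtain
\begin{equation*}
\sum_{y \in B_k(x)} |\mathcal{K}_{xy}|^2 \le C^2 + C^2 \sum_{j=0}^{\min(k,m)} F_j I_j \le C^2 \bigl(1 + (k+1) C'\bigr) \lesssim k,
\end{equation*}
which is the desired bound. The main obstacle to watch in implementing this plan is the verification that the inner sum $I_j$ is indeed governed by the same matrix $\bigl(\begin{smallmatrix} Y & X \\ X & Y \end{smallmatrix}\bigr)$ despite $u_j$ being an interior vertex of $\mathcal{T}_1$ rather than its root: one must carefully enumerate how the edge-type parity transitions as we step through the children of $u_j$ and the subsequent descendants, mirroring the derivation of \eqref{eq:recurs1}--\eqref{eq:recurs} but with initial conditions determined by the orientation of the edge from $u_j$ to $u_{j-1}$. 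Once this bookkeeping is carried out, the contractive spectral property from Lemma~\ref{lem:minftybnd} delivers the uniform bound on $I_j$, and the factor $k$ simply counts the admissible positions of the common ancestor.
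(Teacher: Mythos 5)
Your decomposition by the depth $j$ of the common ancestor $x \wedge y$ matches the structure of the paper's proof, but there is a concrete gap in how you bound the inner sum $I_j$, and it affects the quality of the constant in a way the paper cannot afford.

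You claim $I_j \le C'$ uniformly by treating the sum over the depth $s = d(u_j,y)$ as a convergent geometric series governed by the matrix $\bigl(\begin{smallmatrix} Y & X \\ X & Y \end{smallmatrix}\bigr)$, with constant ``controlled by $1/S_g^1$''. But $S_g^1 = 1 - X - Y$ can be as small as $\frac12 \Im(z)$ (Lemma~\ref{lem:minftybnd}), so $1/S_g^1$ blows up like $1/\Im(z)$. The lemma is invoked (e.g.\ in the proof of Lemma~\ref{lem:stability}) in a regime where $\Im(z)$ can be as small as $N^{-\varepsilon}$, so a constant of order $1/S_g^1$ is not a universal constant and would degrade the downstream stability estimates. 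You flag the issue yourself (``for fixed $\Im(z)>0$''), but it is not a harmless technicality.

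The paper avoids this by using Lemma~\ref{lem:treesum} in a different way: it bounds each ``shell'' contribution at fixed depth $s$ by $\lesssim 1$, \emph{uniformly in $z$} (the eigenvalues have modulus $\le 1$ and the initial data are bounded, without needing to sum the geometric series), so the inner sum over $s = 1,\dots, k-l$ is $\lesssim k-l$. The overall $\lesssim k$ then comes from the outer sum, but crucially it uses the exponential decay of the prefactor $(\tfrac{1}{d-1})^{l-1}$ (your $F_j$), which you throw away by using $F_j \le 1$. Note the two approaches allocate the factor $k$ to different places: the paper gets $k$ from the inner sum and a constant from the outer sum; you get $k$ from counting $\le k+1$ values of $j$. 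If you had kept the decay of $F_j$, your argument would give $\lesssim 1/S_g^1$, which again is a valid bound but not the stated $\lesssim k$ with a universal constant. To close the gap, replace the geometric-series estimate of $I_j$ with the trivial sum of the uniformly bounded shell contributions (giving $I_j \lesssim k-j$), and then exploit $F_j \le (d-1)^{-(j-1)}$ in the outer sum.
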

\begin{proof}
Let $p_x$ denote the path from $x$ to the root $\hat{r}$. Let $v_l$ denote the vertex on $p_x$ that is of distance $l$ from $x$. We let $C(v_l)$ denote the children of $v_l$ that are of distance less than $k-l$ from $v_l$ aside from those  that can only reached using an edge of $p_x$. The product expression \eqref{eq:fact} gives us,
\begin{equation} \label{eq:somesum}
\sum_{y \in B_k(x)} |\mathcal{K}_{xy}|^2 = \sum_{l=0}^k  \left(\frac{1}{d-1}\right)^{l-1} \sum_{z \in C(v_l)} |\mathcal{K}_{v_l z}|^2.
\end{equation}

Now, our main observation is that $|\mathcal{K}_{v_l z}|^2$ can be roughly understood as $|P_{v_l z}|^2$ in a tree that uses $v_l$ as a root. More formally, let $ch^l_1,\ldots , ch^l_{2d-2}$ denote the direct children of $v_l$ in $C(v_l)$ and let $C(ch^l_m)$ denote the children of $ch^l_m$ in $C(v_l)$, while we let $T_{ch^l_m}$ denote the tree with root $ch^l_m$.
We see we have,
\begin{equation}
\sum_{z \in C(v_l)}|\mathcal{K}_{v_l z}|^2 = \sum_{m=1}^{2d-1} \sum_{z \in C(ch^l_m)} |G_{ch^l_m, z}|^2.
\end{equation}
Each of the internal sums on the right hand side can be explicitly computed and shown to be $\lesssim k-l$ by Lemma \ref{lem:treesum}.  The outer sum will just give a constant factor outside. Finally, we can evaluate the full  sum in equation \eqref{eq:somesum}. The summability of the exponential series will show that this will be bounded by some universal constant times $k$, as desired.
\end{proof}

The following lemma allows us to generalize our bounds on the decay of Green's function values with distance to the case of almost tree-like graphs with at most one cycle. The most important part is the relation of the Green's function of the graph to the Green's function of the simply connected cover, which is the infinite tree.

\begin{lemma} \label{lem:excess1}
Consider a digraph $\GG$ with excess at most 1; let $i$ and $j$ be vertices in $\GG$. Let $p_1$ be the shortest path connecting $i$ and $j$ in $\GG$ and let $p_2$ be the second shortest non-backtracking path connecting $i$ and $j$ in $\GG$. Furthermore, let $sp_1$ be a sequence of symbols $fw$ or $rv$ indicating whether each edge in the path $p_1$ were traversed in the forward or reverse direction, and let $sp_2$ be the same object for the path $p_2$. Given these sequences $sp_1$ and $sp_2$, consider the `pure' infinite tree $\TT$ and vertices $v_1$ and $v_2$ in $\TT$ that are reachable from the root $\hat{r}$ using a sequences of edges that match $sp_1$ and $sp_2$, respectively. For example, if $sp_1$ involved only forward edges, then $v_1$ should be a vertex that is reachable from $\hat{r}$ using only forward edges.

 We can derive the following estimate on the Green's function extension,
\begin{equation}
\begin{aligned}
|G(Ext(\GG,m_{\infty},m_{\infty}))_{i + o_i, j+o_j}| &\lesssim |(G_{\TT})_{\hat{r}+o_i,v_1+o_j}| + |(G_{\TT})_{\hat{r}+o_i,v_2+o_j}|\\
&\lesssim\left[ \frac{\max\{|m_{\infty}|,|m^{uod}_{\infty}|\}}{\sqrt{d-1}} \right]^{\text{dist}(i,j)}.
\end{aligned}
\end{equation}
Here, $o_i$ and $o_j$ can be $0$ or $\aleph$ to distinguish the two indices that correspond to the same vertex.

\end{lemma}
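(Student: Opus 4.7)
The plan is to exploit the fact that a digraph of excess at most one is either a tree or a tree with a single added edge (creating one cycle), so its universal cover is exactly the infinite $d$-regular directed tree $\TT$. The strategy is therefore to express $G(Ext(\GG,m_\infty,m_\infty))_{i+o_i,j+o_j}$ as a sum over lifts of $j$ to $\TT$ of Green's function entries of $\TT$, and then to check that the dominant contributions come from the two lifts of $j$ closest to a fixed lift $\tilde i$ of $i$. These two closest lifts correspond precisely to the two shortest non-backtracking paths $p_1,p_2$ from $i$ to $j$ in $\GG$, with matching edge-orientation sequences $sp_1,sp_2$, so they are exactly the vertices $v_1,v_2$ in the statement.

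First I would set up the covering. If $\GG$ is tree-like, there is nothing to do: $\GG$ embeds isometrically into $\TT$ and $G(Ext(\GG,m_\infty,m_\infty))_{i+o_i,j+o_j}=(G_\TT)_{\tilde\imath+o_i,\tilde\jmath+o_j}$ for the unique lift $\tilde\jmath$, which matches $v_1$ (and in this case the $v_2$ term is simply not used). When $\GG$ has exactly one cycle $C$, I would apply the Schur complement formula at one edge $e$ of $C$ to reduce to the tree case: removing $e$ turns $\GG$ into a tree-like graph $\GG'$ that embeds in $\TT$, and iterating the resolvent expansion in the rank-one perturbation corresponding to $e$ yields a convergent series whose terms are products of Green's function entries of $\GG'$, hence of $\TT$, between successive copies of the endpoints of $e$. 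Regrouping this series gives the standard identity
\[
G(Ext(\GG,m_\infty,m_\infty))_{i+o_i,j+o_j}=\sum_{\tilde\jmath\in\pi^{-1}(j)}(G_\TT)_{\tilde\imath+o_i,\tilde\jmath+o_j}
\]
where $\pi:\TT\to\GG$ is the covering map and $\tilde\imath$ is any chosen lift of $i$.

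Next I would show that only the two nearest lifts matter. The lifts of $j$ above $\tilde\imath$ form an orbit under the deck transformation generated by the one cycle, and the non-backtracking paths from $\tilde\imath$ to successive lifts $\tilde\jmath_1,\tilde\jmath_2,\tilde\jmath_3,\dots$ have lengths increasing by $|C|$ at each step. Applying Lemma \ref{lem:Factorization} to each term gives $|(G_\TT)_{\tilde\imath+o_i,\tilde\jmath_k+o_j}|\lesssim(1/\sqrt{d-1})^{\mathrm{dist}(\tilde\imath,\tilde\jmath_k)}$, and summing the resulting geometric series in $k\ge 1$ shows that the total contribution beyond $\tilde\jmath_1,\tilde\jmath_2$ is dominated, up to a universal constant, by $|(G_\TT)_{\tilde\imath+o_i,\tilde\jmath_2+o_j}|$. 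Matching the edge-orientation sequences of the two shortest non-backtracking paths $p_1,p_2$ with those of the paths in $\TT$ from $\tilde\imath$ to $\tilde\jmath_1$ and $\tilde\jmath_2$ identifies $\tilde\jmath_i=v_i$, giving the first bound of the lemma.

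For the second inequality I would invoke Lemma \ref{lem:Factorization} once more: along the path of length $\mathrm{dist}(i,j)$ in $\TT$ from $\tilde\imath$ to $v_1$ (resp.\ $v_2$), each internal vertex contributes a factor $K(E_v)$ that, by the Schur-complement derivation in Lemma \ref{lem:Factorization} combined with the explicit formulas \eqref{eq:m-infty-rel}, can be written as $(d-1)^{-1/2}$ times one of $m_\infty,m_\infty^{sd},m_\infty^{uod},m_\infty^{lod}$. Using Lemma \ref{lem:minftybnd} and the identity $|m_\infty^{lod}|=|m_\infty^{uod}|$, each such factor is at most $\max\{|m_\infty|,|m_\infty^{uod}|\}/\sqrt{d-1}$, and the product along the path gives the stated geometric bound. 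The main technical point, and the only real obstacle, is the bookkeeping in the covering-map argument: one must check that when $\GG$ has a cycle, the resolvent series in the single rank-one perturbation really does regroup into a sum over $\pi^{-1}(j)$ with the claimed geometric growth of distances, so that the two-lift bound becomes rigorous. Once that is in place, the remaining estimates are immediate consequences of Lemmas \ref{lem:Factorization} and \ref{lem:minftybnd}.
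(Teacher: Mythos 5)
Your plan follows the same high-level strategy as the paper (lift to the universal cover $\TT$, reduce to a sum over lifts of $j$, keep only the two nearest lifts, and invoke the factorization of tree Green's functions), but the way you propose to establish the covering identity
\[
G(\mathrm{Ext}(\GG,m_\infty,m_\infty))_{i+o_i,\,j+o_j}=\sum_{\tilde\jmath\in\pi^{-1}(j)}(G_\TT)_{\tilde\imath+o_i,\,\tilde\jmath+o_j}
\]
is genuinely different from the paper's. You propose a resolvent (Schur-complement) expansion in the perturbation obtained by deleting one edge $e$ of the cycle, followed by a regrouping of the series into a sum over deck-group translates. You flag the regrouping as the delicate step, and rightly so: the terms of the resolvent expansion are indexed by the number of \emph{uses} of $e$, while the covering sum is indexed by the \emph{winding number} of the walk around the cycle; walks that use $e$ twice in opposite directions must be reassembled into the $\tilde\jmath_1$ contribution, and each term of the resolvent series is a \emph{product} of tree Green's function entries passing through the endpoints of $e$, not a single entry $(G_\TT)_{\tilde\imath,\tilde\jmath_n}$. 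To close this you would additionally need a multiplicativity identity for $(G_\TT)$ along paths through a fixed vertex (a version of the factorization in Lemma~\ref{lem:Factorization}), and you would have to be careful that $\mathrm{Ext}(\GG\setminus e,\cdot,\cdot)$ is not simply $\mathrm{Ext}(\GG,\cdot,\cdot)$ with $e$ removed, since the deficit functions change. The paper avoids all of this by verifying the covering identity directly: it writes down the defining linear system \eqref{eq:definingeq} that the Green's function entries of any of these operators must satisfy, substitutes the proposed covering formula \eqref{eq:propcover}, and checks term by term that equality holds using translation invariance of $\hat G$ under the deck group and the local lifting property of edges. That verification is short and sidesteps the bookkeeping you would otherwise need. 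Once the covering identity is in place, your remaining steps (geometric-series tail via Lemma~\ref{lem:Factorization} and the $|C|$-length increments, then the final decay bound via Lemma~\ref{lem:minftybnd}) match the paper and are correct. In short: your approach is plausible and captures all the right ideas, but it trades a one-line verification for a nontrivial regrouping argument; if you pursue it, you should prove the multiplicative splitting of $(G_\TT)$ along a path through a fixed vertex and carry out the winding-number bookkeeping explicitly, or simply switch to the direct verification of \eqref{eq:propcover} against \eqref{eq:definingeq} as the paper does.
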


\begin{rmk}
In the top line of the preceding bound, we
distinguish the specific edges that are used in the path connecting $i$ and $j$
in order to be able to derive sharp bounds when large neighborhoods are considered. 
\end{rmk}

\begin{proof}

The extension of $Ext(\GG,m_{\infty},m_{\infty})$ involves adding an appropriate infinite tree to every boundary vertex of the digraph $\GG$ that does not yet have full degree. Since we have assumed that the digraph $\GG$ has excess at most 1, this would imply that there is at most one cycle in the extension graph.

Now, we can always relate the Green's function of a graph to its infinite cover as follows. First, we remark that the Green's function is the unique solution to the following system of equations,
\begin{equation} \label{eq:definingeq}
\begin{aligned}
&\delta_{ik} + z G_{ik} = w G_{\aleph + i,k} + \frac{1}{\sqrt{d-1}} \sum_{(i \to j) \in E} G_{\aleph +j,k}\\
& z G_{i, \aleph +k}= w G_{\aleph + i, \aleph +k} + \frac{1}{\sqrt{d-1}} \sum_{(i \to j) \in E} G_{\aleph+j, \aleph+k}\\
& z G_{\aleph +i,k} = \overline{w} \, G_{ik} + \frac{1}{\sqrt{d-1}} \sum_{(j \to i) \in E} G_{j, k}\\
& \delta_{ik} + z G_{\aleph +i, \aleph+k} = \overline{w} \, G_{i,\aleph +k} + \frac{1}{\sqrt{d-1}} \sum_{(j \to i) \in E}G_{\aleph +j, \aleph+k}.
\end{aligned}
\end{equation}

We argue that if we have a new graph $\hat{H}$, with Green's function $\hat{G}$, that is a lift of the graph $H$, with covering map $\pi$. Then, we would have the following relationship between the Green's function $G$ of the original graph and the Green's function of the covering graph.
\begin{equation} \label{eq:propcover}
\begin{aligned}
&G_{ij} = \sum_{y:\pi(y) = j} G_{x y}\\
&G_{\aleph + i, j} = \sum_{y:\pi(y) = j} G_{\aleph+x,y}\\
& G_{i,j+\aleph} = \sum_{y: \pi(y) = j} G_{x,\aleph+y}\\
& G_{i+\aleph,j+\aleph} = \sum_{y:\pi(y)=j} G_{x+\aleph, y+\aleph}.
\end{aligned}
\end{equation}
Here, $x$ is a vertex in $\hat{H}$ that maps to $i$ under the map $\pi$; the value on the right hand side does not depend on the choice of $x$ due to translation invariance. It is implicitly implied that the above results hold as long as the quantities that appear on the right hand side are convergent. In order to check the above relation, we substitute the proposed equalities in equation \eqref{eq:propcover} into the defining equations of equation \eqref{eq:definingeq} and check that equality holds. This mainly involves realizing that if we choose $\hat{i}$ so that $\pi(\hat{i})=i$, then we can parameterize the vertices $j$ such that $i \to j$ is an edge by some lifted vertex $\hat{j}$ such that $\pi(\hat{j}) = j$ and $\hat{i} \to \hat{j}$ is a edge in $\hat{H}$. This confirms the first two equations in equation \eqref{eq:definingeq}.  We can do the same thing for edges $j \to i$ as well, and this verifies the last two equations of \eqref{eq:definingeq}.

We can use the above formula to compute the Green's function of $Ext(\GG,m_{\infty},m_{\infty})$ by setting $H=Ext(\GG,m_{\infty},m_{\infty})$ . This graph has only one cycle due to our assumption on the excess; thus, the covering graph $\hat{H}$ is an infinite tree. The preimage of any vertex $i \in H$ will correspond to the set of non-backtracking paths in $\hat{H}$.  $\sum_{y:\pi(y)=j}$ now merely correspond to a `sum' over non-backtracking paths.

 For points $i$ and $j$ that are of distance $k$ in $H$, the non-backtracking paths all have lengths at least $k$ and the different non-backtracking paths have length that differ by $|C|$ where $|C|$ is the length of the unique simple cycle in $H$. Recall that $p_1$ was the shortest path between $i$ and $j$ while $p_2$ was the second-shortest non-backtracking path between $i$ and $j$.  We have the following structural properties on the paths $p_j$. We can divide $p_2$  as $Q_i \cup C \cup Q_o$ where $C$ is the cycle and further shortest backtracking paths $p_j$ are $Q_i \cup C \cup \ldots \cup C \cup Q_o$ where there are $j-1$ copies of $C$. The path $p_1$ must be either included completely in $Q_o$ or equal to the union $Q_i \cup Q_o$. 


We also have the following property, let $v$ be a fixed lift of $i$ in $\hat{H}$ and $w_1$ be the lift of $j$ in $\hat{H}$ that is closest to $v$. Then, if $\hat{p}_1$ is the path between $v$ and $w_1$, and $\hat{sp}_1$ is the set of labels ( either $f$ or $b$) of the edges of $\hat{p}_1$, then $\hat{sp}$ is the same sequence as $sp_1$. In addition, let $w_2$ is the lift of $j$ in $\hat{H}$ that is the second closest to $v$. If $\hat{p}_2$ is the path between $v$ and $w_2$, and $\hat{sp}_2$ is the set of labels of this path, then $\hat{sp}_2$ is the same as $sp_2$.

Thus, we see that
$$
\begin{aligned}
&|G_{ij}(Ext(\GG, m_{\infty}, m_{\infty})| \le \sum_{n=1}^{\infty} |(G_{\TT})_{v,w_n}|
 \le  |(G_{\TT})_{v,w_1}| +  |(G_{\TT})_{v,w_2}| \frac{1}{1 - 
\frac{|m_{\infty}| \vee |m_\infty^{uod}|}{\sqrt{d-1}}}.
\end{aligned}
$$
Here, $w_n$ is the $n$th closest lift of $j$ to $v$. To derive the final inequality, we used the decomposition of $G$ found in Lemma \ref{lem:factsum}. The paths $p_j$ have $(j-1)|C|$ more $K$ factors than $p_2$. We can thus apply a geometric sum for these terms. 
%
%
\end{proof}

As a simple corollary of this estimate, we can readily show that we have the same type of estimate for the first order term in perturbation series as in Lemma \ref{lem:treesum}. Later, we will show more general estimates.
\begin{cor} \label{cor:firstcor}
Let $\GG$ be a graph with excess at most $1$. Fix a vertex $i$ in $\GG$ and let $R_k$ denote all of the vertices in $\GG$ of distance exactly $k$ from $i$ in $\GG$. Then, we have the following estimate,
\begin{equation}
\sum_{j \in R_k} |G_{i j}(Ext(\GG, m_{\infty}, m_{\infty})|^2 \lesssim 1.
\end{equation}
\end{cor}
\begin{proof}
A graph with excess $0$ would correspond to an infinite tree after the expansion, so this estimate is already known.

We can understand graphs of excess $1$ by, again, referring to the infinite cover of the tree.
Recall that we have,
\begin{equation}
G_{i j}(Ext(\GG, m_{\infty}, m_{\infty}))= \sum_{l=1}^{\infty} (G_{\TT})_{v w_l},
\end{equation}
where $v$ is a fixed lift of $i$ and $w_l$ is a set of lifts of $w$ in order from closest to most distant.
Furthermore if $j$ is of distance $k$ from $i$ then $w_1$ is of distance $k$ from $v$ and $w_2$ is of distance $\ge k+|C|$ from $v$.

The Cauchy-Schwartz inequality will show that,
\begin{equation}
 |\sum_{l=1}^{\infty} (G_{\TT})_{v w_l}|^2 \le \sum_{l=1}^{\infty} |(G_{\TT})_{v w_l}|^2 e^{l \theta} \sum_{l=1}^{\infty} e^{- l \theta} \lesssim |(G_{\TT})_{v w_1}|^2 + |(G_{\TT})_{v w_2}|^2.
\end{equation}
Setting $\theta =- \frac{1}{2} \log
\frac{|m_{\infty}| \vee |m_\infty^{uod}|}{\sqrt{d-1}}$,
makes sure that the infinite sum  $\sum_{l=1}^{\infty} e^{-l \theta}$ is finite. Furthermore, $|(G_{\TT})_{v w_l}|^2 e^{l \theta}$ will still have exponential decay since the factor $e^{\theta}$ is chosen to be smaller than the ratio $\frac{(G_{\TT})_{v w_{l-1}}}{(G_{\TT})_{v w_l}}$.

By applying this inequality to all vertices $j$ that are in $R_k$, we see that we can bound,
\begin{equation}
\sum_{j \in R_k}|G_{i j}(Ext(\GG, m_{\infty}, m_{\infty}))|^2 \lesssim \sum_{ w \in S_{k} \cup S_{k+|C|}} |(G_{\TT})_{v, w}|^2 \lesssim 1.
\end{equation}
Here, $S_k$ denotes the elements that are of distance $k$ from $v$ in $\TT$.
\end{proof}

\subsection{Comparison of Stieltjes transforms}

\begin{lemma}\label{lem:compareStieltjes}
 We show the following equality between the Stieltjes transform proposed in equations \eqref{eq:m-infty-rel} and those coming from free probability computations as in \cite{BD13}:
$$m_\TT^d(z,w)=\sqrt{d-1} m_\star(\sqrt{d-1}z,\sqrt{d-1}w) .
$$
\end{lemma}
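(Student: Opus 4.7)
The plan is to reduce the claim to an identification between $m_\star$ and the root-diagonal Green's function entry of the \emph{unscaled} Hermitized infinite $d$-regular directed tree. By the identification $m_\TT^d(z,w)=(G_\TT)_{\hat r,\hat r}(z,w)$ used in writing \eqref{eq:m-infty-rel}, and the scaling relation $G_\TT(z,w)=\sqrt{d-1}\,\sfG_\TT(\sqrt{d-1}z,\sqrt{d-1}w)$ recorded in the excerpt just after \eqref{def:Hzw}, one has
\begin{equation*}
m_\TT^d(z,w)=\sqrt{d-1}\,(\sfG_\TT)_{\hat r,\hat r}(\sqrt{d-1}z,\sqrt{d-1}w).
\end{equation*}
Hence the lemma reduces to the identity $m_\star(z,w)=(\sfG_\TT)_{\hat r,\hat r}(z,w)$.

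For this remaining identification, I would invoke the description of $\nu^w$ from \cite{BD13}: $\nu^w$ is the spectral measure of the Hermitization of $(u_1+\cdots+u_d)-w$ with respect to a trace-vector, where $\{u_i\}_{i=1}^d$ are $\star$-free Haar unitaries (this is the operator limit of $\sfH^{\sfB_N}(0,w)$ via the convergence in $\star$-moments of independent Haar orthogonals to free Haar unitaries; see \cite[Thm.~5.4.10]{AGZ10}). On the other hand, the unscaled adjacency operator $\sfA_\TT$ acts at the root $\hat r$ with the same $\star$-moments as $u_1+\cdots+u_d$: each out-edge and in-edge type of the $d$-regular directed tree encodes a single letter $u_i$ or $u_i^*$ in a nonbacktracking word, and the vacuum-trace moment of a word vanishes unless the word reduces to the empty word, precisely matching the Haar-unitary free moment formulas. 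The corresponding Hermitized resolvents therefore agree at the trace vector and at the root respectively, yielding the required identification of Stieltjes transforms.

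As a more hands-on alternative, one may instead invoke the self-consistent cubic equation for the subordination variable associated with $m_\star$, derivable directly from the free probabilistic description of the limit operator (cf.\ the computations in \cite{BD13,HL00}), and verify that under the substitutions $z\mapsto\sqrt{d-1}\,z$, $w\mapsto\sqrt{d-1}\,w$, $m\mapsto m/\sqrt{d-1}$ this equation becomes precisely \eqref{eq:self-conc}, with $m_\TT^d$ obtained from the subordination variable via the same algebra that yields \eqref{eq:m-infty-rel}. The uniqueness of the analytic solution in $\mathbb{C}_+$ with $z\,m_\infty(z,w)\to -1$ as $|z|\to\infty$, noted immediately after \eqref{eq:self-conc}, would then force equality and complete the proof. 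The main obstacle in either route is bookkeeping: one must carefully track how the $\sqrt{d-1}$ scalings interact with the various $\frac{d}{d-1}$ coefficients appearing in \eqref{eq:self-conc} and \eqref{eq:m-infty-rel}, which is routine but needs to be done explicitly to ensure the equations match exactly.
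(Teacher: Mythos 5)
Your proposal is correct, and your primary route (the first of the two you sketch) is genuinely different from the paper's proof. The paper verifies the identity by a hands-on combinatorial argument on the unscaled tree: it decomposes walks in $\TT$ according to a chosen $(d-1)$-regular directed subtree $S_{d-1}$, derives the recursion $m^d_\TT = m^{d-1}_\TT(z - m_\infty)$ by path counting for $|z|$ large, shows algebraically that $1/m^d_\TT = 1/m_\infty - m_\infty$, matches the resulting recursion against \cite{BD13}'s equation~(4.6), and closes the induction with an explicit $d=1$ computation compared to \cite{BD13}'s equation~(4.1). Your route instead identifies $\TT$ with the Cayley graph of the free group $F_d$ (right multiplication by generators giving the $d$ out-edges and their inverses the $d$ in-edges), observes that the vacuum state at $\hat r$ restricted to the algebra generated by $\sfA_\TT$ is the canonical trace on $L(F_d)$, and appeals to the convergence in $\star$-moments of $\sfB_N$ to $u_1+\cdots+u_d$ (which underlies \cite{BD13}'s construction of $\nu^w$) to identify $m_\star$ with $(\sfG_\TT)_{\hat r,\hat r}$ directly. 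This is cleaner and avoids the case analysis, but imports more free-probability machinery and the slightly delicate identification of $\nu^w$ as the spectral measure of the operator Hermitization at the amplified trace; the paper's combinatorial route buys self-containedness and also produces the intermediate recursion, which is useful elsewhere. Two minor points worth tightening if you pursue the Cayley-graph route: the adjacency-operator moments at $\delta_{\hat r}$ count \emph{all} closed walks, not only nonbacktracking ones (a closed walk corresponds to a word reducing to the empty word, which is exactly the Haar-unitary moment condition -- your phrasing ``nonbacktracking word'' conflates the walk with its reduction); and one should say explicitly that both $(\sfG_\TT)_{\hat r,\hat r}$ and $(\sfG_\TT)_{\hat r+\aleph,\hat r+\aleph}$ equal this common value, so the root-diagonal entry agrees with the averaged Stieltjes transform $m_\star$. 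Your alternative route (b) is closer in spirit to the paper's matching of self-consistent equations, but as you note it requires first deriving a cubic subordination equation for $m_\star$ from the references, which is essentially as much work as the paper's path-counting derivation of the recursion; the paper sidesteps this by matching the $d \to d-1$ recursion rather than a single closed equation.
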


\begin{proof} First, notice that if we remove the scaling of edges in the adjacency matrix by $\frac{1}{\sqrt{d-1}}$ , then the corresponding equations for $m_{\infty}$ and $m_{\TT}^d$ would be,
\begin{equation} \label{eq:unrescaled}
\begin{aligned}
& m_{\infty} = \frac{z + d m_{\infty}}{|w|^2-\big( z+ d m_{\infty} \big)(z+ (d-1)m_{\infty})},\\
&m_{\TT}^d = \frac{z+ d m_{\infty}}{|w|^2 - \big(z + d m_{\infty} \big)^2}.
\end{aligned}
\end{equation}

 Consider $\mathcal{T}_d$, our infinite $d$-regular digraph tree; from this, pick a subgraph $S_{d-1}$  in $\mathcal{T}_d$ that is isomorphic to the infinite $d-1$ regular tree $\mathcal{T}_{d-1}$. For every vertex $v$ that is part of $S_{d-1}$, there will be one edge, $i_v$, into it and one edge, $o_v$, out of it that will be part of the original graph $\mathcal{T}_d$, but will not be part of the subgraph $S_{d-1}$. If we remove the edge $o_v$, the connected component remaining that is totally disjoint from $S_{d-1}$ will be isomorphic to $\mathcal{T}_1$ (the $2d-1$-ary tree). If we remove $i_v$, the connected component remaining that is totally disjoint from $S_{d-1}$ will be isomorphic to $\mathcal{T}_2$.  For $|z| > (d-1) + |w|^2$, we compute the Green's function of the Hermitized (unscaled) adjacency matrices of $\mathcal{T}_d$ by counting paths and carefully tracking where such paths uses edges of  $S_{d-1}$. (We will naturally consider $w$ and $w^*$ to correspond to a weighted self-loops at the appropriate vertex.)

Consider trying to compute  $(G_{\TT_d})_{\hat{r}\hat{r}}$ at the root $\hat{r}$. For its first step, it could either take an edge that is part of $S_{d-1}$ or else it could take the outedge $o_{\hat{r}}$. If the path takes the outedge $o_{\hat{r}}$, the path will perform a walk in $\mathcal{T}_1$ before returning to the root by traversing $o_{\hat{r}}$ in the reverse direction. The weighted contribution of paths that take this option would be $\frac{m_{\infty}(z)}{z}$, corresponding to the weighted contribution of walks that return to the root in $\mathcal{T}_1$. Before the walk takes an edge that is part of $S_{d-1}$, it could repeat this process indefinitely. Thus, the total weighted contribution of paths that could occur before the walk's first step in $S_{d-1}$ is,
$$
\sum_{k=0}^{\infty} \left(  \frac{m_{\infty}}{z}\right)^k = \frac{1}{1- \frac{m_{\infty}}{z}}. 
$$

If the path takes an edge that is part of $S_{d-1}$ to get to the vertex $p$, we can repeat a similar argument. Either the next step after this is also a part of $S_{d-1}$ or it could take the inedge $i_p$. (Note that this is still true if we took the weighted edge corresponding to the self loop.) We can count the contribution of all paths that use the inedge $i_p$ as $\frac{1}{1- \frac{m_{\infty}}{z}}$. If we repeat this argument as we traverse edges of $S_{d-1}$ we see that $m_{\TT}^d$ can be expressed as follows. Let $\mathcal{S}_k$ denote the total  weight of all paths in $S_{d-1}$ that use $k$ edges. 
\begin{equation}
m_{\TT}^d = \frac{1}{z} \sum_{k=0}^{\infty}  \frac{\mathcal{S}_k}{z^k}  \left(\frac{1}{1 - \frac{m_{\infty}(z)}{z}} \right)^{k+1}.
\end{equation} 

Thus,
\begin{equation}\label{eq:minusonerecur}
m^{d}_{\TT} = m^{d-1}_{\TT} \left( z - m_{\infty}(z) \right).
\end{equation}

If we return to the equations in \eqref{eq:unrescaled}, we find that,
\begin{equation} \label{eq:invrel}
\frac{1}{m^{d}_{\TT}} = \frac{|w|^2}{z+ d m_{\infty}} - (z + d m_{\infty}) = \frac{|w|^2}{z + d m_{\infty}(z)} - (z + (d-1) m_{\infty}) - m_{\infty}(z) = -m_{\infty}(z) + \frac{1}{m_{\infty}}.
\end{equation}

Solving the quadratic gives,
\begin{equation}
m _{\infty} = \frac{-1 + \sqrt{1+ 4 m_d^2}}{2 m_d}.
\end{equation}
Returning to equation \eqref{eq:minusonerecur}, we see that,
\begin{equation}
m^{d}_{\TT}(z) = m^{d-1}_{\TT }\left( z - \frac{2 m^d_{\TT}}{1+ \sqrt{1 + 4 (m^d_{\TT})^2}} \right).
\end{equation}

This is the same recursion as in equation (4.6) of \cite{BD13}. Notice that though we only confirmed the identity for $|z| \ge (d-1) +|w|^2$, the identity would hold for all values of $z$ by analytic continuation. To fully confirm the equality of Green's functions, it suffices to show the equality of Green's functions when $d=1$.

When $d=1$, we have to solve the quadratic,
\begin{equation}
m_{\infty}^2 + m_{\infty} \left[ z+ \frac{1}{z}  - \frac{|w|^2}{z} \right] +1 = 0.
\end{equation}
Setting the quantity $A= z+ \frac{1}{z} - \frac{|w|^2}{z}$, we see that we have,
\begin{equation}
m_{\infty} = \frac{-A - \sqrt{A^2 - 4}}{2},
\end{equation}
and, using \eqref{eq:invrel}, we see that,
\begin{equation}
\begin{aligned}
m^1_{\TT} &= - \frac{1}{\sqrt{A^2 - 4}} = - \frac{1}{\sqrt{ \left( z+ \frac{1}{z} - \frac{|w|^2}{z}\right)^2 -4}}\\
& = - \frac{z}{\sqrt{ \left( z^2+ 1 - |w|^2 \right)^2 -4z^2}}\\
&=  - \frac{z}{\sqrt{ \left( z^2+ 2z +  1 - |w|^2)(z^2 - 2z +1 - |w|^2\right)}}\\
&= - \frac{z}{\sqrt{ \left( (z+1)^2 - |w|^2) ((z-1)^2 - |w|^2\right)}}\\
&= - \frac{z} {\sqrt{ (z+1 -|w|)(z+1 + |w|)(z-1 - |w|)(z-1 +|w|)}}\\
&= -\frac{z}{\sqrt{(z^2- (1-|w|)^2) (z^2 - (1+|w|)^2)}}.
\end{aligned}
\end{equation}

This Green's function can be checked to correspond to the density of equation (4.1) in \cite{BD13}. \end{proof}

\bibliographystyle{abbrv}
\bibliography{dgraph}       


\end{document}